\documentclass[reqno,11pt]{amsart}
\usepackage{amsthm,amsfonts,amssymb,euscript,mathrsfs,graphics,color,amsmath,latexsym,marginnote}

\usepackage{hyperref}

\usepackage{graphicx}

\numberwithin{equation}{section}

\def\a{{\alpha}}
\def\C{{\mathbb{C}}}

\def\bar{\overline}
\def\R{{\mathbb R}}

\def\R{{\bf R}}

\def\bar{\overline}

\def\R{\mathbb{R}}

\newtheorem{theorem}{Theorem}[section]
\newtheorem{lemma}[theorem]{Lemma}
\newtheorem{proposition}[theorem]{Proposition}

\newtheorem{remark}[theorem]{Remark}

\begin{document}
	
	\title{Nonlinear Landau damping for the unconfined Vlasov-Poisson system in 2D}
	
	\author{Alexandru D. Ionescu}
	\address{Princeton University}
	\email{aionescu@math.princeton.edu}
	
	\author{Quoc-Hung Nguyen}
	\address{State Key Laboratory of Mathematical Sciences, Academy of Mathematics and Systems Science, Chinese Academy of Sciences, Beijing 100190, China; Institute of Mathematics, Academy of Mathematics and Systems Science, the Chinese Academy of Sciences, Beijing 100190, China}
	\email{qhnguyen@amss.ac.cn}
	
	\maketitle

	\begin{abstract}
		We prove nonlinear Landau damping for the two-dimensional Vlasov--Poisson system in the unconfined Euclidean setting near the homogeneous Poisson equilibrium
	\[
	\mu(v)=\frac{1}{2\pi}(1+|v|^2)^{-3/2}.
	\]
	For small, smooth, localized, and neutral perturbations we establish global existence, quantitative pointwise decay estimates for the electric field, and scattering to free transport. The result we prove here appears to be the first nonlinear Landau damping result for solution of the unconfined 2D Vlasov--Poisson system.
	
	The uniform Penrose stability bound degenerates at low spatial frequencies, giving rise to weakly damped plasma oscillations. We separate a faster-decaying regular field from oscillatory components with temporal factors $\cos(t)$ and $\sin(t)$. The proof combines a Volterra representation of the density, weighted estimates for the characteristic flow and nonlinear moments, and integration by parts in time. Neutrality supplies the spatial cancellation needed in the critical two-dimensional estimates.
	\end{abstract}

	\setcounter{tocdepth}{1}

	\tableofcontents
	
\section{Introduction}

\subsection{The stability problem} We investigate the asymptotic stability of the Poisson equilibrium for the
Vlasov--Poisson system in two spatial dimensions. Writing the total
distribution as $\mu+f$, the perturbation satisfies
\begin{equation}\label{eq2}
	\begin{split}
		\partial_t f+v\cdot\nabla_x f+E\cdot\nabla_v\mu
		&=-E\cdot\nabla_v f,\\
		E&=\nabla_x\Delta_x^{-1}\rho,\qquad
		\rho(t,x)=\int_{\R^2}f(t,x,v)\,dv,
	\end{split}
\end{equation}
where $(x,v)\in\R^2\times\R^2$ and
\begin{equation}\label{PoEq}
	\mu(v)=\frac{1}{2\pi}(1+|v|^2)^{-3/2}.
\end{equation}
We prove global existence, decay of the electric field, and scattering
to free transport for small, localized perturbations satisfying a
neutrality condition. Only three derivatives of the initial data are
required.

The decay of the electric field near a spatially homogeneous equilibrium
is known as Landau damping, following Landau's prediction in
\cite{landau}. In the unconfined problem, dispersion and phase mixing
interact with the long-range Poisson force. A fundamental difference compared to the periodic problem is that the Penrose linear stability
criterion becomes degenerate: its uniform lower bound fails near
the plasma frequencies as the spatial frequency tends to zero.
The resulting weakly damped oscillations have dimension-dependent
decay. In two dimensions, controlling their nonlinear effect on the
characteristics requires both oscillatory cancellation and decay
in velocity.

For the Poisson equilibrium \eqref{PoEq}, the linear response can be computed
explicitly since $\widehat\mu(\xi)=e^{-|\xi|}$. The density satisfies
\begin{equation}\label{rhoVolterra}
	\widehat\rho(t,\xi)+\int_0^t(t-s)e^{-(t-s)|\xi|}
	\widehat\rho(s,\xi)\,ds=\widehat{\mathbf R_0}(t,\xi),
\end{equation}
where $\mathbf R_0$ is the source defined in subsection~\ref{intro-field}. In the
linearized problem, it is simply the free transport density
$\mathbf R_0(t,x)=\int_{\R^2}f_0(x-tv,v)\,dv$. The resolvent of
\eqref{rhoVolterra} has Laplace poles at $-|\xi|\pm i$. These approach
the imaginary axis as $|\xi|\to0$, making the degeneration of the
Penrose condition explicit.

More precisely, the Volterra equation gives
\begin{equation}\label{linrep}
	\rho(t)=\mathbf R_0(t)
	-\int_0^t\sin(s)e^{-s|\nabla|}\mathbf R_0(t-s)\,ds,
\end{equation}
where $|\nabla|$ denotes the Fourier multiplier with symbol $|\xi|$.
The explicit time oscillation in \eqref{linrep} persists in the
nonlinear problem and is central to our analysis. Following the
decompositions in \cite{AIonescu2022,NWZ2024}, we separate a
faster-decaying regular field from larger oscillatory components
with phases $\cos(t)$ and $\sin(t)$. The regular part is controlled
by its decay, while the oscillatory amplitudes have time derivatives
that decay faster than the amplitudes themselves. This allows
integration by parts in time to compensate for their weaker decay.
The decomposition thus separates two complementary mechanisms:
direct time integrability and cancellation from time oscillations.
Exploiting both is particularly important in two dimensions, where
the available dispersion is weaker.

\subsection{Previous work}
The Vlasov--Poisson system has an extensive theory of global existence,
regularity, and long-time behavior. We refer, among many other works, to
\cite{arsenev,batt,EHorst1982,CBardos1985,FBouchut1991,CBardos2018,
	KPfaffelmoser1992,JSchaeffer1991,EHorst1993,RTGlassey1996,
	LionsPL1991,HJHwang2011,SHCHoi2011,JSmulevici2016,Xwang2018}.
We discuss here the results most closely related to nonlinear damping
near a homogeneous equilibrium.

In the spatially periodic (confined) setting, Mouhot--Villani
\cite{CMouhot2011} established nonlinear Landau damping for small
analytic and Gevrey perturbations of Penrose-stable equilibria. Bedrossian--Masmoudi--Mouhot \cite{JBedrossian2016} developed a different
approach in Gevrey classes, clarifying the role of nonlinear plasma
echoes, and Grenier--Nguyen--Rodnianski \cite{NguyenTT2020} gave a
simplified proof for analytic and Gevrey data. The plasma echoes can prevent a corresponding damping theory for general
Sobolev perturbations, as shown in \cite{Bedinsta}, although special echo solutions outside the
analytic and Gevrey classes can still damp \cite{GNRechoes}.
At the sharp weighted Gevrey-$3$ threshold, Ionescu--Pausader--Wang--Widmayer \cite{IPWWGevrey} proved
asymptotic stability and solved the final-state problem, constructing
nonlinear wave and scattering operators.
For related Vlasov-type equations on the torus, Gagnebin--Iacobelli
\cite{GagnebinIacobelli2023} proved nonlinear damping for the
massless-electron model, while
Benedetto--Caglioti--Gagnebin--Iacobelli--Rossi \cite{BCGIR2026}
solved the final-state problem for Gevrey data of order less than $3$.

In the unconfined setting, spatial dispersion permits damping results
with finitely many derivatives, but the low-frequency behavior of the
Poisson interaction requires a different analysis. The dimension of the ambient space is connected to the rate of dispersion, and becomes important.
The linearized dynamics and its dispersive oscillatory component
were studied in \cite{HNRcmp,JBedrossian2020}. T. T. Nguyen
\cite{NguyenSurvival} analyzed the survival threshold separating
undamped oscillatory modes from the Landau-damped regime for radial
equilibria. For the unscreened problem near a nonzero homogeneous equilibrium,
the first nonlinear asymptotic stability and scattering result was obtained by
Ionescu--Pausader--Wang--Widmayer \cite{AIonescu2022} for the 3D Poisson equilibrium, using a
Lagrangian representation of the density and a decomposition of the
field into regular and oscillatory components.

Nguyen--Wei--Zhang \cite{NWZ2024} gave a shorter proof of the
three-dimensional result under a neutrality assumption. Their
decomposition uses the equations for the first macroscopic moments
to isolate the oscillatory terms. We use this formulation here:
the zeroth, first, and second moments determine the field components,
and the nonlinear analysis reduces to weighted estimates for these
moments and their derivatives.

For screened interactions, Bedrossian--Masmoudi--Mouhot
\cite{BMMscreened2018} proved nonlinear Landau damping near stable
homogeneous equilibria in $\R^3$ for small, localized Sobolev
perturbations. Han-Kwan--Nguyen--Rousset \cite{HanKwanD2021}
subsequently gave a Lagrangian proof with weaker regularity
assumptions and essentially sharp decay estimates. Further
refinements include sharp bounds \cite{HNX1} and higher derivative
estimates \cite{NguyenTT2020t}. The two-dimensional Vlasov--Poisson system with massless electrons was treated in \cite{HNX2}. The unscreened
Poisson force considered here has a different low-frequency structure,
including the plasma oscillations visible in \eqref{linrep}.
For the relativistic Vlasov--Klein--Gordon system, T. T. Nguyen
\cite{NguyenBelow} established nonlinear damping in the
plasma-oscillation regime below the survival threshold.

The main difficulty in passing from three to two dimensions is that
the weaker dispersion no longer suffices for several characteristic
and moment estimates used in the three-dimensional argument.
In particular, the spatial weight controlling the leading linear
field is not integrable. We compensate for this by combining time
oscillations with velocity decay along characteristics, and by
isolating the leading terms in their second velocity derivatives.
These estimates allow us to retain an integrable spatial weight for
the remaining field components and to close the nonlinear argument.

\subsection{Main result}
Our main theorem gives quantitative pointwise bounds for the electric
field and convergence of $f(t,x+tv,v)$ to an asymptotic profile.
We first fix some notation. For an integer $j\ge0$, $|\nabla_{x,v}^j f|$ denotes the sum of the
absolute values of all mixed derivatives of total order $j$, and
repeated spatial indices are summed. We define $\nabla\Delta^{-1}$
as the Fourier multiplier with symbol $-i\xi/|\xi|^2$ for $\xi\ne0$.
Fix a radial cutoff $\chi\in C_c^\infty([0,\infty))$ with
$0\le\chi\le1$, $\chi=1$ on $[0,1]$, and $\chi=0$ on
$[\frac32,\infty)$. For $z_j\in\mathbb R^{d_j}$, write
\[
\langle z_1,\ldots,z_m\rangle
:=\big(1+\sum_{j=1}^m|z_j|^2\big)^{1/2}.
\]

\begin{theorem}[Global existence, decay, and scattering]\label{mainthm}
	Let $\kappa_0=10^{-4}$ and assume that $\mu$ denotes the Poisson equilibrium \eqref{PoEq}. Assume that $f_0\in C^3(\mathbb{R}_x^2\times\mathbb{R}_v^2;\mathbb{R})$ satisfies the smallness condition
	\begin{equation}\label{a16'}
		[f_0]:=\sum_{j=0}^{3}\sup_{x,v}\langle x,v\rangle^{3+20\kappa_0}\langle v\rangle^{3+\kappa_0}\big|\nabla_{x,v}^j f_0(x,v)\big|
		\leq \epsilon_0,
	\end{equation}
	where $\epsilon_0>0$ is sufficiently small, and the neutrality condition
	\begin{equation}\label{a16}
		\int_{\mathbb{R}^2\times\mathbb{R}^2}f_0(x,v)\,dx\,dv=0.
	\end{equation}
	Then the initial-value problem \eqref{eq2} with
	$f(0,x,v)=f_0(x,v)$ admits a unique global classical solution
	$(f,E)$, with $E$ denoting the electric field, in the class
	\begin{gather*}
		f\in C([0,\infty);C^1_{x,v})
		\cap C^1([0,\infty);C^0_{x,v}),\quad
		\langle x\rangle^2(E,\nabla_xE)
		\in L^\infty_{\mathrm{loc}}
		\bigl([0,\infty);L^\infty_x(\mathbb R^2)\bigr).
	\end{gather*}
	
	In addition, the associated electric field admits the decomposition
	\begin{equation}
		E(t,x)
		=
		E^{reg}(t,x)
		+\cos(t)E^{\cos}(t,x)
		+\sin(t)E^{\sin}(t,x),
	\end{equation}
	and the following estimates hold:
	
	\begin{itemize}
		\item \textbf{Electric-field decay.}
		For every $n\in\{0,1,2\}$, $t>0$, and $x\in\mathbb{R}^2$,
		\begin{align*}
			&|\nabla_x^n E^{reg}(t,x)|
			+\sum_{*\in\{\cos,\sin\}}|\nabla_x^n\partial_t E^{*}(t,x)|
			\lesssim [f_0]\langle t,x\rangle^{-2-\kappa_0}\langle t\rangle^{-\frac12-\frac{2n}{3}},\\
			&\sum_{*\in\{\cos,\sin\}}|\nabla_x^{1+n}E^{*}(t,x)|
			\lesssim [f_0]\langle t,x\rangle^{-2-\kappa_0}
			\begin{cases}
				\langle t\rangle^{2\kappa_0-\frac{2(n+1)}{3}},
				&n\in\{0,1\},\\
				\langle t\rangle^{-\frac{11}{6}},&n=2,
			\end{cases}\\
			&\sum_{*\in\{\cos,\sin\}}|E^{*}(t,x)|
			\lesssim [f_0]\langle t,x\rangle^{-2}\langle t\rangle^{50\kappa_0}.
		\end{align*}
		
		\item \textbf{Scattering of the distribution function.}
		There exists a Lipschitz profile
		$f_\infty\in C^{0,1}_{x,v}$ such that, for every $t>0$ and
		$(x,v)\in\mathbb R^2\times\mathbb R^2$,
		\begin{equation}\label{po}
			\begin{split}
				&\langle v\rangle^{3}\left|	f(t,x,v)- 	f_\infty(x-tv,v)\right|
				\lesssim [f_0]\langle t\rangle^{-\frac12-\kappa_0}\Big(
				\langle x-tv,v\rangle^{-2-19\kappa_0}
				+\langle t\rangle^{\frac12+51\kappa_0}\langle t,x\rangle^{-2}\\
				&\quad+\int_0^t\langle s\rangle^{-\frac23+\kappa_0}
				\langle s,x-(t-s)v\rangle^{-2}\,ds+\langle t\rangle^{\frac12+\kappa_0}
				\int_t^\infty\langle s\rangle^{-\frac12}
				\langle s,x-(t-s)v\rangle^{-2}\,ds\Big)\\
				&\qquad\qquad\lesssim [f_0]\langle t\rangle^{-\frac12-\kappa_0}.
			\end{split}
		\end{equation}
	\end{itemize}
\end{theorem}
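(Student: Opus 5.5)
The plan is a bootstrap argument in the Lagrangian framework of \cite{AIonescu2022,NWZ2024}, adapted to two dimensions. Local well-posedness in the stated class follows from standard theory. We then fix $T>0$ and assume on $[0,T]$ the decomposition $E=E^{reg}+\cos(t)E^{\cos}+\sin(t)E^{\sin}$, with every bound in the electric-field decay part of Theorem~\ref{mainthm} holding with $[f_0]$ replaced by $\epsilon_1=[f_0]^{2/3}$. We aim to recover them with constant $C[f_0]$. This closes the bootstrap and gives global existence by continuation.

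\emph{Step 1 (characteristics).} Write $X(s;t,x,v),V(s;t,x,v)$ for the backward characteristics. Using the bootstrap, we bound $V-v$, $X-x+(t-s)v$, and their first two $(x,v)$-derivatives. The regular field is integrable in time: its weight $\langle s\rangle^{-1/2}\langle s,X\rangle^{-2-\kappa_0}$ integrates to $O(\epsilon_1)$. The oscillatory fields are not integrable directly, since $|E^*|\lesssim\langle s\rangle^{-2+50\kappa_0}$ and $(t-s)$ weights appear in $X$. For these we integrate by parts in $s$, writing $\cos(s)=\partial_s\sin(s)$. The derivative then falls on $E^*$, whose $\partial_s$-bound decays faster, or on $E^*(s,X(s))$ through $\nabla_xE^*\cdot V$, which again decays faster. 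Velocity decay $\langle v\rangle^{-3-\kappa_0}$ of the data absorbs the factor $V$.

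\emph{Step 2 (density and moments).} Write $f(t,x,v)=f_0(X(0),V(0))+\int_0^t(\ldots)$. This yields the Volterra equation \eqref{rhoVolterra} with nonlinear source $\mathbf R_0$, which is the free density plus $-\int_0^t\int E\cdot\nabla_vf$ along free transport. Via \eqref{linrep}, $\rho=\mathbf R_0-\int_0^t\sin(s)e^{-s|\nabla|}\mathbf R_0(t-s)\,ds$. Following \cite{NWZ2024}, we extract the oscillatory part using the zeroth, first, and second macroscopic moments. Integrating by parts in $s$ in the $\sin(s)$ integral leaves boundary terms at $s=t$. These have the form $\cos(t)e^{-t|\nabla|}\mathbf R_0(0)$ and $\sin(t)\,\cdot$ (first moment), and they define $E^{\cos},E^{\sin}$. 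The remaining terms involve $\partial_s\mathbf R_0$ and second moments, and these define $E^{reg}$. Neutrality \eqref{a16} makes $\widehat{\mathbf R_0}(0)=0$ at the relevant order. This gains one power of $|\xi|$, so $\nabla\Delta^{-1}$ applied to $e^{-t|\nabla|}$-Poisson-kernel type terms gives the $\langle t,x\rangle^{-2}$ decay. Without this gain one only gets $\langle t,x\rangle^{-1}$, which is critical in 2D. The kernel bounds we need are $|\nabla^n e^{-s|\nabla|}\nabla\Delta^{-1}\mathrm{div}\,g|\lesssim\langle s,x\rangle^{-2-n}$ applied to weighted moments.

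\emph{Step 3 (closing the estimates).} Each nonlinear moment of the form $\int E\cdot\nabla_vf\,dv$, with $v$-weights up to $|v|^2$, is estimated by changing variables along characteristics; Step~1 makes the Jacobian close to the identity. For derivatives, $\nabla_x$ of free-transport densities gives $t^{-1}\nabla_v$. This produces the gains $\langle t\rangle^{-2n/3}$, interpolating between derivative-count and localization. Only three derivatives are available, so the $n=2$ case for $E^*$ gives only $t^{-11/6}$.

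The main obstacle is the bound on the second velocity derivatives of the characteristics, which are needed for $\nabla^2E$ and for the moments. The oscillatory leading field has the non-integrable weight $\langle t,x\rangle^{-2}$ in 2D, so a direct estimate gives logarithmic losses. To handle this, we first isolate in $\nabla_v^2X$ the leading contribution of $E^{\cos},E^{\sin}$ after time integration by parts and compute it explicitly. We then bound the remainder with the integrable weight $\langle t,x\rangle^{-2-\kappa_0}$, and accept the growth $\langle t\rangle^{50\kappa_0}$ in $|E^*|$. Finally, scattering follows from the same estimates. The profile $g(t,x,v)=f(t,x+tv,v)$ satisfies $\partial_tg=-E(t,x+tv)\cdot(\nabla_v-t\nabla_x)g$. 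Its time integral converges once the oscillatory terms are integrated by parts in time, and the tail integrals $\int_t^\infty$ give exactly the weights displayed in \eqref{po}, producing $f_\infty$ and the rate $\langle t\rangle^{-1/2-\kappa_0}$.
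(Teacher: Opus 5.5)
There is a genuine gap in Step~2, in how you split the field. You take $E^{\cos},E^{\sin}$ to be only the boundary terms produced by the data (the paper's $E^{\cos}_{lin},E^{\sin}_{lin}$) and put everything else into $E^{reg}$. After the two integrations by parts, however, only the high-frequency piece and the quadratic $E\otimes E$ term are genuinely regular. The low-frequency Duhamel term
\[
\int_0^t\sin(t-s)\chi(|\nabla|)e^{-(t-s)|\nabla|}\nabla\Delta^{-1}\bigl(|\nabla|^2\mathbf R_0-2|\nabla|\operatorname{div}\mathbf R_1+\operatorname{div}^2\mathbf R_2\bigr)(s)\,ds
\]
cannot be shown to satisfy the bound $\langle t\rangle^{-1/2}\Phi$ required of $E^{reg}$. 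With the moment estimates one can actually prove, it is only $O(\langle t\rangle^{50\kappa_0}\Phi)$, because the order-one low-frequency kernel $\langle t-s,x-y\rangle^{-3}$ is borderline in spacetime.

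To gain $\langle t\rangle^{-1/2}$ you would have to exploit the oscillation of $\sin(t-s)$ by integrating by parts in $s$ once more. That needs $\partial_s\mathbf R_2$, i.e.\ third velocity moments, which the hierarchy \eqref{z5}--\eqref{z5'} does not supply. These are plasma oscillations excited by the nonlinearity at all times, not only by the data.

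The paper keeps them oscillatory. Writing $\sin(t-s)=\sin t\cos s-\cos t\sin s$ produces the nonlinear amplitudes $E^{\cos}_{non},E^{\sin}_{non}$ of \eqref{z40.4}. These may grow like $\langle t\rangle^{50\kappa_0}\Phi$, which is the growth allowed in the theorem. Your data terms are bounded by $\langle t,x\rangle^{-2}$ with no growth (Lemma~\ref{eslinear}), so your remark about ``accepting'' this growth contradicts your own definition. The time derivatives of these amplitudes decay like $\langle t\rangle^{-1/2}\Phi$, since $\partial_t$ only hits the endpoint and the kernel. That is exactly the property your Step~1 integration by parts needs.

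The bootstrap therefore has to run on the norms \eqref{cbc2} of $E^{reg}$ and $(E^{\cos}_{non},E^{\sin}_{non})$, with the integrable weight $\Phi$ for the nonlinear amplitudes. The linear amplitudes carry the non-integrable weight $\langle t,x\rangle^{-2}$; this is where neutrality enters, and they are estimated once and for all. With your splitting, the time-integrability you assume for $E^{reg}$ in Step~1 belongs to a field that does not have it, and the bootstrap cannot close.

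Two further ingredients are missing.

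First, in Step~1 the factor $V$ coming from $\frac{d}{ds}E^*(s,X(s))$ cannot be absorbed by decay of the data. The characteristic bounds must hold uniformly in $(x,v)$. They feed into $\mathcal R_2$, whose weight $v^{\otimes2}\nabla_v\mu$ decays only like $\langle v\rangle^{-2}$, which is borderline in two dimensions. The paper gains $\langle v\rangle^{-1}$ by integrating along rays (Lemma~\ref{z216}). It then balances integration by parts against direct integration at $s_0=|v|^{3/2}$, which yields the velocity-decaying weights \eqref{aweights}.

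Second, isolating the leading term $O^{i,j}_{s,t}$ of $\nabla_v^2Y$ (see \eqref{sec4eq50}) is not enough by itself. Its pointwise size $\langle t\rangle^{2/3+\kappa_0}\langle t^{1/2+2\kappa_0},v\rangle^{-1}$ only gives $|\nabla^2\mathcal R_k|\lesssim\langle t\rangle^{-3/2+O(\kappa_0)}\Phi$. That is too weak for $|\nabla^3E^*|\lesssim\langle t\rangle^{-11/6}\Phi$, which is needed for the second derivatives of the characteristics. The paper proves a H\"older-$5/6$ difference estimate \eqref{q17} for this term and converts it into the fractional pairing bound \eqref{q47}, hence \eqref{q46b}. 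The $t^{-11/6}$ rate comes from this, not from having only three derivatives on $f_0$.

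A minor point: your equation for $g=f(t,x+tv,v)$ drops the forcing $-E(t,x+tv)\cdot\nabla_v\mu(v)$. That forcing is what produces the term $\mu(v+W_{0,\infty})-\mu(v)$ in $f_\infty$.
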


\begin{remark}
	The neutrality assumption \eqref{a16} is essential for the critical
	two-dimensional estimates in this paper. It was not needed in the
	three-dimensional stability theorem of \cite{AIonescu2022}, where
	the Coulomb field already decays like $|x|^{-2}$. In 2D, nonneutral initial data produce a $|x|^{-1}$ far field decay, which is insufficient for convergence, but neutrality cancels this leading term and gains one power of spatial
	decay for the linear
	oscillatory field.
	
	The neutrality condition is preserved by \eqref{eq2}:
	\begin{equation}\label{a17}
		\iint_{\R^2\times\R^2}f(t,x,v)\,dx\,dv=0,\qquad t\ge0.
	\end{equation}
	Moreover, if $\mu+f_0\ge0$, then $\mu+f(t)\ge0$ for all $t\ge0$,
	since the total distribution $\mu+f$ is constant along the
	characteristic flow.
\end{remark}

\subsection{Main ideas of the proof}
We now outline the main ideas used to prove Theorem~\ref{mainthm}.

\subsubsection{Field decomposition and bootstrap}\label{intro-field}
We introduce the modified velocity moments
\[
\mathbf R_k(t,x):=\int_{\R^2}v^{\otimes k}
\Big[f(t,x,v)+\int_0^t
E(s,x-(t-s)v)\cdot\nabla_v\mu(v)\,ds\Big]\,dv,
\]
for $k=0,1,2$. Here $\mathbf R_0$ is the source in \eqref{linrep}.
The moment identities \eqref{z5}--\eqref{z5'} express its first two
time derivatives through $\mathbf R_1$, $\mathbf R_2$, and quadratic
expressions in $E$. Integrating \eqref{linrep} by parts in time
therefore expresses the field in terms of these three moments and
quadratic field terms, leading to the regular and oscillatory
components described above.

We split the oscillatory amplitudes into the explicit terms determined
by the initial data and the remaining contributions:
\[
E^\ast=E^\ast_{lin}+E^\ast_{non},\qquad \ast\in\{\cos,\sin\}.
\]
The neutrality condition yields
\[
|E^{\cos}_{lin}(t,x)|+|E^{\sin}_{lin}(t,x)|
\lesssim[f_0]\langle t,x\rangle^{-2}.
\]
This spatial weight is not integrable in two dimensions. In contrast,
$E^{reg}$ and $E^\ast_{non}$ can be controlled with the integrable
weight $\Phi(t,x)=\langle t,x\rangle^{-2-\kappa_0}$, with stronger
time decay for $E^{reg}$ and $\partial_tE^\ast_{non}$. We therefore
estimate the explicit linear terms separately and close the
bootstrap for the remaining components. With the norms and $M_T$
defined in Section~2, the main estimate is
\[
\|E^{reg}\|_{1,T}
+\|(E^{\cos}_{non},E^{\sin}_{non})\|_{2,T}
\lesssim[f_0]+M_T^2.
\]
The different weights assigned to the amplitudes and their time and
spatial derivatives are chosen to exploit the oscillatory factors in
subsequent integrations by parts.

\subsubsection{Characteristic estimates}
Let $(X_{s,t},V_{s,t})$ denote the backward characteristic flow, and let
$Y_{s,t}$ and $W_{s,t}$ denote its position and velocity deviations
from free transport, as defined in \eqref{Lan6}. Their
integral equations contain the field evaluated along perturbed
straight lines. Time integration along such lines gains decay in
velocity; for example,
\begin{equation}\label{time-velocity-gain}
	\int_s^t\langle\tau\rangle^{a_2}
	\langle\tau,x-\tau v\rangle^{-a_1}\,d\tau
	\lesssim
	\langle s\rangle^{a_2-a_1+1}\langle v\rangle^{-1},
	\qquad a_1>1,\quad a_2-a_1+1<0.
\end{equation}
This gain is important when estimating the first two velocity
moments of the nonlinear terms. For the regular field, the time
decay is sufficient for direct integration. For the oscillatory
field, we also integrate by parts in time using the explicit
$\cos(t)$ and $\sin(t)$ factors. The differentiated amplitudes
have better decay, while derivatives of the trajectory are
controlled by the characteristic equations.

Differentiating twice in velocity introduces growing time factors,
so the higher-order estimates require a separate treatment.
We isolate the leading oscillatory contribution to
$\nabla_v^2Y_{s,t}$ and prove better bounds for the remainder.
This expansion, together with estimates for the shifted
characteristics and the inverse velocity map, is the main input
from Section~4 into the moment estimates.

\subsubsection{Nonlinear moments}
These modified moments split as $\mathbf R_k=\mathcal I_k+\mathcal R_k$,
where $\mathcal I_k$ is the initial-data contribution and
$\mathcal R_k$ is the nonlinear term.
The latter is built from the difference between the field transported
by the free flow and by the nonlinear characteristic flow. More
generally, we estimate
\begin{equation}\label{defN-unified}
	\begin{split}
		\mathcal N(F,\omega,b)(t,x)
		:=\int_0^t\!\!\int_{\R^2}b(s,v)\big\{
		&F(s,x-(t-s)v)\omega(v)\\
		&-F(s,X_{s,t}(x,v))\omega(V_{s,t}(x,v))
		\big\}\,dv\,ds.
	\end{split}
\end{equation}
In particular, $\mathcal R_k=\mathcal N(E,\nabla_v\mu,v^{\otimes k})$,
with the vector indices of $E$ and $\nabla_v\mu$ contracted.
A velocity change of variables puts both field factors on the free
ray and collects the difference into a small multiplier
$\Sigma_{s,t}$. Bounds for this multiplier give the basic
cancellation estimate. To gain decay for spatial derivatives,
we also use the change of variables $w=x-tv$. This transfers
derivatives to the field, velocity weights, and characteristic
corrections with a factor $t^{-1}$. The resulting remainders are
estimated using the bounds from Section~4 and further integrations
by parts in the oscillatory terms.

The most delicate contribution contains the leading part of
$\nabla_v^2Y_{s,t}$. Its pointwise estimate alone does not give the
decay needed for the highest field derivatives. We prove an
additional fractional difference estimate for this term and
formulate the resulting gain as an estimate against test kernels.
This allows derivatives to be distributed between the moment terms
and the linear response kernels. The weighted moment estimates in
Proposition~\ref{maines} then give the field bounds needed to close
the bootstrap. Thus the argument uses both pointwise decay and
additional regularity of the principal oscillatory term.

\subsection{Organization}
In Section 2 we introduce the moment equations, the field decomposition, and
the main bootstrap propositions. In Section 3 we collect preliminary estimates,
and in Section 4 we prove bounds on the characteristics and second velocity
derivative expansion. We prove the nonlinear moment estimates in 
Section 5 and then combine these ingredients in Section 6 to prove the bootstrap
proposition. In Section 7 we establish a suitable local existence and continuation criterion, matching our bootstrap norms.
Finally, in Section 8 we prove global existence and scattering, thus completing
the proof of Theorem~\ref{mainthm}.

\subsection*{Acknowledgements}
\begingroup
\emergencystretch=1em
A.~D.~I.\ was supported in part by NSF-UEFISCDI grant DMS-2407694
and a Simons Collaboration Grant on Wave Turbulence.
Q.-H.~Nguyen was supported by the CAS Project for Young Scientists in
Basic Research (Grant No.~YSBR-031) and the NSFC (Grant Nos.~1251101538
and 12595282).
\par\endgroup

\subsection*{AI use disclosure}
This paper was written by the authors and was not generated by artificial
intelligence. OpenAI's ChatGPT (Sol 5.6 and Astra 6) assisted with language
editing, organization, bibliographic checks, and checking calculations.
All mathematical content is the authors' own; they reviewed all AI-assisted
revisions and take full responsibility for the paper.

	\section{Dynamics of the density and bootstrap setup} 
	In this section, we derive the dynamical system satisfied by the macroscopic
	density and introduce the bootstrap framework that will be used
	to close the nonlinear estimates.
	\subsection{Characteristic flow}
	We introduce the backward characteristic flow associated with the Vlasov equation.
	For $(s,t)\in\mathcal{I}_T^2:=\{(s,t)\in[0,T]^2:\, s\le t\}$ and $(x,v)\in\mathbb{R}^2\times\mathbb{R}^2$,
	let $(X_{s,t},V_{s,t})$ solve the system of ODEs
	\begin{equation}\label{Lan1}
		\begin{aligned}
			\partial_s X_{s,t}(x,v) &= V_{s,t}(x,v), 
			&\qquad X_{t,t}(x,v) &= x,\\
			\partial_s V_{s,t}(x,v) &= E(s,X_{s,t}(x,v)),
			&\qquad V_{t,t}(x,v) &= v.
		\end{aligned}
	\end{equation}
	
	To measure the deviation from free transport, we define
	\begin{equation}\label{Lan6}
		\begin{aligned}
			Y_{s,t}(x,v) &:= X_{s,t}(x+tv,v)-x-sv,\\
			W_{s,t}(x,v) &:= V_{s,t}(x+tv,v)-v,
		\end{aligned}
	\end{equation}
	so that
	\begin{align*}
		&X_{s,t}(x,v)=x-(t-s)v+ Y_{s,t}(x-tv,v),\\&
		V_{s,t}(x,v)=v+ W_{s,t}(x-tv,v).
	\end{align*}
	By construction, $Y_{t,t}=0$ and $W_{t,t}=0$. 
	From \eqref{Lan1} we obtain the integral representations
	\begin{equation}\label{definitionYW}
		\begin{aligned}
			W_{s,t}(x,v) &= -\int_s^t E(\tau,x+\tau v+Y_{\tau,t}(x,v))\,d\tau,\\
			Y_{s,t}(x,v) &= \int_s^t (\tau-s)\,E(\tau,x+\tau v+Y_{\tau,t}(x,v))\,d\tau.
		\end{aligned}
	\end{equation}
	Using the method of characteristics, the solution of \eqref{eq2} satisfies
	\begin{equation}\label{t1}
		f(t,x,v)
		= f_0(X_{0,t}(x,v),V_{0,t}(x,v))
		- \int_0^t E(s,X_{s,t}(x,v))\cdot\nabla_v\mu(V_{s,t}(x,v))\,ds .
	\end{equation}

	The characteristic flow satisfies the usual composition property: for
	$0\le s\le t_1\le t\le T$,
	\begin{equation}\label{flow-composition}
		(X_{s,t},V_{s,t})
		=
		(X_{s,t_1},V_{s,t_1})
		\circ
		(X_{t_1,t},V_{t_1,t}).
	\end{equation}
	In terms of the correction variables $Y_{s,t}$ and $W_{s,t}$, this becomes
	the following renormalized cocycle identity:
	\begin{align}
		Y_{s,t}(x,v)
		&=
		Y_{t_1,t}(x,v)
		-
		(t_1-s)W_{t_1,t}(x,v)
		\notag\\
		&\quad+
		Y_{s,t_1}
		\left(
		x+Y_{t_1,t}(x,v)-t_1W_{t_1,t}(x,v),
		v+W_{t_1,t}(x,v)
		\right),
		\label{a31}
		\\
		W_{s,t}(x,v)
		&=
		W_{t_1,t}(x,v)
		+
		W_{s,t_1}
		\left(
		x+Y_{t_1,t}(x,v)-t_1W_{t_1,t}(x,v),
		v+W_{t_1,t}(x,v)
		\right).
		\label{a32}
	\end{align}
	We will use this identity later to analyze the large-time behavior
	of the characteristics. In particular, it allows us to compare the
	corrections associated with two different terminal times and is a key tool in the
	study of scattering for the characteristic flow.
	
	\subsection{Moment quantities}
	A direct computation using \eqref{eq2} shows that the function 
	\begin{equation*}
		\begin{split}
			F(t,x,v)&:=f(t,x,v)+\int_0^t E(s,x-(t-s)v)\cdot\nabla_v\mu(v)\, ds\\
			&=f_0(X_{0,t}(x,v),V_{0,t}(x,v))\\
			&+\int_0^t\big[E(s,x-(t-s)v)\cdot\nabla_v\mu(v)-E(s,X_{s,t}(x,v))\cdot\nabla_v\mu(V_{s,t}(x,v))\big]\, ds
		\end{split}
	\end{equation*}
	satisfies the equation
	\begin{equation*}
		\begin{split}
			\partial_{t}F(t,x,v)+v\cdot \nabla_xF(t,x,v)=-E(t,x)\cdot\nabla_v f(t,x,v).
		\end{split}
	\end{equation*}
	On $[0,T]$, the bound $|E(s,x)|\leq C_T\langle x\rangle^{-2}$
	and bounded characteristic corrections give
	\[
	\int_0^t\!\bigl(
	|E(s,x-(t-s)v)|+|E(s,X_{s,t})|
	\bigr)\,ds\lesssim_T\langle v\rangle^{-1}.
	\]
	Since $\langle V_{s,t}\rangle\asymp_T\langle v\rangle$,
	$|\nabla_v\mu(v)|\lesssim\langle v\rangle^{-4}$, and \eqref{a16'}
	holds, the representation above implies
	\[
	\int_{\mathbb R^2}\langle v\rangle^2|F(t,x,v)|\,dv<\infty.
	\]
	Integrating in $v$ yields
	\begin{equation}\label{z1}
		\begin{split}
			\frac{d}{dt}\Big[\int_{ \mathbb{R}^2}F(t,x,v)\,dv\Big]&=-\partial_{x^i}\int_{ \mathbb{R}^2}v^iF(t,x,v)\,dv,\\
			\frac{d}{dt}\Big[\int_{ \mathbb{R}^2}v^jF(t,x,v)\,dv\Big]&=-\partial_{x^i}\int_{ \mathbb{R}^2}v^iv^jF(t,x,v)\,dv+E_j(t,x)\rho(t,x). 
		\end{split}
	\end{equation}
	For $k=0,1,2$, we define
	\begin{equation}\label{z1.5}
		\begin{split}
			&\mathcal{I}_k(E)(t,x):=	\int_{\R^2} v^{\otimes k}f_0(X_{0,t}(x,v),V_{0,t}(x,v))\,dv,\\
			&\mathcal{R}_k(E_1,E)(t,x):=\int_{0}^{t}\int_{\R^2} v^{\otimes k}\big\{E_1(s,x-(t-s)v)\cdot\nabla_v\mu(v)\\
			&\qquad\qquad\qquad\qquad\qquad\qquad-E_1(s,X_{s,t}(x,v))\cdot\nabla_v\mu(V_{s,t}(x,v))\big\}\,dvds,\\
			&\mathcal{R}_k(E)=\mathcal{R}_k(E,E),
		\end{split}
	\end{equation}
	and
	\begin{equation}\label{z1.6}
		\mathbf{R}_k(t,x)=	\mathbf{R}_k(E)(t,x):=\mathcal{I}_k(E)(t,x)+\mathcal{R}_k(E)(t,x)=\int_{\R^2} v^{\otimes k}F(t,x,v)\,dv.
	\end{equation}
	In $\mathcal R_k(E_1,E)$, the characteristic flow
	$(X_{s,t},V_{s,t})$ solves \eqref{Lan1} with force field $E$.
	Since $\operatorname{div}_xE=\rho$ and $\operatorname{curl}_xE=0$
	by \eqref{eq2}, for $j'\in\{1,2\}$ we have
	\[
	\rho E_{j'}
	=\partial_{x^j}(E_jE_{j'})-E_j\partial_{x^j}E_{j'}
	=\partial_{x^j}(E_jE_{j'})-\frac12\partial_{x^{j'}}|E|^2.
	\]
	Combining this identity with \eqref{z1} gives
	\begin{align}\label{z5}
		&	\frac{d}{dt}	\mathbf{R}_0(t,x)=-\operatorname{div}	\mathbf{R}_1(t,x),\\&
		\frac{d}{dt}	\mathbf{R}_1(t,x)=-\operatorname{div}	\mathbf{R}_2(t,x)+\operatorname{div}(E(t,x)\otimes E(t,x))-\frac{1}{2}\nabla(| E(t,x)|^2)\label{z5'}.
	\end{align}
	
	\subsection{Macroscopic density equation}
	
	Using $\operatorname{div}_xE=\rho$, integration by parts in $v$
	gives, for $0\leq s\leq t$,
	\begin{align*}
	\int_{\mathbb R^2}E(s,x-(t-s)v)\cdot\nabla_v\mu(v)\,dv=(t-s)\int_{\mathbb R^2}\rho(s,x-(t-s)v)\mu(v)\,dv.
	\end{align*}
	The boundary term vanishes since $E(s)$ is bounded and
	$\mu(v)=O(\langle v\rangle^{-3})$.
	Integrating the definition of $F$ in $v$ and using \eqref{z1.6}
	therefore yields the Volterra equation
	\begin{equation}\label{rhoEq}
		\rho(t,x)
		+ \int_0^t\!\!\int_{\R^2} (t-s)\rho(s,x-(t-s)v)\mu(v)\,dvds
		= \mathbf{R}_0(t,x).
	\end{equation}
	Taking the Fourier transform in $x$ gives
	\begin{equation}\label{Volterra}
		\widehat{\rho}(t,\xi)
		+ \int_0^t (t-\tau)\widehat{\rho}(\tau,\xi)\widehat{\mu}((t-\tau)\xi)\,d\tau
		= \widehat{\mathbf{R}_0}(t,\xi).
	\end{equation}
	
	As shown in \cite{AIonescu2022}, this equation admits the representation
	\begin{equation}\label{z40}
		\rho(t)
		= \mathbf{R}_0(t)
		- \int_0^t \sin(s)e^{-s|\nabla|}\mathbf{R}_0(t-s)\,ds .
	\end{equation}
	
	Let $$\tilde\rho_{j,0}(x)=\tilde\rho_{j,0}(f_0)(x):=\int_{\R^2} v^{\otimes j}f_0(x,v)\,dv.$$ Then
	$\mathbf{R}_0(0)=\tilde\rho_{0,0}$ and $\mathbf{R}_1(0)=\tilde\rho_{1,0}$.
	Integrating by parts and using \eqref{z5}, we obtain
	\begin{equation}\label{z40.1}
		\rho(t)= \cos(t)e^{-t|\nabla|}\tilde\rho_{0,0}
		+ \int_0^t \cos(t-s)e^{-(t-s)|\nabla|}
		\big(|\nabla|\mathbf{R}_0-\operatorname{div}\mathbf{R}_1\big)(s)\,ds .
	\end{equation}
	
	This representation reveals the oscillatory structure
	responsible for the resonance at frequency $1$. We introduce a cutoff $\chi(|\nabla|)$ supported at low frequencies and use 
	\eqref{z5}–\eqref{z5'} to arrive at the decomposition
	\begin{equation}\label{z40.2}
		\begin{split}
			\rho(t,x)&=\cos(t)e^{-t|\nabla|} \tilde\rho_{0,0}(x)+\sin(t)\chi(|\nabla|)e^{-t|\nabla|} (|\nabla|\tilde\rho_{0,0}-\operatorname{div}\tilde{\rho}_{1,0})(x)\\&+\int_{0}^{t}\cos(s)(1-\chi)(|\nabla|)e^{-s|\nabla|} (|\nabla|\mathbf{R}_0-\operatorname{div}\mathbf{R}_1)(t-s)(x)ds\\&
			+	\int_{0}^{t}\sin(t-s)\chi(|\nabla|)e^{-(t-s)|\nabla|} (|\nabla|^2\mathbf{R}_0-2|\nabla|\operatorname{div}\mathbf{R}_1+\operatorname{div}^2\mathbf{R}_2)(s)(x)ds\\&
			-	\int_{0}^{t}\sin(t-s)\chi(|\nabla|)e^{-(t-s)|\nabla|} (\operatorname{div}^2(E\otimes E)-\frac{1}{2}\Delta(| E|^2))(s)(x)ds.
		\end{split}
	\end{equation}
	
	\subsection{Electric field decomposition}
	Applying $\nabla\Delta^{-1}$, we write
	\begin{equation}\label{de1}
		\begin{split}
			&E(t)=	E^{reg}(t)+E^{osc}(t)=E^{reg}(t)+\cos(t) E^{\cos}(t)+\sin(t)E^{\sin}(t),\\
			&E^{\cos}:=E^{\cos}_{non}+E_{lin}^{\cos},\qquad E^{\sin}:=E^{\sin}_{non}+E_{lin}^{\sin},
		\end{split}
	\end{equation}
	where
	\begin{equation}\label{de1.1}
		\begin{split}
			E^{reg}(t)&:=\int_{0}^{t}\cos(t-s)(1-\chi)(|\nabla|)\nabla\Delta^{-1}e^{-(t-s)|\nabla|} (|\nabla|\mathbf{R}_0-\operatorname{div}\mathbf{R}_1)(s)\,ds\\
			&-	\int_{0}^{t}\sin(t-s)\chi(|\nabla|)e^{-(t-s)|\nabla|}\nabla\Delta^{-1} (\operatorname{div}^2(E\otimes E)-\frac{1}{2}\Delta(| E|^2))(s)\,ds,\\
			E_{lin}^{\cos}(t)&:=	E_{lin}^{\cos}(f_0)(t):=\nabla\Delta^{-1}e^{-t|\nabla|} \tilde\rho_{0,0}(f_0),\\
			E_{lin}^{\sin}(t)&:=	E_{lin}^{\sin}(f_0)(t):=\chi(|\nabla|)\nabla\Delta^{-1}e^{-t|\nabla|} [|\nabla|\tilde\rho_{0,0}(f_0)-\operatorname{div}\tilde{\rho}_{1,0}(f_0)],
		\end{split}
	\end{equation}
	and the nonlinear oscillatory components are defined as
	\begin{equation}\label{z40.4}
		\begin{split}
			E^{\cos}_{non}&:=-\int_{0}^{t}\sin(s)\chi(|\nabla|)e^{-(t-s)|\nabla|} \nabla\Delta^{-1}(|\nabla|^2\mathbf{R}_0-2|\nabla|\operatorname{div}\mathbf{R}_1+\operatorname{div}^2\mathbf{R}_2)(s)\,ds,\\
			E^{\sin}_{non}&:=\int_{0}^{t}\cos(s)\chi(|\nabla|)e^{-(t-s)|\nabla|} \nabla\Delta^{-1}(|\nabla|^2\mathbf{R}_0-2|\nabla|\operatorname{div}\mathbf{R}_1+\operatorname{div}^2\mathbf{R}_2)(s)\,ds.
		\end{split}
	\end{equation}
	
	\subsection{The main bootstrap propositions}
	
	We define $\langle t,x\rangle=\sqrt{1+|x|^2+t^2}$ and
	\begin{equation}\label{cbc1}
		\Phi(t,x):=\langle t,x\rangle^{-2-\kappa_0},~~\kappa_0=10^{-4}.
	\end{equation}
	For a spacetime function $g\in L^\infty([0,T]\times\R^2)$, we define our main norms as
	\begin{equation}\label{cbc2}
		\begin{split}
		&	\|g\|_{1,T}=\sum_{n=0}^{2}\sup_{t\in [0,T]}\big\| \langle t\rangle^{\frac{1}{2}+\frac{2n}{3}} \Phi^{-1}(t)\nabla^n g(t)\big\|_{L^{\infty}_x},\\
			&	\|g\|_{2,T}=\sum_{n=0}^{2}\sup_{t\in [0,T]}\|\langle t\rangle^{\frac{1}{2}+\frac{2n}{3}}	\Phi^{-1}(t)\nabla^n \partial_tg(t)\|_{L^{\infty}_x}+\sup_{t\in [0,T]}\|\langle t\rangle^{-50\kappa_0}\Phi^{-1}(t)g(t)\|_{L^{\infty}}\\&\quad+\sum_{n\in\{1,2\}}\sup_{t\in [0,T]}\|\langle t\rangle^{-2\kappa_0+\frac{2n}{3}}\Phi^{-1}(t)\nabla^ng(t)\|_{L^{\infty}}+\sup_{t\in[0,T]}
			\|\langle t\rangle^{\frac{11}{6}}\Phi^{-1}(t)\nabla_x^3g(t)\|_{L_x^{\infty}}.
		\end{split}
	\end{equation}
	All terms use the same spatial weight $\Phi^{-1}(t,x)$; only the time
	weights vary. At each fixed time, this requires only
	$\langle x\rangle^{-2-\kappa_0}$ decay, which is integrable in $\mathbb R^2$.
	
	We are now ready to state our main propositions.

	\begin{proposition}[Bootstrap]\label{propoglobal3d}
		There exist constants $C\geq1$ and
		$\epsilon_0\in(0,(1+2C)^{-1}]$, independent of $T$, the initial
		data $f_0$, and the solution, such that for every $T>0$,
		if $(f,E)$ is a solution of \eqref{eq2} on $[0,T]$ satisfying
		\begin{equation}\label{bo1}
			M_T := \|E^{reg}\|_{1,T} + \|(E^{\cos}_{non},E^{\sin}_{non})\|_{2,T} + [f_0] \le \epsilon_0,
		\end{equation}
		and if the initial data $f_0$ satisfies the neutrality assumption \eqref{a16}, then
		\begin{align}\label{B}
			\|E^{reg}\|_{1,T} + \|(E^{\cos}_{non},E^{\sin}_{non})\|_{2,T}
			\leq C [f_0] + CM_T^2.
		\end{align}
		In particular,
		\begin{align}\label{B'}
			\|E^{reg}\|_{1,T} + \|(E^{\cos}_{non},E^{\sin}_{non})\|_{2,T}
			\leq 2C [f_0].
		\end{align}
	\end{proposition}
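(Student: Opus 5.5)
The plan is to reduce \eqref{B} to weighted estimates for the three modified moments $\mathbf R_0,\mathbf R_1,\mathbf R_2$ and for the quadratic field terms, and then to read off the field bounds from the explicit linear kernels in \eqref{de1.1}--\eqref{z40.4}. First, under \eqref{bo1}, I record bounds for the full field $E=E^{reg}+\cos(t)E^{\cos}+\sin(t)E^{\sin}$. The linear parts $E^{\ast}_{lin}$ are explicit Poisson-kernel expressions in $\tilde\rho_{0,0},\tilde\rho_{1,0}$. Neutrality \eqref{a16} gives $\int\tilde\rho_{0,0}=0$, so $\nabla\Delta^{-1}e^{-t|\nabla|}\tilde\rho_{0,0}$ gains a power and is $\lesssim[f_0]\langle t,x\rangle^{-2}$. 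Each derivative gains a further $\langle t,x\rangle^{-1}$, and $\partial_t$ of these terms also gains $\langle t,x\rangle^{-1}$. Next I use the characteristic estimates, namely the bounds on $Y_{s,t},W_{s,t}$ and their $x,v$ derivatives, together with the expansion of $\nabla_v^2Y_{s,t}$ from Section~4. These are proved by integrating \eqref{definitionYW} along near-straight rays with the velocity gain \eqref{time-velocity-gain}, and by integrating by parts in $\tau$ against $\cos\tau,\sin\tau$ for the oscillatory part, where $\partial_\tau$ falls either on $\partial_tE^\ast$ (better decay) or on the trajectory (controlled by $W$).

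Second, I prove the moment estimates of Proposition~\ref{maines} type. The term $\mathcal I_k$ is handled by the change of variables $v\mapsto V_{0,t}$, the decay \eqref{a16'} of $f_0$, and the $w=x-tv$ substitution. This gives $|\nabla^n\mathcal I_k|\lesssim[f_0]\langle t,x\rangle^{-2-n-\dots}$-type bounds with $t^{-n}$ gains from derivatives. The nonlinear part $\mathcal R_k=\mathcal N(E,\nabla_v\mu,v^{\otimes k})$ is written, after a velocity change of variables, as a free-ray integral times the multiplier $\Sigma_{s,t}$, which is $O(M_T)$ small and carries one extra factor of $M_T$. The resulting bound is $\lesssim M_T^2\times$ (the weights needed). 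Spatial derivatives are transferred via $w=x-tv$ at a cost $t^{-1}$ each. The $E^{\ast}_{lin}$ contributions, whose weight $\langle t,x\rangle^{-2}$ is not integrable, are treated with integration by parts in time using the $\cos,\sin$ phases, combined with velocity decay of $\nabla_v\mu$.

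Third, I insert these moment bounds into \eqref{de1.1} and \eqref{z40.4}. For $E^{reg}$, the high-frequency part $(1-\chi)$ together with $e^{-(t-s)|\nabla|}$ and the oscillation $\cos(t-s)$ can be integrated by parts in $s$ using \eqref{z5}. Since $1-\chi$ excludes the resonance, this leaves kernels that are integrable in time. The quadratic term $E\otimes E$ decays like $\Phi^2$-type weights, which gives a $M_T^2+[f_0]^2$ contribution. For $E^{\ast}_{non}$, the time derivative $\partial_tE^\ast_{non}$ is, up to the boundary term, $\chi e^{-(t-s)|\nabla|}$ applied to $|\nabla|^2\mathbf R_0-2|\nabla|\operatorname{div}\mathbf R_1+\operatorname{div}^2\mathbf R_2$. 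Because this carries two extra derivatives, the Poisson kernel $\nabla\Delta^{-1}|\nabla|^2e^{-\tau|\nabla|}$ decays like $\langle\tau,x\rangle^{-3}$, and convolving against the moment bounds yields the $\langle t\rangle^{-1/2-2n/3}\Phi$ rate. The amplitude bound with loss $\langle t\rangle^{50\kappa_0}$, and the $\nabla^n$ bounds, follow by integrating $\partial_t$ in time and by distributing derivatives onto the kernel.

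The main obstacle will be the top-order bounds: $\nabla^3E^\ast_{non}$ with $\langle t\rangle^{-11/6}$, and $\nabla^2$ of $\partial_tE^\ast$. The principal oscillatory part of $\nabla_v^2Y_{s,t}$ contributes a moment term whose pointwise bound loses roughly $t^{1/6}$. For that term I would first try to prove a fractional (Hölder-in-$x$) difference estimate of order about $1/3$. I would then pair it against the Poisson test kernels, moving a fractional derivative onto the kernel, whose $|\nabla|^{1/3}$ costs $\langle\tau\rangle^{-1/3}$ and is integrable after the $\chi$ cutoff. Finally, \eqref{B'} follows from \eqref{B}, since $CM_T^2\le C\epsilon_0M_T$ and $\epsilon_0\le(1+2C)^{-1}$ allow absorption.
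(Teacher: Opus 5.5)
Your outline is the paper's: characteristic bounds, the multiplier $\Sigma_{s,t}$, transfer of $x$-derivatives through $w=x-tv$, time integration by parts against $\cos t,\sin t$, kernel bounds for \eqref{de1.1} and \eqref{z40.4}, and a fractional gain for the leading part of $\nabla_v^2Y_{s,t}$. The absorption step giving \eqref{B'} is also fine. The gap is the top-order step, which is where the two-dimensional argument is decided. There, both the size of the deficit and the source of the gain are wrong.

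By \eqref{q45}, $\nabla^2\mathcal R_k$ decays only like $\langle t\rangle^{-3/2+}$. The obstruction is $t^{-2}\mathcal J_{ji}(E^{osc})$, which pairs $O^{i,j}_{s,t}(x-tv,v)$ with $\nabla E^{osc}$ along free rays and obeys only $|\mathcal J_{ji}(E^{osc})|\lesssim M_T^2\langle t\rangle^{1/2+4\kappa_0}\Phi$. The $n=2$ part of $\|E^{reg}\|_{1,T}$ and the $\nabla^3$ and $\partial_t\nabla^2$ parts of $\|\cdot\|_{2,T}$ all need one more (fractional) derivative of $\mathbf R_k$ at rate $\langle t\rangle^{-11/6}$. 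So this term must go from $\langle t\rangle^{1/2}$ to $\langle t\rangle^{1/6}$: a gain of $\langle t\rangle^{-1/3}$, not a $t^{1/6}$ loss.

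This gain cannot come from decay of the Poisson kernel in $\tau=t-s$, for three reasons.
\begin{itemize}
\item $\partial_t\nabla^2E^\Box_{non}$ contains the endpoint term $\nabla^2\chi(|\nabla|)\nabla\Delta^{-1}\mathcal P(t)$, with $\mathcal P=|\nabla|^2\mathbf R_0-2|\nabla|\operatorname{div}\mathbf R_1+\operatorname{div}^2\mathbf R_2$, and there is no time integration at all.
\item The high-frequency part of $E^{reg}$ has a kernel concentrated at $t-s\lesssim1$. Your integration by parts in $s$ leaves the boundary term at $s=t$, which is $(1-\chi)$ times an order $-1$ operator applied to $\nabla^2\mathbf R_k(t)$, so again only $\langle t\rangle^{-3/2}$.
\item The range $t-s\lesssim1$ of $\nabla^3E^\Box_{non}$ behaves the same way.
\end{itemize}
A factor $\langle\tau\rangle^{-1/3}$ is worthless in all three places. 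You also did not flag $\nabla^2E^{reg}$ as a top-order difficulty.

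What is missing is a fractional estimate for the moment itself whose constant carries the time gain:
\[
\bigl||\nabla|^{\sigma}\nabla^2\chi(|\nabla|)\mathbf R_k\bigr|+\bigl|\nabla^3(1-\Delta)^{-1}\mathbf R_k\bigr|\lesssim([f_0]+M_T^2)\langle t\rangle^{-11/6}\Phi
\]
for some $\sigma<1$, so that the leftover multiplier $\chi(|\nabla|)\nabla|\nabla|^{-\sigma}$ has an integrable kernel preserving $\Phi$. You also need the pairing form \eqref{q47} to treat $\nabla^2E^{reg}$ without a log-divergent kernel. A H\"older bound of any order with the pointwise constant $\langle t\rangle^{1/2}$ gives nothing.

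The paper gets the gain from \eqref{q17}:
\[
|\delta_h\mathcal J_{ji}(E^{osc})|\lesssim M_T^2\langle t\rangle^{1/6}\Phi\,|h|^{5/6}\min\{|h|,1\}^{1/6}\quad\text{for } |h|\le\langle t,x\rangle/2.
\]
It proves this by splitting the difference into low- and high-velocity pieces $Z^\ast_1,\dots,Z^\ast_4$. In $Z^\ast_1$ it integrates by parts in time, using the difference bounds \eqref{sec5eq56} for $O$ and $\partial_sO$; in $Z^\ast_4$ it integrates by parts in $v$. The bound is then globalized with the cutoff $\psi$ and the commutator $[|\nabla|^\sigma,\psi]$, giving \eqref{q46}, \eqref{q47} and \eqref{q46b} with $\sigma=5/6+\epsilon_1$. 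Without such an estimate, the top-order components of the bootstrap norms cannot be recovered.
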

	
	\begin{proposition}[Initial local existence and continuation]\label{Localwell}
		Let $T\geq0$, let $\epsilon\in(0,\epsilon_0]$, where $\epsilon_0$
		is as in Proposition~\ref{propoglobal3d}, and assume that $f_0$
		satisfies the neutrality condition \eqref{a16}.
		If $T>0$, assume that $(f,E)$ is a solution of \eqref{eq2}
		on $[0,T]$ with initial datum $f_0$ and
		\begin{equation}\label{bo1'}
		A_T+[f_0]\leq\epsilon\leq\epsilon_0,
		\end{equation}
		where
		\begin{equation}\label{local-AT}
			A_T:=\|E^{reg}\|_{1,T}+\|(E^{\cos}_{non},E^{\sin}_{non})\|_{2,T}.
		\end{equation}
		If $T=0$, assume only that $[f_0]\leq\epsilon$.
		
		Then there exist $\delta=\delta(T)>0$ and a unique solution
		$(f,E)$ on $[0,T+\delta]$ with initial datum $f_0$,
		agreeing with the given solution on $[0,T]$ when $T>0$, and satisfying
		\begin{equation}\label{bo2}
			A_{T+\delta}\leq C_1\epsilon,
		\end{equation}
		where $C_1\geq1$ is independent of $T$, $\epsilon$, $f_0$,
		and the solution.
	\end{proposition}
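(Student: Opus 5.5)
We will prove Proposition~\ref{Localwell} by a contraction argument for the field $E$ on a short interval $[T,T+\delta]$. The Lagrangian representation \eqref{t1}, together with the characteristic system \eqref{Lan1}, will be the main tool. The unknown is $E$ on $[0,T+\delta]$, equal to the given field on $[0,T]$. We work in the space $\mathcal X_\delta$ of fields with $\sup_{t\le T+\delta}\|\langle x\rangle^2(E,\nabla_xE)(t)\|_{L^\infty}\le K$, where $K$ is a constant depending on $T$ and on the bound $A_T+[f_0]\le\epsilon$. Given $E\in\mathcal X_\delta$, we solve \eqref{Lan1}. Since $E$ and $\nabla E$ are bounded and Lipschitz in $x$, the flow exists and is $C^1$, with $|\nabla_{x,v}(X_{s,t},V_{s,t})|\lesssim_{T,K}1$ and $|V_{s,t}-v|\lesssim K(t-s)$. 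We then define $f$ by \eqref{t1}, set $\rho=\int f\,dv$, and take $\Phi(E):=\nabla\Delta^{-1}\rho$.

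The first step is to show that $\Phi$ maps $\mathcal X_\delta$ into itself and is a contraction in the weaker norm $\sup_t\|\langle x\rangle^2E(t)\|_{L^\infty}$. For the mapping property we need weighted decay of $\rho$ and $\nabla\rho$. The bound \eqref{a16'} gives $|\nabla^jf_0(x,v)|\lesssim\epsilon\langle x,v\rangle^{-3-20\kappa_0}\langle v\rangle^{-3-\kappa_0}$. Since the characteristics deviate from straight lines by $O_{T,K}(1)$, we obtain $\langle X_{0,t},V_{0,t}\rangle\gtrsim\langle x-tv,v\rangle$ and hence $|\rho|+|\nabla\rho|\lesssim\langle x\rangle^{-3-}$ from the data term. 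The $\mu$-term in \eqref{t1} contributes at most $\int_0^t\int|E(s,X_{s,t})|\langle v\rangle^{-4}dv\,ds$, which is handled by the same weighted estimate. To pass from $\rho$ to $E=\nabla\Delta^{-1}\rho$ with the weight $\langle x\rangle^{-2}$, we must remove the $|x|^{-1}$ monopole. This is exactly where the neutrality \eqref{a17} enters, and it is preserved because the mass of $f$ is conserved. We write $E(x)=\frac1{2\pi}\int\big(\frac{x-y}{|x-y|^2}-\frac{x}{|x|^2}\big)\rho(y)dy$ for $|x|\ge1$ and estimate it using $\langle y\rangle^{-3-}$ decay. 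For the contraction, differences of two flows are bounded by $\delta$ times the difference of the fields through Gronwall, which gives a contraction once $\delta$ is small.

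Uniqueness and regularity follow from the contraction. The identities \eqref{z5}--\eqref{z5'} and the decomposition \eqref{de1} then hold on $[0,T+\delta]$.

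The second step, which we expect to be the main obstacle, is the quantitative bound \eqref{bo2} with $C_1$ independent of $T$. We will not redo the bootstrap. Instead we argue by continuity: $A_{t}$ is continuous in $t$ on $[0,T+\delta]$. Continuity of $E^{reg}$, $E^{\ast}_{non}$ and their derivatives in the weighted norms follows from the formulas \eqref{de1.1}, \eqref{z40.4} and the continuity of $\mathbf R_k$, which in turn follows from the flow bounds. Since $A_T\le\epsilon$, shrinking $\delta$ gives $A_{T+\delta}\le 2\epsilon$, so $C_1=2$.

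The delicate point is that the spatial weights $\Phi^{-1}$ are unbounded. Continuity in the weighted $L^\infty$ norms therefore requires uniform-in-$x$ control of the time differences. We obtain this by bounding $\partial_t$ of each component with the same weight, using \eqref{z5}--\eqref{z5'} and kernel bounds for $e^{-t|\nabla|}\nabla\Delta^{-1}\chi$ applied to functions decaying like $\langle x\rangle^{-3-}$. At $T=0$ the same argument starts from $A_0$. The nonlinear components vanish at $t=0$, and $E^{reg}(0)=0$, so $A_0=0$. Only the $\partial_t$ terms at $t=0$ are nonzero, and these are bounded by $C[f_0]$, which yields \eqref{bo2}.
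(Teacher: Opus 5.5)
Your route differs from the paper's. You iterate on $E$ itself through \eqref{t1} and $E=\nabla\Delta^{-1}\rho$, whereas the paper runs a Picard iteration for the triple $(E^{reg},E^{\cos}_{non},E^{\sin}_{non})$ through the map $\mathcal B$ built from \eqref{de1.1}--\eqref{z40.4}. As written, both of your steps have a gap.

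\textbf{Step 1: your space is not invariant.} You bound the $\mu$-contribution to $\rho$ by $\int_0^t\int_{\R^2}|E(s,X_{s,t})|\langle v\rangle^{-4}\,dv\,ds$ and say it is handled like the data term. It is not.

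With $|E(s,y)|\lesssim K\langle y\rangle^{-2}$ and $|x|\gg t$, the velocities $|v|\le1$ alone contribute $\approx t\langle x\rangle^{-2}$, because along those rays the bound $K\langle x-(t-s)v\rangle^{-2}$ is $\approx K\langle x\rangle^{-2}$. This is not an artifact of the crude bound. Integrating by parts in $v$, the free-transport part equals $\int_0^t(t-s)\int_{\R^2}(\operatorname{div}E)(s,x-(t-s)v)\mu(v)\,dv\,ds$, and for fields in your space $\operatorname{div}E$ is only $O(\langle x\rangle^{-2})$. Hence $\rho=O(\langle x\rangle^{-2})$ only.

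Your argument for $\nabla\Delta^{-1}\rho=O(\langle x\rangle^{-2})$, however, needs $\rho=O(\langle x\rangle^{-3-})$ with zero mean. From $O(\langle x\rangle^{-2})$ you only get $O(\langle x\rangle^{-1}\log\langle x\rangle)$. The linear memory term is as large as the field itself. So your space would have to encode neutrality and $\langle x\rangle^{-3-}$ decay of $\operatorname{div}E$, and the characteristic-minus-free difference would need its own estimate.

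The paper sidesteps this. In $\mathcal B$ the memory term has already been inverted exactly by the resolvent \eqref{z40}--\eqref{z40.2}. Only the moment fields $\mathbf R_k$, which decay like $\langle x\rangle^{-2-2\kappa_0}$, then enter, and they do so through operators carrying derivatives. The price is Step~3 of the paper's proof, where \eqref{sec7eq16}--\eqref{sec7eq21} and Gronwall show that the fixed point is the field generated by $f$.

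Separately, $A_{T+\delta}$ contains $\nabla^2E^{reg}$. This needs $\nabla_x^2\mathbf R_k$, hence two derivatives of $E$ along the flow, and a contraction controlling only $(E,\nabla_xE)$ does not give that. This is why the paper bounds the iterates in $G^{2,\delta}$ and contracts in $G^{1,\delta}$.

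\textbf{Step 2: the weighted time derivatives you want to bound are not bounded.} Your strategy for \eqref{bo2}, continuity from time $T$, matches the paper's in spirit, but the mechanism you give fails at top order.

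The norm $\|\cdot\|_{2,T}$ contains $\nabla_x^n\partial_tE^{\ast}_{non}$ for $n\le2$. A weighted bound on its time derivative involves $\partial_t\mathbf R_2$, which \eqref{z5}--\eqref{z5'} do not provide. That quantity contains $\operatorname{div}_x\int v^{\otimes3}F\,dv$. Already at $t=0$ the term $\operatorname{div}_x\int v^{\otimes3}f_0\,dv$ is in general only $O(|x|^{-1-21\kappa_0})$ under \eqref{a16'}, not $O(\langle x\rangle^{-2-\kappa_0})$.

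Similarly, $\partial_t\nabla^2E^{reg}$ produces the kernel $K^{1-\chi}_{1,1}(t-s)$, whose $L^1$ norm is $\approx(t-s)^{-1}$, plus a Calder\'on--Zygmund boundary term. Neither can be controlled with only $C^2$ bounds on $\mathbf R_k$.

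The weighted norms are therefore only H\"older in time. The paper proves \eqref{s7c37} with modulus $\delta^{19\kappa_0}$ by splitting velocities at $|v|=\delta^{-1}$, and at $|v|=\delta^{-1/2}$ for the nonlinear part. It uses \eqref{s7c12} to get the modulus $\delta\log(2/\delta)$ for $E^{reg}$.

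The modulus must also be quantitative. ``Shrinking $\delta$'' by qualitative continuity gives a $\delta$ depending on the solution, whereas the proof of Theorem~\ref{mainthm} needs a common continuation time. The paper's bound $A_{T+\delta}\le 2A_T+C\langle T\rangle^{\beta}\epsilon\delta^{\kappa_0}$, for a fixed power $\beta$, is what yields $\delta(T)=c\langle T\rangle^{-N}$.
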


	\section{Some preliminary estimates}
	
	Let $G(t,\cdot)$ denote the convolution kernel of
	$e^{-t|\nabla|}$, so that $\widehat G(t,\xi)=e^{-t|\xi|}$. 
	We decompose $G=\chi(|\nabla|)G+(1-\chi)(|\nabla|)G$ into low- and high-frequency components using a smooth cutoff.
	By standard techniques in Fourier analysis, we obtain the following kernel estimates.
	
	\begin{lemma}\label{keres}
		For every $t\geq 0$, $x\in\mathbb R^2$, and all integers $j_1,j_2\ge0$ we have
		\begin{align}\label{esG1}
			&	|\partial_{t}^{j_1}\nabla_x^{j_2}m_{-1}(\nabla)\chi(|\nabla|)G(t,x)|\lesssim \frac{1}{\langle x,t\rangle^{1+j_1+j_2}},\\
			&|\partial_{t}^{j_1}\nabla_x^{j_2}m_{-1}(\nabla)(1-\chi)(|\nabla|)G(t,x)|\lesssim \frac{1}{(t+|x|)^{1+j_1+j_2}\langle x,t\rangle^{7}}.\label{esG2}
		\end{align}
	where $m_{-1}\in C^\infty(\R^2\setminus\{0\})$ is a homogeneous symbol of degree $-1$.
	\end{lemma}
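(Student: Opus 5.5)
We work directly with the oscillatory integral
\[
K_{j_1,j_2}(t,x)=\int_{\R^2}e^{ix\cdot\xi}\,(-|\xi|)^{j_1}(i\xi)^{\otimes j_2}m_{-1}(\xi)\,\phi(\xi)\,e^{-t|\xi|}\,d\xi,\qquad \phi\in\{\chi(|\cdot|),(1-\chi)(|\cdot|)\},
\]
and the symbol $a(\xi):=|\xi|^{j_1}\xi^{\otimes j_2}m_{-1}(\xi)$ is homogeneous of degree $N:=j_1+j_2-1\ge -1$.

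\textbf{Low-frequency bound \eqref{esG1}.} Here the natural model is the Poisson kernel: $e^{-t|\xi|}$ is the Fourier transform of $c\,t\,(t^2+|x|^2)^{-3/2}$, and degree-$N$ homogeneous multipliers applied to it should give kernels bounded by $(t+|x|)^{-2-N}=(t+|x|)^{-1-j_1-j_2}$, which is the claimed rate for $t+|x|\gtrsim1$. For $t+|x|\lesssim1$ the bound is trivial, since $|K|\le\int_{|\xi|\le 3/2}|\xi|^{N}\,d\xi<\infty$ because $N>-2$.

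For $R:=t+|x|\ge1$, we use a dyadic (Littlewood--Paley) decomposition $\phi(\xi)=\sum_{k\le 1}\psi(2^{-k}\xi)\phi(\xi)$. Each piece contributes at most $2^{k(N+2)}e^{-ct2^k}$ trivially. Integrating by parts $M$ times in $\xi$ gains the factor $(2^k|x|)^{-M}$; this works because every $\xi$-derivative of $e^{-t|\xi|}$ costs $t\lesssim 2^{-k}\cdot(t2^k)$, and that extra factor is absorbed by $e^{-ct2^k}$. Summing the bound
\[
2^{k(N+2)}\min\bigl(1,(2^k|x|)^{-M}\bigr)\langle t2^k\rangle^{M}e^{-ct2^k}
\]
over $k$ gives $\lesssim R^{-(N+2)}$, using $N+2>0$ for the sum at small $k$. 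The cutoff $\phi$ is smooth and equals $1$ near the origin, so it causes no trouble. Alternatively one can simply cite the standard estimate for homogeneous symbols times Poisson semigroups.

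\textbf{High-frequency bound \eqref{esG2}.} Now $\xi$ is supported in $|\xi|\ge1$, so the factor $e^{-t|\xi|}\le e^{-t}e^{-t(|\xi|-1)}$ is available.

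For $|x|\ge1$ or $t\ge1$, we integrate by parts in $\xi$ as before, now with any $M$. There is no singularity at the origin, and large $|\xi|$ is controlled by $e^{-t|\xi|}$. This gives decay $\lesssim e^{-ct}\,\langle x\rangle^{-M}\,t^{-(N+2)-M}$ with the expected small-$t$ singular behaviour; the large-$|\xi|$ sum over dyadic $k\ge0$ is $\sum_k 2^{k(N+2)}(2^k|x|)^{-M}\langle t2^k\rangle^{M}e^{-ct2^k}$.

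For small $t+|x|$, the dyadic sum behaves like the Poisson kernel, giving $|K|\lesssim (t+|x|)^{-(N+2)}$. For $t+|x|\ge1$, the sum yields $\lesssim e^{-ct}\min(1,|x|^{-M})$. Since $e^{-ct}\langle x\rangle^{-M}\lesssim\langle x,t\rangle^{-M}$, choosing $M=8+j_1+j_2$ gives \eqref{esG2}.

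\textbf{Main obstacle.} The delicate point is the dyadic summation near $\xi=0$ in the low-frequency bound when $N=-1$ (that is, $j_1=j_2=0$), where the homogeneity $N+2=1$ is small. We must check that the sum over $k\to-\infty$ converges at the correct rate $R^{-1}$. This is where one verifies that $\partial_\xi^M$ applied to $|\xi|^{N}e^{-t|\xi|}$ on the annulus of size $2^k$ is bounded by $2^{k(N-M)}\langle t2^k\rangle^{M}e^{-ct2^k}$. We also need to split the range of $k$ at $2^k\sim R^{-1}$: for $2^k\le R^{-1}$ we use the trivial bound, and for $2^k>R^{-1}$ we use either the $e^{-ct2^k}$ decay (when $t\ge|x|$) or $M$ integrations by parts (when $|x|\ge t$).
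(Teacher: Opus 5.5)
Your argument is correct and is the standard Fourier-analytic proof that the paper only alludes to ("by standard techniques in Fourier analysis") without writing out: Littlewood--Paley pieces, the trivial bound for frequencies $2^k\leq R^{-1}$, $M>j_1+j_2+1$ integrations by parts above that scale, and the extra factor $e^{-ct}$ on the support $|\xi|\geq1$ of $1-\chi$. The one slip is the stray factor $t^{-(N+2)-M}$ in your high-frequency paragraph: for $|x|\geq1$ and $M>N+2$, your own dyadic sum gives $e^{-ct}|x|^{-M}$ with no singularity as $t\to0$, which is the bound you then correctly use.
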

	
	We now estimate the functions $E^{\cos}_{lin}$ and $E^{\sin}_{lin}$.
	\begin{lemma}\label{eslinear}
		Assume that $f_0\in C^3(\mathbb R_x^2\times\mathbb R_v^2;\mathbb R)$,
		$[f_0]<\infty$, and the neutrality condition \eqref{a16} holds.
		Let $\ast\in\{\cos,\sin\}$ and $k\in\{0,1,2,3\}$. Then, for every
		$t\geq0$ and $x\in\mathbb R^2$,
		\begin{align}\label{a23}
			|\nabla_{x,t}^k E_{lin}^{\ast}(t,x)|
			\lesssim [f_0]
			\begin{cases}
				\langle t,x\rangle^{-2-k},&k\in\{0,1\},\\
				\langle t\rangle^{-k+1+20\kappa_0}
				\langle t,x\rangle^{-3-20\kappa_0},&k\in\{2,3\},
			\end{cases}
		\end{align}
		where
		\[
		|\nabla_{x,t}^kF|:=
		\sum_{j+|\alpha|=k}|\partial_t^j\partial_x^\alpha F|.
		\]
	\end{lemma}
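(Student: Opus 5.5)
Both fields are convolutions of a kernel of the type in Lemma~\ref{keres} with a spatially localized source built from $f_0$. We have $E^{\cos}_{lin}(t)=K^{c}(t)\ast\tilde\rho_{0,0}$ with $K^{c}(t)=\nabla\Delta^{-1}G(t)$, whose symbol is homogeneous of degree $-1$ times $e^{-t|\xi|}$. We also have $E^{\sin}_{lin}(t)=K^{s}(t)\ast g_0$ with $g_0=|\nabla|\tilde\rho_{0,0}-\operatorname{div}\tilde\rho_{1,0}$, where $K^s$ carries the low-frequency cutoff $\chi$. We apply Lemma~\ref{keres} with $m_{-1}(\xi)=-i\xi/|\xi|^2$, splitting $G=\chi G+(1-\chi)G$ for $E^{\cos}_{lin}$. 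The assumption \eqref{a16'} gives $|\nabla_x^j\tilde\rho_{0,0}(x)|+|\nabla_x^j\tilde\rho_{1,0}(x)|\lesssim[f_0]\langle x\rangle^{-3-20\kappa_0}$ for $j\le3$, after integrating $\langle x,v\rangle^{-3-20\kappa_0}\langle v\rangle^{-3-\kappa_0}$ (times $|v|$) in $v$. Neutrality \eqref{a16} says exactly $\int\tilde\rho_{0,0}\,dx=0$.

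\emph{The cosine term.} We write $K^c\ast\tilde\rho_{0,0}(x)=\int[K^c(t,x-y)-K^c(t,x)]\tilde\rho_{0,0}(y)\,dy$ and split into $|y|\le\frac12\langle t,x\rangle$ and its complement.
\begin{itemize}
\item In the near region we use the mean value theorem with $|\nabla_xK^c|\lesssim\langle t,x\rangle^{-2}$. This gives $\langle t,x\rangle^{-2}\int\langle y\rangle^{-2-20\kappa_0}dy\lesssim\langle t,x\rangle^{-2}$.
\item The far region contributes $\langle t,x\rangle^{-1}\int_{|y|\gtrsim\langle t,x\rangle}\langle y\rangle^{-3-20\kappa_0}\,dy\lesssim\langle t,x\rangle^{-2-20\kappa_0}$. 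The term $K^c(t,x-y)$ is handled with $|K^c|\lesssim \langle x-y,t\rangle^{-1}$, integrated against $\langle y\rangle^{-3-20\kappa_0}$ over $|y|\sim|x|$.
\item The high-frequency part $(1-\chi)G$ has the singular factor $(t+|x|)^{-1-j}$ at small $t$. Since it has stronger decay $\langle t,x\rangle^{-8}$, we avoid the singularity by placing derivatives on $\tilde\rho_{0,0}$, or by noting that $(1-\chi)(|\nabla|)\nabla\Delta^{-1}$ is a nice operator near the origin.
\end{itemize}
For $k=1$ we repeat the argument with one more derivative on the kernel, using the bound $\langle t,x\rangle^{-3}$ after subtracting the mean. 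Time derivatives act like spatial ones, since $\partial_t$ on $e^{-t|\nabla|}$ gives $-|\nabla|$, which costs one power in \eqref{esG1}.

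\emph{The sine term.} Here $g_0$ is a derivative of order one: $g_0=|\nabla|\tilde\rho_{0,0}-\operatorname{div}\tilde\rho_{1,0}$. So $E^{\sin}_{lin}=\chi\nabla\Delta^{-1}|\nabla|G\ast\tilde\rho_{0,0}-\chi\nabla\Delta^{-1}\operatorname{div}G\ast\tilde\rho_{1,0}$. Both kernels are degree-$0$ symbols times $\chi e^{-t|\xi|}$, which by the same Fourier analysis as in Lemma~\ref{keres} are bounded by $\langle t,x\rangle^{-2-j}$. Convolving with sources decaying like $\langle y\rangle^{-3-20\kappa_0}$ gives $\langle t,x\rangle^{-2-k}$ directly, with no cancellation needed. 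The region $|y|\sim|x|$ is controlled using kernel integrability $\int\langle z,t\rangle^{-2}dz\lesssim\log$. To avoid the logarithm we would treat that region using $\langle x\rangle^{-3-20\kappa_0}$ times $\int_{|z|\lesssim|x|}\langle z,t\rangle^{-2}\,dz$, which is acceptable since $\langle x\rangle^{-1-20\kappa_0}\log\langle x\rangle\lesssim1$.

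\emph{Cases $k=2,3$.} Putting all $k$ derivatives on the kernel would give $\langle t,x\rangle^{-2-k}$, but when $t\lesssim1$ the kernel is not integrable near $x=y$. We therefore put at most one derivative on the kernel and the remaining $k-1$ on $\tilde\rho_{0,0}$, which is allowed since $f_0\in C^3$. Then $\nabla^{k-1}\tilde\rho\sim\langle y\rangle^{-3-20\kappa_0}$ convolved with a kernel of size $\langle t,x\rangle^{-2}$ (without a mean-zero property) gives $\langle t,x\rangle^{-3-20\kappa_0}$ up to logarithmic losses. We then interpolate with the pure-kernel bound $\langle t,x\rangle^{-2-k}$ to trade decay, and produce the stated $\langle t\rangle^{-k+1+20\kappa_0}\langle t,x\rangle^{-3-20\kappa_0}$ by splitting regions $|y|\le\frac12\langle t,x\rangle$ versus larger. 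The main obstacle is exactly this bookkeeping for $k=2,3$: matching the powers $\langle t\rangle^{1-k+20\kappa_0}$ while avoiding logarithmic losses at $|y|\approx|x|$ and the short-time singularity of the high-frequency kernel in \eqref{esG2}.
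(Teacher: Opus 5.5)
The gap is in the cases $k\in\{2,3\}$, for the low-frequency part of $E^{\cos}_{lin}$. Your treatment of $k\in\{0,1\}$ matches the paper's argument: subtract $\partial^\gamma K_1(t,x)$ using neutrality, apply the mean value theorem on $|y|\le R/2$ with $R=\langle t,x\rangle$, bound $|y|>R/2$ directly, and move derivatives onto $\tilde\rho_{0,0}$ for the high-frequency kernel at short times. Here $K_1$ is the kernel of $\chi(|\nabla|)\nabla\Delta^{-1}e^{-t|\nabla|}$.

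\emph{The obstacle you name does not exist for $K_1$.} By \eqref{esG1}, $|\partial^\gamma_{x,t}K_1(t,z)|\lesssim\langle t,z\rangle^{-1-k}$, which is bounded for every $t\ge0$. For $k\ge2$ it is integrable, with $\int_{\mathbb R^2}\langle t,z\rangle^{-1-k}\,dz\lesssim\langle t\rangle^{1-k}$. Only the high-frequency kernel is singular at $z=0$, and you already handle it separately.

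\emph{Your workaround discards the decay the statement asks for.} With one derivative on $K_1$ and absolute values, the source bound $|\nabla^{k-1}\tilde\rho_{0,0}|\lesssim\langle y\rangle^{-p}$, $p=3+20\kappa_0$, yields only $\langle t,x\rangle^{-2}$, since the region $|y|\lesssim1$ alone contributes that much. Even with cancellation used in the near region, the region $|y|\ge R/2$, $|x-y|\le R/2$ is bounded by $R^{-p}\int_{|z|\le R/2}\langle t,z\rangle^{-2}\,dz$, which has no decay in $t$. At $|x|\approx t$ this is $\approx R^{-p}$, too large by a factor $\langle t\rangle^{k-1-20\kappa_0}$. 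The factor $\langle t\rangle^{1-k}$ comes exactly from the $L^1$ norm of $k$ derivatives of the kernel, so moving derivatives onto $\tilde\rho_{0,0}$ throws it away.

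\emph{There is nothing to interpolate with.} The ``pure-kernel bound'' $\langle t,x\rangle^{-2-k}$ is false for $k\ge2$. The region $|y|\ge R/2$, $|x-y|\le R/2$ really contributes $\langle t\rangle^{1-k}R^{-p}$. For example, at $t=0$ take a mean-zero source made of two bumps of height $\langle x_0\rangle^{-p}$ at $\pm x_0$; then $\nabla^2K_1(0)\ast\tilde\rho_{0,0}$ has size $\approx\langle x_0\rangle^{-p}\gg\langle x_0\rangle^{-4}$ at $x_0$. This is why \eqref{a23} has the form it does.

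\emph{The fix.} Keep all $k$ derivatives on $K_1$ and run your $k\le1$ argument unchanged.
\begin{itemize}
\item The near region gives $R^{-2-k}$, by the mean value theorem and $\int|y||\tilde\rho_{0,0}(y)|\,dy\lesssim1$.
\item On $|y|>R/2$, the part with $|x-y|\le R/2$ is bounded by $R^{-p}\langle t\rangle^{1-k}$.
\item The part with $|x-y|>R/2$, together with the subtracted term, is bounded by $R^{-1-k}R^{2-p}\le\langle t\rangle^{1-k}R^{-p}$.
\end{itemize}
Then absorb $R^{-2-k}=R^{1-k+20\kappa_0}R^{-p}\le\langle t\rangle^{1-k+20\kappa_0}R^{-p}$, using $R\ge\langle t\rangle$ and $1-k+20\kappa_0<0$. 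This absorption is where the $20\kappa_0$ in the exponent comes from, which your proposal never accounts for.

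The same correction applies to your sine paragraph. There the claim $\langle t,x\rangle^{-2-k}$ holds only for $k\le1$. For $k\ge2$, the region $|y|\ge R/2$, $|x-y|\le R/2$ contributes $\langle t\rangle^{-k}R^{-p}$, which is still within \eqref{a23}.
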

	
	\begin{proof}
		Let $p:=3+20\kappa_0$. By linearity, it is
		enough to prove the estimate when $[f_0]=1$. The neutrality condition
		\eqref{a16} and the definition \eqref{a16'} give
		\begin{equation}\label{a21.1}
			\int_{\mathbb{R}^2}\tilde\rho_{0,0}(x)\,dx = 0
		\end{equation}
		and
		\begin{align}\label{a21}
			\sum_{n=0}^{3}
			|\nabla_x^n(\tilde\rho_{0,0},\tilde\rho_{1,0})(x)|
			\lesssim \langle x\rangle^{-p}.
		\end{align}
		
		Let $K_1$ and $K_2$ denote the kernels of the operators
		\[
		\chi(|\nabla|)m_{-1}(\nabla)e^{-t|\nabla|}
		\quad\hbox{and}\quad(1-\chi)(|\nabla|)m_{-1}(\nabla)e^{-t|\nabla|},
		\]
		where $m_{-1}\in C^\infty(\R^2\setminus\{0\})$ is a homogeneous symbol of degree $-1$. Lemma~\ref{keres} gives
		\begin{align}\label{a20}
			|\nabla^n_{x,t}K_1(t,x)|&\lesssim \langle t,x\rangle^{-1-n},\\
			\label{a19}|\nabla^n_{x,t}K_2(t,x)|&\lesssim (t+|x|)^{-1-n}\langle t,x\rangle^{-7}
		\end{align}
		for any $n\geq 0$. 
		
		Fix $k\in\{0,1,2,3\}$ and a mixed derivative $\partial_{x,t}^{\gamma}$ with $|\gamma|=k$, and recall the definition of $E_{{lin}}^{\cos}$ in \eqref{de1.1}. Since every time derivative of \(e^{-t|\nabla|}\) contributes a factor \(-|\nabla|\), on the high-frequency support all \(k\) derivatives may be transferred to \(\widetilde\rho_{0,0}\), with the resulting degree-zero angular multipliers absorbed into \(m_{-1}\). Thus
		\begin{align*}
			|\partial_{x,t}^{\gamma}E_{lin}^{\cos}(t,x)|&\lesssim\mathbf 1_{0\leq t\leq1}|K_2(t)|*|\nabla_x^k\tilde\rho_{0,0}|(x)+
			\mathbf 1_{t\geq1}|\partial_{x,t}^{\gamma}K_2(t)|*|\tilde\rho_{0,0}|(x)\\
			&+|\partial_{x,t}^{\gamma}K_1(t)*\tilde\rho_{0,0}(x)|.
		\end{align*}
		
		For $t>0$ and $n\geq0$ (also for $t=0$ when $n=0$, with $t^{-n}=1$), a direct near--far decomposition gives
		\begin{equation}\label{rg1}
			\int_{\mathbb{R}^2}(t+|x-y|)^{-1-n}
			\langle t,x-y\rangle^{-7}\langle y\rangle^{-p}\,dy
			\lesssim t^{-n}\langle t\rangle^{-3}
			\langle t,x\rangle^{-p}.
		\end{equation}
		Using \eqref{a19}, \eqref{a21}, and \eqref{rg1} with $n=0$ when
		$0\leq t\leq1$ and with $n=k$ when $t\geq1$, we obtain
		\begin{align}\label{a22}
			|\partial_{x,t}^{\gamma}E_{{lin}}^{\cos}(t,x)|
			\lesssim\langle t\rangle^{-3}\langle t,x\rangle^{-p}+|\partial_{x,t}^{\gamma}K_1(t)*\tilde\rho_{0,0}(x)|.
		\end{align}

		It remains to estimate the low-frequency term. Put
		$R:=\langle t,x\rangle$. By \eqref{a21.1},
		\begin{align*}
			|\partial_{x,t}^{\gamma}K_1(t)*\tilde\rho_{0,0}(x)|&\le\int_{\mathbb{R}^2}
			|\partial_{x,t}^{\gamma}K_1(t,x-y)-\partial_{x,t}^{\gamma}K_1(t,x)|\,|\tilde\rho_{0,0}(y)|\,dy.
		\end{align*}
		On $|y|\leq R/2$, the mean-value theorem, \eqref{a20}, and
		$\int |y||\tilde\rho_{0,0}(y)|\,dy\lesssim1$ give a bound
		$\lesssim R^{-2-k}$. On $|y|>R/2$, \eqref{a20} gives
		\begin{align*}
			&\int_{|y|>R/2}
			\bigl(\langle t,x-y\rangle^{-1-k}+R^{-1-k}\bigr)
			\langle y\rangle^{-p}\,dy\lesssim
			\begin{cases}
				R^{-2-k},&k\in\{0,1\},\\
				\langle t\rangle^{-k+1+20\kappa_0}R^{-p},&k\in\{2,3\}.
			\end{cases}
		\end{align*}
		Indeed, split once more according to $|x-y|\leq R/2$ or $|x-y|>R/2$, and use
		\[
		\int_{|z|\leq R/2}\langle t,z\rangle^{-1-k}\,dz
		\lesssim
		\begin{cases}
			R,&k=0,\\
			\log(2+R),&k=1,\\
			\langle t\rangle^{1-k},&k\in\{2,3\},
		\end{cases}
		\qquad
		\int_{|y|>R/2}\langle y\rangle^{-p}\,dy\lesssim R^{2-p}.
		\]
		Since $R\geq\langle t\rangle$ and $p=3+20\kappa_0$, this proves
		\begin{equation}\label{rg2}
			|\partial_{x,t}^{\gamma}K_1(t)*\tilde\rho_{0,0}(x)|\lesssim
			\begin{cases}
				R^{-2-k},&k\in\{0,1\},\\
				\langle t\rangle^{-k+1+20\kappa_0}R^{-p},&k\in\{2,3\}.
			\end{cases}
		\end{equation}
		The first term on the right-hand side of \eqref{a22} is no larger
		than the right-hand side of \eqref{rg2}; hence \eqref{a23} follows
		for $E_{{lin}}^{\cos}$.
		
		We consider now the component $E^{\sin}_{{lin}}$. Since
		$\partial_te^{-t|\nabla|}=-|\nabla|e^{-t|\nabla|}$, we have
		\begin{equation*}
		E_{{lin}}^{\sin}=-(\partial_tK_1)*\tilde\rho_{0,0}-\sum_{j=1}^{2}(\partial_{x_j}K_1)*\tilde\rho_{1,0}^j.
		\end{equation*}
		Thus \eqref{a20} with one additional derivative, followed by
		\eqref{a21}, gives
		\[
		|\partial_{x,t}^{\gamma}E_{lin}^{\sin}(t,x)|
		\lesssim
		\int_{\mathbb R^2}\langle t,x-y\rangle^{-2-k}
		\langle y\rangle^{-p}\,dy\lesssim
		\begin{cases}
			R^{-2-k},&k\in\{0,1\},\\
			\langle t\rangle^{-k+1+20\kappa_0}R^{-p},&k\in\{2,3\},
		\end{cases}
		\]
	        where the last bounds follow by considering several cases. This gives the bounds \eqref{a23} for the $E_{lin}^{\sin}$ component, as claimed.	
		\end{proof}	
	
	\subsection{Time–space integral estimates}
	We prove now several elementary integral bounds.
	In Lemmas~\ref{z216}--\ref{Le3} and their proofs, implicit constants
	may depend on the fixed exponents and on $\kappa_0$, but are independent
	of $s,t,x,v$. No uniformity near borderline exponent values is asserted.

	\begin{lemma}\label{z216}
		(i) If $0\le s\le t$ and $x,v\in\R^2$, then
		\begin{equation*}
			\int_s^t \langle\tau,x-\tau v\rangle^{-1}\,d\tau
			\lesssim
			\langle v\rangle^{-1}\log\!\Big(2+\frac{t\langle v\rangle}{s+1}\Big).
		\end{equation*}
		
		(ii) Assume that $a_1>1$, $a_2\in\mathbb R$, and set
			$\gamma:=a_2-a_1+1.$
	 If $\gamma>0$ and $0\leq s\leq t$, then
		\begin{equation}\label{z210}
			\int_s^t\langle\tau\rangle^{a_2}\langle\tau,x-\tau v\rangle^{-a_1}\,d\tau\lesssim\langle t\rangle^{\gamma}
			\langle v\rangle^{-1}.
		\end{equation}
		On the other hand, if $\gamma<0$, then \begin{equation}\label{z212}
			\int_s^t
			\langle\tau\rangle^{a_2}
			\langle\tau,x-\tau v\rangle^{-a_1}\,d\tau
			\lesssim
			\langle s\rangle^{\gamma}
			\langle v\rangle^{-1}.
		\end{equation}
	If $\gamma=0$, then
		\begin{equation}\label{z212-borderline}
			\int_s^t\langle\tau\rangle^{a_2}
			\langle\tau,x-\tau v\rangle^{-a_1}\,d\tau
			\lesssim\langle v\rangle^{-1}
			\log\!\left(2+\frac{t+1}{s+1}\right).
		\end{equation}
	\end{lemma}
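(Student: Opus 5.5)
The plan is to reduce everything to a one-dimensional integral along the ray $\tau\mapsto x-\tau v$. We use the elementary comparison
\[
\langle\tau,x-\tau v\rangle\asymp 1+\tau+|x-\tau v|,
\]
and treat the cases $|v|\le 2$ and $|v|\ge2$ separately.

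\textbf{Small velocities.} When $|v|\le2$ we have $\langle v\rangle\asymp1$. We simply discard the spatial term and use $\langle\tau,x-\tau v\rangle\ge\langle\tau\rangle$. Then (i) becomes $\int_s^t\langle\tau\rangle^{-1}d\tau\lesssim\log\frac{2+t}{1+s}$. Part (ii) becomes $\int_s^t\langle\tau\rangle^{\gamma-1}d\tau$, which is bounded by $\langle t\rangle^\gamma$, by $\langle s\rangle^{\gamma}$, or by the logarithm, according to whether $\gamma>0$, $\gamma<0$, or $\gamma=0$. In all cases this matches the claimed bounds.

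\textbf{Large velocities.} Assume $|v|\ge2$ and write $\tau_0:=x\cdot v/|v|^2$, so that
\[
|x-\tau v|\ge |v|\,|\tau-\tau_0|.
\]
Hence
\[
\langle\tau,x-\tau v\rangle\gtrsim \langle\tau\rangle+|v||\tau-\tau_0|.
\]
We split $[s,t]$ into the near region $N=\{\tau:|\tau-\tau_0|\le\langle\tau\rangle/|v|\}$ and its complement. On $N$ we have $\tau\asymp\tau_0$ up to $\langle\tau_0\rangle/|v|$ errors (since $|v|\ge2$). Consequently $N$ has length $\lesssim\langle\tau_0\rangle/|v|$, and on it the integrand is $\lesssim\langle\tau_0\rangle^{a_2-a_1}$. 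This gives a contribution $\lesssim|v|^{-1}\langle\tau_0\rangle^{\gamma}$, where $\tau_0$ is restricted to $\tau_0\gtrsim s$ (when $N\cap[s,t]\ne\emptyset$) and $\tau_0\lesssim t$. Replacing $\langle\tau_0\rangle^\gamma$ by $\langle t\rangle^\gamma$ for $\gamma>0$ and by $\langle s\rangle^\gamma$ for $\gamma<0$ yields the desired bounds. Off $N$ the integrand is bounded by
\[
\langle\tau\rangle^{a_2}\bigl(|v||\tau-\tau_0|\bigr)^{-a_1}\lesssim \langle\tau\rangle^{a_2}\,(|v||\tau-\tau_0|)^{-1}\,\langle\tau\rangle^{1-a_1},
\]
using $|v||\tau-\tau_0|\ge\langle\tau\rangle$ and $a_1>1$. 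This leaves $|v|^{-1}\int\langle\tau\rangle^{\gamma}|\tau-\tau_0|^{-1}d\tau$ over $|\tau-\tau_0|\ge\langle\tau\rangle/|v|$. That integral is borderline (logarithmic), so for (ii) it is better to redistribute the exponent. For $\tau$ far from $\tau_0$, say $|\tau-\tau_0|\ge\langle\tau\rangle/2$, the integrand is $\lesssim |v|^{-a_1}\langle\tau\rangle^{a_2-a_1}$. Its integral gives $|v|^{-a_1}\max(\langle t\rangle^\gamma,\langle s\rangle^\gamma)$, or a logarithm when $\gamma=0$, which is fine since $a_1>1$. In the intermediate region $\langle\tau\rangle/|v|\le|\tau-\tau_0|\le\langle\tau\rangle/2$ we have $\tau\asymp\tau_0$, and substituting $u=|v||\tau-\tau_0|$ gives
\[
\langle\tau_0\rangle^{a_2}|v|^{-1}\int_{\langle\tau_0\rangle}^\infty u^{-a_1}du\lesssim |v|^{-1}\langle\tau_0\rangle^{\gamma},
\]
which is handled exactly like the near region.

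For (i) ($a_1=1$, $a_2=0$) the intermediate region instead produces $|v|^{-1}\log|v|$. The far region gives $|v|^{-1}\log\frac{2+t}{1+s}$. Both are dominated by $\langle v\rangle^{-1}\log\bigl(2+t\langle v\rangle/(s+1)\bigr)$, which I would check separately when $\tau_0\notin[s/2,2t]$, where only the far region occurs.

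\textbf{Borderline case $\gamma=0$.} The main obstacle is the bookkeeping in this case and in (i), where the logarithms must come out in exactly the stated form. The near and intermediate regions contribute $|v|^{-1}$ with no logarithm, since $\langle\tau_0\rangle^0=1$. The far region contributes $|v|^{-a_1}\log\frac{2+t}{1+s}$. This establishes \eqref{z212-borderline}, and all the power-type cases \eqref{z210}--\eqref{z212} follow from the same decomposition.
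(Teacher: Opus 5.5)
Your argument for (ii) is correct, and it is organized differently from the paper's. The paper rotates to $v=(v_1,0)$ and splits $[s,t]$ into dyadic blocks $\langle\tau\rangle\approx T$. On each block it proves $\int_{I_T}\langle\tau,x-\tau v\rangle^{-a_1}d\tau\lesssim\langle v\rangle^{-1}T^{1-a_1}$ by the substitution $r=x_1-\tau v_1$, and then sums over blocks. You instead localize around the closest-approach time $\tau_0=x\cdot v/|v|^2$. Because $a_1>1$, your near and intermediate regions give $|v|^{-1}\langle\tau_0\rangle^{\gamma}$ with $\langle s\rangle\lesssim\langle\tau_0\rangle\lesssim\langle t\rangle$ and no logarithm. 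The far region gives $|v|^{-a_1}$ times the one-dimensional power or logarithm. So all three cases of (ii) close.

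Part (i), however, has a gap in the small-time regime: the intermediate contribution $|v|^{-1}\log|v|$ is not always dominated by $\langle v\rangle^{-1}\log(2+t\langle v\rangle/(s+1))$. Take $s=0$, $t=|v|^{-1}$ with $|v|$ large, and $x=v/4$, so $\tau_0=1/4$.
\begin{itemize}
\item Every $\tau\in[0,t]$ satisfies $\langle\tau\rangle/|v|\le|\tau-\tau_0|\le\langle\tau\rangle/2$, so the whole interval is intermediate and your bound is $|v|^{-1}\log|v|$.
\item The target, by contrast, is $\approx\langle v\rangle^{-1}$.
\item Here $\tau_0\notin[s/2,2t]$, so this also contradicts your remark that only the far region occurs in that case.
\end{itemize}
The lemma itself is fine here, since the integral is at most $t$. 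The problem is that your region scales $\langle\tau\rangle/|v|$ and $\langle\tau\rangle/2$ never see the length $t-s$ of the integration interval.

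The repair is to keep the restriction to $[s,t]$. The substitution $u=|v||\tau-\tau_0|$ maps the intermediate part of $[s,t]$ into a subset of $[\langle s\rangle,\infty)$ of measure at most $2|v|(t-s)$. So that part contributes $\lesssim|v|^{-1}\log(2+|v|(t-s)/\langle s\rangle)$, which is admissible.

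Simpler still, and this is essentially what the paper does, use $\langle\tau\rangle\gtrsim s+1$ on all of $[s,t]$ and compare with an interval centred at $\tau_0$:
\[
\int_s^t\frac{d\tau}{(s+1)+|v|\,|\tau-\tau_0|}\le\frac{2}{|v|}\log\Big(1+\frac{|v|(t-s)}{2(s+1)}\Big).
\]
This proves (i) for large $|v|$ in one step, with no near/intermediate/far split.
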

	
	\begin{proof} (i) If $|v|\le 1$, then $\langle v\rangle\approx 1$ and
		\[
		\int_s^t \langle \tau,x-\tau v\rangle^{-1}\,d\tau\le\int_s^t \langle \tau\rangle^{-1}\,d\tau\lesssim\log\Big(2+\frac{t}{s+1}\Big)\lesssim\langle v\rangle^{-1}\log\Big(2+\frac{t\langle v\rangle}{s+1}\Big).
		\]
		Assume now that $|v|\ge 1$. Let $e=v/|v|$ and
		decompose $x=x_\parallel e+x_\perp$, where
		$x_\perp\cdot e=0$. Dropping the nonnegative term $|x_\perp|^2$,
		we estimate
		\[
		\int_s^t \langle \tau,x-\tau v\rangle^{-1}\,d\tau\lesssim\int_s^t\big((s+1)^2+|x_\parallel-\tau|v||^2\big)^{-1/2}\,d\tau\lesssim |v|^{-1}\int_J \big((s+1)^2+r^2\big)^{-1/2}\,dr,
		\]
		where $J$ is an interval of length $\leq |v|(t-s)\leq |v|t$.
			Since the integrand is even and nonincreasing in $|r|$, its integral
			over $J$ is bounded by that over the centered interval below. Thus
		\[
		\int_s^t \langle \tau,x-\tau v\rangle^{-1}\,d\tau\lesssim |v|^{-1}\int_{-t|v|}^{t|v|}\big((s+1)^2+r^2\big)^{-1/2}\,dr\lesssim\langle v\rangle^{-1}\log\Big(2+\frac{t\langle v\rangle}{s+1}\Big),
		\]
		as desired.

		(ii) By rotation invariance, we may assume that $v=(v_1,0)$, $v_1\geq 0$. Then
		\begin{equation}\label{gr3.5}
			|x-\tau v|^2=|x_1-\tau v_1|^2+|x_2|^2,\qquad 
			\langle\tau,x-\tau v\rangle^{-a_1}\lesssim\langle \tau,x_1-\tau v_1\rangle^{-a_1}.
		\end{equation}
		
		We first prove a dyadic estimate. Assume $T\geq 1$ and let $I_T$ be a time interval with
		$\langle\tau\rangle\approx T$. Then
		\begin{equation}\label{gr3.7}
			\int_{I_T}\langle\tau,x-\tau v\rangle^{-a_1}\,d\tau\lesssim\langle v\rangle^{-1}T^{1-a_1}.
		\end{equation}
		Indeed, this follows directly from \eqref{gr3.5} if $|v|\le1$. On the other hand, if $|v|\ge1$, then we make the change of variables $r=x_1-\tau v_1$ to bound
		\[
		\int_{I_T}\langle \tau,x_1-\tau v_1\rangle^{-a_1}\,d\tau\lesssim|v|^{-1}\int_{\mathbb R}(T^2+r^2)^{-a_1/2}\,dr\lesssim|v|^{-1}T^{1-a_1}.
		\]
		This proves the dyadic estimate \eqref{gr3.7}. Multiplying by $\langle\tau\rangle^{a_2}\approx T^{a_2}$ on $I_T$, we get
		\[\int_{I_T}\langle\tau\rangle^{a_2}\langle\tau,x-\tau v\rangle^{-a_1}\,d\tau\lesssim\langle v\rangle^{-1}T^{a_2-a_1+1}.
		\]
		Summing over dyadic intervals intersecting $[s,t]$, the largest scale
			gives $\langle t\rangle^\gamma$ when $\gamma>0$, while the smallest
			gives $\langle s\rangle^\gamma$ when $\gamma<0$.
			When $\gamma=0$, the number of intervals is
			$\lesssim\log\!\left(2+\frac{t+1}{s+1}\right)$.
			This proves \eqref{z210}--\eqref{z212-borderline}.
	\end{proof}
	
	We record two auxiliary estimates that will be used repeatedly. 
		
	\begin{lemma}\label{Le1}
		For all $t\ge0$ and $x\in\mathbb R^2$, the following estimates hold:
		\begin{align}
			\int_{\mathbb R^2}
			\langle x-tv\rangle^{-2-\kappa_0}
			\frac{dv}{\langle v\rangle^{2+\kappa_0}}
			&\lesssim
			\langle t\rangle^{\kappa_0}
			\langle t,x\rangle^{-2-\kappa_0},
			\label{z201}
			\\
			\int_{\mathbb R^2}
			\langle x-tv\rangle^{-2}
			\frac{dv}{\langle v\rangle^{2+\kappa_0}}
			&\lesssim
			\log(2+t)
			\langle t,x\rangle^{-2}.
			\label{z201p}
		\end{align}
	\end{lemma}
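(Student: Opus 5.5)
The plan is to split into a short-time regime and a long-time regime, and within each into a near/far region for $x$. For $t\le 1$ both estimates are trivial: $\langle t,x\rangle\approx\langle x\rangle$, and $\langle x-tv\rangle^{-a}\langle v\rangle^{-2-\kappa_0}$ with $a\in\{2,2+\kappa_0\}$ is handled by splitting $|v|\le |x|/2$ (where $|x-tv|\ge |x|/2$, so the integrand is $\lesssim\langle x\rangle^{-a}\langle v\rangle^{-2-\kappa_0}$, integrable in $v$) and $|v|>|x|/2$ (where $\langle v\rangle^{-2-\kappa_0}\lesssim \langle x\rangle^{-a+?}$; more precisely I bound $\int_{|v|>|x|/2}\langle v\rangle^{-2-\kappa_0}dv\lesssim\langle x\rangle^{-\kappa_0}$, which is too weak). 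So instead for $t\le1$ I use the change of variables only when $t\ge$ small; simplest is to note for $t\le 1$ that $\langle x-tv\rangle\gtrsim \langle x\rangle/\langle v\rangle$, giving integrand $\lesssim \langle x\rangle^{-a}\langle v\rangle^{a-2-\kappa_0}$, which is integrable only if $a<\kappa_0$; so I must be more careful. I will therefore treat all $t$ uniformly by the substitution $w=x-tv$ in the region $|v|\ge |x|/(2t)$-type, as follows.

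Uniform argument: fix $t\ge0$, $x$, and set $R=\langle t,x\rangle$. Split $\mathbb R^2_v$ into $A=\{|tv-x|\le |x|/2\}$ and its complement. On $A^c$, $\langle x-tv\rangle\gtrsim \langle x\rangle$ and also $\langle x-tv\rangle\gtrsim t|v|-|x|$; I further split by $|v|\le 1$ versus $|v|>1$. When $|x|\le t$ (so $R\approx\langle t\rangle$): on $|v|\ge 2\langle x\rangle/\langle t\rangle$-ish, $\langle x-tv\rangle\gtrsim \langle t\rangle|v|$ only for $|v|\gtrsim 1$; for $|v|\lesssim 1$ I change variables $w=x-tv$, $dv=t^{-2}dw$, and get $t^{-2}\int_{|w|\lesssim t}\langle w\rangle^{-a}dw$, which is $\lesssim t^{-2}$ for $a=2+\kappa_0$ and $\lesssim t^{-2}\log(2+t)$ for $a=2$. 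For $|v|\ge C$, $\langle x-tv\rangle\gtrsim t|v|$ and $\int_{|v|\ge C}(t|v|)^{-a}\langle v\rangle^{-2-\kappa_0}dv\lesssim t^{-a}$. This gives $\langle t\rangle^{-2}$ (resp.\ $\log(2+t)\langle t\rangle^{-2}$), matching \eqref{z201}, \eqref{z201p} since $\langle t\rangle^{\kappa_0}\langle t\rangle^{-2-\kappa_0}=\langle t\rangle^{-2}$.

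When $|x|\ge t$ (so $R\approx\langle x\rangle$): on $A$ we have $|v|\approx|x|/t$ (when $t\gtrsim1$), so $\langle v\rangle^{-2-\kappa_0}\approx (t/|x|)^{2+\kappa_0}$ up to $\langle\cdot\rangle$, and $\int_A\langle x-tv\rangle^{-a}dv\le t^{-2}\int_{|w|\le|x|/2}\langle w\rangle^{-a}dw$, giving $t^{-2}(1$ or $\log\langle x\rangle)$; multiplied by $(t/|x|)^{2+\kappa_0}$ this yields $t^{\kappa_0}|x|^{-2-\kappa_0}$, resp.\ $\log(2+\langle x\rangle)\,t^{\kappa_0}|x|^{-2-\kappa_0}$. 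For \eqref{z201} this is exactly the claim; for \eqref{z201p} I use $\log\langle x\rangle\,(t/|x|)^{\kappa_0}\lesssim \log(2+t)+1$-type bound (since $y^{-\kappa_0}\log y$ bounded, after writing $\log|x|=\log t+\log(|x|/t)$). If $t\lesssim1$, on $A$ the bound $|v|\gtrsim |x|/t\ge|x|$ gives $\langle v\rangle^{-2-\kappa_0}\lesssim\langle x\rangle^{-2-\kappa_0}$ times... I'll instead bound $\int_A \langle v\rangle^{-2-\kappa_0}\langle x-tv\rangle^{-a}dv\le\sup_A\langle v\rangle^{-2-\kappa_0}\cdot t^{-2}\int\langle w\rangle^{-a}$ using $\langle v\rangle\gtrsim |x|/t$, giving $t^{\kappa_0}|x|^{-2-\kappa_0}$ again. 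On $A^c$, $\langle x-tv\rangle^{-a}\lesssim\langle x\rangle^{-a}$ and $\int\langle v\rangle^{-2-\kappa_0}<\infty$, giving $\langle x\rangle^{-a}$, fine for both (for $a=2+\kappa_0$ since $\langle t\rangle^{\kappa_0}\ge1$).

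The main obstacle is bookkeeping at the transition $|x|\approx t$ and $t\approx 1$ (where $t^{-2}$ from the Jacobian must be compensated by the $\langle v\rangle$ weight), and the logarithm in \eqref{z201p} for $|x|\gg t$; I expect to handle the latter by the elementary inequality $(t/|x|)^{\kappa_0}\log(2+|x|)\lesssim\log(2+t)$ for $|x|\ge t\ge1$.
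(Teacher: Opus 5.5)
Your proposal is correct and follows essentially the same route as the paper: you split according to $|x|\lesssim t$ versus $|x|\gtrsim t$, change variables $w=x-tv$ (the paper uses $w=tv$), use that $|v|\approx|x|/t$ on the region where $x-tv$ is small, and absorb the logarithm in \eqref{z201p} via $(t/|x|)^{\kappa_0}\log(2+|x|)\lesssim_{\kappa_0}\log(2+t)$. You can delete the abandoned first paragraph, since your uniform argument already covers $t\le 1$; there you only need to note that the trivial bound by $\int\langle v\rangle^{-2-\kappa_0}\,dv$ handles the regime $\langle t,x\rangle\lesssim 1$, where the factors $t^{-2}$, $t^{-a}$, or $|x|^{-2-\kappa_0}$ would otherwise blow up.
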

\begin{proof}
For $0\leq t\leq1$, bounded $x$ follows by integrability; for $|x|>2$,
use the velocity splitting below and
$|x|^{-\kappa_0}\log(2+|x|)\lesssim_{\kappa_0}1$.
We now assume $t\geq1$.
		
		We first prove \eqref{z201p}. Assume first that $|x|\le2t$. Then
		$\langle t,x\rangle\approx\langle t\rangle$. By the change of variables
		$w=tv$, we get
		\begin{align*}
			\int_{\mathbb R^2}
			\langle x-tv\rangle^{-2}
			\frac{dv}{\langle v\rangle^{2+\kappa_0}}
			&=
			t^{-2}
			\int_{\mathbb R^2}
			\langle x-w\rangle^{-2}
			\frac{dw}{\langle w/t\rangle^{2+\kappa_0}}\lesssim
			t^{-2}\log(2+t).
		\end{align*}
		This proves \eqref{z201p} when $|x|\le2t$.
		
		On the other hand, if $|x|\geq 2t$ then we split the velocity integral into two
		regions. If $|v|\le |x|/(2t)$, then $|x-tv|\gtrsim |x|$, and hence
		\begin{align*}
			\int_{|v|\le |x|/(2t)}
			\langle x-tv\rangle^{-2}
			\frac{dv}{\langle v\rangle^{2+\kappa_0}}
			&\lesssim
			\langle x\rangle^{-2}.
		\end{align*}
For $|v|>|x|/(2t)$, set $w=tv$ and split further at $|w|=2|x|$ to obtain
\[
\int_{|v|>|x|/(2t)}\langle x-tv\rangle^{-2}
\frac{dv}{\langle v\rangle^{2+\kappa_0}}
\lesssim t^{\kappa_0}|x|^{-2-\kappa_0}\log(2+|x|)
\lesssim_{\kappa_0}|x|^{-2}\log(2+t).
\]
Here $\lambda=|x|/t\geq2$ and
$\lambda^{-\kappa_0}\log(2+\lambda t)\leq\log(2+t)+\kappa_0^{-1}$.
This completes the proof of \eqref{z201p}.
	The proof of \eqref{z201} is similar and easier, since the spatial kernel
		has stronger decay.
		\end{proof}

	For $\sigma\in\mathbb R$, define
	\begin{align}
		\mathbf S_\sigma(t,x)
		&:=
		\int_0^t\int_{\mathbb R^2}
		\langle s,x-(t-s)v\rangle^{-2-\kappa_0}
		\langle s\rangle^\sigma
		\frac{dv\,ds}{\langle v\rangle^{2+\kappa_0}},
		\label{S-sigma-def}
		\\
		\mathbf T_\sigma(t,x)
		&:=
		\int_0^t\int_{\mathbb R^2}
		\langle s,x-(t-s)v\rangle^{-2}
		\langle s\rangle^\sigma
		\frac{dv\,ds}{\langle v\rangle^{2+2\kappa_0}}.
		\label{T-sigma-def}
	\end{align}
		We use the following convention: for $\gamma\in\mathbb R$ and $a\ge0$,
	\begin{equation}\label{positive-part-convention}
		\langle a\rangle^{\gamma_+}
		:=
		\mathbf 1_{\gamma>0}\langle a\rangle^\gamma
		+
		\mathbf 1_{\gamma<0}
		+
		\mathbf 1_{\gamma=0}\log(2+a).
	\end{equation}
	Here $\langle a\rangle^{\gamma_+}$ denotes the entire weight in
	\eqref{positive-part-convention}; at $\gamma=0$ it equals $\log(2+a)$.
	Additional powers of $\langle a\rangle$ are written as separate factors.
	
	\begin{lemma}\label{Le3}
		For any $\sigma\in\mathbb R$, $t\ge0$, and $x\in\mathbb R^2$, one has
		\begin{align}
			\mathbf S_\sigma(t,x)
			&\mathrel{\lesssim_{\sigma,\kappa_0}}
			\langle t\rangle^{(\sigma-\kappa_0+1)_+}
			\langle t\rangle^{\kappa_0}
			\langle t,x\rangle^{-2-\kappa_0},
			\label{z143a}
			\\
			\mathbf T_\sigma(t,x)
			&\mathrel{\lesssim_{\sigma,\kappa_0}}
			\log(2+t)
			\langle t\rangle^{(\sigma+1)_+}
			\langle t,x\rangle^{-2}.
			\label{z143b}
		\end{align}
	\end{lemma}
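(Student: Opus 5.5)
The plan is to treat the inner velocity integral at each fixed $s$ by a rescaled version of Lemma~\ref{Le1}, and then perform the time integral by splitting $[0,t]$ into $[0,t/2]$ and $[t/2,t]$. For fixed $s\in[0,t]$, put $\tau=t-s$. The inner integral in \eqref{S-sigma-def} is
\[
I(s):=\int_{\mathbb R^2}\langle s,x-\tau v\rangle^{-2-\kappa_0}\frac{dv}{\langle v\rangle^{2+\kappa_0}} .
\]
I will bound it by repeating the proof of \eqref{z201} with the base scale $1$ replaced by $\langle s\rangle$:
\[
I(s)\lesssim \langle s\rangle^{-\kappa_0}\,\langle\tau\rangle^{\kappa_0}\,\langle s,\tau,x\rangle^{-2-\kappa_0}\,\min\big(1,\langle s\rangle^{2}\langle\tau\rangle^{-2}\big)\cdot(\text{harmless factors}).
\]
Concretely, I distinguish the same cases as in Lemma~\ref{Le1}.

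\textbf{Case $|x|\le 2\tau$.} Here $\langle s,\tau, x\rangle\approx\langle s,\tau\rangle$. Substituting $w=\tau v$ gives
\[
I(s)\lesssim \tau^{-2}\int\langle s,x-w\rangle^{-2-\kappa_0}\,dw\approx \tau^{-2}\langle s\rangle^{-\kappa_0}.
\]
This is valid when $\tau\gtrsim\langle s\rangle$; when $\tau\lesssim\langle s\rangle$, the trivial bound $\langle s\rangle^{-2-\kappa_0}$ is used instead.

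\textbf{Case $|x|\ge2\tau$.} I split the velocity region at $|v|=|x|/(2\tau)$. This gives a contribution $\langle s,x\rangle^{-2-\kappa_0}$, plus a tail
\[
\tau^{\kappa_0}|x|^{-2-\kappa_0}\int_{|w|\lesssim|x|}\langle s,x-w\rangle^{-2-\kappa_0}\,dw\lesssim \tau^{\kappa_0}|x|^{-2-\kappa_0}\langle s\rangle^{-\kappa_0}.
\]
Since $\langle s,\tau,x\rangle\approx\langle t,x\rangle$ for every $s\in[0,t]$, the net pointwise bound is
\[
I(s)\lesssim \langle t\rangle^{\kappa_0}\langle s\rangle^{-\kappa_0}\langle t,x\rangle^{-2-\kappa_0}+\mathbf 1_{|x|\le 2\tau}\,\langle\tau\rangle^{-2}\langle s\rangle^{-\kappa_0}.
\]

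\textbf{Time integration.} Multiplying by $\langle s\rangle^\sigma$ and integrating over $s$, the first term gives $\langle t\rangle^{\kappa_0}\langle t,x\rangle^{-2-\kappa_0}\int_0^t\langle s\rangle^{\sigma-\kappa_0}\,ds$. This integral is exactly $\langle t\rangle^{(\sigma-\kappa_0+1)_+}$ in the convention \eqref{positive-part-convention}, including the logarithm at the borderline exponent. For the second term:
\begin{itemize}
\item On $s\le t/2$ we have $\tau\approx t$, and when $|x|\lesssim t$ the factor $\langle\tau\rangle^{-2}\approx\langle t,x\rangle^{-2}$. This produces $\langle t,x\rangle^{-2}\int_0^{t/2}\langle s\rangle^{\sigma-\kappa_0}\,ds$. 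The missing power $\langle t,x\rangle^{-\kappa_0}$ must be recovered, and this is the delicate point. I would refine the $|x|\le2\tau$ estimate by keeping $\langle w/\tau\rangle^{-2-\kappa_0}$ and using $\langle\tau,x\rangle^{-\kappa_0}$-type decay from the region $|w|\gtrsim\tau$. Alternatively, I would accept the weaker $\langle s\rangle^{-\kappa_0}$ and trade it using $\langle t\rangle^{\kappa_0}\langle t,x\rangle^{-\kappa_0}\gtrsim1$ when $|x|\lesssim t$. The latter works directly, since $\langle t,x\rangle\approx\langle t\rangle$ there, so the second term is absorbed into the first form.
\item On $s\ge t/2$, $\langle s\rangle\approx\langle t\rangle$. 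I integrate $\int_0^{t/2}\min(\langle s\rangle^{-2-\kappa_0},\langle\tau\rangle^{-2}\langle s\rangle^{-\kappa_0})\,d\tau\lesssim\langle t\rangle^{-1-\kappa_0}$, which yields $\langle t\rangle^{\sigma-\kappa_0+1}\langle t\rangle^{-2}$. This is compatible with the claimed bound since $\langle t,x\rangle\approx\langle t\rangle$ in this region.
\end{itemize}

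\textbf{Bound for $\mathbf T_\sigma$.} The proof of \eqref{z143b} is identical with exponent $2$ instead of $2+\kappa_0$. The inner integral now produces $\log(2+\langle\tau\rangle/\langle s\rangle)$ in place of $\langle s\rangle^{-\kappa_0}$, exactly as in \eqref{z201p}; this is bounded by $\log(2+t)$. The extra velocity decay $\langle v\rangle^{-2-2\kappa_0}$ absorbs the $\lambda^{-\kappa_0}$ factors as in Lemma~\ref{Le1}.

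\textbf{Main obstacle.} I expect the main difficulty to be the uniform bookkeeping of the case $|x|\ge 2\tau$ with $s\approx t$, together with the borderline exponents. The constants are allowed to depend on $\sigma$, so the log case follows from the dyadic summation as in Lemma~\ref{z216}.
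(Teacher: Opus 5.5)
Your proposal is correct and is essentially the paper's argument. It uses the same ingredients: the trivial bound $I(s)\lesssim\langle s\rangle^{-2-\kappa_0}$, the substitution $w=x-(t-s)v$, and the velocity split at $(t-s)|v|\approx|x|$. The only difference is organization: the paper uses a global split $\langle x\rangle\lessgtr 10t$ followed by a time split at $t/4$, whereas you use the $s$-dependent split $|x|\lessgtr 2(t-s)$.

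Two small repairs are needed.
\begin{itemize}
\item Delete the opening heuristic display. At $s=0$, $x=0$ it predicts $\langle\tau\rangle^{-4}$, whereas $I(0)\approx\langle\tau\rangle^{-2}$.
\item In the case $|x|\ge 2(t-s)$, the tail bound $\tau^{\kappa_0}|x|^{-2-\kappa_0}\langle s\rangle^{-\kappa_0}$ gives your net bound only when $|x|\gtrsim\langle s\rangle$. For $|x|\ll\langle s\rangle$ you must invoke the trivial bound instead, which gives $\langle s\rangle^{-2-\kappa_0}\approx\langle t,x\rangle^{-2-\kappa_0}$. The paper's global split makes this automatic.
\end{itemize}
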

	
		\begin{proof} We may assume that $t\geq 1$ and consider two cases.
		\medskip
		
		{\bf{Case 1: $\langle x\rangle\leq 10 t$.}} In this case, $\langle t,x\rangle\approx \langle t\rangle$, so it is enough to prove
		\begin{align}
			\mathbf{S}_{\sigma}(t,x)
			&\lesssim
			\langle t\rangle^{(\sigma-\kappa_0+1)_+}\langle t\rangle^{-2},
			\label{z142a}\\
			\mathbf{T}_{\sigma}(t,x)
			&\lesssim
			\log(2+t)\langle t\rangle^{(\sigma+1)_+}\langle t\rangle^{-2}.
			\label{z142b}
		\end{align}

		We first estimate $\mathbf{S}_{\sigma}(t,x)$. We use the decomposition
		\begin{align*}
			\mathbf{S}_{\sigma}(t,x)&\lesssim
			\int_0^t\int_{\mathbb R^2}
			\langle t\rangle^{-2-\kappa_0}
			\langle t\rangle^\sigma
			\frac{dv\,ds}{\langle v\rangle^{2+\kappa_0}}+
			\frac1{t^2}
			\int_0^{t/4}
			\int_{\mathbb R^2}\langle s,w\rangle^{-2-\kappa_0}\langle s\rangle^\sigma\,dw\,ds.
		\end{align*}
		For the first term we obtain
		\begin{align*}
			\int_0^t\int_{\mathbb R^2}
			\langle t\rangle^{-2-\kappa_0}
			\langle t\rangle^\sigma
			\frac{dv\,ds}{\langle v\rangle^{2+\kappa_0}}&\lesssim
			\langle t\rangle^{\sigma-1-\kappa_0}
			\lesssim
			\langle t\rangle^{(\sigma-\kappa_0+1)_+}\langle t\rangle^{-2}.
		\end{align*}
		For the second term, we use $\int_{\mathbb R^2}
			\langle s,w\rangle^{-2-\kappa_0}\,dw\lesssim
			\langle s\rangle^{-\kappa_0}$, so
		\begin{align*}
			\frac1{t^2}\int_0^{t/4}
			\int_{\mathbb R^2}\langle s,w\rangle^{-2-\kappa_0}\langle s\rangle^\sigma\,dw\,ds&\lesssim
			\frac1{t^2}
			\int_0^t
			\langle s\rangle^{\sigma-\kappa_0}\,ds
			\lesssim
			\langle t\rangle^{(\sigma-\kappa_0+1)_+}\langle t\rangle^{-2}.
		\end{align*}
		The desired bounds \eqref{z142a} follow.
		
		The estimate for $\mathbf{T}_{\sigma}(t,x)$ is similar. First we estimate, as before
	\begin{align*}
	\mathbf T_\sigma(t,x)
	&\lesssim
	\int_0^t\int_{\mathbb R^2}
	\langle t\rangle^{-2}
	\langle t\rangle^\sigma
	\frac{dv\,ds}{\langle v\rangle^{2+2\kappa_0}}+
	\frac{1}{t^2}
	\int_0^{t/4}
	\langle s\rangle^\sigma
	\int_{\mathbb R^2}
	\langle s,w\rangle^{-2}
	\left\langle\frac{x-w}{t}\right\rangle^{-2-2\kappa_0}
	\,dw\,ds.
\end{align*}
For the first term we obtain
\begin{align*}
	\int_0^t\int_{\mathbb R^2}
	\langle t\rangle^{-2}
	\langle t\rangle^\sigma
	\frac{dv\,ds}{\langle v\rangle^{2+2\kappa_0}}
	&\lesssim
	\langle t\rangle^{\sigma-1}
	\lesssim
	\langle t\rangle^{(\sigma+1)_+}\langle t\rangle^{-2}.
\end{align*}
For the second term, we notice that
\begin{equation*}
	\int_{\mathbb R^2}
	\langle s,w\rangle^{-2}
	\left\langle\frac{x-w}{t}\right\rangle^{-2-2\kappa_0}
	\,dw
	\lesssim
	\log(2+t),
\end{equation*}
which follows by considering the two regions $|w|\lesssim t$ and $|w|\gg t$ separately. Thus
\begin{align*}
	\frac{1}{t^2}
	\int_0^{t/4}
	\langle s\rangle^\sigma
	\int_{\mathbb R^2}
	\langle s,w\rangle^{-2}
	\left\langle\frac{x-w}{t}\right\rangle^{-2-2\kappa_0}
	\,dw\,ds\lesssim
	\log(2+t)\langle t\rangle^{(\sigma+1)_+}\langle t\rangle^{-2}.
\end{align*}
Combining the two estimates proves \eqref{z142b}.
		\medskip

{\bf{Case 2: $\langle x\rangle\geq 10t$.}}
In this case, $\langle t,x\rangle\approx\langle x\rangle$. Let
$r=t-s$. For fixed $s$, we split the velocity
integration according to whether $r|v|\leq |x|/2$ or $r|v|\geq |x|/2$.

On the first region, $|x-rv|\gtrsim |x|$, and therefore
\[
	\int_{\{r|v|\leq |x|/2\}}
	\langle s,x-rv\rangle^{-2-\kappa_0}
	\frac{dv}{\langle v\rangle^{2+\kappa_0}}
	\lesssim \langle x\rangle^{-2-\kappa_0}.
\]
On the second region, we have
$\langle v\rangle^{-2-\kappa_0}\lesssim (r/|x|)^{2+\kappa_0}$. Using the change of
variables $w=x-rv$, we obtain
\begin{align*}
	\int_{\{r|v|\geq |x|/2\}}\langle s,x-rv\rangle^{-2-\kappa_0}
	\frac{dv}{\langle v\rangle^{2+\kappa_0}}&\lesssim
	r^{-2}\frac{r^{2+\kappa_0}}{|x|^{2+\kappa_0}}\int_{\mathbb R^2}\langle s,w\rangle^{-2-\kappa_0}\,dw\lesssim
	\langle x\rangle^{-2-\kappa_0}
	r^{\kappa_0}\langle s\rangle^{-\kappa_0}.
\end{align*}
Consequently,
\[
	\int_{\mathbb R^2}
	\langle s,x-(t-s)v\rangle^{-2-\kappa_0}
	\frac{dv}{\langle v\rangle^{2+\kappa_0}}
	\lesssim
	\langle x\rangle^{-2-\kappa_0}
	\left(1+(t-s)^{\kappa_0}\langle s\rangle^{-\kappa_0}\right).
\]
and it follows that
\begin{align*}
	\mathbf S_\sigma(t,x)
	&\lesssim
	\langle x\rangle^{-2-\kappa_0}
	\langle t\rangle^{\kappa_0}
	\int_0^t\langle s\rangle^{\sigma-\kappa_0}\,ds\lesssim
	\langle t\rangle^{\kappa_0}
	\langle t\rangle^{(\sigma-\kappa_0+1)_+}
	\langle x\rangle^{-2-\kappa_0}.
\end{align*}
This proves the desired estimates \eqref{z143a} in Case 2.

To estimate $\mathbf T_\sigma$, we use \eqref{z201p}, with $t-s$
in place of $t$, thus
\[
	\int_{\mathbb R^2}
	\langle s,x-(t-s)v\rangle^{-2}
	\frac{dv}{\langle v\rangle^{2+2\kappa_0}}
	\lesssim
	\log(2+t)\langle x\rangle^{-2}.
\]
Therefore
\begin{align*}
	\mathbf T_\sigma(t,x)
	&\lesssim\log(2+t)\langle x\rangle^{-2}
	\int_0^t\langle s\rangle^\sigma\,ds\lesssim
	\log(2+t)
	\langle t\rangle^{(\sigma+1)_+}
	\langle x\rangle^{-2}.
\end{align*}
This gives \eqref{z143b} in Case 2 and completes the proof of the lemma.
	\end{proof}

	\section{Estimates for the characteristics}\label{section4}
	In this section we assume that $(f,E)$ is a solution of the system \eqref{eq2} on the interval $[0,T]$, satisfying the bounds $M_T\leq\epsilon\ll 1$ as in Proposition \ref{propoglobal3d}, and we establish pointwise bounds for the deviation of the
characteristic flow. Recall that, with the notation of Section~2,
	the correction term $Y_{s,t}$ is defined by
	\[
	Y_{s,t}(x,v)=X_{s,t}(x+tv,v)-x-sv,
	\]
        for any $0\le s\le t\le T$ and $x,v\in\R^2$, and satisfies the integral equation
	\begin{equation}\label{Yinteq}
	Y_{s,t}(x,v)=\int_s^t(\tau-s)\,E(\tau,x+\tau v+Y_{\tau,t}(x,v))\,d\tau.
	\end{equation}
	We also recall that $W_{s,t}=\partial_s Y_{s,t}$. For simplicity of notation we define the weights
	\begin{equation}\label{aweights}
	\begin{split}
		\mathfrak a_0(s,v)&:=\langle v\rangle^{-1}\langle s\rangle^{-\frac12-\kappa_0}
		+\langle s,|v|^{\frac32}\rangle^{-\frac23+50\kappa_0},\\
		\mathfrak a_1(s,v)&:=\langle v\rangle^{-1}\langle s\rangle^{-\frac76-\kappa_0}+\langle s,|v|^{\frac32}\rangle^{-\frac23}
		\langle s\rangle^{-\frac23+\kappa_0},\\
		\mathfrak a_2(s,v)&:=\langle s^{\frac12+2\kappa_0},v\rangle^{-1}
		\langle s\rangle^{-\frac43+\kappa_0}.
	\end{split}
	\end{equation}
	and notice that these weights are nonincreasing in both $s$ and $|v|$.
	Lemma~\ref{z216} applies also to the rays $x+\tau v$, by replacing $v$ with $-v$.
	\medskip
	\subsection{Pointwise bounds for the characteristics} We prove now several bounds on the function $Y$ and its derivatives.

	\begin{lemma} [Estimates for the characteristics]\label{keyle}  
	For all $0\le s\le t\le T$ we have
	\begin{align}\label{n13}
		&	|Y_{s,t}|+\langle s\rangle	|\partial_sY_{s,t}|\lesssim   M_T\mathfrak{a}_0(s,v),\\
		&\label{n14}	|\nabla_xY_{s,t}|+\langle s\rangle	|\nabla_x\partial_sY_{s,t}|\lesssim  M_T\mathfrak{a}_1(s,v),\\
		&\label{xf}|\nabla_vY_{s,t}|+\langle s\rangle	|\nabla_v\partial_sY_{s,t}|\lesssim  M_T\big(\langle v\rangle^{-1}\langle s\rangle^{-\frac{1}{6}-\kappa_0}+\langle s,|v|^{\frac{3}{2}}\rangle^{-\frac{1}{3}+\kappa_0}\big),\\
		&\label{n15}	|\nabla_x\nabla_vY_{s,t}|+\langle s\rangle 	|\nabla_x\nabla_v\partial_sY_{s,t}|+\langle s\rangle(	|\nabla_x^2Y_{s,t}|+\langle s\rangle	|\nabla_x^2\partial_sY_{s,t}|)\lesssim  M_T\langle s\rangle\mathfrak{a}_2(s,v),\\
		&\label{n17}|\nabla_v^2Y_{s,t}|\lesssim M_T	\langle t\rangle^{\frac{2}{3}+\kappa_0}	\langle t^{\frac{1}{2}+2\kappa_0},v\rangle^{-1}
		,\\&\label{n17b}
		|\nabla_v^2\partial_sY_{s,t}|\lesssim	M_T\langle t\rangle^{\frac{1}{6}-\kappa_0}	\langle s^{\frac12+2\kappa_0},v\rangle^{-1}
		\langle s\rangle^{-\frac12+2\kappa_0}.
	\end{align}
	Here, $Y_{s,t}=Y_{s,t}(x,v)$. In particular, the following
	rougher bounds also hold:
	\begin{align}\label{n13'}
		&	|Y_{s,t}|+\langle s\rangle	|\partial_sY_{s,t}|\lesssim   M_T
		\langle v\rangle^{-\frac{1}{10}}\langle s\rangle^{-\frac12-\kappa_0}	,\\&\label{n14'}	|\nabla_xY_{s,t}|+\langle s\rangle	|\nabla_x\partial_sY_{s,t}|\lesssim  M_T\langle v\rangle^{-\frac{1}{10}}\langle s\rangle^{-\frac{7}{6}-\kappa_0}
		,\\&\label{xf'}
		|\nabla_vY_{s,t}|+\langle s\rangle	|\nabla_v\partial_sY_{s,t}|\lesssim  M_T\langle v\rangle^{-\frac{1}{10}}\langle s\rangle^{-\frac{1}{6}-\kappa_0},\\
		&\label{n15'}	|\nabla_x\nabla_vY_{s,t}|+\langle s\rangle 	|\nabla_x\nabla_v\partial_sY_{s,t}|+\langle s\rangle(	|\nabla_x^2Y_{s,t}|+\langle s\rangle	|\nabla_x^2\partial_sY_{s,t}|)\lesssim  M_T\langle v\rangle^{-2\kappa_0}  \langle s\rangle^{-\frac{5}{6}+2\kappa_0}.
	\end{align}
\end{lemma}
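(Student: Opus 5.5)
We will prove the bounds by a continuity argument in $s$ (for fixed $t$), run on the integral equation \eqref{Yinteq} and its derivatives. We use the bootstrap bounds \eqref{bo1} together with Lemma~\ref{eslinear}. These give
\[
|\nabla^n E^{reg}|\lesssim M_T\langle t\rangle^{-\frac12-\frac{2n}3}\Phi,\qquad
|\nabla^n E^{\ast}|\lesssim M_T\langle t,x\rangle^{-2}\langle t\rangle^{50\kappa_0}\ \ (n=0),
\]
and the improved bounds on $\nabla^{n}E^\ast$ and $\partial_tE^\ast$ from \eqref{cbc2}. We split $E=E^{reg}+\cos(\tau)E^{\cos}+\sin(\tau)E^{\sin}$ inside every integral.

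\emph{Regular part.} We bound it directly. Under the bootstrap hypothesis the perturbed ray stays within $O(1)$ of the free ray $x+\tau v$, since $|Y|\lesssim\epsilon$. Lemma~\ref{z216}(ii) then gives, for instance,
\[
\int_s^t(\tau-s)\langle\tau\rangle^{-\frac12}\langle\tau,x+\tau v\rangle^{-2-\kappa_0}\,d\tau\lesssim\langle s\rangle^{-\frac12-\kappa_0}\langle v\rangle^{-1},
\]
which produces the first term of $\mathfrak a_0$.

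\emph{Oscillatory part.} Using only the $\langle\tau,x\rangle^{-2}\langle\tau\rangle^{50\kappa_0}$ bound would lose a logarithm and give no decay in $s$. Instead we integrate by parts in $\tau$ using $\cos\tau=\partial_\tau\sin\tau$. The derivative then falls on three factors:
\begin{itemize}
\item on $(\tau-s)$, which is harmless;
\item on $E^\ast$ through $\partial_\tau E^\ast$, which carries the improved weight $\langle\tau\rangle^{-1/2}\Phi$;
\item on the argument, through $\nabla E^\ast\cdot(v+\partial_\tau Y_{\tau,t})$.
\end{itemize}
The last term is the dangerous one, since it contains a factor $|v|$. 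We handle it by interpolating between two bounds. For small $|v|$ relative to $\tau^{2/3}$ we integrate by parts and accept the factor $|v|\langle\tau\rangle^{-2/3+2\kappa_0}$. For large $|v|$ we do not integrate by parts, and instead use the $\langle v\rangle^{-1}$ gain from Lemma~\ref{z216}. Optimizing the threshold at $|v|\sim\tau^{2/3}$ produces the weight $\langle s,|v|^{3/2}\rangle^{-2/3+50\kappa_0}$ in $\mathfrak a_0$. The boundary term at $\tau=s$ gives the same size, and the boundary term at $\tau=t$ vanishes. The bound for $\partial_sY=-\int_s^tE$ follows the same way, with one less power of $(\tau-s)$.

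For the derivatives we differentiate \eqref{Yinteq}. For example,
\[
\nabla_xY_{s,t}=\int_s^t(\tau-s)\nabla E(\tau,\cdot)\,(I+\nabla_xY_{\tau,t})\,d\tau,
\]
and similarly for $\nabla_v$, where the chain rule produces the factor $\tau I+\nabla_vY_{\tau,t}$. We close these linear Volterra inequalities by Gronwall, since the smallness $M_T\le\epsilon$ absorbs the $\nabla Y$ terms on the right. We then apply the same regular/oscillatory split, using the bounds for $\nabla^n E$, $n\le2$, and for $\nabla^3E^\ast$ at the top order. The extra factor $\tau$ coming from $\nabla_v$ explains why the $v$-derivative bounds in \eqref{xf} and \eqref{n17} lose one power of time relative to the $x$-derivative bounds. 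Second-order bounds follow by differentiating once more. The new terms are products such as $\nabla^2E\,(\tau+\nabla_vY)^2$, which are controlled by the first-order bounds just proved. The rough bounds \eqref{n13'}--\eqref{n15'} follow from the sharp ones by elementary inequalities, such as $\langle s,|v|^{3/2}\rangle^{-2/3+50\kappa_0}\lesssim\langle v\rangle^{-1/10}\langle s\rangle^{-1/2-\kappa_0}$.

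The main obstacle is \eqref{n17} and \eqref{n17b}. There $\nabla_v^2Y$ contains $\int(\tau-s)\tau^2\nabla^2E\,d\tau$, and even after integrating by parts, the oscillatory term generates $\nabla^3E^\ast\,v\,\tau^3$. This term only admits the $\langle t\rangle$-growing bound stated in \eqref{n17}, with no decay in $s$. We would first try to put the full time weight on the top endpoint $t$, using Lemma~\ref{z216}(ii) with $\gamma>0$ in \eqref{z210}, and to extract the $\langle t^{1/2+2\kappa_0},v\rangle^{-1}$ factor by splitting $|v|\lessgtr t^{1/2+2\kappa_0}$. If the exponents do not close, the fallback is to accept the weaker bound and defer the sharp structure to the leading-term expansion of $\nabla_v^2Y$ mentioned in the introduction.
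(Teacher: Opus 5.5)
Your architecture is the paper's. You run a continuity argument on \eqref{Yinteq} and its derivatives, split $E=E^{reg}+E^{osc}$, and use Lemma~\ref{z216} for the regular part. For $E^{osc}$ you switch between the direct ray bound and integration by parts in $\tau$ at $\tau\sim|v|^{3/2}$, which is where the $\langle s,|v|^{3/2}\rangle$ weights come from. The paper first moves $E^{osc}$ to the free ray via \eqref{Ecomp}. It then integrates by parts only in the free-ray integrals $P_{n_1,n_2}$, built from $\int_s^t(\tau-s)\tau^{n_1}\nabla^{n_1+n_2}E^{osc}(\tau,x+\tau v)\,d\tau$ and its companion $\langle s\rangle\int_s^t\tau^{n_1}\nabla^{n_1+n_2}E^{osc}(\tau,x+\tau v)\,d\tau$, so no $\partial_\tau Y$ factors arise. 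Integrating by parts along the perturbed ray, as you propose, also works. One small correction: the endpoint term at $\tau=t$ does not vanish; for the $(\tau-s)$-weighted integral it is the one at $\tau=s$ that does. Splitting at $s_0=\min\{\max\{s,|v|^{3/2}\},t\}$ also adds a term at $s_0$. All of these are $\lesssim M_T\langle s_0\rangle^{-1+50\kappa_0}\lesssim M_T\mathfrak a_0(s,v)$, so they are harmless.

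The gap is at the step you flag yourself. Your fallback is not available. A weaker bound is not \eqref{n17}, and the expansion in Lemma~\ref{lem:second-v-expansion} is downstream of this lemma: \eqref{q62a} is the $P_{2,0}$ estimate proved here, and its remainder bounds use \eqref{n17}.

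Your first route does close, and it is exactly the paper's.
\begin{itemize}
\item If $\langle v\rangle\ge\langle t\rangle^{1/2+2\kappa_0}$, do not integrate by parts. Then \eqref{z210} gives $\int_s^t\langle\tau\rangle^{5/3+2\kappa_0}\Phi(\tau,x+\tau v)\,d\tau\lesssim\langle v\rangle^{-1}\langle t\rangle^{2/3+\kappa_0}$.
\item Otherwise, integrate by parts on all of $[s,t]$. Using $|\nabla^3E^\ast|+|\partial_t\nabla^2E^\ast|\lesssim M_T\langle\tau\rangle^{-11/6}\Phi$, the bulk is $\lesssim\langle v\rangle\int_s^t\langle\tau\rangle^{7/6}\Phi\,d\tau\lesssim\langle t\rangle^{1/6-\kappa_0}$, and the endpoint terms are $\lesssim\langle\tau\rangle^{-1/3+\kappa_0}$.
\item All remaining terms are $\lesssim M_T\langle v\rangle^{-1}\langle t\rangle^{1/6-\kappa_0}$. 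This includes $\nabla_v^2Y_{\tau,t}\nabla\widetilde E$ under the bootstrap bound $|\nabla_v^2Y_{\tau,t}|\lesssim\langle t\rangle^{1/6-\kappa_0}$.
\end{itemize}

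What is genuinely missing is the threshold for \eqref{n17b} and for the $\mathfrak a_2$ bounds in \eqref{n15}. Neither of your switches ($\tau\sim|v|^{3/2}$, or $|v|\sim t^{1/2+2\kappa_0}$) produces the factor $\langle s^{1/2+2\kappa_0},v\rangle^{-1}$. For example, take $s\approx1$ and $\langle v\rangle\approx\langle t\rangle^{1/2}$. Integration by parts then bounds $\int_s^t\tau^2\nabla^2E^{osc}(\tau,x+\tau v)\,d\tau$ only by $O(M_T)$, while \eqref{n17b} requires roughly $M_T\langle t\rangle^{-1/3}$.

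The switch must be at $\langle v\rangle\sim\langle s\rangle^{1/2+2\kappa_0}$. Above it, use the direct bound $\langle v\rangle^{-1}\langle s\rangle^{-1/3+\kappa_0}$; below it, integrate by parts on all of $[s,t]$, which gives $\langle s\rangle^{-5/6-\kappa_0}$. This yields $P_{1,1}\lesssim M_T\langle s^{1/2+2\kappa_0},v\rangle^{-1}\langle s\rangle^{-1/3+\kappa_0}$, hence \eqref{n15}. Via $\tau^2=(\tau-s)\tau+s\tau$ it also gives \eqref{q62p}, which controls the leading term of \eqref{n17b}.
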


\begin{proof} We divide the proof into three steps.
	\medskip
	
	\textbf{Step 1: oscillatory integral bounds.} For integers $n_1,n_2\ge0$ with $n_1+n_2\le2$, define
	\begin{align*}
		P_{n_1,n_2}	:=	\Big|\int_s^t(\tau-s) \tau^{n_1}\nabla_{x}^{n_1+n_2}E^{osc}(\tau,x+\tau v)d\tau\Big|+\langle s\rangle\Big|\int_s^t \tau^{n_1}\nabla_{x}^{n_1+n_2}E^{osc}(\tau,x+\tau v)\,d\tau\Big|.
	\end{align*}
	Using \eqref{de1} and integrating by parts on $[s_0,t]$ for any $s_0\in[s,t]$, we obtain
	\begin{equation}\label{Pbounds1}
		\begin{split}
			P_{n_1,n_2}	&\lesssim \sum_{\ast\in\{\cos,\sin\}}\Big\{\int_s^{s_0} \langle \tau\rangle^{n_1+1}|\nabla_{x}^{n_1+n_2}E^{\ast}(\tau,x+\tau v)|\,d\tau\\
			&+\mathbf 1_{s_0<t}\sum_{\tau\in\{s_0,t\}} \langle \tau\rangle^{n_1+1}|\nabla_{x}^{n_1+n_2} E^{\ast}(\tau,x+\tau v)|+\int_{s_0}^{t}\langle \tau\rangle^{n_1}\Big[\langle \tau\rangle|\nabla_{x}^{n_1+n_2}\partial_\tau E^{\ast}(\tau,x+\tau v)|\\
			&+|\nabla_{x}^{n_1+n_2} E^{\ast}(\tau,x+\tau v)|+\langle v\rangle\langle \tau\rangle|\nabla_{x}^{n_1+n_2+1} E^{\ast}(\tau,x+\tau v)|\Big]\,d\tau\Big\}.
		\end{split}
	\end{equation}
	
	Using \eqref{bo1} and \eqref{a23}, for any $t\in[0,T]$, $x\in\R^2$, and $\ast\in\{\cos,\sin\}$ we obtain
	\begin{equation}\label{Ebounds1}
		\begin{split}
			&|E^\ast(t,x)|\lesssim M_T\langle t,x\rangle^{-2}\langle t\rangle^{50\kappa_0},\\
			&|\nabla_x^n E^\ast(t,x)|\lesssim M_T\langle t,x\rangle^{-2-\kappa_0}\langle t\rangle^{-2n/3+2\kappa_0},\qquad n\in\{1,2\},\\
			&|\partial_t\nabla_x^n E^\ast(t,x)|\lesssim M_T\langle t,x\rangle^{-2-\kappa_0}\langle t\rangle^{-1/2-2n/3},\qquad n\in\{0,1\},\\
			&|\nabla_x^3 E^\ast(t,x)|+|\partial_t\nabla_x^2 E^\ast(t,x)|\lesssim M_T\langle t,x\rangle^{-2-\kappa_0}\langle t\rangle^{-11/6}.
		\end{split}
	\end{equation}

	{\bf{Case $n_1+n_2=0$.}} Using \eqref{Pbounds1} and \eqref{Ebounds1}, we have
	\begin{align*}
		P_{0,0}&\lesssim
		M_T\int_s^{s_0}\langle \tau\rangle^{1+50\kappa_0}
		\langle \tau,x+\tau v\rangle^{-2}\,d\tau+M_T\mathbf 1_{s_0<t}\sum_{r\in\{s_0,t\}}
		\langle r\rangle^{1+50\kappa_0}
		\langle r,x+rv\rangle^{-2}\\
		&+M_T\int_{s_0}^{t}\Big[
		\langle \tau\rangle^{\frac12}
		\langle \tau,x+\tau v\rangle^{-2-\kappa_0}+
		\langle \tau\rangle^{50\kappa_0}
		\langle \tau,x+\tau v\rangle^{-2}+
		\langle v\rangle
		\langle \tau\rangle^{\frac13+\kappa_0}
		\langle \tau,x+\tau v\rangle^{-2}
		\Big]\,d\tau .
	\end{align*}
	Applying \eqref{z210}--\eqref{z212}, we obtain
	\begin{align}
		P_{0,0}&\lesssim M_T\big(\mathbf 1_{s_0>s}\langle v\rangle^{-1}
		\langle s_0\rangle^{50\kappa_0}+\mathbf 1_{s_0<t}
		\langle v\rangle^{-1}
		\langle s_0\rangle^{-\frac12-\kappa_0}+\mathbf 1_{s_0<t}
		\langle s_0\rangle^{-\frac23+\kappa_0}\big).
		\label{P00-pre}
	\end{align}
	We choose $s_0\in[s,t]$ as 
	\begin{equation}\label{choice-s0-P00}
		s_0:=\begin{cases}
			s\qquad&\text{ if }\,\,|v|^{3/2}\leq s,\\
			t\qquad&\text{ if }\,\,|v|^{3/2}\geq t,\\
			|v|^{3/2}&\text{ if }\,\,s\leq |v|^{3/2}\leq t.
		\end{cases}		
	\end{equation}
	With this choice,
	\begin{align*}
		\mathbf 1_{s_0>s}
		\langle v\rangle^{-1}
		\langle s_0\rangle^{50\kappa_0}
		&\lesssim
		\langle s,\langle v\rangle^{3/2}\rangle^{-\frac23+50\kappa_0},\\
		\mathbf 1_{s_0<t}\langle s_0\rangle^{-\frac23+\kappa_0}
		&\lesssim
		\langle s,\langle v\rangle^{3/2}\rangle^{-\frac23+\kappa_0}\lesssim
		\langle s,\langle v\rangle^{3/2}\rangle^{-\frac23+50\kappa_0},\\
		\mathbf 1_{s_0<t}
		\langle v\rangle^{-1}
		\langle s_0\rangle^{-\frac12-\kappa_0}
		&\lesssim
		\langle v\rangle^{-1}
		\langle s\rangle^{-\frac12-\kappa_0}.
	\end{align*}
	Therefore
	\begin{equation}\label{q60}
		P_{0,0}\lesssim M_T\big(\langle v\rangle^{-1}\langle s\rangle^{-\frac12-\kappa_0}+
		\langle s,|v|^{3/2}\rangle^{-\frac23+50\kappa_0}\big).
	\end{equation}

	{\bf{Case $n_1+n_2=1$.}} In this case we prove that
	\begin{align}\nonumber
		P_{n_1,n_2}&\lesssim \mathbf{1}_{n_1=0}M_T\big(\langle v\rangle^{-1}\langle s\rangle^{-\frac{7}{6}-\kappa_0}+\langle s,|v|^{\frac{3}{2}}\rangle^{-\frac{2}{3}} 	\langle s\rangle^{-\frac{2}{3}+\kappa_0}\big)
		\\&+\mathbf{1}_{n_1=1}M_T\big(\langle v\rangle^{-1}\langle s\rangle^{-\frac{1}{6}-\kappa_0}+\langle s,|v|^{\frac{3}{2}}\rangle^{-\frac{1}{3}+\kappa_0}\big).\label{q61}
	\end{align}	
	Indeed, from \eqref{Pbounds1} and \eqref{Ebounds1}, we have
	\begin{align}\nonumber
		P_{n_1,n_2}	&\lesssim M_T\int_s^{s_0} \langle \tau\rangle^{\frac{1}{3}+n_1+2\kappa_0} \Phi(\tau,x+\tau v)d\tau+\mathbf 1_{s_0<t}\sum_{\tau\in\{s_0,t\}}M_T \langle \tau\rangle^{\frac{1}{3}+n_1+2\kappa_0} \Phi(\tau,x+\tau v)\\&
		+M_T\int_{s_0}^{t}(\langle \tau\rangle^{n_1-\frac{1}{6}}+\langle v\rangle\langle \tau\rangle^{n_1-\frac{1}{3}+2\kappa_0})\Phi(\tau,x+\tau v)d\tau.	\label{P-one-derivative-start}
	\end{align}
	
	We apply now \eqref{z210}--\eqref{z212}. First, for the integral over $[s,s_0]$, Lemma~\ref{z216} gives
	\begin{align}
		&\int_s^{s_0}
		\langle \tau\rangle^{\frac13+n_1+2\kappa_0}
		\Phi(\tau,x+\tau v)\,d\tau\lesssim\mathbf 1_{s_0>s}\langle v\rangle^{-1}
		\big(\mathbf 1_{n_1=0}\langle s\rangle^{-\frac23+\kappa_0}+\mathbf 1_{n_1=1}\langle s_0\rangle^{\frac13+\kappa_0}\big)
		\label{q61-I0}.
	\end{align}
	Second, for the integrals over $[s_0,t]$, we obtain
	\begin{equation*}
		\begin{split}
			\int_{s_0}^{t}
			\langle \tau\rangle^{n_1-\frac16}
			\Phi(\tau,x+\tau v)\,d\tau
			&\lesssim\mathbf{1}_{s_0<t}\langle v\rangle^{-1}\langle s_0\rangle^{n_1-\frac76-\kappa_0},\\
			\langle v\rangle
			\int_{s_0}^{t}
			\langle \tau\rangle^{n_1-\frac13+2\kappa_0}
			\Phi(\tau,x+\tau v)\,d\tau
			&\lesssim\mathbf{1}_{s_0<t}\langle s_0\rangle^{n_1-\frac43+\kappa_0}.
		\end{split}
	\end{equation*}
	Finally, $\langle r\rangle^{\frac13+n_1+2\kappa_0}\Phi(r,x+rv)\lesssim\langle r\rangle^{n_1-\frac53+\kappa_0}$ for $r\in\{s_0,t\}$. Thus, 
	\begin{align}\nonumber
		P_{n_1,n_2}	&\lesssim M_T	\mathbf 1_{s_0>s}\langle v\rangle^{-1}\big(\mathbf 1_{n_1=0}
		\langle s\rangle^{-\frac23+\kappa_0}+\mathbf 1_{n_1=1}
		\langle s_0\rangle^{\frac13+\kappa_0}\big)\\&
		+M_T\mathbf{1}_{s_0<t}\big(\langle v\rangle^{-1}
		\langle s_0\rangle^{n_1-\frac76-\kappa_0}+	\langle s_0\rangle^{n_1-\frac43+\kappa_0}\big).	\label{P-one-derivative-start'}
	\end{align}
	We now choose  $s_0$ as in \eqref{choice-s0-P00} and the desired bounds \eqref{q61} follow from \eqref{P-one-derivative-start'}.
	
	{\bf{Case $n_1+n_2=2$.}} In this case we prove that
	\begin{align}
		P_{n_1,n_2}&\lesssim\mathbf{1}_{n_1\leq 1}M_T
		\langle s^{\frac12+2\kappa_0},v\rangle^{-1}
		\langle s\rangle^{-\frac43+n_1+\kappa_0}+\mathbf{1}_{n_1=2}M_T
		\langle t\rangle^{\frac23+\kappa_0}
		\langle v,t^{\frac12+2\kappa_0}\rangle^{-1}
		\label{q62}.
	\end{align}
	From \eqref{Pbounds1} and \eqref{Ebounds1}, we have, for $n_1+n_2=2$,
	\begin{equation}\label{P-two-derivative-start}
		\begin{split}
			&P_{n_1,n_2}\lesssim
			M_T\int_s^{s_0}
			\langle \tau\rangle^{-\frac13+n_1+2\kappa_0}\Phi(\tau,x+\tau v)\,d\tau\\
			&+\mathbf{1}_{s_0<t}M_T\sum_{r\in\{s_0,t\}}
			\langle r\rangle^{-\frac13+n_1+2\kappa_0}\Phi(r,x+rv)+M_T\int_{s_0}^{t}
			\langle v\rangle
			\langle \tau\rangle^{n_1-\frac56}
			\Phi(\tau,x+\tau v)\,d\tau.
		\end{split}	
	\end{equation}
	
	Assume first that $n_1\le1$. Applying \eqref{z210}--\eqref{z212} gives
	\begin{align}
		P_{n_1,n_2}
		&\lesssim
		M_T\mathbf 1_{s_0>s}
		\langle v\rangle^{-1}
		\langle s\rangle^{n_1-\frac43+\kappa_0}+
		M_T\mathbf 1_{s_0<t}
		\langle s_0\rangle^{n_1-\frac{11}{6}-\kappa_0}.
		\label{q62-reduced-low}
	\end{align}
	Choose $s_0\in[s,t]$ such that
	\begin{equation*}
		s_0=t
		\quad\text{if}\quad
		\langle v\rangle\ge \langle s\rangle^{\frac12+2\kappa_0},
		\qquad
		s_0=s
		\quad\text{if}\quad
		\langle v\rangle< \langle s\rangle^{\frac12+2\kappa_0}.
	\end{equation*}
	The bounds \eqref{q62} follow in this case from \eqref{q62-reduced-low}.

	It remains to consider $n_1=2$, $n_2=0$. Applying
	\eqref{z210} to \eqref{P-two-derivative-start}, we obtain
	\begin{align*}
		P_{2,0}&\lesssim M_T\mathbf 1_{s_0>s}\langle v\rangle^{-1}
		\langle s_0\rangle^{\frac23+\kappa_0}+M_T\mathbf 1_{s_0<t}
		\big(\langle t\rangle^{\frac16-\kappa_0}+\langle s_0\rangle^{-\frac13+\kappa_0}\big).
	\end{align*}
	We choose $s_0\in[s,t]$ such that
	\[
	s_0=t
	\quad\text{if}\quad
	\langle v\rangle\geq
	\langle t\rangle^{\frac12+2\kappa_0},
	\qquad
	s_0=s
	\quad\text{if}\quad
	\langle v\rangle<
	\langle t\rangle^{\frac12+2\kappa_0},
	\]
	and the desired bounds \eqref{q62} follow in this case as well.
	
	For later use we notice that our argument also proves that
	\begin{align}
		\left|\int_s^t\tau^2\nabla_x^2E^{osc}(\tau,x+\tau v)\,d\tau\right|
		&\lesssim M_T\langle s^{\frac12+2\kappa_0},v\rangle^{-1}\langle s\rangle^{-\frac13+\kappa_0}.
		\label{q62p}
	\end{align}
	Indeed, $\tau^2=(\tau-s)\tau+s\tau$, so the term in the left-hand side of \eqref{q62p} is controlled by $P_{1,1}$.
	\medskip
	
	\textbf{Step 2: bootstrap control.}
	Assume that for some $T'\le T$,
	\begin{equation}\label{n12}
		\begin{split}
			&A(T'):=	\sup_{0\leq s\leq t\leq T'}\big\{\langle s\rangle^{\frac{1}{2}+\kappa_0}\|(Y_{s,t},\langle s\rangle\partial_sY_{s,t})\|_{L^\infty_{x,v}}+\langle s\rangle^{\frac{7}{6}+\kappa_0}\|(\nabla_xY_{s,t},\langle s\rangle\partial_s\nabla_xY_{s,t})\|_{L^\infty_{x,v}}\\
			&+\langle s\rangle^{\frac{1}{6}+\kappa_0}\|(\nabla_vY_{s,t},\langle s\rangle\partial_s\nabla_vY_{s,t})\|_{L^\infty_{x,v}}	+\langle s\rangle^{\frac{5}{6}+\kappa_0}\|(\nabla_x\nabla_vY_{s,t},\langle s\rangle\partial_s\nabla_x\nabla_vY_{s,t})\|_{L^\infty_{x,v}}\\
			&+\langle s\rangle^{\frac{11}{6}+\kappa_0}\|(\nabla_x^2Y_{s,t},\langle s\rangle\partial_s\nabla_x^2Y_{s,t})\|_{L^\infty_{x,v}}+\langle t\rangle^{-\frac{1}{6}+\kappa_0}\|(\nabla_v^2Y_{s,t},\langle s\rangle\partial_s\nabla_v^2Y_{s,t})\|_{L^\infty_{x,v}}\big\}\leq 1.
		\end{split}
	\end{equation}
	
	For simplicity of notation we write $\nabla_x^n\widetilde{E}(\tau):=(\nabla_x^nE)(\tau,x+\tau v+Y_{\tau,t})$, $\nabla_x^n\widetilde{E}^{reg}(\tau):=(\nabla_x^nE^{reg})(\tau,x+\tau v+Y_{\tau,t})$, $\nabla_x^n\widetilde{E}^{osc}(\tau):=(\nabla_x^nE^{osc})(\tau,x+\tau v+Y_{\tau,t})$. Differentiating the characteristic equation \eqref{Yinteq} yields the integral identities
	\begin{equation}\label{nabla1Y}
		\begin{split}
			\nabla_xY_{s,t}&=\int_s^t(\tau-s)(I+\nabla_{x}Y_{\tau,t})\nabla \widetilde{E}(\tau)d\tau,\\
			\nabla_vY_{s,t}&=\int_s^t(\tau-s)(\tau I+\nabla_{v}Y_{\tau,t})\nabla \widetilde{E}(\tau)d\tau,
		\end{split}
	\end{equation}
	\begin{equation}\label{nabla2Y}
		\begin{split}
			\nabla_x^2Y_{s,t}&=\int_s^t(\tau-s)\nabla_{x}^2Y_{\tau,t}\nabla \widetilde{E}(\tau)d\tau+\int_s^t(\tau-s)(I+\nabla_{x}Y_{\tau,t})^{\otimes 2}:\nabla^2 \widetilde{E}(\tau)d\tau,\\
			\nabla_v^2Y_{s,t}&=\int_s^t(\tau-s)\nabla_{v}^2Y_{\tau,t}\nabla \widetilde{E}(\tau)\,d\tau+\int_s^t(\tau-s)(\tau I+\nabla_{v}Y_{\tau,t})^{\otimes 2}:\nabla^2 \widetilde{E}(\tau)d\tau,\\
			\nabla_x\nabla_{v}Y_{s,t}&=\int_s^t(\tau-s)\nabla_{x}\nabla_vY_{\tau,t}\nabla \widetilde{E}(\tau)d\tau\\
			&+\int_s^t(\tau-s)(I+\nabla_{x}Y_{\tau,t})\otimes (\tau I+\nabla_{v}Y_{\tau,t}):\nabla^2 \widetilde{E}(\tau)d\tau.
		\end{split}
	\end{equation}
	
	Using the bootstrap bounds \eqref{n12} together with \eqref{bo1}, we reduce each quantity to an oscillatory part controlled by $P_{n_1,n_2}$ plus a regular contribution estimated by \eqref{z210}--\eqref{z212}. Notice that $|Y_{\tau,t}|\lesssim1$, due to \eqref{n12}. Therefore, using \eqref{bo1}, \eqref{a23}, and \eqref{Ebounds1}, we have
	\begin{equation}\label{Ecomp}
		\begin{split}
			|\nabla_x^q&E^{osc}(\tau,x+\tau v+Y_{\tau,t}(x,v))
			-\nabla_x^qE^{osc}(\tau,x+\tau v)|\\
			&+|\nabla_x^qE^{reg}(\tau,x+\tau v+Y_{\tau,t}(x,v))|\lesssim M_T\langle\tau\rangle^{-\frac12-\frac{2q}{3}}\Phi(\tau,x+\tau v),
		\end{split}
	\end{equation}
	for $q=0,1,2$. 
	
	We would like to use the identities \eqref{Yinteq} and \eqref{nabla1Y}--\eqref{nabla2Y} to close the bootstrap argument. The terms containing only the free factors $I$ and $\tau I$ can be bounded by the corresponding $P_{n_1,n_2}$ (and by
	\eqref{q62p} in the last
	estimate).  Every other term is covered by the displayed bound or contains
	a derivative of $Y_{\tau,t}$ and is bounded by \eqref{n12} and
	\eqref{Ebounds1}; the
	$\nabla_v^2Y_{\tau,t}$ terms are treated separately in the last two bounds. 
	
	More precisely, starting from \eqref{Yinteq} and using \eqref{Ecomp} we can estimate
	\begin{equation}\label{Ysu1}
		\begin{split}
			|Y_{s,t}|+\langle s\rangle	|\partial_sY_{s,t}|&\lesssim \left|\int_s^t(\tau-s) \widetilde{E}^{osc}(\tau)d\tau\right|+\langle s\rangle\left|\int_s^t \widetilde{E}^{osc}(\tau)d\tau\right|+\int_s^t\langle \tau\rangle| \widetilde{E}^{reg}(\tau)|\,d\tau\\&
			\lesssim 	P_{0,0}+M_T\int_s^t\langle \tau\rangle^{\frac{1}{2}}\Phi(\tau,x+\tau v)d\tau.
		\end{split}
	\end{equation}
	
	Similarly, using \eqref{nabla1Y}, \eqref{Ecomp}, and \eqref{Ebounds1} we estimate
	\begin{equation}\label{Ysu2}
		\begin{split}
			|\nabla_xY_{s,t}|+\langle s\rangle|\nabla_x\partial_sY_{s,t}|&\lesssim \Big|\int_s^t(\tau-s)\nabla \widetilde{E}^{osc}(\tau)d\tau\Big|+\langle s\rangle \Big|\int_s^t\nabla \widetilde{E}^{osc}(\tau)d\tau\Big|\\
			&+\int_s^t\langle \tau\rangle|\nabla \widetilde{E}^{reg}(\tau)|\,d\tau+\int_s^t\langle \tau\rangle^{1-\frac{7}{6}-\kappa_0}|\nabla \widetilde{E}(\tau)|\,d\tau\\
			&\lesssim	P_{0,1}+M_T\int_s^t\langle \tau\rangle^{-\frac{1}{6}}\Phi(\tau,x+\tau v)d\tau,
		\end{split}
	\end{equation}
	and
	\begin{equation}\label{Ysu3}
		\begin{split}
			|\nabla_vY_{s,t}|+\langle s\rangle|\nabla_v\partial_sY_{s,t}|&\lesssim \Big|\int_s^t(\tau-s)\tau\nabla \widetilde{E}^{osc}(\tau)d\tau\Big|+\langle s\rangle \Big|\int_s^t \tau\nabla \widetilde{E}^{osc}(\tau)d\tau\Big|\\
			&+\int_s^t\langle \tau\rangle^2|\nabla \widetilde{E}^{reg}(\tau)|\,d\tau+\int_s^t\langle \tau\rangle^{1-\frac{1}{6}-\kappa_0}|\nabla \widetilde{E}(\tau)|\,d\tau\\
			&\lesssim P_{1,0}+M_T\int_s^t\langle \tau\rangle^{\frac{5}{6}}\Phi(\tau,x+\tau v)d\tau.
		\end{split}
	\end{equation}
	
	Similarly, using \eqref{nabla2Y},
	\eqref{Ecomp}, and \eqref{Ebounds1}, we obtain
	\begin{equation}\label{Ysu4}
		\begin{split}
			&|\nabla_x^2Y_{s,t}|+\langle s\rangle|\nabla_x^2\partial_sY_{s,t}|
			\lesssim\Big|\int_s^t(\tau-s)\nabla^2\widetilde E^{osc}(\tau)\,d\tau\Big|
			+\langle s\rangle\Big|\int_s^t\nabla^2\widetilde E^{osc}(\tau)\,d\tau\Big|\\
			&+\int_s^t\langle\tau\rangle|\nabla^2\widetilde E^{reg}(\tau)|\,d\tau+\int_s^t\langle\tau\rangle^{1-\frac{11}{6}-\kappa_0}
			|\nabla\widetilde E(\tau)|\,d\tau+\int_s^t\langle\tau\rangle^{1-\frac{7}{6}-\kappa_0}|\nabla^2\widetilde E(\tau)|\,d\tau\\
			&\lesssim
			P_{0,2}
			+M_T\int_s^t
			\langle\tau\rangle^{-\frac56}
			\Phi(\tau,x+\tau v)\,d\tau,
		\end{split}
	\end{equation}
	and
	\begin{equation}\label{Ysu5}
		\begin{split}
			&|\nabla_x\nabla_vY_{s,t}|+\langle s\rangle|\nabla_x\nabla_v\partial_sY_{s,t}|
			\lesssim\Big|\int_s^t(\tau-s)\tau
			\nabla^2\widetilde E^{osc}(\tau)\,d\tau\Big|+\langle s\rangle
			\Big|\int_s^t\tau\nabla^2\widetilde E^{osc}(\tau)\,d\tau\Big|\\
			&+\int_s^t\langle\tau\rangle^2
			|\nabla^2\widetilde E^{reg}(\tau)|\,d\tau+\int_s^t\langle\tau\rangle^{1-\frac56-\kappa_0}
			|\nabla\widetilde E(\tau)|\,d\tau+\int_s^t
			\langle\tau\rangle^{1-\frac16-\kappa_0}
			|\nabla^2\widetilde E(\tau)|\,d\tau\\
			&\lesssim P_{1,1}+M_T\int_s^t\langle\tau\rangle^{\frac16}\Phi(\tau,x+\tau v)\,d\tau.
		\end{split}
	\end{equation}
	
	To bound the two $v$-derivatives, we notice that $|\nabla_v^2Y_{\tau,t}|\lesssim \langle t\rangle^{\frac16-\kappa_0}$ (due to \eqref{n12}) thus
	\begin{equation}\label{Ysu6}
		\begin{split}
			&|\nabla_v^2Y_{s,t}|\lesssim\Big|\int_s^t(\tau-s)\tau^2\nabla^2\widetilde E^{osc}(\tau)\,d\tau\Big|
			+\int_s^t\langle\tau\rangle^3|\nabla^2\widetilde E^{reg}(\tau)|\,d\tau\\
			&+\int_s^t\langle\tau\rangle^{2-\frac16-\kappa_0}|\nabla^2\widetilde E(\tau)|\,d\tau+
			\langle t\rangle^{\frac16-\kappa_0}\int_s^t\langle\tau\rangle|\nabla\widetilde E(\tau)|\,d\tau\\
			&\lesssim
			P_{2,0}+M_T\int_s^t
			\langle\tau\rangle^{\frac76}
			\Phi(\tau,x+\tau v)\,d\tau+	M_T\langle t\rangle^{\frac16-\kappa_0}
			\int_s^t
			\langle\tau\rangle^{\frac13+2\kappa_0}
			\Phi(\tau,x+\tau v)\,d\tau
		\end{split}
	\end{equation}
	and
	\begin{equation}\label{Ysu7}
		\begin{split}
			&|\nabla_v^2\partial_sY_{s,t}|\lesssim\Big|\int_s^t\tau^2\nabla^2\widetilde E^{osc}(\tau)\,d\tau\Big|
			+\int_s^t\langle\tau\rangle^2|\nabla^2\widetilde E^{reg}(\tau)|\,d\tau\\
			&+\int_s^t\langle\tau\rangle^{1-\frac16-\kappa_0}|\nabla^2\widetilde E(\tau)|\,d\tau+
			\langle t\rangle^{\frac16-\kappa_0}
			\int_s^t|\nabla\widetilde E(\tau)|\,d\tau\lesssim\Big|\int_s^t\tau^2\nabla^2E^{osc}(\tau,x+\tau v)\,d\tau\Big|\\
			&+M_T\int_s^t\langle\tau\rangle^{\frac16}\Phi(\tau,x+\tau v)\,d\tau+M_T\langle t\rangle^{\frac16-\kappa_0}\int_s^t\langle\tau\rangle^{-\frac23+2\kappa_0}\Phi(\tau,x+\tau v)\,d\tau.
		\end{split}
	\end{equation}

	\textbf{Step 3: conclusion.}
	For any $\beta\neq1+\kappa_0$, Lemma~\ref{z216} (ii) gives
	\begin{equation}\label{IntBouRep}
		\int_s^t
		\langle\tau\rangle^\beta
		\Phi(\tau,x+\tau v)\,d\tau
		\lesssim
		\langle v\rangle^{-1}
		\begin{cases}
			\langle s\rangle^{\beta-1-\kappa_0},
			&\beta<1+\kappa_0,\\
			\langle t\rangle^{\beta-1-\kappa_0},
			&\beta>1+\kappa_0.
		\end{cases}
	\end{equation}
	We apply this with $\beta=1/2, \beta=-1/6, \beta=5/6, \beta=-5/6, \beta=1/6$, and use the bounds \eqref{Ysu1}--\eqref{Ysu5}, and the bounds \eqref{q60}, \eqref{q61}, and \eqref{q62} (with $n_1\leq 1$). The desired bounds \eqref{n13}, \eqref{n14}, \eqref{xf}, and \eqref{n15} follow. Moreover, the bounds \eqref{Ysu6}, \eqref{q62} (with $n_1=2$), and \eqref{IntBouRep} show that
	\begin{align*}
		|\nabla_v^2Y_{s,t}|&\lesssim
		M_T\langle t\rangle^{\frac23+\kappa_0}
		\langle v,t^{\frac12+2\kappa_0}\rangle^{-1}+M_T\langle v\rangle^{-1}
		\langle t\rangle^{\frac16-\kappa_0}\lesssim
		M_T\langle t\rangle^{\frac23+\kappa_0}
		\langle v,t^{\frac12+2\kappa_0}\rangle^{-1},
	\end{align*}
	which proves \eqref{n17}. Finally, the bounds \eqref{Ysu7}, \eqref{q62p}, and \eqref{IntBouRep} show that
	\begin{align*}
		|\nabla_v^2\partial_sY_{s,t}|&\lesssim M_T\langle s^{\frac12+2\kappa_0},v\rangle^{-1}
		\langle s\rangle^{-\frac13+\kappa_0}+M_T\langle v\rangle^{-1}
		\langle s\rangle^{-\frac56-\kappa_0}+ M_T\langle t\rangle^{\frac16-\kappa_0}\langle v\rangle^{-1}\langle s\rangle^{-\frac53+\kappa_0}\\
		&\lesssim
		M_T\langle t\rangle^{\frac16-\kappa_0}
		\langle s^{\frac12+2\kappa_0},v\rangle^{-1}
		\langle s\rangle^{-\frac12+2\kappa_0},
	\end{align*}
	which proves \eqref{n17b}.
	
	It remains to close the bootstrap. The bounds \eqref{n13}--\eqref{n17b} and the definition \eqref{n12} show that \(A(T')\leq C M_T\) whenever \(A(T')\leq1\). Since $A(0)=0$ and $A(T')$ is continuous it follows that $A(T)\lesssim M_T$ provided that $M_T$ is sufficiently small. This completes the proof.
\end{proof}
	
\subsection{Additional estimates} For later use we record a few more estimates that follow from our arguments so far. Since $\partial_s^2Y_{s,t}(x,v)=E(s,x+sv+Y_{s,t}(x,v))$, we have
		\begin{align}
			\label{x200}
			|\partial_s^2Y_{s,t}(x,v)|
			&\lesssim M_T\langle s,x+sv\rangle^{-2+50\kappa_0},\\
			|\partial_s^2\nabla_vY_{s,t}(x,v)|
			&\lesssim M_T\langle s,x+sv\rangle^{-\frac{5}{3}+\kappa_0},\notag\\
			\label{x202}
			|\partial_s^2\nabla_v^2Y_{s,t}(x,v)|
			&\lesssim M_T\big(\langle s\rangle^{\frac23}+\langle s\rangle^{\frac13}\langle t\rangle^{\frac16-\kappa_0}\big)\langle s,x+sv\rangle^{-2+\kappa_0}.
		\end{align}
Moreover, it follows from \eqref{Ysu7} and integration by parts that
		\begin{align}\label{x203}
			&|\nabla_v^2W_{s,t}(x,v)|\lesssim M_T\sum_{\tau\in\{s,t\}}\Big(\langle\tau\rangle^{\frac23+2\kappa_0}
			+\langle\tau\rangle^{-\frac23+2\kappa_0}\langle t\rangle^{\frac16-\kappa_0}\Big)\Phi(\tau,x+\tau v) \notag\\
			&\quad
			+M_T\langle v\rangle\int_s^t\langle\tau\rangle^{\frac16}\Phi(\tau,x+\tau v)\,d\tau
			+M_T\langle v\rangle\langle t\rangle^{\frac16-\kappa_0}\int_s^t\langle\tau\rangle^{-\frac23+2\kappa_0}\Phi(\tau,x+\tau v)d\tau .
		\end{align}
For the weighted oscillatory integrals, use the primitives
$(\tau-s)\sin\tau+\cos\tau$ and
$-(\tau-s)\cos\tau+\sin\tau$, respectively.
Integration by parts, together with \eqref{Ebounds1} and
\eqref{Ysu1}, then gives
\begin{align*}
|Y_{s,t}(x,v)|+\langle s\rangle|\partial_sY_{s,t}(x,v)|
&\lesssim M_T\sum_{\tau\in\{s,t\}}
\langle\tau\rangle^{1+50\kappa_0}
\langle\tau,x+\tau v\rangle^{-2}\\
&\quad+M_T\int_s^t
\bigl(1+\langle v\rangle
\langle\tau\rangle^{\frac16+2\kappa_0}\bigr)
\langle\tau\rangle^{\frac12}
\Phi(\tau,x+\tau v)\,d\tau.
\end{align*}

	\subsection{Estimates along the shifted ray and the change of variables} Let
	\begin{equation}\label{newDefY}
	\widetilde Y_{s,t}(x,v):=Y_{s,t}(x-tv,v),\qquad \widetilde W_{s,t}(x,v):=W_{s,t}(x-tv,v).
	\end{equation}
	Recalling the function $X$ defined in \eqref{Lan1}, we notice that
	\begin{equation}\label{shifted-Y-equation}
	\begin{split}
	&X_{s,t}(x,v)=x-(t-s)v+\widetilde Y_{s,t}(x,v),\\
	&\widetilde Y_{s,t}(x,v)=\int_s^t(r-s)E\bigl(r,X_{r,t}(x,v)\bigr)\,dr.
	\end{split}
	\end{equation}
Differentiating \eqref{shifted-Y-equation} with respect to $x$ and $v$, we obtain
\begin{align}
	\nabla_x\widetilde Y_{s,t}&=\int_s^t(r-s)\nabla E(r,X_{r,t})\big(I+\nabla_x\widetilde Y_{r,t}\big)dr,
	\label{shifted-x-system}\\
	\nabla_v\widetilde Y_{s,t}&=\int_s^t(r-s)\nabla E(r,X_{r,t})\big(-(t-r)I+\nabla_v\widetilde Y_{r,t}\big)\,dr.
	\label{shifted-v-system}
\end{align}
Differentiating once more with respect to $v$ gives
\begin{align}
	\nabla_v^2\widetilde Y_{s,t}
	&=\int_s^t(r-s)\Big[\nabla E(r,X_{r,t})\nabla_v^2\widetilde Y_{r,t}+
	\nabla^2E(r,X_{r,t})(-(t-r)I+\nabla_v\widetilde Y_{r,t})^{\otimes 2}\Big]\,dr.
	\label{shifted-vv-system}
\end{align}

We prove now several estimates on the function $\widetilde{Y}$ and its derivatives. For simplicity of notation, for $s\leq t\in[0,T]$ let
	\begin{equation}\label{etast}
		\eta_{s,t}:=\frac{t-s}{\langle t\rangle}\approx\min\Big\{\frac{t-s}{\langle s\rangle},1\Big\}.
	\end{equation}

	\begin{lemma}[Shifted-ray estimates]\label{lem:shifted-ray} With $\widetilde{Y}$ defined as in \eqref{newDefY}, we have
		\begin{align}
			|\widetilde Y_{s,t}(x,v)|
			&\lesssim M_T\mathfrak{a}_0(s,v)\eta_{s,t}.
			\label{z80a}
		\end{align}
		Moreover, if $n_1,n_2\ge0$ and $n_1+n_2=1$, then
	\begin{align}\label{z80'b}
	&\left|\nabla_x^{n_1}\nabla_v^{n_2}\widetilde Y_{s,t}(x,v)\right|\lesssim M_T\mathfrak{a}_1(s,v)(t-s)^{n_2}\eta_{s,t}.
	\end{align}
	Finally, the refined second-derivative estimate is
	\begin{align}\label{z80'c}
		\left|\nabla_v^2\widetilde Y_{s,t}(x,v)\right|&\lesssim M_T\mathfrak{a}_2(s,v)(t-s)^2.
	\end{align}
	\end{lemma}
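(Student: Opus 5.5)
The plan is to obtain all three bounds from the unshifted estimates of Lemma~\ref{keyle}, using the identity $\widetilde Y_{s,t}(x,v)=Y_{s,t}(x-tv,v)$ and the chain rule. The extra smallness factor $\eta_{s,t}$ should come from the vanishing $Y_{t,t}=0$ together with the bounds on $\partial_sY$. Concretely, for \eqref{z80a} I would write
\[
\widetilde Y_{s,t}(x,v)=-\int_s^t\partial_rY_{r,t}(x-tv,v)\,dr .
\]
By \eqref{n13} and monotonicity of $\mathfrak a_0$ in $r$, this gives $|\widetilde Y_{s,t}|\lesssim M_T\mathfrak a_0(s,v)\int_s^t\langle r\rangle^{-1}dr\lesssim M_T\mathfrak a_0(s,v)\log(\langle t\rangle/\langle s\rangle)$. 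When $t\le 2\langle s\rangle$ this is $\lesssim M_T\mathfrak a_0(s,v)(t-s)/\langle s\rangle\approx M_T\mathfrak a_0\eta_{s,t}$. When $t\ge 2\langle s\rangle$ we have $\eta_{s,t}\approx1$, and the direct bound \eqref{n13} suffices. Since $\mathfrak a_0$ is evaluated at $(s,v)$ and is independent of $x$, the shift of $x$ causes no trouble.

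For the first-derivative bound \eqref{z80'b}, the chain rule gives
\[
\nabla_x\widetilde Y_{s,t}=(\nabla_xY_{s,t})(x-tv,v),\qquad
\nabla_v\widetilde Y_{s,t}=(\nabla_vY_{s,t}-t\nabla_xY_{s,t})(x-tv,v).
\]
The $x$-derivative is handled exactly as above, using \eqref{n14} for $\partial_r\nabla_xY_{r,t}$. For the $v$-derivative, the naive bound $t|\nabla_xY|$ carries the wrong power $t$ instead of $t-s$, so I would instead use the shifted integral equation \eqref{shifted-v-system}. The right-hand side involves $\nabla E(r,X_{r,t})$ multiplied by $(r-s)(t-r)$, plus a term with $\nabla_v\widetilde Y_{r,t}$ that can be absorbed via a Gronwall/bootstrap argument in the quantity $\sup_s|\nabla_v\widetilde Y_{s,t}|/((t-s)\eta_{s,t}\mathfrak a_1(s,v))$. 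In the main term I would rewrite $X_{r,t}(x,v)=(x-tv)+rv+Y_{r,t}(x-tv,v)$, so that the field is evaluated along the ray $y+rv$ with $y=x-tv$, as in Lemma~\ref{keyle}.

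The regular part of the field and the part coming from $Y$-corrections are then bounded by \eqref{Ecomp} and Lemma~\ref{z216}, with $(r-s)(t-r)\le(t-s)\min(r-s,t-r)$. The oscillatory part is treated like $P_{0,1}$ in Step 1: integrate by parts in $r$ against $\cos r,\sin r$, with the boundary terms vanishing at $r=s$ and $r=t$ because of the weight $(r-s)(t-r)$. The same choice of $s_0$ as in \eqref{choice-s0-P00} should reproduce the weight $\mathfrak a_1(s,v)$, with an extra factor $(t-s)$ and, from the $(r-s)$ factor relative to $\langle r\rangle$, the factor $\eta_{s,t}$.

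For \eqref{z80'c}, I would use \eqref{shifted-vv-system} in the same way. The main term is $\int_s^t(r-s)(t-r)^2\nabla^2E(r,X_{r,t})\,dr$. The cross terms with $\nabla_v\widetilde Y_{r,t}$ are controlled by the already-proved \eqref{z80'b}, and the linear term in $\nabla_v^2\widetilde Y_{r,t}$ is absorbed by Gronwall, since $\int_s^t(r-s)|\nabla E|\,dr\lesssim M_T$. For the main term, $(t-r)^2\le(t-s)^2$, and it remains to bound $\int_s^t(r-s)\nabla^2E\,dr$-type integrals with weight $\langle s\rangle\mathfrak a_2(s,v)$, as in \eqref{n15}/$P_{0,2}$. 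Here the difficulty is that $(t-r)^2$ is not monotone under integration by parts, so its derivative $2(t-r)$ produces the $P_{1,1}$-type integrals. I expect this oscillatory integration by parts, with the non-constant weight $(r-s)(t-r)^2$ and the comparison of the rays through $X_{r,t}$, to be the main obstacle. I would first try splitting $[s,t]$ at the midpoint, using $(t-r)\approx t-s$ on the first half and $(r-s)\approx t-s$ on the second half, and then applying the $P_{n_1,n_2}$ bounds \eqref{q62} and \eqref{q62p} on each piece.
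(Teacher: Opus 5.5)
Your treatment of \eqref{z80a} and of the $\nabla_x$ case of \eqref{z80'b} is the paper's. For the $v$-derivatives you take a different route: you re-run the oscillatory integration by parts of Step~1 of Lemma~\ref{keyle} with the weights $(r-s)(t-r)$ and $(r-s)(t-r)^2$. The paper never returns to the field along rays. By \eqref{shifted-x-system}, $\partial_r^2\nabla_x\widetilde Y_{r,t}=\nabla E(r,X_{r,t})(I+\nabla_x\widetilde Y_{r,t})$, and $\nabla_x\widetilde Y_{r,t}$ and its $r$-derivative vanish at $r=t$. Two integrations by parts then give
\[
\int_s^t(r-s)(t-r)\,\nabla E(r,X_{r,t})(I+\nabla_x\widetilde Y_{r,t})\,dr=(t-s)\nabla_x\widetilde Y_{s,t}-2\int_s^t\nabla_x\widetilde Y_{r,t}\,dr .
\]
The main term is therefore controlled by the $(1,0)$ case of \eqref{z80'b}, i.e.\ by \eqref{n14}, which already accounts for the regular part, both oscillatory parts, and the ray corrections. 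Your first-derivative route can be made to work with $s_0$ as in \eqref{choice-s0-P00}. Note, however, that when $t-s\lesssim\langle s\rangle$ the factor $\eta_{s,t}$ must come from the length of $[s,t]$, not from the factor $r-s$: the bulk term $\int_{s_0}^t|t+s-2r|\,|\nabla E^\ast|\,dr$ contains no such factor.

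The gap is in \eqref{z80'c}. You leave the main term $\int_s^t(r-s)(t-r)^2\nabla^2E(r,X_{r,t})\,dr$ open, and the suggested remedy does not work as stated, for three reasons. First, you cannot replace $(t-r)^2$ by $(t-s)^2$ inside a signed oscillatory integral, and a midpoint split does not make this weight constant on either half. Second, \eqref{q62p} and $P_{1,1}$ carry only $\langle s\rangle^{-1/3+\kappa_0}$, a full power of $\langle s\rangle$ worse than $P_{0,2}$. Third, unless compensated by an extra factor $t-s$, such bounds cannot give $M_T\mathfrak a_2(s,v)(t-s)^2$ when $t-s\ll\langle s\rangle$, which is exactly the regime needed for \eqref{q9-new}.

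The missing idea is summation by parts against a primitive that is already controlled. The paper sets $H(r):=\nabla E(r,X_{r,t})\nabla_x^2\widetilde Y_{r,t}+\nabla^2E(r,X_{r,t})(I+\nabla_x\widetilde Y_{r,t})^{\otimes2}$, so that $\partial_r^2\nabla_x^2\widetilde Y_{r,t}=H(r)$, and uses
\[
\int_s^t(r-s)(t-r)^2H(r)\,dr=(t-s)^2\nabla_x^2\widetilde Y_{s,t}+\int_s^t\partial_r^2\big[(r-s)(t-r)^2\big]\nabla_x^2\widetilde Y_{r,t}\,dr .
\]
It combines this with \eqref{n15}, with $|\partial_r^2[(r-s)(t-r)^2]|\lesssim t-s$, and with the estimate $(-(t-r)I+\nabla_v\widetilde Y_{r,t})^{\otimes2}-(t-r)^2(I+\nabla_x\widetilde Y_{r,t})^{\otimes2}=O(M_T\mathfrak a_1(r,v)(t-r)^2)$ from \eqref{z80'b}.

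The same idea also works within your own framework. Let $y=x-tv$ and $G(r):=\int_s^r(\tau-s)\nabla^2E^{osc}(\tau,y+\tau v)\,d\tau$. The case $n_1=0$, $n_2=2$ of \eqref{q62} gives $|G(r)|\lesssim M_T\mathfrak a_2(s,v)$ uniformly in $r\in[s,t]$, since that bound does not depend on the upper endpoint. Moreover,
\[
\int_s^t(r-s)(t-r)^2\,\nabla^2E^{osc}(r,y+rv)\,dr=2\int_s^t(t-r)\,G(r)\,dr ,
\]
which is $\lesssim M_T\mathfrak a_2(s,v)(t-s)^2$ with no splitting at all. The regular part and the corrections from \eqref{Ecomp} are bounded directly by $M_T(t-s)^2\int_s^t\langle r\rangle^{-5/6}\Phi(r,y+rv)\,dr\lesssim M_T(t-s)^2\mathfrak a_2(s,v)$.
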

	
\begin{proof}  Since $\widetilde Y_{t,t}=0$, we have
	\begin{equation*}
		\widetilde Y_{s,t}(x,v)=-\int_s^t\partial_rY_{r,t}(x-tv,v)\,dr.
	\end{equation*}
	Therefore, by \eqref{n13},	$|\widetilde Y_{s,t}(x,v)|\lesssim M_T\mathfrak a_0(s,v)$ and
	\begin{align*}
|\widetilde Y_{s,t}(x,v)|
		&\lesssim M_T\int_s^t\frac{\mathfrak a_0(r,v)}{\langle r\rangle}\,dr
		\lesssim M_T\mathfrak a_0(s,v)\frac{t-s}{\langle s\rangle}.
	\end{align*}
The desired bounds \eqref{z80a} follow. 

The bounds \eqref{z80'b} follow by the same argument, using \eqref{n14} instead of \eqref{n13}, if $(n_1,n_2)=(1,0)$. On the other hand, if $(n_1,n_2)=(0,1)$ then we use first \eqref{shifted-x-system} and integrate by parts to see that
\begin{align*}
&\int_s^t(r-s)(t-r)\nabla E(r,X_{r,t})
\big(I+\nabla_x\widetilde Y_{r,t}\big)\,dr=(t-s)\nabla_x\widetilde Y_{s,t}-2\int_s^t\nabla_x\widetilde Y_{r,t}\,dr.
\end{align*}
Notice that the functions $\eta_{s,t}$, $\mathfrak{a}_1(s,v)$, and $t-s$ are all decreasing in $s$. We use now the bounds \eqref{z80'b} with $(n_1,n_2)=(1,0)$ to conclude that
\[
\left|\int_s^t(r-s)(t-r)\nabla E(r,X_{r,t})\big(I+\nabla_x\widetilde Y_{r,t}\big)\,dr\right|\lesssim M_T\mathfrak a_1(s,v)(t-s)\eta_{s,t}.
\]
Moreover,
\begin{align*}
\left|\int_s^t(r-s)(t-r)\nabla E(r,X_{r,t})\nabla_x\widetilde Y_{r,t}\,dr\right|&\lesssim
\sup_{r\in[s,t]}\big((t-r)|\nabla_x\widetilde Y_{r,t}|\big)\int_s^t(r-s)|\nabla E(r,X_{r,t})|\,dr\\
&\lesssim M_T^2\mathfrak a_1(s,v)(t-s)\eta_{s,t},
\end{align*}
where the last estimates follow because $\|\nabla E(r)\|_{L^\infty_x}\lesssim M_T\langle r\rangle^{-\frac83+\kappa_0}$. Therefore
\begin{equation}\label{sec4eq1}
\left|\int_s^t(r-s)(t-r)\nabla E(r,X_{r,t})\,dr\right|\lesssim M_T\mathfrak a_1(s,v)(t-s)\eta_{s,t}.
\end{equation}

Using \eqref{shifted-v-system}, we obtain
\begin{equation}\label{sec4eq2}
|\nabla_v\widetilde Y_{s,t}(x,v)|\lesssim M_T\mathfrak a_1(s,v)(t-s)\eta_{s,t}+\int_s^tM_T\langle r\rangle^{-\frac32}|\nabla_v\widetilde Y_{r,t}(x,v)|\,dr,
\end{equation}
for any $s\leq t\in[0,T]$ and $x,v\in\R^2$. Notice that the function 
\(
s\mapsto
\mathfrak a_1(s,v)(t-s)\eta_{s,t}
\)
is decreasing in $s$  and the integral kernel in \eqref{sec4eq2} has $L^1$ norm $\lesssim M_T$. A standard Volterra absorption argument gives
\[
|\nabla_v\widetilde Y_{s,t}(x,v)|\lesssim M_T\mathfrak a_1(s,v)(t-s)\eta_{s,t},
\]
which completes the proof of \eqref{z80'b}.

Finally, we prove the bounds \eqref{z80'c}. In view of \eqref{shifted-vv-system} and the estimates $\|\nabla E(r)\|_{L^\infty_x}\lesssim M_T\langle r\rangle^{-\frac52}$ used earlier, for \eqref{z80'c} it suffices to prove that
\begin{equation}\label{sec4eq3}
\Big|\int_s^t(r-s)\nabla^2E(r,X_{r,t})(-(t-r)I+\nabla_v\widetilde Y_{r,t})^{\otimes 2}\,dr\Big|\lesssim M_T\mathfrak{a}_2(s,v)(t-s)^2
\end{equation}
for any $s\leq t\in[0,T]$ and $x,v\in\R^2$.

We differentiate \eqref{shifted-x-system} in $x$ to conclude that 
\begin{equation*}
\nabla_x^2\widetilde{Y}_{s,t}=\int_s^t(r-s)\Big[\nabla E(r,X_{r,t})\nabla_x^2\widetilde Y_{r,t}+
	\nabla^2E(r,X_{r,t})(I+\nabla_x\widetilde Y_{r,t})^{\otimes 2}\Big]\,dr.
\end{equation*}
As in the preceding integration-by-parts calculation, we integrate by parts twice to conclude that
\begin{align*}
&\int_s^t(r-s)(t-r)^2\Big[\nabla E(r,X_{r,t})\nabla_x^2\widetilde Y_{r,t}+\nabla^2E(r,X_{r,t})\big(I+\nabla_x\widetilde Y_{r,t}\big)^{\otimes2}\Big]\,dr\\
&\qquad=(t-s)^2\nabla_x^2\widetilde Y_{s,t}+\int_s^t\partial_r^2\big[(r-s)(t-r)^2\big]\nabla_x^2\widetilde Y_{r,t}\,dr.
\end{align*}
The bounds \eqref{n15} give
\begin{equation*}
|\nabla_x^2\widetilde Y_{r,t}(x,v)|\lesssim M_T\mathfrak a_2(r,v),\qquad r\in[s,t].
\end{equation*}
Since $\left|\partial_r^2\big[(r-s)(t-r)^2\big]\right|\lesssim t-s$ and $\mathfrak a_2$ is decreasing, it follows that
\begin{equation}\label{sec4eq4}
\begin{split}
\Big|\int_s^t(r-s)(t-r)^2\Big[\nabla E(r,X_{r,t})\nabla_x^2\widetilde Y_{r,t}
+\nabla^2E(r,X_{r,t})\big(I+\nabla_x\widetilde Y_{r,t}\big)^{\otimes2}\Big]\,dr\Big|\\
\lesssim M_T\mathfrak a_2(s,v)(t-s)^2.
\end{split}
\end{equation}

In addition,
\begin{align*}
&\Big|\int_s^t(r-s)(t-r)^2\nabla E(r,X_{r,t})\nabla_x^2\widetilde Y_{r,t}\,dr\Big|\\
&\lesssim
\sup_{r\in[s,t]}\big((t-r)^2|\nabla_x^2\widetilde Y_{r,t}|\big)\int_s^t(r-s)|\nabla E(r,X_{r,t})|\,dr\lesssim
M_T^2\mathfrak a_2(s,v)(t-s)^2.
\end{align*}
Using also \eqref{sec4eq4} it follows that
\begin{equation}\label{sec4eq5}
\Big|\int_s^t(r-s)(t-r)^2\nabla^2E(r,X_{r,t})\big(I+\nabla_x\widetilde Y_{r,t}\big)^{\otimes2}\,dr\Big|
\lesssim M_T\mathfrak a_2(s,v)(t-s)^2.
\end{equation}

On the other hand, the bounds \eqref{z80'b} give
\[
\big|\nabla_v\widetilde Y_{r,t}\big|+(t-r)\big|\nabla_x\widetilde Y_{r,t}\big|\lesssim M_T\mathfrak a_1(r,v)(t-r)\eta_{r,t}.
\]
Therefore
\begin{align*}
&\big|\big(-(t-r)I+\nabla_v\widetilde Y_{r,t}\big)^{\otimes2}-(t-r)^2\big(I+\nabla_x\widetilde Y_{r,t}\big)^{\otimes2}\big|\lesssim
M_T\mathfrak a_1(r,v)(t-r)^2.
\end{align*}
Notice that 
\[
\mathfrak a_1(r,v)\lesssim\mathfrak a_2(r,v)\langle r\rangle.
\]
The pointwise field estimates imply $|\nabla^2E(r,X_{r,t})|\lesssim M_T\langle r\rangle^{-\frac{10}{3}+\kappa_0}$
and therefore we obtain
\begin{align*}
&\int_s^t(r-s)|\nabla^2E(r,X_{r,t})|\left|\big(-(t-r)I+\nabla_v\widetilde Y_{r,t}\big)^{\otimes2}
-(t-r)^2\big(I+\nabla_x\widetilde Y_{r,t}\big)^{\otimes2}
\right|\,dr\\
&\qquad\lesssim M_T^2\mathfrak a_2(s,v)(t-s)^2.
\end{align*}
The desired bounds \eqref{sec4eq3} follow using also \eqref{sec4eq5}, which completes the proof.
\end{proof}

	In several spacetime integrals we need to straighten the characteristic map
	\(
	v\mapsto X_{s,t}(x,v)
	\)
	into the free transport map
	\(
	v\mapsto x-(t-s)v
	\).
	To this end, we introduce the following velocity reparametrization.
	
	\begin{lemma}[Approximate free-transport reparametrization]\label{lem:Psi}
		For every $0\leq s<t\leq T$ and $x,v\in\mathbb{R}^2$, there exists a unique
		$\Psi_{s,t}(x,v)\in\mathbb{R}^2$ satisfying
		\begin{equation}\label{X(x,Psi)-new}
			X_{s,t}\!\big(x,\Psi_{s,t}(x,v)\big)=x-(t-s)v.
		\end{equation}
		Equivalently,
		\begin{equation}\label{z16a-new}
			\Psi_{s,t}(x,v)-v
			=
			\frac{1}{t-s}\,\widetilde{Y}_{s,t}\!\big(x,\Psi_{s,t}(x,v)\big).
		\end{equation}
		On the diagonal, we set $\Psi_{t,t}(x,v):=v$.
		Then the map $v\mapsto \Psi_{s,t}(x,v)$ is a $C^2$ diffeomorphism close to the identity, and the following estimates hold.
		
		(i) First-order bounds: for any $n_1+n_2\le 1$,
		\begin{align}\label{z52-new}
			|\nabla_x^{n_1}\nabla_v^{n_2}&(\Psi_{s,t}(x,v)-v)|\lesssim
			\begin{cases}
			M_T\mathfrak{a}_0(s,v)\langle t\rangle^{-1}\qquad&\text{ if }n_1=n_2=0,\\
			M_T\mathfrak{a}_1(s,v)\langle t\rangle^{-1}(t-s)^{n_2}\qquad&\text{ if }n_1+n_2=1.
			\end{cases}
		\end{align}

		(ii) Second $v$-derivative bound:
		\begin{equation}\label{q9-new}
			|\nabla_v^2\Psi_{s,t}(x,v)|\lesssim M_T\mathfrak{a}_2(s,v)(t-s).
		\end{equation}

		(iii) Jacobian control: in particular,
		\begin{equation}\label{z142c-new}
			\big|\det(\nabla_v\Psi_{s,t}(x,v))-1\big|\lesssim
			M_T\mathfrak{a}_1(s,v)\langle t\rangle^{-1}(t-s).
		\end{equation}
	\end{lemma}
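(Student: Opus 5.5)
We will construct $\Psi_{s,t}(x,\cdot)$ as the fixed point of the map
\[
\mathcal T(w):=v+\frac{1}{t-s}\widetilde Y_{s,t}(x,w),
\]
which is exactly \eqref{z16a-new}. Equivalence with \eqref{X(x,Psi)-new} is immediate from \eqref{shifted-Y-equation}: $X_{s,t}(x,w)=x-(t-s)w+\widetilde Y_{s,t}(x,w)$ equals $x-(t-s)v$ if and only if $w=\mathcal T(w)$.

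\textbf{Existence and uniqueness.} By \eqref{z80'b} with $(n_1,n_2)=(0,1)$,
\[
|\nabla_w\mathcal T(w)|\lesssim M_T\mathfrak a_1(s,w)\,\eta_{s,t}\lesssim M_T\ll1 .
\]
So $\mathcal T$ is a contraction on $\mathbb R^2$, and the Banach fixed point theorem gives a unique $\Psi_{s,t}(x,v)$. Since $\widetilde Y_{s,t}$ is $C^2$ in $(x,v)$ (the field is $C^2$ under the bootstrap), the implicit function theorem applied to $G(v,w)=w-v-(t-s)^{-1}\widetilde Y_{s,t}(x,w)$ shows that $v\mapsto\Psi$ is $C^2$. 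Its inverse is explicit, $v=w-(t-s)^{-1}\widetilde Y_{s,t}(x,w)$, so the map is a $C^2$ diffeomorphism. As $s\to t$ one has $\widetilde Y_{s,t}=O((t-s)^2)$, which makes the diagonal convention $\Psi_{t,t}=v$ consistent.

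\textbf{Zeroth- and first-order bounds.} Using \eqref{z80a} and $\eta_{s,t}/(t-s)=\langle t\rangle^{-1}$,
\[
|\Psi-v|\lesssim M_T\,\mathfrak a_0(s,\Psi)\langle t\rangle^{-1}.
\]
We then replace $\mathfrak a_0(s,\Psi)$ by $\mathfrak a_0(s,v)$. This is the one point needing care: the weights are only comparable when $\langle\Psi\rangle\approx\langle v\rangle$. That comparability holds because $|\Psi-v|\lesssim M_T\le1$, and the weights $\langle v\rangle^{-1}$ and $\langle s,|v|^{3/2}\rangle$ are stable under bounded perturbations of $v$ (indeed $|v+O(1)|^{3/2}\lesssim |v|^{3/2}+\langle v\rangle^{1/2}\lesssim\langle s,|v|^{3/2}\rangle$).

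For derivatives, differentiate the fixed-point identity:
\[
\nabla\Psi=\bigl(I-(t-s)^{-1}\nabla_w\widetilde Y\bigr)^{-1}\bigl(\text{free term}+(t-s)^{-1}\nabla_x\widetilde Y\bigr),
\]
where the free term is $I$ for a $v$-derivative and $0$ for an $x$-derivative. By \eqref{z80'b} the inverted matrix is $I+O(M_T\mathfrak a_1\langle t\rangle^{-1}(t-s))$. This gives
\[
|\nabla_v\Psi-I|\lesssim M_T\mathfrak a_1\langle t\rangle^{-1}(t-s),\qquad |\nabla_x\Psi|\lesssim M_T\mathfrak a_1\eta_{s,t}(t-s)^{-1}=M_T\mathfrak a_1\langle t\rangle^{-1}.
\]
Together these prove \eqref{z52-new}. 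The Jacobian bound \eqref{z142c-new} then follows by expanding the $2\times2$ determinant, since the quadratic terms are smaller.

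\textbf{Second derivative.} Differentiating once more,
\[
\nabla_v^2\Psi=(I-A)^{-1}(t-s)^{-1}\nabla_w^2\widetilde Y(x,\Psi)\bigl[\nabla_v\Psi,\nabla_v\Psi\bigr],\qquad A=(t-s)^{-1}\nabla_w\widetilde Y .
\]
Using \eqref{z80'c}, $|\nabla_v\Psi|\lesssim1$, and the same comparability of $\mathfrak a_2(s,\Psi)$ with $\mathfrak a_2(s,v)$ (which again holds since $|\Psi-v|\lesssim1$), we get
\[
|\nabla_v^2\Psi|\lesssim M_T\mathfrak a_2(s,v)(t-s)^2/(t-s)=M_T\mathfrak a_2(s,v)(t-s),
\]
which is \eqref{q9-new}.

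The main potential obstacle is only the uniformity of the contraction, namely that $\mathfrak a_1\eta_{s,t}$ stays bounded uniformly in $s,t,v$. This holds because $\mathfrak a_1\le 2$.
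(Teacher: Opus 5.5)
Your proposal is correct and follows essentially the same route as the paper. Both arguments use the contraction $w\mapsto v+(t-s)^{-1}\widetilde Y_{s,t}(x,w)$ from \eqref{z80'b} and the inverse function theorem for $w\mapsto w-(t-s)^{-1}\widetilde Y_{s,t}(x,w)$. They then use weight comparability from $|\Psi_{s,t}-v|\lesssim M_T$, and differentiate the fixed-point identity together with \eqref{z80a}--\eqref{z80'c}.

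The only slip is cosmetic. In your unified formula for $\nabla\Psi$, the term $(t-s)^{-1}\nabla_x\widetilde Y$ belongs only to the $x$-derivative; for the $v$-derivative one has simply $\nabla_v\Psi=(I-A)^{-1}$. That corrected form is what your subsequent bounds actually use.
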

	
	\begin{proof} For any $s<t\in[0,T]$ and $x\in\R^2$ define
\[
H_{s,t,x}(w):=w-\frac{1}{t-s}\widetilde Y_{s,t}(x,w).
\]
Then \eqref{X(x,Psi)-new} is equivalent to
\begin{equation}\label{sec4eq8}
H_{s,t,x}(\Psi_{s,t}(x,v))=v.
\end{equation}
By \eqref{z80'b},
\[
\sup_{x,w\in\R^2,\,s<t\in[0,T]}
\frac{1}{t-s}|\nabla_w\widetilde Y_{s,t}(x,w)|
\lesssim M_T.
\]
Therefore, for every $x,v\in\R^2$ and $s<t\in[0,T]$, the map
\[
w\mapsto v+\frac{1}{t-s}\widetilde Y_{s,t}(x,w)
\]
is a contraction of $\R^2$. It has a unique fixed point, denoted by
$\Psi_{s,t}(x,v)$, and this proves \eqref{z16a-new}. Moreover the map $w\mapsto H_{s,t,x}(w)$ is bijective and its Jacobian
\[
\nabla_wH_{s,t,x}(w)=I-\frac{1}{t-s}\nabla_w\widetilde Y_{s,t}(x,w)
\]
is close to the identity in $L^\infty$. The inverse function theorem therefore shows that $v\mapsto\Psi_{s,t}(x,v)$ is a global $C^2$ diffeomorphism satisfying \eqref{sec4eq8}.

The estimates \eqref{z80a}--\eqref{z80'c} and the definition \eqref{aweights} show that
\begin{equation}\label{sec4eq9}
\begin{split}
\frac{1}{t-s}|\widetilde Y_{s,t}(x,w)|
&\lesssim M_T\mathfrak a_0(s,w)\langle t\rangle^{-1},\\
\frac{1}{t-s}|\nabla_x\widetilde Y_{s,t}(x,w)|
&\lesssim M_T\mathfrak a_1(s,w)\langle t\rangle^{-1},\\
\frac{1}{t-s}|\nabla_w\widetilde Y_{s,t}(x,w)|
&\lesssim M_T\mathfrak a_1(s,w)\langle t\rangle^{-1}(t-s),\\
\frac{1}{t-s}|\nabla_w^2\widetilde Y_{s,t}(x,w)|
&\lesssim M_T\mathfrak a_2(s,w)(t-s).
\end{split}
\end{equation}
Thus $|\Psi_{s,t}(x,v)-v|\lesssim M_T\mathfrak a_0(s,\Psi)\langle t\rangle^{-1}\lesssim M_T$, so $\langle\Psi_{s,t}(x,v)\rangle\approx\langle v\rangle$ and $\mathfrak a_j(s,\Psi_{s,t}(x,v))\approx\mathfrak a_j(s,v)$ for any $x,v\in\R^2$, $s<t\in[0,T]$, and  $j\in\{0,1,2\}$.

Differentiating the fixed-point identity \eqref{sec4eq8} in $x$ and $v$, we obtain
\begin{align*}
\nabla_x\Psi_{s,t}(x,v)&=\Big(I-\frac{1}{t-s}\nabla_w\widetilde Y_{s,t}(x,\Psi_{s,t}(x,v))\Big)^{-1}
\frac{1}{t-s}\nabla_x\widetilde Y_{s,t}(x,\Psi_{s,t}(x,v)),\\
\nabla_v(\Psi_{s,t}(x,v)-v)
&=\Big(I-\frac{1}{t-s}\nabla_w\widetilde Y_{s,t}(x,\Psi_{s,t}(x,v))\Big)^{-1}
\frac{1}{t-s}\nabla_w\widetilde Y_{s,t}(x,\Psi_{s,t}(x,v)).
\end{align*}
These identities and the bounds \eqref{sec4eq9} give \eqref{z52-new} and \eqref{z142c-new}. The bounds \eqref{q9-new} follow also from \eqref{sec4eq9} by taking one more $v$-derivative. 
\end{proof}

A consequence is the following useful comparison estimate.
	
	\begin{lemma}[Comparison with free transport]\label{lem:change-var}
		Assume that $M_T$ is sufficiently small.
		Let $h,g\in C(\mathbb{R}^2;\mathbb{R}_+)$ and
		\begin{equation}\label{z36.01}
		g(v+z)\approx g(v)\qquad\text{for all }|z|\leq 1/2.
		\end{equation}
		Then for all $0\leq s\leq t\leq T$ and $x\in\mathbb R^2$,
		\begin{equation}\label{z36}
			\int_{\mathbb{R}^2} g(w)\,h(X_{s,t}(x,w))\,dw\approx \int_{\mathbb{R}^2} g(v)\,h(x-(t-s)v)\,dv.
		\end{equation}
	\end{lemma}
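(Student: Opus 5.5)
The natural route is the change of variables $w=\Psi_{s,t}(x,v)$ from Lemma~\ref{lem:Psi}. This turns $h(X_{s,t}(x,w))$ into $h(x-(t-s)v)$, and the Jacobian and the weight $g$ are then compared using the smallness of $\Psi_{s,t}-v$.

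\textbf{The diagonal $s=t$.} Here $X_{t,t}(x,w)=x=x-(t-t)v$, so both sides of \eqref{z36} are $h(x)\int g$ and there is nothing to prove. Assume from now on that $s<t$.

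\textbf{Step 1: change of variables.} By Lemma~\ref{lem:Psi}, the map $v\mapsto\Psi_{s,t}(x,v)$ is a global $C^2$ diffeomorphism of $\mathbb R^2$. Substituting $w=\Psi_{s,t}(x,v)$ and using \eqref{X(x,Psi)-new} gives
\[
\int_{\mathbb R^2} g(w)\,h(X_{s,t}(x,w))\,dw=\int_{\mathbb R^2} g(\Psi_{s,t}(x,v))\,h(x-(t-s)v)\,\bigl|\det\nabla_v\Psi_{s,t}(x,v)\bigr|\,dv .
\]
Since $h,g\ge0$ are continuous, this holds as an identity in $[0,\infty]$ with no integrability assumptions.

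\textbf{Step 2: pointwise comparison of the weights.} It suffices to show that, uniformly in $s,t,x,v$,
\[
g(\Psi_{s,t}(x,v))\,\bigl|\det\nabla_v\Psi_{s,t}(x,v)\bigr|\approx g(v).
\]
For the factor $g$, \eqref{z52-new} with $n_1=n_2=0$ gives $|\Psi_{s,t}(x,v)-v|\lesssim M_T\mathfrak a_0(s,v)\langle t\rangle^{-1}\lesssim M_T$. Once $M_T$ is small enough this is at most $1/2$, and then \eqref{z36.01} with $z=\Psi_{s,t}(x,v)-v$ yields $g(\Psi_{s,t}(x,v))\approx g(v)$. For the Jacobian, \eqref{z142c-new} gives
\[
|\det\nabla_v\Psi_{s,t}-1|\lesssim M_T\mathfrak a_1(s,v)\langle t\rangle^{-1}(t-s)\le M_T\mathfrak a_1(s,v)\lesssim M_T,
\]
because $\mathfrak a_1$ is bounded. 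So for small $M_T$ the determinant lies in $[1/2,3/2]$. Multiplying the two comparisons and integrating against $h(x-(t-s)v)\ge0$ gives \eqref{z36} in both directions.

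\textbf{Main point to check.} The only step needing care is that the constants in \eqref{z52-new} and \eqref{z142c-new} are uniform in $(s,t,x,v)$. In particular the factor $(t-s)\langle t\rangle^{-1}\le1$ must absorb the $(t-s)$ growth in the $v$-derivative bound. The smallness of $M_T$ is then fixed once, so that the displacement $|\Psi_{s,t}-v|$ stays below the tolerance $1/2$ required in \eqref{z36.01}.
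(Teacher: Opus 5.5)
Your proposal is correct and follows the paper's own proof. You treat $s=t$ directly and then substitute $w=\Psi_{s,t}(x,v)$ via \eqref{X(x,Psi)-new}. With $M_T$ small, \eqref{z52-new} and \eqref{z142c-new} give $|\Psi_{s,t}(x,v)-v|\le 1/2$ and a Jacobian comparable to $1$, so \eqref{z36.01} yields the pointwise comparison of the weights. Your explicit check that $\mathfrak a_0,\mathfrak a_1$ are bounded and $(t-s)\langle t\rangle^{-1}\le 1$ just spells out the uniformity the paper uses implicitly.
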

	\begin{proof}
		The case $s=t$ is immediate since $X_{t,t}(x,w)=x$.
		For $s<t$, \eqref{z52-new} and \eqref{z142c-new}, with $M_T$
		sufficiently small, give uniformly in $s,t,x,v$
		\[
		|\Psi_{s,t}(x,v)-v|\leq\frac12,\qquad
		\frac12\leq\det\nabla_v\Psi_{s,t}(x,v)\leq2.
		\]
		We make the change of variables $w=\Psi_{s,t}(x,v)$, which is justified by Lemma~\ref{lem:Psi}. Using \eqref{X(x,Psi)-new},
		\[
		\int_{\mathbb{R}^2} g(w)\,h(X_{s,t}(x,w))\,dw
		=
		\int_{\mathbb{R}^2} g(\Psi_{s,t}(x,v))\,h(x-(t-s)v)\,\det(\nabla_v\Psi_{s,t}(x,v))\,dv.
		\]
		The conclusion follows from the assumption \eqref{z36.01} and the Jacobian bound \eqref{z142c-new}.
	\end{proof}
	
	\subsection{Second $v$-derivative expansion}
	
	For $1\le i,j\le2$, define
	\begin{equation}\label{sec4eq50}
		O_{s,t}^{i,j}(x,v):=\int_s^t(\tau-s)\tau^2\partial_{x_i x_j}^2E^{osc}(\tau, x+\tau v)\,d\tau.
	\end{equation}
	
	\begin{lemma}\label{lem:second-v-expansion}
		For all $0\le s\le t\le T$, the following estimates hold:
		\begin{align}
			\Big|\partial_{v_j}\partial_{v_i}Y_{s,t}(x,v)-O_{s,t}^{i,j}(x,v)\Big|&\lesssim M_T
			\langle t\rangle^{\frac16-\kappa_0}\langle v\rangle^{-1},
			\label{q27}\\
			\langle s\rangle\left|
			\partial_{v_j}\partial_{v_i}\partial_sY_{s,t}(x,v)-\partial_sO_{s,t}^{i,j}(x,v)
			\right|&\lesssim M_T\langle t\rangle^{\frac16-\kappa_0}
			\langle v\rangle^{-1}.\label{q28}
		\end{align}
		Moreover,
		\begin{align}
			|O_{s,t}^{i,j}(x,v)|&\lesssim M_T
			\langle t\rangle^{\frac23+\kappa_0}
			\langle t^{\frac12+2\kappa_0},v\rangle^{-1},
			\label{q62a}\\
			|\partial_sO_{s,t}^{i,j}(x,v)|&\lesssim M_T
			\langle s\rangle^{-\frac13+\kappa_0}\langle s^{\frac12+2\kappa_0},v\rangle^{-1},
			\label{q62b}
		\end{align}
		and
		\begin{align}
			|\nabla_{x}O_{s,t}^{i,j}(x,v)|
			&\lesssim
			M_T
			\langle t\rangle^{\frac16-\kappa_0}
			\langle v\rangle^{-1},
			\label{q62c}
			\\
			|\nabla_{x}\partial_sO_{s,t}^{i,j}(x,v)|
			&\lesssim
			M_T
			\langle s\rangle^{-\frac56-\kappa_0}\langle v\rangle^{-1}.
			\label{q62d}
		\end{align}
		Finally, for $s<t$,
		\begin{align}
			\frac{1}{t-s}\left|O_{s,t}^{i,j}(x-tv,v)\right|&\lesssim M_T\langle s\rangle^{-\frac13+\kappa_0}\langle v\rangle^{-1},\label{q29}\\
			\frac{1}{t-s}\left|\nabla_v\big[O_{s,t}^{i,j}(x-tv,v)\big]\right|&\lesssim
			M_T\langle t\rangle^{\frac16-\kappa_0}\langle v\rangle^{-1}.\label{q30}
		\end{align}
	\end{lemma}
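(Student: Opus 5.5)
The plan is to compare the exact second-derivative identity \eqref{nabla2Y} for $\nabla_v^2Y_{s,t}$ with its ``free'' leading part. Expanding $(\tau I+\nabla_vY_{\tau,t})^{\otimes2}=\tau^2I+2\tau\nabla_vY_{\tau,t}+(\nabla_vY_{\tau,t})^{\otimes2}$ and splitting $E=E^{reg}+E^{osc}$, I would write $\partial_{v_i}\partial_{v_j}Y_{s,t}-O^{i,j}_{s,t}$ as a sum of four pieces:
\begin{itemize}
\item[(a)] $\int_s^t(\tau-s)\tau^2\bigl[\nabla^2E^{osc}(\tau,x+\tau v+Y_{\tau,t})-\nabla^2E^{osc}(\tau,x+\tau v)\bigr]\,d\tau$;
\item[(b)] the $E^{reg}$ contribution $\int_s^t(\tau-s)\tau^2\nabla^2\widetilde E^{reg}\,d\tau$;
\item[(c)] the cross terms containing $\nabla_vY_{\tau,t}$;
\item[(d)] the term $\int_s^t(\tau-s)\nabla_v^2Y_{\tau,t}\nabla\widetilde E\,d\tau$.
\end{itemize}
These are exactly the non-leading terms already handled in \eqref{Ysu6}. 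For (a) and (b) I use \eqref{Ecomp} with $q=2$, which gives the integrand $M_T\langle\tau\rangle^{\frac76}\Phi(\tau,x+\tau v)$. For (c) I use \eqref{xf'} with \eqref{Ebounds1}. For (d) I use \eqref{n17} (now already proved). Then \eqref{IntBouRep} bounds all of these by $M_T\langle t\rangle^{\frac16-\kappa_0}\langle v\rangle^{-1}$, which is \eqref{q27}. The same bookkeeping applied to the $s$-derivative identity (the analogue of \eqref{Ysu7}, where $\partial_sO^{i,j}_{s,t}=-\int_s^t\tau^2\partial_{ij}E^{osc}(\tau,x+\tau v)\,d\tau$ is the removed leading term) gives \eqref{q28}. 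The only check needed is that the integrals $\int_s^t\langle\tau\rangle^{\frac16}\Phi$ and $\langle t\rangle^{\frac16-\kappa_0}\int_s^t\langle\tau\rangle^{-\frac23+2\kappa_0}\Phi$ are $\lesssim\langle s\rangle^{-1}\langle t\rangle^{\frac16-\kappa_0}\langle v\rangle^{-1}$.

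The bounds \eqref{q62a} and \eqref{q62b} are already contained in Step 1 of Lemma~\ref{keyle}. Indeed, $|O^{i,j}_{s,t}|\le P_{2,0}$, so \eqref{q62} with $n_1=2$ gives \eqref{q62a}, and $\partial_sO^{i,j}_{s,t}$ is precisely the quantity estimated in \eqref{q62p}. For \eqref{q62c} and \eqref{q62d} I would redo the integration by parts of \eqref{Pbounds1} with one extra $x$-derivative. Writing $E^{osc}=\cos\tau\,E^{\cos}+\sin\tau\,E^{\sin}$ and integrating by parts in $\tau$ on the whole interval (that is, $s_0=s$), the boundary terms are $\langle\tau\rangle^{3}|\nabla^3E^\ast|\lesssim M_T\langle\tau\rangle^{\frac76}\Phi$ at $\tau\in\{s,t\}$. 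The interior terms involve $\langle\tau\rangle^3\bigl(|\partial_\tau\nabla^3E^\ast|+\langle v\rangle|\nabla^4E^\ast|\bigr)$.

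That interior term is the difficulty: the norms in \eqref{cbc2} control neither $\nabla^4E^\ast$ nor $\partial_t\nabla^3E^\ast$. So I would not integrate by parts for $\nabla_xO$. Instead I estimate directly, using $|\nabla^3E^\ast|\lesssim M_T\langle\tau\rangle^{-\frac{11}{6}}\Phi$ and \eqref{z210}:
\[
\int_s^t\langle\tau\rangle^{3-\frac{11}{6}}\Phi\,d\tau\lesssim\langle t\rangle^{\frac16-\kappa_0}\langle v\rangle^{-1},
\]
which is \eqref{q62c}. Similarly, $\int_s^t\langle\tau\rangle^{2-\frac{11}{6}}\Phi\,d\tau\lesssim\langle s\rangle^{-\frac56-\kappa_0}\langle v\rangle^{-1}$ gives \eqref{q62d}. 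This direct route works because the exponents fall on the favourable sides of $1+\kappa_0$ in \eqref{IntBouRep}.

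For \eqref{q29} and \eqref{q30}, note that $O^{i,j}_{s,t}(x-tv,v)$ is the same integral along the backward ray $x-(t-\tau)v$; Lemma~\ref{z216} applies there after the substitution $v\to-v$ and a time shift, or via the same dyadic argument. For \eqref{q29} I use $(\tau-s)/(t-s)\le1$, so it suffices to bound $\int_s^t\tau^2\nabla^2E^{osc}$ along this ray. I redo the \eqref{q62p} argument with the cutoff $s_0$ chosen by comparing $\langle v\rangle$ with $\langle s\rangle^{\frac12+2\kappa_0}$, integrating by parts on $[s_0,t]$ only. This yields $M_T\langle s\rangle^{-\frac13+\kappa_0}\langle v\rangle^{-1}$.

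For \eqref{q30}, the $v$-derivative produces $-(t-\tau)\nabla^3E^{osc}$. I would first integrate by parts in $\tau$ to move the factor $(\tau-s)(t-\tau)/(t-s)\le\min(\tau-s,t-\tau)$ onto a primitive. If that fails, I would use the identity $\nabla_v[O(x-tv,v)]=\int_s^t(\tau-s)(\tau-t)\tau^2\nabla^3E^{osc}(\tau,x-(t-\tau)v)\,d\tau$ and estimate it directly with $|\nabla^3E^\ast|\lesssim M_T\langle\tau\rangle^{-\frac{11}{6}}\Phi$, since $(\tau-s)(t-\tau)/(t-s)\le\langle\tau\rangle$ reproduces exactly the integral of \eqref{q62c}.

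I expect \eqref{q30} to be the main obstacle. The concern is whether the bound $(\tau-s)(t-\tau)/(t-s)\lesssim\langle\tau\rangle$ gains enough, or whether one must exploit oscillation with only the available third-derivative control. My first attempt would be the direct estimate just described, which seems to give $\langle t\rangle^{\frac16-\kappa_0}\langle v\rangle^{-1}$ as required.
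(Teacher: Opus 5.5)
Your treatment of \eqref{q27}, \eqref{q28}, \eqref{q62a}--\eqref{q62d}, and (in its final form) \eqref{q30} matches the paper's proof. You use the same error integrals $\int_s^t\langle\tau\rangle^{7/6}\Phi$ and $\langle t\rangle^{1/6-\kappa_0}\int_s^t\langle\tau\rangle^{1/3+2\kappa_0}\Phi$, the same identifications $|O^{i,j}_{s,t}|\le P_{2,0}$ and $\partial_sO^{i,j}_{s,t}\leftrightarrow$ \eqref{q62p}, and the same non-oscillatory estimates based on $|\nabla^3E^\ast|\lesssim M_T\langle\tau\rangle^{-11/6}\Phi$. You are also right that integration by parts is unavailable for $\nabla_xO$, since the norms control neither $\nabla^4E^\ast$ nor $\partial_t\nabla^3E^\ast$.

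The step that fails as written is your reduction for \eqref{q29}.

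\begin{itemize}
\item The pointwise bound $(\tau-s)/(t-s)\le1$ can only be applied after taking absolute values inside the integral. Once you do that, the oscillation is gone, so there is nothing left to integrate by parts with a cutoff $s_0$.
\item Conversely, a bound on the signed integral $\int_s^t\tau^2\nabla^2E^{osc}\,d\tau$ does not control the signed integral carrying the extra weight $(\tau-s)/(t-s)$.
\end{itemize}

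The repair is simply to estimate in absolute value, as the paper does:
\[
\frac{|O^{i,j}_{s,t}(x-tv,v)|}{t-s}\le\int_s^t\tau^2\big|\nabla^2E^{osc}(\tau,x-(t-\tau)v)\big|\,d\tau\lesssim M_T\int_s^t\langle\tau\rangle^{\frac23+2\kappa_0}\Phi(\tau,x-(t-\tau)v)\,d\tau .
\]
Since $\frac23+2\kappa_0<1+\kappa_0$, \eqref{z212} applied along the ray $(x-tv)+\tau v$ gives $M_T\langle s\rangle^{-\frac13+\kappa_0}\langle v\rangle^{-1}$. No time shift is needed there.

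If you want to keep the oscillatory route, the correct reduction is $O^{i,j}_{s,t}(y,v)=-\int_s^t\partial_\sigma O^{i,j}_{\sigma,t}(y,v)\,d\sigma$, which uses $O^{i,j}_{t,t}=0$. Take $y=x-tv$ and apply \eqref{q62b}; its right-hand side is nonincreasing in $\sigma$, which yields the same bound.
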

	
	\begin{proof}
The estimate \eqref{q62a} follows from \eqref{q62} with
$n_1=2$ and $n_2=0$. Moreover,
\[
	\partial_sO_{s,t}^{i,j}(x,v)
	=
	-\int_s^t\tau^2
	\partial_{x_ix_j}^2E^{osc}(\tau,x+\tau v)\,d\tau,
\]
so \eqref{q62b} follows from \eqref{q62p}. Using \eqref{Ebounds1},
\begin{align*}
	|\nabla_xO_{s,t}^{i,j}(x,v)|
	&\lesssim
	M_T\int_s^t\langle\tau\rangle^{7/6}
	\Phi(\tau,x+\tau v)\,d\tau,\\
	|\nabla_x\partial_sO_{s,t}^{i,j}(x,v)|
	&\lesssim
	M_T\int_s^t\langle\tau\rangle^{1/6}
	\Phi(\tau,x+\tau v)\,d\tau.
\end{align*}
The bounds \eqref{q62c} and \eqref{q62d} now follow from
\eqref{z210} and \eqref{z212}, respectively.

We next prove \eqref{q27}. Differentiating \eqref{Yinteq} twice
with respect to $v$, we obtain
\begin{align*}
	\partial_{v_j}\partial_{v_i}Y_{s,t}
	&=\int_s^t(\tau-s)\tau^2
	\partial_{x_ix_j}^2E(\tau,x+\tau v+Y_{\tau,t})\,d\tau\\
	&\quad+\int_s^t(\tau-s)\tau
	\Big[
	\partial_{v_i}Y_{\tau,t}\cdot\nabla_x\partial_{x_j}E
	+\partial_{v_j}Y_{\tau,t}\cdot\nabla_x\partial_{x_i}E
	\Big](\tau,x+\tau v+Y_{\tau,t})\,d\tau\\
	&\quad+\int_s^t(\tau-s)
	\partial_{v_i}\partial_{v_j}Y_{\tau,t}\cdot
	\nabla_xE(\tau,x+\tau v+Y_{\tau,t})\,d\tau\\
	&\quad+\int_s^t(\tau-s)
	\big(\partial_{v_i}Y_{\tau,t}\otimes\partial_{v_j}Y_{\tau,t}\big)
	:\nabla_x^2E(\tau,x+\tau v+Y_{\tau,t})\,d\tau.
\end{align*}
The bounds \eqref{n13}, \eqref{xf}, and \eqref{n17} imply that $|Y_{\tau,t}|\lesssim M_T$, $|\nabla_vY_{\tau,t}|\lesssim M_T\langle\tau\rangle^{-1/6-\kappa_0}$, and $|\nabla_v^2Y_{\tau,t}|\lesssim M_T\langle t\rangle^{1/6-\kappa_0}$. 
In particular, for $0\leq\theta\leq1$,
\[
	\Phi(\tau,x+\tau v+\theta Y_{\tau,t})
	\lesssim\Phi(\tau,x+\tau v).
\]
Splitting the leading field into its regular and oscillatory
parts, and applying the mean value theorem to the oscillatory part,
the preceding identity, \eqref{cbc2}, and \eqref{Ebounds1} give
\begin{align*}
	\left|\partial_{v_j}\partial_{v_i}Y_{s,t}-O_{s,t}^{i,j}\right|
	&\lesssim M_T\int_s^t\langle\tau\rangle^{7/6}\Phi(\tau,x+\tau v)\,d\tau+ M_T\langle t\rangle^{1/6-\kappa_0}
	\int_s^t\langle\tau\rangle^{1/3+2\kappa_0}\Phi(\tau,x+\tau v)\,d\tau.
\end{align*}
By \eqref{z210}--\eqref{z212}, the two integrals are bounded by $\langle t\rangle^{1/6-\kappa_0}\langle v\rangle^{-1}$ and $\langle s\rangle^{-2/3+\kappa_0}\langle v\rangle^{-1}$. This proves \eqref{q27}.

Similarly,
\[
	\partial_sY_{s,t}(x,v)
	=-\int_s^tE(\tau,x+\tau v+Y_{\tau,t})\,d\tau.
\]
Differentiating twice in $v$ gives the same error terms as above,
with the factor $\tau-s$ removed, while the leading oscillatory term
is $\partial_sO_{s,t}^{i,j}$. Therefore
\begin{align*}
	&\left|\partial_{v_j}\partial_{v_i}\partial_sY_{s,t}
	-\partial_sO_{s,t}^{i,j}\right|
	\lesssim M_T\int_s^t\langle\tau\rangle^{1/6}
	\Phi(\tau,x+\tau v)\,d\tau\\
	&\qquad+M_T\langle t\rangle^{1/6-\kappa_0}
	\int_s^t\langle\tau\rangle^{-2/3+2\kappa_0}\Phi(\tau,x+\tau v)\,d\tau\lesssim
	M_T\langle t\rangle^{1/6-\kappa_0}
	\langle s\rangle^{-1}\langle v\rangle^{-1},
\end{align*}
which proves \eqref{q28}.

Finally, we prove \eqref{q29}--\eqref{q30}. Since $x-(t-\tau)v=(x-tv)+\tau v$, we have
\begin{align*}
	\frac{|O_{s,t}^{i,j}(x-tv,v)|}{t-s}&\lesssim
	M_T\int_s^t\langle\tau\rangle^{2/3+2\kappa_0}
	\Phi(\tau,x-(t-\tau)v)\,d\tau\lesssim M_T\langle s\rangle^{-1/3+\kappa_0}\langle v\rangle^{-1},
\end{align*}
using \eqref{Ebounds1} and \eqref{z212}. Moreover,
\[
	\nabla_v\!\left[O_{s,t}^{i,j}(x-tv,v)\right]=\int_s^t(\tau-s)\tau^2(\tau-t)\nabla_x\partial_{x_ix_j}^2E^{osc}(\tau,x-(t-\tau)v)\,d\tau.
\]
Consequently, using now \eqref{Ebounds1} and \eqref{z210},
\begin{align*}
	\frac{
	\left|\nabla_v[O_{s,t}^{i,j}(x-tv,v)]\right|}{t-s}&\lesssim M_T\int_s^t\langle\tau\rangle^{7/6}
	\Phi(\tau,x-(t-\tau)v)\,d\tau\lesssim M_T\langle t\rangle^{1/6-\kappa_0}\langle v\rangle^{-1}.
\end{align*}
The bounds \eqref{q29}--\eqref{q30} follow, which completes the proof of the lemma.
\end{proof}

\section{Estimates for the fields $\mathcal{I}_k$ and $\mathcal{R}_k$, $k=0,1,2$}
	
	Our main goal is to prove the bootstrap bounds \eqref{B} in
Proposition~\ref{propoglobal3d}. In view of \eqref{de1.1} and
\eqref{z40.4}, the main input is provided by estimates for the moment
fields $\mathbf R_k$, $k=0,1,2$. Recall from
\eqref{z1.5}--\eqref{z1.6} that $\mathbf R_k=\mathcal I_k+\mathcal R_k$, 
where $\mathcal I_k=\mathcal I_k(E)$ is the initial-data contribution
and $\mathcal R_k=\mathcal R_k(E)$ is the nonlinear remainder. More
explicitly,
\begin{equation}\label{sec5eq1}
\begin{split}
&\mathcal{I}_k(t,x)
=\int_{\mathbb{R}^2}v^{\otimes k}
f_0\bigl(X_{0,t}(x,v),V_{0,t}(x,v)\bigr)\,dv,\\
&\mathcal{R}_k(t,x)
=\int_0^t\int_{\mathbb{R}^2}v^{\otimes k}
\bigl\{
E(s,x-(t-s)v)\cdot\nabla_v\mu(v)
-E(s,X_{s,t}(x,v))\cdot\nabla_v\mu(V_{s,t}(x,v))
\bigr\}\,dv\,ds.
\end{split}
\end{equation}

As before, we assume throughout this section that $(f,E)$ is a solution
of \eqref{eq2} on $[0,T]$ satisfying $M_T\leq\epsilon\ll1$. We first
estimate $\mathcal I_k$ and then $\mathcal R_k$.

	\subsection{Bounds for the initial data components $\mathcal{I}_k$}
	
	\begin{lemma}\label{estI_k} For any $k=0,1,2$, $n=0,1,2$ and $(t,x)\in[0,T]\times\mathbb{R}^2$ we have
		\begin{equation}\label{Z16-new}
			|\nabla_x^n\mathcal{I}_k(t,x)|\lesssim [f_0]\langle t\rangle^{19\kappa_0-n+\mathbf{1}_{n=2}(\frac16-\kappa_0)}
			\langle t,x\rangle^{-2-19\kappa_0}.
		\end{equation}
	\end{lemma}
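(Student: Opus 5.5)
The plan is to write $\mathcal I_k$ along the free ray, move all spatial derivatives onto $f_0$ and the Jacobian factors of the backward flow, and then apply the free-transport integral bounds of Lemma~\ref{Le1} after straightening the characteristics. Since
\[
X_{0,t}(x,v)=x-tv+\widetilde Y_{0,t}(x,v),\qquad V_{0,t}(x,v)=v+\widetilde W_{0,t}(x,v),
\]
we have $\mathcal I_k(t,x)=\int v^{\otimes k}f_0(x-tv+\widetilde Y_{0,t},\,v+\widetilde W_{0,t})\,dv$.

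\textbf{Case $n=0$.} By \eqref{z80a} and its analogue for $\widetilde W$, both corrections are $O(M_T)$, so the weight $\langle\cdot\rangle$ is comparable on the free and perturbed points. The bound $[f_0]$ gives
\[
|f_0(X_{0,t},V_{0,t})|\lesssim [f_0]\,\langle X_{0,t},V_{0,t}\rangle^{-3-20\kappa_0}\langle V_{0,t}\rangle^{-3-\kappa_0}.
\]
Apply Lemma~\ref{lem:change-var} with $h(y)=\langle y\rangle^{-2-19\kappa_0}$ and $g(v)=\langle v\rangle^{k-3-\kappa_0-1-\kappa_0}$, splitting $\langle x,v\rangle^{-3-20\kappa_0}\le \langle x\rangle^{-2-19\kappa_0}\langle v\rangle^{-1-\kappa_0}$. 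For $k\le 2$ this gives $g\lesssim\langle v\rangle^{-2-2\kappa_0}$, and a variant of \eqref{z201} with exponent $19\kappa_0$ then yields $\langle t\rangle^{19\kappa_0}\langle t,x\rangle^{-2-19\kappa_0}$.

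\textbf{Cases $n=1,2$.} A naive $\nabla_x$ applied to $f_0(X_{0,t},V_{0,t})$ only costs bounded Jacobians $\nabla_xX_{0,t}=I+\nabla_x\widetilde Y_{0,t}$, so it gains no time decay. To gain $t^{-n}$, convert $x$-derivatives into $v$-derivatives and integrate by parts. Acting on a function of $x-tv$, we have $\nabla_x=-t^{-1}\nabla_v$. Write
\[
\nabla_x\big[f_0(X_{0,t},V_{0,t})\big]=-t^{-1}\nabla_v\big[f_0(X_{0,t},V_{0,t})\big]+t^{-1}\mathcal E,
\]
where $\mathcal E$ collects $(\nabla_v+t\nabla_x)$ acting on $\widetilde Y_{0,t}$ and $\widetilde W_{0,t}$, together with the $\nabla_v f_0$ term coming from $\nabla_vV_{0,t}=I+\dots$. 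Integrating the $\nabla_v$ term by parts moves the derivative onto $v^{\otimes k}$, which loses one power of $\langle v\rangle$; this is affordable because we kept $\langle v\rangle^{-3-\kappa_0}$. For $\mathcal E$, the quantities $t\nabla_x\widetilde Y$ and $\nabla_v\widetilde Y$ are bounded via \eqref{z80'b}, since $\mathfrak a_1(0,v)\,t\,\eta_{0,t}\lesssim 1$, and similarly for $\widetilde W$. The term $t^{-1}\nabla_vf_0$ is harmless. Each such term carries a factor $t^{-1}$ times a quantity bounded like the $n=0$ integrand with $\nabla f_0$ in place of $f_0$, so the $n=0$ argument applied to $\nabla f_0$ gives $\langle t\rangle^{19\kappa_0-1}$.

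For $n=2$ we iterate. The second conversion produces $\nabla_v^2\widetilde Y_{0,t}$, which is controlled by \eqref{z80'c}: $\mathfrak a_2(0,v)t^2\lesssim \langle v\rangle^{-1}t^2$. Dividing by $t^2$ this is bounded, but for the corresponding $\widetilde W$ term, or for a cruder bound through \eqref{n17}, we only get $\nabla_v^2Y\lesssim t^{2/3+\kappa_0}$, i.e.\ $t^{-2}\cdot t^{2/3}$. This is where the loss $\langle t\rangle^{1/6-\kappa_0}$ in \eqref{Z16-new} enters.

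I expect the main obstacle to be tracking second derivatives of the flow in this $n=2$ step, in particular $\nabla_v^2W$ and $\nabla_v^2Y$. The route I would try first is to use the expansion of Lemma~\ref{lem:second-v-expansion}: by \eqref{q27}, after removing the $O$ part, the remainder is $\lesssim\langle t\rangle^{1/6-\kappa_0}\langle v\rangle^{-1}$, and the shifted bounds \eqref{q29}--\eqref{q30} control the $O$ part evaluated at $x-tv$. The $O$ term should be integrated by parts once more in $v$, so that only its first $v$-derivative \eqref{q30} appears, times $\langle t\rangle^{1/6-\kappa_0}$. This gives exactly the extra factor $\langle t\rangle^{1/6-\kappa_0}$.
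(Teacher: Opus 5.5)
Your basic idea of trading each $\nabla_x$ for $t^{-1}\nabla_v$ along the free ray is right. Your integration by parts, $\nabla_x\int H\,dv=t^{-1}\int(\nabla_v+t\nabla_x)H\,dv$, is equivalent to the paper's change of variables $v=(x-w)/t$. The gap is that the estimates you feed into it do not give the claimed decay.

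\textbf{The term $\mathcal E$.} For $n=1$ you bound $t\nabla_x\widetilde Y_{0,t}$ and $\nabla_v\widetilde Y_{0,t}$ separately by \eqref{z80'b}, asserting $\mathfrak a_1(0,v)\,t\,\eta_{0,t}\lesssim1$. This is false. For $t\geq1$ one has $\mathfrak a_1(0,v)\approx\langle v\rangle^{-1}$ and $t\eta_{0,t}\approx t$, so each piece is only $O(M_Tt\langle v\rangle^{-1})$. Then $t^{-1}\mathcal E$ has no time decay on $|v|\lesssim1$, which is the region that dominates when $|x|\lesssim t$. What you need is the cancellation inside the combination. Since $\widetilde Y_{0,t}(x,v)=Y_{0,t}(x-tv,v)$,
\[
(\nabla_v+t\nabla_x)\widetilde Y_{0,t}(x,v)=(\nabla_vY_{0,t})(x-tv,v),\qquad (\nabla_v+t\nabla_x)^2\widetilde Y_{0,t}(x,v)=(\nabla_v^2Y_{0,t})(x-tv,v),
\]
and similarly for $W$. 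So the correct inputs are the unshifted bounds: \eqref{xf} (at $s=0$ this also covers $W_{0,t}$), \eqref{n17}, and \eqref{n17b}. The shifted bounds \eqref{z80'b} and \eqref{z80'c} are the wrong tools; for instance \eqref{z80'c} only gives $|\nabla_v^2\widetilde Y_{0,t}|\lesssim M_Tt^2\langle v\rangle^{-1}$, hence no decay after the factor $t^{-2}$.

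\textbf{The case $n=2$.} You also misread \eqref{n17}. Because $\langle t^{\frac12+2\kappa_0},v\rangle^{-1}\leq\langle t\rangle^{-\frac12-2\kappa_0}$, it gives $|\nabla_v^2Y_{0,t}|\lesssim M_T\langle t\rangle^{\frac16-\kappa_0}$ uniformly in $v$. Likewise \eqref{n17b} at $s=0$ gives $|\nabla_v^2W_{0,t}|\lesssim M_T\langle t\rangle^{\frac16-\kappa_0}\langle v\rangle^{-1}$. This is exactly the loss in \eqref{Z16-new}, so the detour through Lemma~\ref{lem:second-v-expansion} and a further integration by parts is unnecessary. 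For $t\leq1$ no conversion is needed, and the $t^{-1}$ factors should not be used there.

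\textbf{The case $n=0$.} The weight bookkeeping fails as written. After splitting $\langle x-tv,v\rangle^{-3-20\kappa_0}\leq\langle x-tv\rangle^{-2-19\kappa_0}\langle v\rangle^{-1-\kappa_0}$ you are left with $\langle v\rangle^{-2-2\kappa_0}$. For $|x|\gg t\geq1$, the region $|v-x/t|\leq1/t$ alone gives
\[
\int_{\mathbb R^2}\langle x-tv\rangle^{-2-19\kappa_0}\langle v\rangle^{-2-2\kappa_0}\,dv\gtrsim t^{2\kappa_0}|x|^{-2-2\kappa_0}.
\]
This exceeds $\langle t\rangle^{19\kappa_0}\langle t,x\rangle^{-2-19\kappa_0}$ by a factor $(|x|/t)^{17\kappa_0}$. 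No fixed product split $\langle x-tv\rangle^{-a}\langle v\rangle^{-b}$ can work: you would need $a,b\geq2+19\kappa_0$, while for $k=2$ only $a+b=4+21\kappa_0$ is available. Keep the joint weight instead and bound the integrand by $[f_0]\langle x-tv,v\rangle^{-2-20\kappa_0}\langle v\rangle^{-2}$, as the paper does. On $|v|\geq|x|/(2t)$ the joint weight supplies $\langle v\rangle^{-19\kappa_0}\lesssim(t/|x|)^{19\kappa_0}$. On $|v|\leq|x|/(2t)$ one has $|x-tv|\geq|x|/2$. With these corrections your argument becomes the paper's proof.
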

	
	\begin{proof} If $t\in [0,1]$ then we recall the definition \eqref{a16'} and use \eqref{n13}, \eqref{n14}, and \eqref{n15} to estimate
		\begin{align*}
			\sum_{ n=0}^2|\nabla_{x}^n\mathcal{I}_k(t,x)|\lesssim [f_0]\int_{\mathbb{R}^2}\langle x-tv,v\rangle^{-2-20\kappa_0}\langle v\rangle^{-2}dv\lesssim
			[f_0]\langle x\rangle^{-2-19\kappa_0}.
		\end{align*}
		On the other hand, if $t\geq 1$ then we rewrite
		\begin{align*}
			\mathcal{I}_k(t,x)=\int_{\R^2}\Big(\frac{x-w}{t}\Big)^{\otimes k} f_0\Big(w +Y_{0,t}\Big(w,\frac{x-w}{t}\Big), \frac{x-w}{t}+W_{0,t}\Big(w,\frac{x-w}{t}\Big)\Big)\frac{dw}{t^2}.
		\end{align*}
		Using now the bounds \eqref{n13}, \eqref{xf}, \eqref{n17}, and \eqref{n17b} we can estimate, for $n,k\in\{0,1,2\}$,
		\begin{equation*}
		\begin{split}
		|\nabla_{x}^n\mathcal{I}_k(t,x)|&\lesssim \langle t\rangle^{-n+\mathbf{1}_{n=2}(\frac{1}{6}-\kappa_0)}[f_0]\int_{\mathbb{R}^2}\Big\langle w,\frac{x-w}{t}\Big\rangle^{-2-20\kappa_0}\Big\langle \frac{x-w}{t}\Big\rangle^{-2} t^{-2}dw\\
		&\lesssim \langle t\rangle^{-n+\mathbf{1}_{n=2}(\frac{1}{6}-\kappa_0)}[f_0]\int_{\mathbb{R}^2}\langle x-tv,v\rangle^{-2-20\kappa_0}\langle v\rangle^{-2}dv\\
		&\lesssim \langle t\rangle^{-n+\mathbf{1}_{n=2}(\frac{1}{6}-\kappa_0)}[f_0]\langle t\rangle^{19\kappa_0}\langle t,x\rangle^{-2-19\kappa_0}.
		\end{split}
		\end{equation*}
		This completes the proof of the lemma. 
	\end{proof}

	\subsection{Main bounds for $\mathcal{R}_k$} Our main result in this section is the following:
	
	\begin{proposition}\label{maines} For $k=0,1,2$ we have
		\begin{align}
			\label{q43}
			&	\big|\mathcal{R}_k(t,x)\big|
			\lesssim M_T^2\langle t\rangle^{20\kappa_0}\Phi(t,x),\\
			\label{q44}
			&	 \big|\nabla \mathcal{R}_k(t,x)\big|
			\lesssim M_T^2\langle t\rangle^{-\frac{5}{6}+20\kappa_0}\,\Phi(t,x),\\
			\label{q45}
			&	\big|\nabla^2 \mathcal{R}_k(t,x)\big|
			\lesssim M_T^2\langle t\rangle^{-\frac{3}{2}+30\kappa_0}\,\Phi(t,x),\\
			\label{q46} 
			&\big|\nabla^3(1-\Delta)^{-1}\mathcal{R}_k(t,x)\big|
			\lesssim M_T^2\langle t\rangle^{-\frac{11}{6}}\,\Phi(t,x).
		\end{align}
		Moreover, for any $\varphi\in W^{4,\infty}\cap W^{4,1}$ and $\epsilon_1\in [2\kappa_0,1/6-2\kappa_0]$, we have
		\begin{align}
			\left|\int_{\mathbb{R}^2} \nabla_x^3\varphi(x)\,\mathcal{R}_k(t,x)\,dx\right|
			&\lesssim M_T^2\langle t\rangle^{-\frac{11}{6}}\int_{\mathbb{R}^2}\Big(|\nabla \varphi(x)|
			+\big|\nabla |\nabla|^{-\frac{5}{6}-\epsilon_1}\varphi(x)\big|\Big)\,\Phi(t,x)\,dx.\label{q47}
		\end{align}
\end{proposition}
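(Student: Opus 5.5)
The plan is to treat $\mathcal R_k$ as an instance of the unified functional $\mathcal N(F,\omega,b)$ from \eqref{defN-unified}, with $F=E$, $\omega=\nabla_v\mu$, $b=v^{\otimes k}$, and to move both field evaluations onto the free ray using Lemma~\ref{lem:Psi}. For $s<t$ we change variables $v=\Psi_{s,t}(x,u)$ in the nonlinear term and use \eqref{X(x,Psi)-new}. Then
\[
\mathcal R_k(t,x)=\int_0^t\!\!\int_{\R^2}E(s,x-(t-s)u)\cdot\Sigma_{s,t}(x,u)\,du\,ds,
\]
where the multiplier is
\[
\Sigma_{s,t}(x,u):=u^{\otimes k}\nabla_v\mu(u)-\Psi^{\otimes k}\,\nabla_v\mu\bigl(V_{s,t}(x,\Psi)\bigr)\det\nabla_u\Psi,\qquad \Psi=\Psi_{s,t}(x,u).
\]
By \eqref{z52-new}, \eqref{z142c-new}, and the velocity bound $|W|\lesssim M_T\langle s\rangle^{-1}\mathfrak a_0$ from \eqref{n13}, we get $|\Sigma_{s,t}|\lesssim M_T\langle u\rangle^{k-5}\bigl(\mathfrak a_0(s,u)\langle s\rangle^{-1}+\mathfrak a_1(s,u)\bigr)$, with a gain of $\eta_{s,t}$ from the Jacobian term. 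We insert $E=E^{reg}+E^{osc}$, with $|E^{reg}|\lesssim M_T\langle s\rangle^{-1/2}\Phi$ and $|E^{osc}|\lesssim M_T\langle s\rangle^{50\kappa_0}\langle s,\cdot\rangle^{-2}$. The regular part is then controlled by $\mathbf S_\sigma$ in Lemma~\ref{Le3}. The oscillatory part has only a non-integrable $\langle\cdot\rangle^{-2}$ weight, so there we integrate by parts in $s$ using $\cos s,\sin s$. The $s$-derivative then falls on $E^\ast$ (giving the $\langle s\rangle^{-1/2}\Phi$ decay of $\partial_tE^\ast$), on the ray (giving $u\cdot\nabla E^\ast$, which is absorbed by the extra $\langle u\rangle^{-1}$ velocity gain of Lemma~\ref{z216}), or on $\Sigma$ (controlled by the $\partial_s$ bounds in Lemma~\ref{keyle}). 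The boundary terms at $s=t$ vanish since $\Sigma_{t,t}=0$. This gives \eqref{q43} via \eqref{z143a} and Lemma~\ref{Le1}.

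For the derivative bounds \eqref{q44}--\eqref{q45}, differentiating in $x$ directly loses too much. Instead, for $s$ close to $t$ we use the substitution $w=x-(t-s)u$, which turns each $x$-derivative into a $u$-derivative with a factor $(t-s)^{-1}$. Integrating by parts in $u$ then puts the derivatives on $\Sigma$ and on the velocity weights, and this yields the $\langle t\rangle^{-1}$-type gain. For $s\le t/2$ the factor is $\approx t^{-1}$. For $s$ near $t$ we keep the derivatives on $E$, using the $\nabla^nE$ bounds in \eqref{Ebounds1} together with the smallness $\eta_{s,t}$ of $\Sigma$. The derivatives of $\Sigma$ are controlled by \eqref{z52-new}, \eqref{q9-new}, and \eqref{n14}--\eqref{n15}. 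The loss $\mathbf 1_{n=2}(\tfrac16)$ comes from $\nabla_v^2Y$ in \eqref{n17}. Since the statement asks only for $\langle t\rangle^{-3/2+30\kappa_0}$ rather than $\langle t\rangle^{-4/3}$, I would use the decomposition of Lemma~\ref{lem:second-v-expansion} here. The remainder $\nabla_v^2Y-O$ costs only $\langle t\rangle^{1/6-\kappa_0}\langle v\rangle^{-1}$, and the principal part $O_{s,t}$ carries the bounds \eqref{q29}, which are better in $s$. Again the oscillatory pieces are treated by integration by parts in $s$, with the primitives from Section~4.

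The main obstacle is the third-derivative estimates \eqref{q46}--\eqref{q47}. Here the principal term, which contains $O^{i,j}_{s,t}$ (two $u$-derivatives landing on $\widetilde Y$ inside $\Psi$), has pointwise size only $\langle t\rangle^{2/3}$, which is not enough for $\langle t\rangle^{-11/6}$. My plan is as follows. First, isolate the term in which exactly two derivatives hit $O$, using \eqref{q27}--\eqref{q28} to handle all the other terms as in the previous step. Second, prove a fractional difference (Hölder-type) estimate in $x$ for $O_{s,t}(x-tv,v)$ of order $\tfrac56+\epsilon_1$. The idea is to interpolate between the bound \eqref{q29} and the derivative bounds \eqref{q30} and \eqref{q62c}, using the representation of $O$ as a time integral of $\nabla^2E^{osc}$ together with the $\nabla^3E^\ast$ bound $\langle t\rangle^{-11/6}$. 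Third, pair this against the test function $\varphi$. The third derivative $\nabla^3\varphi$ is split as $\nabla^2\cdot\nabla\varphi$, and we move $|\nabla|^{5/6+\epsilon_1}$ onto $\varphi$, which produces the term $\nabla|\nabla|^{-5/6-\epsilon_1}\varphi$. This gives \eqref{q47}.

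Finally, \eqref{q46} follows from \eqref{q47} by taking $\varphi$ to be the kernel of $\nabla^3(1-\Delta)^{-1}$ translated to $x$, after applying one integration by parts so that the kernel has the needed integrability. We check that the kernels of $\nabla(1-\Delta)^{-1}$ and $\nabla|\nabla|^{-5/6-\epsilon_1}(1-\Delta)^{-1}$ are integrable and decay fast, so their convolution with $\Phi(t,\cdot)$ is $\lesssim\Phi(t,x)$. I expect the fractional-difference step, and in particular keeping the $\langle u\rangle^{-1}$ velocity gain while interpolating, to be the delicate point.
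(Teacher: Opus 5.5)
Your zeroth-order step matches the paper in outline, but there is a genuine gap in how you obtain the $t^{-1}$ gain for \eqref{q44}--\eqref{q45}. Differentiating \eqref{q1} and writing $\nabla_xE(s,x-(t-s)u)=-(t-s)^{-1}\nabla_u[E(s,x-(t-s)u)]$ leads to
\[
\int_0^t\!\!\int_{\mathbb R^2}E(s,x-(t-s)u)\,\big(\nabla_x\Sigma_{s,t}+(t-s)^{-1}\nabla_u\Sigma_{s,t}\big)(x,u)\,du\,ds .
\]
You then bound the derivatives of $\Sigma$ pointwise via \eqref{z52-new}, \eqref{q9-new}, \eqref{n14}--\eqref{n15}. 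This gives no decay in $t$:

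\begin{itemize}
\item By \eqref{q10}, which comes from $\nabla_uJ_{s,t}\sim\nabla_u^2\Psi_{s,t}$ and \eqref{q9-new}, $|\nabla_u\Sigma_{s,t}|\lesssim M_T\langle u\rangle^{-3+k-|\beta|}\mathfrak a_2(s,u)(t-s)+\dots$. So the factor $(t-s)^{-1}$ is exactly cancelled, leaving $O(M_T\mathfrak a_2(s,u))$.
\item $\nabla_x\Sigma_{s,t}$ at fixed $u$ contains, schematically, $\nabla_xJ_{s,t}\approx(t-s)^{-1}(-t\nabla_x^2Y_{s,t}+\nabla_x\nabla_vY_{s,t})$ and $\nabla_xW_{s,t}$. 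Both are of size $M_T$ when $s=O(1)$, by \eqref{n14}--\eqref{n15}.
\end{itemize}

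Hence the region $s\in[0,1]$ is only bounded by $M_T^2\int\langle x-tu\rangle^{-2}\langle u\rangle^{-3}\,du\approx M_T^2t^{-2}\log t$ for $|x|\lesssim t$. For $E^{reg}$, $\mathbf S_\sigma$ gives only $\Phi(t,x)$. But \eqref{q44} requires $t^{-17/6+19\kappa_0}$ there.

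The decay exists, but it sits in a cancellation between the two pieces. At fixed $x-(t-s)u$ the base point $(x-t\Psi,\Psi)$ moves, to leading order, by $(-s,1)h/(t-s)$. So in the Jacobian, $\nabla_x^2Y$ effectively carries the weight $ts/(t-s)^2$ rather than $t/(t-s)$. Separate bounds cannot see this, and splitting at $s=t/2$ does not help, because the loss occurs at $s=O(1)$.

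The paper does not extract the derivative gain from $\nabla\Sigma$. It differentiates the unstraightened form \eqref{z51} at fixed $w=x-tv$, giving the exact identity \eqref{z60}. There $\partial_{x_j}\mathcal N$ is $t^{-1}$ times the sum of:
\begin{itemize}
\item $\mathcal N(D_jF,\omega,b)$, in which the field derivative carries weight $s$, not $t$;
\item $\mathcal N(F,\partial_{v_j}\omega,b)$ and $\mathcal N(F,\omega,\partial_{v_j}b)$;
\item $\mathcal N^{r,1}_j$, which involves only $\partial_{v_j}Y_{s,t}$ and $\partial_{v_j}W_{s,t}$ at $(x-tv,v)$.
\end{itemize}
Each of these is again handled by the zeroth-order machinery \eqref{q21}, \eqref{q21-improved}.

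This choice also matters structurally at second order. Moving two derivatives onto $\Sigma$ would involve $\nabla_u^3\Psi_{s,t}$, i.e.\ third velocity derivatives of $\widetilde Y_{s,t}$, which Section~\ref{section4} does not control. Iterating \eqref{z60} via \eqref{n25} needs only $\nabla_v^2Y_{s,t}$ and $\nabla_v^2W_{s,t}$, and that is exactly where Lemma~\ref{lem:second-v-expansion} and the principal term $\mathcal J_{ji}$ enter. Your third-order plan (fractional difference bound on the $O$-term, pairing against $\varphi$) is in this spirit, but it presupposes this structure.

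There is also a secondary omission at zeroth order. Terms carrying the undifferentiated oscillatory field only give $\log(2+t)\langle t,x\rangle^{-2}$ via \eqref{z143b}, not $\Phi$. In your version these include the $s=0$ boundary term and the terms where $\partial_s$ hits $\Sigma$. This bound suffices only for $|x|\le\langle t\rangle^{10}$. The exterior region needs the spatial decay of $\Sigma$ itself, namely \eqref{z2} and Lemma~\ref{Lem1}.
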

	
For $b=b(s,v)$ and a weight $\omega=\omega(v)$ we define the more general trilinear operators
	\begin{equation}\label{defN-new}
		\mathcal{N}(F,\omega,b)(t,x)
		:=
		\int_0^t\!\!\int_{\mathbb{R}^2} b(s,v)\Big\{F(s,x-(t-s)v)\omega(v)-F(s,X_{s,t}(x,v))\omega(V_{s,t}(x,v))\Big\}\,dv\,ds.
	\end{equation}
	In particular, recalling the definition \eqref{sec5eq1},
	\begin{equation}\label{sec5eq6}
	\mathcal{R}_k(t,x)=\mathcal{N}\big(E,\nabla\mu,v^{\otimes k}\big)(t,x).
	\end{equation}
	The main advantage of using the more general definition is that $x$-derivatives of $\mathcal{R}_k$ can also be expressed in terms of the trilinear form $\mathcal{N}$ up to lower order terms.
	
	We use the velocity change of variables $\Psi_{s,t}(x,v)$ defined as in Lemma \ref{lem:Psi}.
	Changing variables $v\mapsto \Psi_{s,t}(x,v)$ in the characteristic term yields
	\begin{equation}\label{q1}
		\mathcal{N}(F,\omega,b)(t,x)=\int_0^t\!\!\int_{\mathbb{R}^2} F(s,x-(t-s)v)\,\Sigma_{s,t}(x,v)\,dv\,ds,
	\end{equation}
	where
	\begin{equation}\label{sec5eq7}
	\Sigma_{s,t}(x,v)=b(s,v)\omega(v)-\det(\nabla_v\Psi_{s,t}(x,v))\,b\big(s,\Psi_{s,t}(x,v)\big)\,\omega\big(V_{s,t}(x,\Psi_{s,t}(x,v))\big).
	\end{equation}
	The function $\Sigma_{s,t}$ is to be thought of as small, as it encodes the failure of the characteristic flow to coincide with free transport.
	
	\subsubsection{Outline of the proof of Proposition~\ref{maines}} We prove the main bounds \eqref{q43}--\eqref{q47} in several steps, in the next four subsections:
\smallskip

{\bf{Step 1.}} In Subsection~\ref{subsection5.3}, we estimate the multipliers
$\Sigma_{s,t}$ and their $v$-derivatives. Lemma~\ref{Lem1} then gives
the pointwise bounds \eqref{q43}--\eqref{q45} for $0\leq t\leq1$
and in the exterior region $t\geq1$, $|x|\geq\langle t\rangle^{10}$.
\smallskip

{\bf{Step 2.}} In subsection \ref{zerobound} we prove estimates on expressions of the form
\[
\mathcal N(D_x^\alpha E,\partial_v^\beta\mu,v^{\otimes k}),
\]
$|\alpha|\leq2$.  
These bounds suffice to prove the bounds \eqref{q43} in the interior region
\begin{equation}\label{sec5eq10}
\mathcal{D}_T:=\{(t,x)\in [0,T]\times\R^2:\,t\geq1\text{ and }|x|\leq\langle t\rangle^{10}\}.
\end{equation}
and to control parts of the differentiated terms $\nabla^j_x\mathcal{R}_k$, $j=1,2$. 
The contribution of the regular field is estimated directly, while for the oscillatory field we integrate by parts in time and sometimes also in velocity.
\smallskip

{\bf{Step 3.}} In subsection \ref{firstorderbound} we make a change of variables of the form $w=x-tv$ to transfer one spatial
derivative to the field and to the velocity weights, producing a factor
$t^{-1}$ and a remainder $\mathcal N_j^{r,1}$. The bounds in subsection \ref{zerobound} control the transferred derivatives, while the characteristic estimates control the remainder. In particular, we prove the bounds \eqref{q44} in the interior region.
\smallskip

{\bf{Step 4.}} In subsection \ref{secondorderbound} we differentiate $\mathcal N_j^{r,1}$ to produce a second-order remainder
$\mathcal N_{ji}^{r,2}$. Its lower-order terms are estimated directly,
while its principal term is analyzed using the second $v$-derivative
expansion from Lemma~\ref{lem:second-v-expansion}. The resulting
pointwise bounds prove \eqref{q45} in the interior region. The leading oscillatory term also
satisfies a fractional difference estimate, which yields the bounds \eqref{q46} and
the pairing estimate \eqref{q47}.

\subsection{The multiplier $\Sigma_{s,t}$ and exterior estimates}\label{subsection5.3} We start by proving several bounds on the multipliers $\Sigma_{s,t}$ defined in \eqref{sec5eq7}.
	
\begin{lemma}[Bounds for $\Sigma_{s,t}$]\label{lem:Sigma-new}
	Recall the weights $\mathfrak a_0,\mathfrak a_1,\mathfrak a_2$ defined in \eqref{aweights}, and let
	\[
	b(s,v)=v^{\otimes k},
	\qquad
	\omega(v)=\partial_v^\beta\mu(v),
	\qquad
	k\in\{0,1,2\},
	\qquad
	k-|\beta|\leq 1.
	\]
	Then, for all $0\leq s\leq t\leq T$ and $x,v\in\R^2$, we have
	\begin{align}
		|\Sigma_{s,t}(x,v)|&\lesssim M_T\langle v\rangle^{-3+k-|\beta|}\mathfrak a_1(s,v)(t-s)\langle t\rangle^{-1}+ M_T\langle v\rangle^{-4+k-|\beta|}
		\mathfrak a_0(s,v)\langle s\rangle^{-1},
		\label{q3}\\
		|\nabla_v\Sigma_{s,t}(x,v)|
		&\lesssim
		M_T\langle v\rangle^{-3+k-|\beta|}
		\mathfrak a_2(s,v)(t-s)
		\notag\\
		&+M_T\langle v\rangle^{-4+k-|\beta|}\big(\langle v\rangle^{-1}\langle s\rangle^{-\frac16-\kappa_0}+\langle s,|v|^{\frac32}\rangle^{-\frac13+\kappa_0}\big)\langle s\rangle^{-1}.
		\label{q10}
	\end{align}
	Moreover, we also have the following cruder bounds
	\begin{equation}\label{z2}
		|\Sigma_{s,t}(x,v)|
		\lesssim
		M_T\langle t\rangle^{1+50\kappa_0}
		\left(
		\int_0^t\langle x-\tau v\rangle^{-2}\,d\tau
		\right)
		\langle v\rangle^{-2}.
	\end{equation}
\end{lemma}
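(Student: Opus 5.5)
The plan is to split $\Sigma_{s,t}$ into three differences and bound each with Lemma~\ref{lem:Psi} and Lemma~\ref{keyle}. Write $\Psi=\Psi_{s,t}(x,v)$, $J=\det\nabla_v\Psi$, and $V^\ast:=V_{s,t}(x,\Psi)$. Then
\[
\Sigma_{s,t}=(1-J)\,b(s,v)\omega(v)+J\big(b(s,v)-b(s,\Psi)\big)\omega(v)+J\,b(s,\Psi)\big(\omega(v)-\omega(V^\ast)\big).
\]
Two facts about $\omega=\partial_v^\beta\mu$ and $b=v^{\otimes k}$ drive all three terms: $|\nabla^j\omega(v)|\lesssim\langle v\rangle^{-3-|\beta|-j}$, and $|\nabla^j b|\lesssim\langle v\rangle^{k-j}$. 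By Lemma~\ref{lem:Psi}, $\langle\Psi\rangle\approx\langle v\rangle$, so these bounds can be evaluated at $v$ throughout.

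For \eqref{q3}, the terms are handled as follows.
\begin{itemize}
\item The first term is controlled by \eqref{z142c-new}: it is bounded by $M_T\mathfrak a_1(s,v)\langle t\rangle^{-1}(t-s)\langle v\rangle^{k-3-|\beta|}$, which is the first summand of \eqref{q3}.
\item The second term is controlled by \eqref{z52-new}: it is at most $M_T\mathfrak a_0\langle t\rangle^{-1}\langle v\rangle^{k-1-3-|\beta|}$. Since $\langle t\rangle^{-1}\le\langle s\rangle^{-1}$, this fits into the second summand.
\item The third term requires $V^\ast-v$. Since $V_{s,t}(x,w)=w+W_{s,t}(x-tw,w)$ and $W=\partial_sY$, \eqref{n13} gives $|W|\lesssim M_T\mathfrak a_0\langle s\rangle^{-1}$. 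Hence $|V^\ast-v|\le|\Psi-v|+|W|\lesssim M_T\mathfrak a_0(s,v)\langle s\rangle^{-1}$. The mean value theorem then gives the bound $M_T\mathfrak a_0\langle s\rangle^{-1}\langle v\rangle^{k-4-|\beta|}$, again the second summand.
\end{itemize}
The condition $k-|\beta|\le1$ ensures that all the weights are nonpositive powers, so the constants stay harmless.

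For \eqref{q10}, I would differentiate each piece in $v$. Differentiating the first term produces $\nabla_vJ$, which involves $\nabla_v^2\Psi$ and is controlled by \eqref{q9-new}: $M_T\mathfrak a_2(t-s)$. Derivatives landing on $b\omega$ give only better terms, using $\mathfrak a_1\langle t\rangle^{-1}(t-s)\lesssim\mathfrak a_2(t-s)$. That inequality should follow from the definitions, although it may need $\mathfrak a_1\lesssim\langle s\rangle\mathfrak a_2$ together with $\langle t\rangle\ge\langle s\rangle$; this needs checking.

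The second term's derivative involves $\nabla_v\Psi-I$, which is bounded by $M_T\mathfrak a_1\langle t\rangle^{-1}(t-s)$ via \eqref{z52-new}.

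The main obstacle is the derivative of the third term, which requires $\nabla_v(V^\ast-v)$. By the chain rule,
\[
\nabla_v V^\ast=\nabla_v\Psi+(\nabla_xW)(-t\nabla_v\Psi)+(\nabla_vW)\nabla_v\Psi .
\]
The term $t\nabla_xW$ is dangerous because of the factor $t$. I would avoid expanding it by writing $V_{s,t}(x,w)=w+\widetilde W_{s,t}(x,w)$ with $\widetilde W=-\partial_s\widetilde Y$, and then estimating $\nabla_w\widetilde W$ directly. Differentiating \eqref{shifted-v-system} in $s$ gives
\[
\nabla_v\widetilde W_{s,t}=-\int_s^t\nabla E(r,X_{r,t})\big(-(t-r)I+\nabla_v\widetilde Y_{r,t}\big)\,dr.
\]
This is the same structure as \eqref{sec4eq1}, and the integration by parts against $\nabla_x\widetilde Y$ used there should give $M_T\mathfrak a_1(s,v)\eta_{s,t}$. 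Alternatively, the $\langle s\rangle\,|\nabla\partial_sY|$ bounds in \eqref{xf} and \eqref{n14} give $\langle s\rangle^{-1}\big(\langle v\rangle^{-1}\langle s\rangle^{-1/6-\kappa_0}+\langle s,|v|^{3/2}\rangle^{-1/3+\kappa_0}\big)$, plus a term $t\,\mathfrak a_1\langle s\rangle^{-1}$ coming from $\nabla_xW$. I would absorb that last term into $\mathfrak a_2(t-s)$ when $t-s\gtrsim t$, and use the shifted-ray form when $t-s\ll t$. Multiplying by $|\nabla\omega|\lesssim\langle v\rangle^{-4-|\beta|}$ and $|b|\lesssim\langle v\rangle^k$ produces exactly the second line of \eqref{q10}.

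The crude bound \eqref{z2} I would prove without the change of variables, estimating each of the two terms in $\Sigma$ separately. For this I would use $|\Psi-v|\lesssim\langle t\rangle^{-1}\int_s^t(r-s)|E(r,X_{r,t})|\,dr$, obtained from \eqref{shifted-Y-equation} and \eqref{z16a-new}, together with $|E(r,y)|\lesssim M_T\langle r\rangle^{50\kappa_0}\langle y\rangle^{-2}$. Since $X_{r,t}$ stays within $O(1)$ of the straight line $x-(t-r)\Psi$, this bounds the integrand by $\langle x-\tau v\rangle^{-2}$ after the reparametrization $\tau=t-r$. The power $\langle t\rangle^{1+50\kappa_0}$ absorbs the factor $r-s$ together with the weight $\langle r\rangle^{50\kappa_0}$, and the velocity factors give $\langle v\rangle^{-2}$.
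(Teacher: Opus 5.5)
Your proof of \eqref{q3} is the paper's. The inequality you flag, $\mathfrak a_1(s,v)\lesssim\langle s\rangle\mathfrak a_2(s,v)$, does hold; it is used in the proof of Lemma~\ref{lem:shifted-ray}. There are, however, two problems, one in \eqref{q10} and one in \eqref{z2}.

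\textbf{The primary route for \eqref{q10} fails.} The shifted-ray integration by parts does not give $|\nabla_w\widetilde W_{s,t}|\lesssim M_T\mathfrak a_1(s,v)\eta_{s,t}$. In \eqref{sec4eq1} the weight $(r-s)(t-r)$ vanishes at both endpoints. Here the weight is $t-r=(t-s)-(r-s)$, so by \eqref{shifted-x-system}
\[
\int_s^t(t-r)\nabla E(r,X_{r,t})\bigl(I+\nabla_x\widetilde Y_{r,t}\bigr)\,dr=-(t-s)\nabla_x\widetilde W_{s,t}-\nabla_x\widetilde Y_{s,t}.
\]
Moreover, \eqref{n14} gives only $|\nabla_x\widetilde W_{s,t}|\lesssim M_T\mathfrak a_1\langle s\rangle^{-1}$, with no decay in $t$. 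This is the $-t\nabla_xW$ term of your chain rule in disguise, and the claimed bound fails in general when $t-s\gg\langle s\rangle$.

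The term is nevertheless harmless. Since $\langle v\rangle^{-1}\langle s\rangle^{-1}\mathfrak a_1\lesssim\mathfrak a_2$, a contribution $M_T(t-s)\mathfrak a_1\langle s\rangle^{-1}$ to $\nabla_v[V_{s,t}(x,\Psi)]-I$, once multiplied by $|b\,\nabla\omega|\lesssim\langle v\rangle^{-4+k-|\beta|}$, lands in the \emph{first} line of \eqref{q10}, not the second. Your fallback case split is therefore correct and is essentially the paper's argument. The paper writes $t=s+(t-s)$ and sends the $s$-part to the second line via $\langle s\rangle\mathfrak a_1\lesssim\langle v\rangle^{-1}\langle s\rangle^{-1/6-\kappa_0}+\langle s,|v|^{3/2}\rangle^{-1/3+\kappa_0}$. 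In your regime $t-s\ll t$ no shifted-ray argument is needed, because $t\lesssim\langle s\rangle$ there.

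\textbf{The sketch of \eqref{z2} is incomplete.} It bounds only $|\Psi-v|$. Also, ``estimating each of the two terms in $\Sigma$ separately'' cannot work, since neither term is small. You must estimate the difference as for \eqref{q3}, which requires bounding $|\Psi-v|$, $|V_{s,t}(x,\Psi)-\Psi|$ and $|J-1|$ by the ray integral. The Jacobian is the missing piece, and it is exactly where the factor $\langle t\rangle^{1}$ comes from.

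By \eqref{z16a-new},
\[
|\Psi-v|\le(t-s)^{-1}\int_s^t(r-s)|E|\,dr\le\int_s^t|E|\,dr.
\]
Note the factor is $(t-s)^{-1}$, not $\langle t\rangle^{-1}$, so no power of $t$ is needed for this term.

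By contrast, $J-1$ requires $(t-s)^{-1}\nabla_w\widetilde Y_{s,t}$. Differentiating $\widetilde Y_{s,t}(x,w)=\int_s^t(r-s)E(r,X_{r,t})\,dr$ produces $(r-s)(t-r)\nabla E$. So you also need the crude bound $|\nabla_xE(r,z)|\lesssim M_T\langle t\rangle^{50\kappa_0}\langle z\rangle^{-2}$, not just the one for $E$, together with $|\nabla_w\widetilde Y_{r,t}|\lesssim M_T(t-r)$ from \eqref{z80'b}. Then $(r-s)(t-r)/(t-s)\le t$ gives the power $\langle t\rangle^{1+50\kappa_0}$. The identity $\nabla_v\Psi=\bigl(I-(t-s)^{-1}\nabla_w\widetilde Y_{s,t}(x,\Psi)\bigr)^{-1}$ transfers this bound to $J-1$.

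Finally, replacing the ray $x-\tau\Psi$ by $x-\tau v$ uses $t|\Psi-v|\lesssim M_T$ from \eqref{z52-new}, which you did not mention.
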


\begin{proof} The conclusions are immediate when $s=t$ since $\Psi_{t,t}(x,v)=v$ and $\Sigma_{t,t}(x,v)=0$, so we may assume that $s<t$.
	For simplicity, set
	\begin{equation}\label{sec5eq11}
	J_{s,t}(x,v):=\det(\nabla_v\Psi_{s,t}(x,v)),\qquad W^\sharp_{s,t}(x,v):=W_{s,t}(x-t\Psi_{s,t}(x,v),\Psi_{s,t}(x,v)).
	\end{equation}
	Thus
	\begin{equation}\label{sigma-Q-identity}
	V_{s,t}(x,\Psi_{s,t}(x,v))=\Psi_{s,t}(x,v)+W^\sharp_{s,t}(x,v).
	\end{equation}
	In addition, recalling \eqref{PoEq}, for $m=0,1,2$ we have
	\begin{equation}\label{sigma-vel-decay}
		\big|\nabla_v^m\bigl(v^{\otimes k}\partial_v^\beta\mu(v)\bigr)\big|\lesssim\langle v\rangle^{-3+k-|\beta|-m}.
	\end{equation}

By Lemma~\ref{lem:Psi}, the definition \eqref{aweights}, and
	\eqref{z142c-new}, we have
	\begin{equation}\label{sigma-Psi}
	\begin{split}
		|\Psi_{s,t}(x,v)-v|&\lesssim M_T\mathfrak a_0(s,v)\langle t\rangle^{-1},\\
		|\nabla_v\Psi_{s,t}(x,v)-I|+|J_{s,t}(x,v)-1|&\lesssim M_T\mathfrak a_1(s,v)\langle t\rangle^{-1}(t-s),\\
		|\nabla_v^2\Psi_{s,t}(x,v)|+|\nabla_vJ_{s,t}(x,v)|&\lesssim M_T\mathfrak a_2(s,v)(t-s).
	\end{split}
	\end{equation}
	We will use the following direct consequences of \eqref{aweights}:
	\begin{equation}\label{sigma-weight-comparison}
	\begin{split}
		\langle s\rangle\mathfrak a_1(s,v)+\mathfrak a_0(s,v)
		&\lesssim
		\langle v\rangle^{-1}\langle s\rangle^{-\frac16-\kappa_0}+
		\langle s,|v|^{\frac32}\rangle^{-\frac13+\kappa_0},\\
		\langle v\rangle^{-1}\langle s\rangle^{-1}
		\mathfrak a_1(s,v)
		&\lesssim
		\mathfrak a_2(s,v).
	\end{split}
	\end{equation}
By \eqref{n13} and the comparability of the velocity weights,
	\begin{equation}\label{sigma-W}
		|W^\sharp_{s,t}(x,v)|\lesssim M_T\mathfrak a_0(s,v)\langle s\rangle^{-1}.
	\end{equation}
We next differentiate $W^\sharp_{s,t}$. By the chain rule,
	\begin{align*}
	\nabla_vW^\sharp_{s,t}(x,v)
	&=
	\Big[
	-t\nabla_xW_{s,t}
	+\nabla_vW_{s,t}
	\Big]
	\bigl(x-t\Psi_{s,t}(x,v),\Psi_{s,t}(x,v)\bigr)
	\nabla_v\Psi_{s,t}(x,v).
	\end{align*}
	Using \eqref{n14}, \eqref{xf}, and
	$|\nabla_v\Psi_{s,t}(x,v)|\lesssim1$, we obtain
	\begin{align*}
	|\nabla_vW^\sharp_{s,t}(x,v)|\lesssim
	M_T
	\left(\langle v\rangle^{-1}\langle s\rangle^{-\frac16-\kappa_0}+
	\langle s,|v|^{\frac32}\rangle^{-\frac13+\kappa_0}\right)
	\langle s\rangle^{-1}+M_Tt\,\mathfrak a_1(s,v)\langle s\rangle^{-1}.
	\end{align*}
	Writing $t=s+(t-s)$ and using
	\eqref{sigma-weight-comparison} we conclude that
	\begin{align}
	|\nabla_vW^\sharp_{s,t}(x,v)|
	&\lesssim M_T\left(
	\langle v\rangle^{-1}\langle s\rangle^{-\frac16-\kappa_0}+
	\langle s,|v|^{\frac32}\rangle^{-\frac13+\kappa_0}
	\right)
	\langle s\rangle^{-1}+
	M_T\langle v\rangle\mathfrak a_2(s,v)(t-s).
	\label{sigma-Wv}
	\end{align}

	We prove now \eqref{q3}. By \eqref{sigma-Q-identity},
	\eqref{sigma-Psi}, and \eqref{sigma-W},
	\begin{align}
	&\left|
	V_{s,t}\bigl(x,\Psi_{s,t}(x,v)\bigr)-v
	\right|
	\lesssim
	M_T\mathfrak a_0(s,v)\langle s\rangle^{-1}.
	\label{sigma-V}
	\end{align}
	Therefore, the definition of $\Sigma_{s,t}$ in \eqref{sec5eq7}, the mean value theorem,
	and \eqref{sigma-vel-decay} give
	\begin{align}
	|\Sigma_{s,t}(x,v)|
	&\lesssim
	\langle v\rangle^{-3+k-|\beta|}
	|J_{s,t}(x,v)-1|+\langle v\rangle^{-4+k-|\beta|}
	\Big(|\Psi_{s,t}(x,v)-v|+|W^\sharp_{s,t}(x,v)|
	\Big).\label{sigma-basic}
	\end{align}
	The bounds \eqref{q3} follow from \eqref{sigma-Psi} and \eqref{sigma-W}.

	To prove \eqref{q10}, we differentiate
	\eqref{sigma-Q-identity} with respect to $v$ and use
	\eqref{sigma-Psi} and \eqref{sigma-Wv}, thus
	\begin{equation}\label{sigma-Vv}
	\begin{split}
	&\left|\nabla_v\left[V_{s,t}\bigl(x,\Psi_{s,t}(x,v)\bigr)\right]-I\right|\\
	&\qquad\lesssim M_T\left(\langle v\rangle^{-1}\langle s\rangle^{-\frac16-\kappa_0}+
	\langle s,|v|^{\frac32}\rangle^{-\frac13+\kappa_0}\right)\langle s\rangle^{-1}+
	M_T\langle v\rangle\mathfrak a_2(s,v)(t-s).
	\end{split}
	\end{equation}
	In particular,
	\begin{align}
	\left|
	\nabla_v\left[
	V_{s,t}\bigl(x,\Psi_{s,t}(x,v)\bigr)
	\right]
	\right|
	\lesssim
	1+M_T\langle v\rangle\mathfrak a_2(s,v)(t-s).
	\label{sigma-Vv-rough}
	\end{align}
We differentiate also the definition \eqref{sec5eq7} of $\Sigma_{s,t}$. Using the mean value theorem, \eqref{sigma-vel-decay}, \eqref{sigma-Psi}, and the comparabilities $\langle\Psi_{s,t}(x,v)\rangle\approx \left\langle V_{s,t}\bigl(x,\Psi_{s,t}(x,v)\bigr)\right\rangle\approx\langle v\rangle$, we obtain
	\begin{align*}
	|\nabla_v\Sigma_{s,t}(x,v)|
	&\lesssim
	\langle v\rangle^{-3+k-|\beta|}
	|\nabla_vJ_{s,t}(x,v)|\\
	&+\langle v\rangle^{-4+k-|\beta|}
	|J_{s,t}(x,v)-1|
	{\left(1+\left|\nabla_v\left[V_{s,t}\bigl(x,\Psi_{s,t}(x,v)\bigr)\right]\right|\right)}\\
	&+\langle v\rangle^{-4+k-|\beta|}\left(|\nabla_v\Psi_{s,t}(x,v)-I|
	\right.\left.+\left|
	\nabla_v\left[
	V_{s,t}\bigl(x,\Psi_{s,t}(x,v)\bigr)
	\right]-I
	\right|
	\right)\\
	&+\langle v\rangle^{-5+k-|\beta|}
	\left(|\Psi_{s,t}(x,v)-v|
	\right.\left.+\left|
	V_{s,t}\bigl(x,\Psi_{s,t}(x,v)\bigr)-v
	\right|
	\right)\\
	&\hspace{18mm}\times
	\left(
	1+
	\left|
	\nabla_v\left[
	V_{s,t}\bigl(x,\Psi_{s,t}(x,v)\bigr)
	\right]
	\right|
	\right).
	\end{align*}
	Using \eqref{sigma-Psi}, \eqref{sigma-V},
	\eqref{sigma-Vv}, and \eqref{sigma-Vv-rough}, it follows that
	\begin{align*}
	|\nabla_v\Sigma_{s,t}(x,v)|
	&\lesssim
	M_T\langle v\rangle^{-3+k-|\beta|}
	\mathfrak a_2(s,v)(t-s)\\
	&+M_T\langle v\rangle^{-4+k-|\beta|}
	\left(
	\langle v\rangle^{-1}\langle s\rangle^{-\frac16-\kappa_0}
	+
	\langle s,|v|^{\frac32}\rangle^{-\frac13+\kappa_0}
	\right)
	\langle s\rangle^{-1}\\
	&+M_T^2\langle v\rangle^{-4+k-|\beta|}
	\mathfrak a_0(s,v)\langle s\rangle^{-1}
	\mathfrak a_2(s,v)(t-s).
	\end{align*}
	This gives the bounds \eqref{q10}.

	Finally, we prove \eqref{z2}. The bounds \eqref{bo1},
	\eqref{a23}, \eqref{Ebounds1} give
	\begin{equation}\label{sigma-crude-field}
	|E(r,z)|+|\nabla_xE(r,z)|
	\lesssim
	M_T\langle t\rangle^{50\kappa_0}\langle z\rangle^{-2},
	\qquad 0\leq r\leq t.
	\end{equation}
	For any $x,w\in\R^2$, it follows from \eqref{z80a} that $\big\langle x-(t-r)w+\widetilde Y_{r,t}(x,w)
	\big\rangle\approx\langle x-(t-r)w\rangle$. The integral equations for $\widetilde Y$ and $W$ therefore imply
	\begin{align}
	&\frac{|\widetilde Y_{s,t}(x,w)|}{t-s}+|W_{s,t}(x-tw,w)|\lesssim
	M_T\langle t\rangle^{50\kappa_0}
	\int_0^t\langle x-\tau w\rangle^{-2}\,d\tau.
	\label{sigma-crude-YW}
	\end{align}

	Differentiating the integral equation for $\widetilde Y$ with respect to $w$, we have
	\begin{align*}
	\nabla_w\widetilde Y_{s,t}(x,w)&=\int_s^t(r-s)\nabla_xE\bigl(r,x-(t-r)w+\widetilde Y_{r,t}(x,w)
	\bigr)\bigl(-(t-r)I+\nabla_w\widetilde Y_{r,t}(x,w)\bigr)\,dr.
	\end{align*}
	By \eqref{z80'b}, $|\nabla_w\widetilde Y_{r,t}(x,w)|\lesssim M_T(t-r)$. 
	Combining this with \eqref{sigma-crude-field}, we obtain
	\begin{align}
	\frac{|\nabla_w\widetilde Y_{s,t}(x,w)|}{t-s}&\lesssim M_T\langle t\rangle^{1+50\kappa_0}
	\int_0^t\langle x-\tau w\rangle^{-2}\,d\tau.
	\label{sigma-crude-Yv}
	\end{align}

	We apply \eqref{sigma-crude-YW}--\eqref{sigma-crude-Yv} with
	$w=\Psi_{s,t}(x,v)$. By \eqref{sigma-Psi}, $t|\Psi_{s,t}(x,v)-v|\lesssim M_T$, therefore
	\[
	\int_0^t
	\left\langle
	x-\tau\Psi_{s,t}(x,v)
	\right\rangle^{-2}\,d\tau
	\lesssim
	\int_0^t\langle x-\tau v\rangle^{-2}\,d\tau.
	\]
	Differentiation of the fixed-point identity \eqref{z16a-new} gives
	\[
	\nabla_v\Psi_{s,t}(x,v)
	=
	\left(
	I-\frac{1}{t-s}
	\nabla_w\widetilde Y_{s,t}
	\bigl(x,\Psi_{s,t}(x,v)\bigr)
	\right)^{-1}.
	\]
	Consequently, using \eqref{sigma-crude-YW}--\eqref{sigma-crude-Yv} and \eqref{z16a-new}
	\begin{align}
	&|\Psi_{s,t}(x,v)-v|+|W^\sharp_{s,t}(x,v)|+|J_{s,t}(x,v)-1|\lesssim
	M_T\langle t\rangle^{1+50\kappa_0}
	\int_0^t\langle x-\tau v\rangle^{-2}\,d\tau.
	\label{sigma-crude}
	\end{align}
	Finally, since $k-|\beta|\leq1$, $\langle v\rangle^{-3+k-|\beta|}+\langle v\rangle^{-4+k-|\beta|}\lesssim
	\langle v\rangle^{-2}$, and the desired bounds \eqref{z2} follow from \eqref{sigma-basic} and \eqref{sigma-crude}.
\end{proof}

The same argument shows that we can restrict the symbol $b$ to high velocities:

\begin{remark}[Large-velocity bounds for $\Sigma_{s,t}$]
	Assume that
	\[
	b(s,v)=\big[1-\chi(|v|/\langle s\rangle)\big]v^{\otimes k},
	\qquad
	\omega(v)=\partial_v^\beta\mu(v),
	\qquad
	k\in\{0,1,2\},
	\qquad
	k-|\beta|\leq1.
	\]
	Then
	\begin{align}
	|\Sigma_{s,t}(x,v)|
	&\lesssim
	M_T\mathbf 1_{\{|v|\geq\langle s\rangle/10\}}
	\langle v\rangle^{-3+k-|\beta|}\big[
	\mathfrak a_1(s,v)(t-s)\langle t\rangle^{-1}+
	\langle v\rangle^{-1}
	\mathfrak a_0(s,v)\langle s\rangle^{-1}
	\big],\label{q3-largev}\\
	|\nabla_v\Sigma_{s,t}(x,v)|
	&\lesssim
	M_T\mathbf 1_{\{|v|\geq\langle s\rangle/10\}}
	\Big[
	\langle v\rangle^{-3+k-|\beta|}
	\mathfrak a_2(s,v)(t-s)
	\notag\\
	&\hspace{25mm}
	+
	\langle v\rangle^{-4+k-|\beta|}
	\Big(
	\langle v\rangle^{-1}\langle s\rangle^{-\frac16-\kappa_0}
	+
	\langle s,|v|^{\frac32}\rangle^{-\frac13+\kappa_0}
	\Big)
	\langle s\rangle^{-1}
	\Big].
	\label{q10-largev}
	\end{align}
	Since $\chi=1$ on $[0,1]$ and
	$|\Psi_{s,t}(x,v)-v|\lesssim M_T\ll1$ by \eqref{sigma-Psi},
	$\Sigma_{s,t}(x,v)=0$ for $|v|<\langle s\rangle/10$.
	Each velocity derivative of the cutoff contributes a factor
	$\langle v\rangle^{-1}$ on its support, so the proof of
	\eqref{q3}--\eqref{q10} gives the stated bounds.
\end{remark}

We prove now several bounds on $\nabla_x^m\mathcal{R}_k(t,x)$, $m=0,1,2$.

	\begin{lemma}\label{Lem1}
	For any $t\in[0,T]$, $x\in\R^2$, and $k=0,1,2$, one has
		\begin{equation}\label{z34b}
			\sum_{m=0}^2
			|\nabla_x^m\mathcal R_k(t,x)|
			\lesssim
			M_T^2\,t^{\frac14}\langle t\rangle^4
			\langle x\rangle^{-\frac{11}{4}}.
		\end{equation}
	\end{lemma}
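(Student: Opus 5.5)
Lemma~\ref{Lem1} is a crude bound, valid for all $(t,x)$ but useful mainly for $t\le1$ and in the exterior region $|x|\ge\langle t\rangle^{10}$. So I would not exploit oscillation. Instead I would estimate the representation \eqref{q1},
\[
\nabla_x^m\mathcal R_k(t,x)=\nabla_x^m\int_0^t\!\!\int_{\R^2}E(s,x-(t-s)v)\,\Sigma_{s,t}(x,v)\,dv\,ds ,
\]
directly, using only the crude field bound \eqref{sigma-crude-field}, i.e. $|\nabla_x^jE(r,z)|\lesssim M_T\langle t\rangle^{50\kappa_0}\langle z\rangle^{-2}$ for $j\le 2$ (from \eqref{bo1}, \eqref{a23}, \eqref{Ebounds1}), and the crude multiplier bound \eqref{z2}. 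The factor $M_T^2$ comes from one power of $M_T$ in $E$ and one in $\Sigma$.

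\textbf{Case $m=0$.} Combining \eqref{sigma-crude-field} and \eqref{z2} gives
\[
|\mathcal R_k|\lesssim M_T^2\langle t\rangle^{1+100\kappa_0}\int_0^t\!\!\int\langle x-(t-s)v\rangle^{-2}\Big(\int_0^t\langle x-\tau v\rangle^{-2}d\tau\Big)\langle v\rangle^{-2}\,dv\,ds .
\]
I would split the velocity integral at $|v|\le |x|/(4\langle t\rangle)$. In that region both kernels are $\lesssim\langle x\rangle^{-2}$, so the contribution is $\lesssim t^2\langle x\rangle^{-4}\log\langle x\rangle$. In the complementary region $\langle v\rangle^{-2}\lesssim\langle t\rangle^2|x|^{-2}$. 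There I would use Lemma~\ref{z216}(i), which bounds the inner $\tau$-integral by $\langle v\rangle^{-1}\log(2+t\langle v\rangle)$. The remaining $\int\langle x-(t-s)v\rangle^{-2}dv$ I would bound by $(t-s)^{-2}\log$, or by $\langle v\rangle^{-1}$-integrability after interpolation. Trading logarithms and using a fractional power, $\min(1,\cdot)^{1/4}$, of the small-$s$ region should give the factor $t^{1/4}$ (this also handles $t\to0$, where $\Sigma$ vanishes since the integral is over $[0,t]$). The power $\langle x\rangle^{-11/4}$ comes from interpolating $\langle x\rangle^{-2}$ against $\langle x\rangle^{-4}$. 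All powers of $\langle t\rangle$ are generous, and $\langle t\rangle^4$ absorbs them.

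\textbf{Cases $m=1,2$.} I would not differentiate $\Sigma$ in $x$, since only $v$-derivative bounds are available. Instead I would return to the original form \eqref{sec5eq1} and differentiate in $x$. Derivatives landing on $E(s,x-(t-s)v)$ are harmless. Derivatives landing on $E(s,X_{s,t})\nabla\mu(V_{s,t})$ produce factors $\nabla_xX_{s,t}$, $\nabla_xV_{s,t}$, $\nabla_x^2X_{s,t}$, and so on. By \eqref{n14}, \eqref{n15} (in the shifted variables $\widetilde Y$) these are bounded by powers of $\langle t\rangle$. I would then regroup: write each term as (free term with $\nabla^jE$) minus (characteristic term with $\nabla^jE$), which is again of the form $\mathcal N(\nabla^jE,\partial^\beta\mu,v^{\otimes k})$, plus remainder terms containing a factor $\nabla_xY$ or $\nabla_xW$. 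The $\mathcal N$ terms are treated exactly as in the case $m=0$, using Lemma~\ref{lem:Sigma-new} with $|\beta|\ge k-1$. For the remainders I would use $|\nabla_xY|\lesssim M_T$ and the comparison Lemma~\ref{lem:change-var} to move to free rays, giving $M_T^2\langle t\rangle^{C}\int\langle x-(t-s)v\rangle^{-2}\langle v\rangle^{-4+k}dv$. The same velocity splitting then yields $\langle x\rangle^{-11/4}$.

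\textbf{Main obstacle.} The hard part is the spatial decay $\langle x\rangle^{-11/4}$ in the regime $|v|\gtrsim|x|/t$ with $k=2$, where the velocity weight is only $\langle v\rangle^{-2}$ and the kernel $\langle x-(t-s)v\rangle^{-2}$ is not integrable in $v$ uniformly. This is where the extra factor $\int_0^t\langle x-\tau v\rangle^{-2}d\tau\lesssim\langle v\rangle^{-1}\log$ from \eqref{z2} must be used carefully. I would first try to extract the additional $\langle v\rangle^{-1}\sim t/|x|$ from \eqref{z2}. If that falls short by logarithms or by $\langle x\rangle^{1/4}$, I would pay with $t$-powers, which are abundant.
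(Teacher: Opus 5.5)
\textbf{The gap is in the remainder terms for $m=1,2$.} You bound the characteristic factors uniformly, $|\nabla_x\widetilde Y_{s,t}|\lesssim M_T$. This reduces, e.g., $\int_0^t\!\int v^{\otimes k}\,\nabla E(s,X_{s,t})\nabla_x\widetilde Y_{s,t}\,\nabla\mu(V_{s,t})\,dv\,ds$ to
\[
M_T^2\langle t\rangle^{C}\int_0^t\!\!\int_{\mathbb R^2}\langle x-(t-s)v\rangle^{-2}\langle v\rangle^{k-4}\,dv\,ds
=M_T^2\langle t\rangle^{C}\int_{\mathbb R^2}\mathfrak I_t(x,v)\,\langle v\rangle^{k-4}\,dv,
\qquad
\mathfrak I_t(x,v):=\int_0^t\langle x-\tau v\rangle^{-2}\,d\tau .
\]
This cannot give $\langle x\rangle^{-11/4}$ for any $k$. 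If $|x|\geq2\langle t\rangle$ and $|v|\leq1$, the ray $x-\tau v$, $0\leq\tau\leq t$, stays at distance $\approx|x|$ from the origin. Hence $\mathfrak I_t\approx t\langle x\rangle^{-2}$, and the unit velocity ball alone contributes $\approx t\langle x\rangle^{-2}$. The loss sits at bounded velocities, so no velocity splitting repairs it. This bound is also too weak where the lemma is used ($t\leq1$, or $|x|\geq\langle t\rangle^{10}$ in \eqref{q43}--\eqref{q45}).

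The missing idea is that the corrections are themselves localized along the free ray. Differentiate $\widetilde Y_{s,t}=\int_s^t(r-s)E(r,X_{r,t})\,dr$ and $\widetilde W_{s,t}=-\int_s^tE(r,X_{r,t})\,dr$ at most twice in $x$. Using $\sum_{j\leq2}|\nabla^jE(r,z)|\lesssim M_T\langle t\rangle^{50\kappa_0}\langle z\rangle^{-2}$ and $\langle X_{r,t}\rangle\approx\langle x-(t-r)v\rangle$, this gives \eqref{sec5eq4}: $|\nabla_x^j\widetilde Y_{s,t}|+|\nabla_x^j\widetilde W_{s,t}|\lesssim M_T\langle t\rangle^{1+50\kappa_0}\mathfrak I_t(x,v)$. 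That supplies a second ray factor in every remainder, just as \eqref{z2} does for the principal terms $\mathcal N(\nabla_x^mE_i,\partial_{v_i}\mu,v^{\otimes k})$. Everything is then bounded by $M_T^2\langle t\rangle^{1+100\kappa_0}\int\mathfrak I_t^2\langle v\rangle^{-2}dv$, as in \eqref{Rk-crude-master}, and the lemma reduces to the single estimate \eqref{a40}.

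\textbf{The case $m=0$ has the right starting bound but needs repair.} Your initial inequality coincides with the paper's, but several steps are wrong:
\begin{itemize}
\item Lemma~\ref{z216}(i) concerns $\langle\tau,x-\tau v\rangle^{-1}$ and does not apply. The correct elementary bound is $\mathfrak I_t\lesssim\min\{t,|v|^{-1}\}$.
\item The integral $\int\langle x-(t-s)v\rangle^{-2}dv$ diverges in two dimensions, and $(t-s)^{-2}$ is not integrable at $s=t$.
\item The exponent $11/4$ does not come from ``interpolating'' the two region bounds $\langle x\rangle^{-2}$ and $\langle x\rangle^{-4}$; each region must be bounded on its own.
\end{itemize}
The paper observes that $\int_0^t\langle x-(t-s)v\rangle^{-2}ds=\mathfrak I_t(x,v)$, so the double integral equals $\int\mathfrak I_t^2\langle v\rangle^{-2}dv$. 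It then uses $\mathfrak I_t\lesssim\min\{t,|v|^{-1}\langle D_v(x)\rangle^{-1/2}\}$, interpolation (the source of $t^{1/4}$), and the angular bound \eqref{a40.2}.

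Your ``main obstacle'' idea is a valid alternative when $|x|\geq4\langle t\rangle$. On $|v|\geq|x|/(4\langle t\rangle)$, use $\mathfrak I_t\lesssim|v|^{-1}$ on one factor and $\langle v\rangle^{-3}\lesssim(\langle t\rangle/|x|)^{1-\kappa_0}\langle v\rangle^{-2-\kappa_0}$, then apply \eqref{z201p} at each fixed $s$. This gives $t\langle t\rangle\log(2+t)|x|^{-3+\kappa_0}$. For $|x|\leq4\langle t\rangle$, the crude bound $\min\{t,|v|^{-1}\}$ suffices because of the generous factor $\langle t\rangle^4$.
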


\begin{proof}
	Using \eqref{bo1}, \eqref{a23}, and the decomposition \eqref{de1}, we have
	\begin{equation}\label{a2}
		\sum_{j=0}^2|\nabla_x^jE(t,x)|
		\lesssim
		M_T\langle t\rangle^{50\kappa_0}\langle x\rangle^{-2}.
	\end{equation}

	Recall that
	\[
	\mathcal R_k(t,x)
	=
	\sum_{i=1}^2
	\mathcal N\bigl(E_i,\partial_{v_i}\mu,v^{\otimes k}\bigr)(t,x).
	\]
	Since $k-1\leq1$, Lemma~\ref{lem:Sigma-new}, \eqref{q1}, and
	\eqref{a2} show that, for $m\in\{0,1,2\}$,
	\begin{align*}
	&\left|
	\mathcal N\bigl(\nabla_x^mE_i,\partial_{v_i}\mu,
	v^{\otimes k}\bigr)(t,x)
	\right|\\
	&\quad\lesssim
	M_T\langle t\rangle^{1+50\kappa_0}
	\int_0^t\int_{\mathbb R^2}
	|\nabla_x^mE_i(s,x-(t-s)v)|
	\left(\int_0^t\langle x-\tau v\rangle^{-2}\,d\tau\right)
	\frac{dv\,ds}{\langle v\rangle^2}\\
	&\quad\lesssim
	M_T^2\langle t\rangle^{1+100\kappa_0}
	\int_{\mathbb R^2}
	\left(\int_0^t\langle x-\tau v\rangle^{-2}\,d\tau\right)^2
	\frac{dv}{\langle v\rangle^2}.
	\end{align*}

	It remains only to control the terms in which an $x$-derivative falls
	on the characteristics. Differentiating the integral equations for
	$Y_{s,t}$ and $W_{s,t}$ at most twice in $x$, and using
	\eqref{a2} and \eqref{n13}--\eqref{n15}, we obtain
	\begin{equation}\label{sec5eq4}
	\sum_{j=0}^2
	\left(
	|\nabla_x^jY_{s,t}(x-tv,v)|
	+
	|\nabla_x^jW_{s,t}(x-tv,v)|
	\right)
	\lesssim
	M_T\langle t\rangle^{1+50\kappa_0}
	\int_0^t\langle x-\tau v\rangle^{-2}\,d\tau.
	\end{equation}
	Indeed, the factors $I+\nabla_xY_{\tau,t}$ are uniformly bounded,
	while all the remaining derivatives of $Y_{\tau,t}$ are
	$O(M_T)$.

	When $\mathcal R_k$ is differentiated at most twice, its principal
	terms are precisely the expressions estimated above. Every remaining
	term contains at least one factor
	$\nabla_x^jY_{s,t}$ or $\nabla_x^jW_{s,t}$, with $1\leq j\leq2$, and can be estimated using \eqref{a2}, \eqref{sec5eq4}, and the decay of $\mu$. Thus
	\begin{equation}\label{Rk-crude-master}
	\sum_{m=0}^2|\nabla_x^m\mathcal R_k(t,x)|
	\lesssim
	M_T^2\langle t\rangle^{1+100\kappa_0}
	\int_{\mathbb R^2}
	\left(
	\int_0^t\langle x-\tau v\rangle^{-2}\,d\tau
	\right)^2
	\frac{dv}{\langle v\rangle^2}.
	\end{equation}

	It remains to prove
	\begin{equation}\label{a40}
	\int_{\mathbb R^2}
	\left(
	\int_0^t\langle x-\tau v\rangle^{-2}\,d\tau
	\right)^2
	\frac{dv}{\langle v\rangle^2}
	\lesssim
	t^{\frac14}\langle t\rangle^2\langle x\rangle^{-\frac{11}{4}}.
	\end{equation}
	We split the velocity space according to whether
	$|x|\geq2(1+t)|v|$ or $|x|\leq2(1+t)|v|$. In the first region,
	$|x-\tau v|\gtrsim|x|$ for $0\leq\tau\leq t$, and hence
	\[
	\int_{\{|x|\geq2(1+t)|v|\}}
	\left(
	\int_0^t\langle x-\tau v\rangle^{-2}\,d\tau
	\right)^2
	\frac{dv}{\langle v\rangle^2}
	\lesssim
	t^2\langle x\rangle^{-4}\log(2+|x|)
	\lesssim
	t^{\frac14}\langle t\rangle^2
	\langle x\rangle^{-\frac{11}{4}}.
	\]

	We notice that for every $x,v\in\R^2$, $v\neq0$,
	\begin{equation}\label{a40.1}
	\int_0^t\langle x-\tau v\rangle^{-2}\,d\tau
	\lesssim
	\min\left\{t,\,
	|v|^{-1}\langle D_v(x)\rangle^{-\frac12}\right\},\qquad
	D_v(x):=|x|^2-\frac{(x\cdot v)^2}{|v|^2}.
	\end{equation}
	Interpolating the two bounds gives
	\[
	\left(
	\int_0^t\langle x-\tau v\rangle^{-2}\,d\tau
	\right)^2
	\lesssim
	t^{\frac14}|v|^{-\frac74}
	\langle D_v(x)\rangle^{-\frac78}
	\lesssim
	t^{\frac14}|v|^{-\frac74}
	\langle D_v(x)\rangle^{-\frac34}.
	\]
	Using polar coordinates $v=r\theta$ and
	\begin{equation}\label{a40.2}
	\int_{\mathbb S^1}
	\langle D_\theta(x)\rangle^{-\frac34}\,d\theta
	\lesssim\langle x\rangle^{-1},
	\end{equation}
	we obtain
	\begin{align*}
	&\int_{\{|x|\leq2(1+t)|v|\}}
	\left(
	\int_0^t\langle x-\tau v\rangle^{-2}\,d\tau
	\right)^2
	\frac{dv}{\langle v\rangle^2}\lesssim
	t^{\frac14}\langle x\rangle^{-1}
	\int_{r\gtrsim |x|/\langle t\rangle}
	r^{-\frac34}\langle r\rangle^{-2}\,dr
	\lesssim
	t^{\frac14}\langle t\rangle^2
	\langle x\rangle^{-\frac{11}{4}}.
	\end{align*}
	This proves \eqref{a40}. Combining \eqref{Rk-crude-master} and
	\eqref{a40}, and using $3+100\kappa_0<4$, gives \eqref{z34b}.
\end{proof}

By \eqref{z34b}, \eqref{q43}--\eqref{q45} hold for
$0\leq t\leq1$ or $|x|\geq\langle t\rangle^{10}$.
For $t\leq1$, \eqref{q46} follows from the integrable, exponentially
decaying kernel of $\nabla(1-\Delta)^{-1}$, and \eqref{q47}
from two integrations by parts.
It remains to prove \eqref{q43}--\eqref{q45} in $\mathcal D_T$;
for $t\geq1$, \eqref{q46}--\eqref{q47} follow from the global
decomposition in Subsection~\ref{secondorderbound}.

	\subsection{Zeroth-order bound for $\mathcal N$}\label{zerobound}
	
	Throughout this subsection, we assume that $k\in\{0,1,2\}$ and $k-|\beta|\le 1$. We use the
	representation \eqref{q1}, the symbol bounds \eqref{q3}--\eqref{q10}, and
	the transport comparison inequality \eqref{z36}.
	
	\subsubsection{A basic estimate}
	For simplicity of notation, we sometimes write
	\begin{equation*}
		F(s):=F(s,x-(t-s)v).
	\end{equation*}
	From \eqref{q1}, \eqref{q3}, and \eqref{aweights}, we obtain
	\begin{equation}\label{x103}
	\begin{split}
		&\left|\mathcal N(F,\partial_v^\beta\mu,v^{\otimes k})(t,x)\right|\\
		&\lesssim M_T
		\int_0^t\int_{\mathbb R^2}
		|F(s,x-(t-s)v)|	\left(\langle v\rangle^{-1}\langle s\rangle^{-\frac76-\kappa_0}+
		\langle s,|v|^{\frac32}\rangle^{-\frac23+50\kappa_0}
		\langle s\rangle^{-\frac23}\right)
		\frac{dv\,ds}{\langle v\rangle^{2}}.
	\end{split}
	\end{equation}
	In particular, applying \eqref{x103} with $F=E$ and using Lemma~\ref{Le3},
	we obtain
	\begin{equation}\label{q14}
		\left|\mathcal N(E,\partial_v^\beta\mu,v^{\otimes k})(t,x)\right|
		\lesssim
		M_T^2 \log(2+t)\langle t,x\rangle^{-2}\lesssim
		M_T^2\langle t\rangle^{20\kappa_0}\Phi(t,x),
	\end{equation}
	for $(t,x)\in\mathcal{D}_T$. This gives the desired estimate \eqref{q43}.
	
	\subsubsection{Higher-order derivatives} To estimate higher spatial derivatives of $\mathcal N$, we need to
	control expressions of the form
	\begin{equation*}
		\mathcal N(F',\partial_v^\beta\mu,v^{\otimes k}),
		\qquad
		F'=s^{|\alpha|}\partial_x^\alpha F.
	\end{equation*}
	For a multi-index $\alpha$, recall the convention
	\begin{equation}\label{sec5eq20}
		D_x^\alpha F(s,\cdot)
		:=s^{|\alpha|}\partial_x^\alpha F(s,\cdot)
		\quad (|\alpha|\geq1),~~
		D_jF:=D_x^{e_j}F=s\partial_{x_j}F.
	\end{equation}
	Applying \eqref{x103}, \eqref{bo1}, and Lemma~\ref{Le3}, we get
	\begin{align*}
		\left|\mathcal N(D_jE^{reg},\partial_v^\beta\mu,v^{\otimes k})(t,x)\right|
		&\lesssim M_T\int_0^t\int_{\mathbb R^2}
		|\nabla_xE^{reg}(s,x-(t-s)v)|
		\langle s\rangle^{-\frac16-\kappa_0}
		\frac{dv\,ds}{\langle v\rangle^{2+2\kappa_0}}\\
		&\lesssim M_T^2
		\int_0^t\int_{\mathbb R^2}
		\langle s,x-(t-s)v\rangle^{-2-\kappa_0}
		\langle s\rangle^{-\frac43-\kappa_0}
		\frac{dv\,ds}{\langle v\rangle^{2+2\kappa_0}}\\
		&\lesssim M_T^2
		\langle t,x\rangle^{-2-\kappa_0}
		\langle t\rangle^{\kappa_0}.
	\end{align*}
	Similarly,
	\begin{align*}
		\left|\mathcal N(D_jD_{j'}E^{reg},\partial_v^\beta\mu,v^{\otimes k})(t,x)\right|
		&\lesssim M_T
		\int_0^t\int_{\mathbb R^2}
		|\nabla_x^2E^{reg}|(s,x-(t-s)v)
		\langle s\rangle^{\frac56-\kappa_0}
		\frac{dv\,ds}{\langle v\rangle^{2+2\kappa_0}}\\
		&\lesssim M_T^2\langle t,x\rangle^{-2-\kappa_0}
		\langle t\rangle^{\kappa_0}.
	\end{align*}
	Consequently,
	\begin{align}
		&\left|
		\mathcal N(D_jE^{reg},\partial_v^\beta\mu,v^{\otimes k})(t,x)
		\right|
		+
		\left|
		\mathcal N(D_jD_{j'}E^{reg},\partial_v^\beta\mu,v^{\otimes k})(t,x)
		\right|\lesssim
		M_T^2
		\langle t,x\rangle^{-2-\kappa_0}
		\langle t\rangle^{\kappa_0}.
		\label{q5}
	\end{align}
	
Let $b_{hi}(s,v):=\left(1-\chi\left(\frac{|v|}{\langle s\rangle}\right)\right)v^{\otimes k}.$
	Using the large-velocity version of the symbol bound, namely
	\eqref{q3-largev}, and arguing as in the proof of \eqref{x103}, we obtain
	\begin{equation}\label{largev-start}
	\begin{split}
		&\left|\mathcal N(F,\partial_v^\beta\mu,b_{hi})(t,x)\right|\lesssim M_T\int_0^t
		\int_{\{|v|\ge \langle s\rangle/10\}}
		|F(s,x-(t-s)v)|\\
		&\qquad\qquad\qquad\qquad\times\left(\langle v\rangle^{-1}
		\langle s\rangle^{-\frac76-\kappa_0}+
		\langle s,|v|^{\frac32}\rangle^{-\frac23+50\kappa_0}
		\langle s\rangle^{-\frac23}\right)
		\frac{dv\,ds}{\langle v\rangle^2}.
	\end{split}
	\end{equation}
	On the region $|v|\ge \langle s\rangle/10$, we have
	\begin{equation*}
		\langle v\rangle^{-1}
		\langle s\rangle^{-\frac76-\kappa_0}
		+
		\langle s,|v|^{\frac32}\rangle^{-\frac23+50\kappa_0}
		\langle s\rangle^{-\frac23}
		\lesssim
		\langle s\rangle^{-\frac53+100\kappa_0}
		\langle v\rangle^{-2\kappa_0}.
	\end{equation*}
	Therefore \eqref{largev-start} implies
	\begin{equation}\label{q6}
		\left|
		\mathcal N(F,\partial_v^\beta\mu,b_{hi})(t,x)\right|
		\lesssim
		M_T\int_0^t\int_{\mathbb R^2}|F(s,x-(t-s)v)|\langle s\rangle^{-\frac53+100\kappa_0}
		\frac{dv\,ds}{\langle v\rangle^{2+2\kappa_0}}.
	\end{equation}

	Assume now that $F$ has the oscillatory form
	$F(s,x)=e^{\ast is}F_*(s,x),$
	where $\ast\in\{+,-\}$. We consider the small-velocity contribution
	\begin{align}
		&\mathcal{N}\left(F,\partial_{v}^\beta\mu,\chi\left(\frac{|v|}{\langle s\rangle}\right)v^{\otimes k}\right)(t,x)
		\notag\\
		&=\int_0^t\int_{\mathbb R^2} e^{\ast is}
		\chi\left(\frac{|v|}{\langle s\rangle}\right)
		v^{\otimes k}
		\Big\{
		F_*(s,x-(t-s)v)\partial_v^\beta\mu(v)-F_*(s,X_{s,t})\partial_v^\beta\mu(V_{s,t})
		\Big\}\,dv\,ds.
		\label{small-v-N}
	\end{align}
	The bracket in \eqref{small-v-N} vanishes at $s=t$. Therefore, integrating
	by parts in $s$, we obtain
	\begin{equation}		\label{small-v-decomp}
	\begin{split}
		&\left|\mathcal N\left(F,\partial_v^\beta\mu,
		\chi\left(\frac{|v|}{\langle s\rangle}\right)v^{\otimes k}\right)(t,x)
		\right|\lesssim I_1+I_2+I_3+I_4+I_5+I_6,\\
		&I_1:=\left|
		\int_{\mathbb R^2}
		\chi(|v|)v^{\otimes k}
		\Big\{F_*(0,x-tv)\partial_v^\beta\mu(v)-
		F_*(0,X_{0,t})\partial_v^\beta\mu(V_{0,t})
		\Big\}\,dv\right|,\\
		&I_2:=\int_0^t\left|\int_{\mathbb R^2}\partial_s\left[
		\chi\left(\frac{|v|}{\langle s\rangle}\right)\right]
		v^{\otimes k}\Big\{F_*(s,x-(t-s)v)\partial_v^\beta\mu(v)-F_*(s,X_{s,t})\partial_v^\beta\mu(V_{s,t})
		\Big\}\,dv\right|\,ds,\\
		&I_3:=\int_0^t\left|
		\int_{\mathbb R^2}
		\chi\left(\frac{|v|}{\langle s\rangle}\right)
		v^{\otimes k}\Big\{\partial_sF_*(s,x-(t-s)v)\partial_v^\beta\mu(v)-
		\partial_sF_*(s,X_{s,t})\partial_v^\beta\mu(V_{s,t})\Big\}\,dv\right|\,ds,\\
		&I_4:=\int_0^t\left|\int_{\mathbb R^2}
		\chi\left(\frac{|v|}{\langle s\rangle}\right)v^{\otimes k}v\cdot
		\Big\{\nabla_xF_*(s,x-(t-s)v)\partial_v^\beta\mu(v)-\nabla_xF_*(s,X_{s,t})\partial_v^\beta\mu(V_{s,t})
		\Big\}\,dv\right|ds,\\
		&I_5:=\int_0^t\int_{\mathbb R^2}
		\chi\left(\frac{|v|}{\langle s\rangle}\right)
		|v|^k
		|\nabla_xF_*(s,X_{s,t})|
		|\partial_sY_{s,t}|
		|\partial_v^\beta\mu(V_{s,t})|\,dv\,ds,\\
		&I_6:=\int_0^t\int_{\mathbb R^2}
		\chi\left(\frac{|v|}{\langle s\rangle}\right)|v|^k|F_*(s,X_{s,t})|
		|\partial_v^\beta\nabla_v\mu(V_{s,t})|
		|\partial_sW_{s,t}|\,dv\,ds.
	\end{split}
	\end{equation}
	
	We now estimate these six terms. Using \eqref{q1}, \eqref{x103}, and \eqref{z36}, we obtain
	\begin{equation}\label{I123-bound}
	\begin{split}
	&I_1\lesssim M_T\int_{|v|\lesssim 1}|F_*(0,x-tv)|\frac{dv}{\langle v\rangle^2},\\
	&I_2+I_3\lesssim M_T\int_0^t\int_{\mathbb R^2}
		\left(\langle s\rangle^{-1}|F_*(s,x-(t-s)v)|+
		|\partial_sF_*(s,x-(t-s)v)|\right)\\
		&\qquad\qquad\qquad\times\left(
		\langle v\rangle^{-1}\langle s\rangle^{-\frac76-\kappa_0}+
		\langle s,|v|^{\frac32}\rangle^{-\frac23+50\kappa_0}
		\langle s\rangle^{-\frac23}\right)
		\frac{dv\,ds}{\langle v\rangle^2}.
	\end{split}
	\end{equation}
	For the transport derivative term, using the support of the cutoff
	$|v|\lesssim\langle s\rangle$, we get
	\begin{align}
	I_4&\lesssim M_T\int_0^t
		\int_{\{|v|\le 4\langle s\rangle\}}
		|\nabla_xF_*(s,x-(t-s)v)|\left(
		\langle v\rangle^{-1}\langle s\rangle^{-\frac76-\kappa_0}+
		\langle s,|v|^{\frac32}\rangle^{-\frac23}
		\langle s\rangle^{-\frac23+\kappa_0}
		\right)
		\frac{dv\,ds}{\langle v\rangle}
		\notag\\
		&\lesssim
		M_T\int_0^t\int_{\mathbb R^2}|\nabla_xF_*(s,x-(t-s)v)|
		\langle s\rangle^{-\frac23+3\kappa_0}
		\frac{dv\,ds}{\langle v\rangle^{2+2\kappa_0}}.
		\label{q20}
	\end{align}
	Moreover, by the characteristic estimates \eqref{n13}
	\begin{align}
		I_5&\lesssim M_T
		\int_0^t\int_{\mathbb R^2}
		|\nabla_xF_*(s,x-(t-s)v)|\mathfrak{a}_0(s,v)
		\langle s\rangle^{-1}
		\frac{dv\,ds}{\langle v\rangle^2}.
		\label{I5-bound}
	\end{align}
	Finally, since
	\begin{equation*}
		|\partial_sW_{s,t}(x-tv,v)|=|E(s,x-(t-s)v+Y_{s,t})|\lesssim M_T
		\langle s,x-(t-s)v\rangle^{-2}
		\langle s\rangle^{50\kappa_0},
	\end{equation*}
	we have
	\begin{align}
		I_6&\lesssim M_T
		\int_0^t\int_{\mathbb R^2}
		|F_*(s,x-(t-s)v)|\langle s,x-(t-s)v\rangle^{-2}
		\langle s\rangle^{50\kappa_0}
		\frac{dv\,ds}{\langle v\rangle^3}.
		\label{I6-bound}
	\end{align}
	
	Combining the small-velocity estimates \eqref{I123-bound}--\eqref{I6-bound} with the
	large-velocity contribution \eqref{q6}, we obtain, for
	$F(s,x)=e^{\ast is}F_*(s,x)$,
	\begin{equation}\label{q21}
	\begin{split}
		\left|\mathcal N(F,\partial_v^\beta\mu,v^{\otimes k})(t,x)\right|
		&\lesssim M_T\int_{\mathbb R^2}
		|F_*(0,x-tv)|\frac{dv}{\langle v\rangle^3}\\
		&+M_T\int_0^t\int_{\mathbb R^2}
		|\partial_sF_*(s,x-(t-s)v)|\langle s\rangle^{-\frac76-\kappa_0}
		\frac{dv\,ds}{\langle v\rangle^{2+2\kappa_0}}\\
		&+M_T\int_0^t\int_{\mathbb R^2}|\nabla_xF_*(s,x-(t-s)v)|\langle s\rangle^{-\frac23+3\kappa_0}
		\frac{dv\,ds}{\langle v\rangle^{2+2\kappa_0}}\\
		&+M_T\int_0^t\int_{\mathbb R^2}
		|F_*(s,x-(t-s)v)|\langle s\rangle^{-\frac53+100\kappa_0}
		\frac{dv\,ds}{\langle v\rangle^{2+2\kappa_0}}.
	\end{split}
	\end{equation}
	
\subsubsection{An improved estimate for the $I_4$ term} For $t\geq1$, we sharpen the estimate of $I_4$ by integrating by parts
in $v$ away from the endpoint $s=t$. We split	
\begin{align}
		I_4&\lesssim M_T\int_{t-\frac12}^t\int_{\mathbb R^2}
		|\nabla_xF_*(s,x-(t-s)v)|\langle s\rangle^{-\frac23+3\kappa_0}
		\frac{dv\,ds}{\langle v\rangle^{2+2\kappa_0}}\notag\\
		&\quad+\int_0^{t-\frac12}
		\left|\int_{\mathbb R^2}
		\nabla_xF_*(s,x-(t-s)v)\cdot
		\Sigma'_{s,t}(x,v)\,dv
		\right|\,ds.
		\label{I4-split}
	\end{align}
	Here
	\begin{align}
		\Sigma'_{s,t}(x,v)&:=
		\chi\left(\frac{|v|}{\langle s\rangle}\right)
		v^{\otimes(k+1)}
		\partial_v^\beta\mu(v)
		\notag\\
		&-J_{s,t}(x,v)
		\chi\left(\frac{|\Psi_{s,t}(x,v)|}{\langle s\rangle}\right)
		\Psi_{s,t}(x,v)^{\otimes(k+1)}
		\partial_v^\beta\mu
		\left(
		V_{s,t}(x,\Psi_{s,t}(x,v))
		\right).
		\label{Sigma-prime-def}
	\end{align}
	For $s\le t-\frac12$, we use the identity
	\begin{equation*}
		\nabla_xF_*(s,x-(t-s)v)=-\frac{1}{t-s}
		\nabla_v\bigl[F_*(s,x-(t-s)v)\bigr].
	\end{equation*}
	Integrating by parts in $v$, we get
	\begin{align}
		&\int_0^{t-\frac12}\left|
		\int_{\mathbb R^2}
		\nabla_xF_*(s,x-(t-s)v)\cdot
		\Sigma'_{s,t}(x,v)\,dv\right|\,ds
		\notag\\
		&\qquad\lesssim
		\int_0^{t-\frac12}
		\int_{\mathbb R^2}
		|F_*(s,x-(t-s)v)|
		\frac{|\nabla_v\Sigma'_{s,t}(x,v)|}{t-s}\,dv\,ds .
		\label{I4-ibp-v}
	\end{align}
	The boundary term in this identity vanishes because $\Sigma'_{s,t}(x,\cdot)$ is compactly supported. 
	
	Let $b'(s,v):=\chi\big(|v|/\langle s\rangle\big)v^{\otimes(k+1)}$, so, for $m\in\{0,1,2\}$,
	\begin{equation}
		\left|
		\nabla_v^m\bigl(b'(s,v)\partial_v^\beta\mu(v)\bigr)
		\right|
		\lesssim\mathbf 1_{\{|v|\leq4\langle s\rangle\}}\langle v\rangle^{-2+k-|\beta|-m}.
		\label{Sigma-prime-symbol-bound}
	\end{equation}
The proof of \eqref{q10} uses only velocity derivative
bounds through order two. Each derivative of the cutoff contributes
$\langle v\rangle^{-1}$ on its support, so \eqref{Sigma-prime-symbol-bound}
allows the same argument with $k$ replaced by $k+1$, giving
	\begin{align*}
		\frac{|\nabla_v\Sigma'_{s,t}(x,v)|}{t-s}&\lesssim M_T\mathbf 1_{\{|v|\leq4\langle s\rangle\}}
		\langle v\rangle^{-1}\langle s^{\frac12+2\kappa_0},v\rangle^{-1}
		\langle s\rangle^{-\frac43+\kappa_0}\\
		&+M_T\mathbf 1_{\{|v|\leq4\langle s\rangle\}}\langle v\rangle^{-2}\left(
		\langle v\rangle^{-1}\langle s\rangle^{-\frac16-\kappa_0}+
		\langle s,|v|^{\frac32}\rangle^{-\frac13+\kappa_0}\right)
		\langle s\rangle^{-1}(t-s)^{-1}.
	\end{align*}
	On the support of the cutoff, we have
	\begin{align*}
		\langle v\rangle^{-1}
		\langle s^{\frac12+2\kappa_0},v\rangle^{-1}&\lesssim
		\langle s\rangle^{2\kappa_0}
		\langle v\rangle^{-2-2\kappa_0},\\
		\langle v\rangle^{-2}\left(
		\langle v\rangle^{-1}\langle s\rangle^{-\frac16-\kappa_0}+
		\langle s,|v|^{\frac32}\rangle^{-\frac13+\kappa_0}\right)
		&\lesssim
		\langle s\rangle^{-\frac16-\kappa_0}
		\langle v\rangle^{-2-2\kappa_0}.
	\end{align*}
	Therefore, if $t-s\geq1/2$ then the last three bounds show that
	\begin{equation}
		\frac{|\nabla_v\Sigma'_{s,t}(x,v)|}{t-s}
		\lesssim
		M_T\left(
		\langle s\rangle^{-\frac43+3\kappa_0}
		+
		\langle s\rangle^{-\frac76-\kappa_0}
		\langle t-s\rangle^{-1}
		\right)
		\langle v\rangle^{-2-2\kappa_0}.
		\label{Sigma-prime-v-bound}
	\end{equation}
	Combining \eqref{I4-split}, \eqref{I4-ibp-v}, and \eqref{Sigma-prime-v-bound} we have
	\begin{align}
		I_4&\lesssim
		M_T\int_0^t\int_{\mathbb R^2}|F_*(s,x-(t-s)v)|
		\left(\langle s\rangle^{-\frac43+3\kappa_0}+
		\langle s\rangle^{-\frac76-\kappa_0}
		\langle t-s\rangle^{-1}\right)
		\frac{dv\,ds}{\langle v\rangle^{2+2\kappa_0}}
		\notag\\
		&+M_T\int_{t-\frac12}^t\int_{\mathbb R^2}
		|\nabla_xF_*(s,x-(t-s)v)|
		\langle s\rangle^{-\frac23+3\kappa_0}
		\frac{dv\,ds}{\langle v\rangle^{2+2\kappa_0}}.
		\label{I4-improved}
	\end{align}
	
	Substituting \eqref{I4-improved} into the proof of \eqref{q21}, we obtain
	\begin{equation}\label{q21-improved}
	\begin{split}
		&\left|\mathcal N(F,\partial_v^\beta\mu,v^{\otimes k})(t,x)\right|
		\lesssim
		M_T\int_{\mathbb R^2}
		|F_*(0,x-tv)|\frac{dv}{\langle v\rangle^3}\\
		&\qquad+M_T\int_0^t\int_{\mathbb R^2}
		|\partial_sF_*(s,x-(t-s)v)|
		\langle s\rangle^{-\frac76-\kappa_0}
		\frac{dv\,ds}{\langle v\rangle^{2+2\kappa_0}}\\
		&\qquad+M_T\int_0^t\int_{\mathbb R^2}
		|\nabla_xF_*(s,x-(t-s)v)|\mathfrak{a}_0(s,v)
		\langle s\rangle^{-1}
		\frac{dv\,ds}{\langle v\rangle^2}\\
		&\qquad+M_T\int_{t-\frac12}^t\int_{\mathbb R^2}
		|\nabla_xF_*(s,x-(t-s)v)|
		\langle s\rangle^{-\frac23+3\kappa_0}
		\frac{dv\,ds}{\langle v\rangle^{2+2\kappa_0}}\\
		&\qquad+M_T\int_0^t\int_{\mathbb R^2}
		|F_*(s,x-(t-s)v)|\left(\langle s\rangle^{-\frac43+3\kappa_0}+
		\langle s\rangle^{-\frac76-\kappa_0}
		\langle t-s\rangle^{-1}\right)
		\frac{dv\,ds}{\langle v\rangle^{2+2\kappa_0}}.
	\end{split}
	\end{equation}
	
	\subsubsection{Bounds on the derivatives of $E^{osc}$} We prove the following:
	
	\begin{lemma}\label{lem1Sec5}
	For any $t\in[0,T]$, $x\in\mathbb R^2$, and $j,j'\in\{1,2\}$ we have
	\begin{align}
		&\left|\mathcal N(D_jE^{osc},\partial_v^\beta\mu,v^{\otimes k})(t,x)\right|\lesssim
		M_T^2\langle t,x\rangle^{-2-\kappa_0}
		\langle t\rangle^{5\kappa_0},\label{q8}\\
		&\left|\mathcal N(D_jD_{j'}E^{osc},\partial_v^\beta\mu,v^{\otimes k})(t,x)\right|\lesssim
	M_T^2\langle t\rangle^{\frac13+5\kappa_0}\Phi(t,x).\label{q11}
\end{align}
	\end{lemma}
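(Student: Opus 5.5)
We plan to apply the oscillatory estimate \eqref{q21-improved} with $F=D_jE^{osc}$ or $F=D_jD_{j'}E^{osc}$. Writing $\cos(s)=\tfrac12(e^{is}+e^{-is})$ and similarly for $\sin$, we split each such $F$ into pieces $e^{\pm is}F_*$ with amplitudes
\[
F_*(s,x)=s^{|\alpha|}\partial_x^\alpha E^{\ast}(s,x),\qquad \ast\in\{\cos,\sin\},\quad |\alpha|\in\{1,2\}.
\]
Since $F_*(0)=0$ when $|\alpha|\ge1$, the first term in \eqref{q21-improved} vanishes. Each of the remaining four terms reduces to a spacetime integral of the form $\mathbf S_\sigma$ in Lemma~\ref{Le3}. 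The velocity weights are either $\langle v\rangle^{-2-2\kappa_0}$, which is even better than the weight in $\mathbf S_\sigma$, or $\mathfrak a_0(s,v)\langle v\rangle^{-2}$, which is bounded by $\langle s\rangle^{-\frac12-\kappa_0}\langle v\rangle^{-3}+\langle s\rangle^{-\frac23+50\kappa_0}\langle v\rangle^{-2-\kappa_0}$ after splitting according to whether $|v|^{3/2}\lesssim s$ or not.

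For the amplitudes we use \eqref{Ebounds1}. For $|\alpha|=1$ it gives
\[
|F_*|\lesssim M_T\langle s\rangle^{\frac13+2\kappa_0}\Phi,\qquad |\nabla_xF_*|\lesssim M_T\langle s\rangle^{-\frac13+2\kappa_0}\Phi,
\]
and $|\partial_sF_*|\lesssim M_T\langle s\rangle^{\frac13+2\kappa_0}\Phi$, where we use $\partial_s(s\nabla E^\ast)=\nabla E^\ast+s\nabla\partial_sE^\ast$ and the $\partial_t$ bound $\langle s\rangle^{-\frac12-\frac23}$. For $|\alpha|=2$ it gives
\[
|F_*|\lesssim M_T\langle s\rangle^{\frac23+2\kappa_0}\Phi,\qquad |\nabla_xF_*|\lesssim M_T\langle s\rangle^{\frac16}\Phi,\qquad |\partial_sF_*|\lesssim M_T\langle s\rangle^{\frac23+2\kappa_0}\Phi,
\]
the last because $s^2\partial_s\nabla^2E^\ast$ contributes only $\langle s\rangle^{\frac16}$.

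We then collect exponents and apply \eqref{z143a}.

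\textbf{Case $|\alpha|=1$.} The $\partial_s$ term has $\sigma=-\frac56+\kappa_0$, and the $F_*$ term has $\sigma\le-1+5\kappa_0$. The $\nabla_xF_*$ term with weight $\mathfrak a_0\langle s\rangle^{-1}$ has $\sigma=-\frac43-\kappa_0+2\kappa_0$ or $-2+52\kappa_0$. All of these satisfy $\sigma-\kappa_0+1<0$, so each contributes $M_T^2\langle t\rangle^{\kappa_0}\Phi$. Some exponents may carry small multiples of $\kappa_0$; we expect to absorb them into the stated $\langle t\rangle^{5\kappa_0}$, possibly borrowing a factor $\langle v\rangle^{-\kappa_0}$ from the weights.

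The endpoint term over $[t-\tfrac12,t]$ is handled directly. The time interval has length $\tfrac12$, and Lemma~\ref{Le1} together with $\langle s\rangle\approx\langle t\rangle$ gives a contribution $M_T^2\langle t\rangle^{-1+5\kappa_0}\Phi$.

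The $\langle t-s\rangle^{-1}$ piece of the last term needs extra care, since $\mathbf S_\sigma$ has no $\langle t-s\rangle$ factor. We split $s<t/2$ from $s>t/2$. On $s<t/2$ we have $\langle t-s\rangle^{-1}\approx\langle t\rangle^{-1}$. On $s>t/2$ we bound the $v$-integral by $\langle t\rangle^{\kappa_0}\Phi(t,x)$, up to comparability of $\langle s,x-(t-s)v\rangle$ integrated as in Lemma~\ref{Le1}, and the $s$-integral yields only a factor $\log t$.

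\textbf{Case $|\alpha|=2$.} The $\partial_s$ term has $\sigma=-\frac12+\kappa_0$, hence $\sigma-\kappa_0+1=\frac12$ and a bound $\langle t\rangle^{\frac12}$. This is too large, and we expect it to be the main obstacle for \eqref{q11}. Our first attempt is to note that the $\partial_sF_*$ piece coming from $s^2\partial_s\nabla^2E^\ast$ is better, namely $\langle s\rangle^{\frac16}\Phi$. The bad piece is $2s\nabla^2E^\ast$, which equals $2s^{-1}F_*$. So the $\partial_s$ term is essentially $\langle s\rangle^{-1}|F_*|$ times the weight $\langle s\rangle^{-\frac76}$, that is $\sigma=-\frac32+3\kappa_0$, which is fine. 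We must therefore keep the full $\partial_sF_*$ rather than its crude bound. With this, the worst remaining term is the $F_*$ term with $\langle s\rangle^{-\frac76-\kappa_0}\langle t-s\rangle^{-1}$, giving $\sigma=-\frac12+\kappa_0$ after accounting for the $\langle t-s\rangle$ splitting. Alternatively the $\nabla_xF_*$ term with $\mathfrak a_0$, where $\sigma=\frac16-\frac23-1+50\kappa_0=-\frac32+50\kappa_0$, could be the worst. The term with weight $\langle s\rangle^{-\frac43+3\kappa_0}$ has $\sigma=-\frac23+5\kappa_0$, so $\sigma+1=\frac13+4\kappa_0$, which gives exactly $\langle t\rangle^{\frac13+5\kappa_0}\Phi$. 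This matches the claimed bound, and we expect this term to be the binding one.
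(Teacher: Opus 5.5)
Your plan for \eqref{q11} is essentially the paper's proof. You apply \eqref{q21-improved} to the amplitudes $F^\Box=s^2\partial_{x_jx_{j'}}^2E^\Box$, use $F^\Box(0)=0$, and identify the binding contribution as $|F^\Box|\langle s\rangle^{-4/3+3\kappa_0}\lesssim M_T\langle s\rangle^{-2/3+5\kappa_0}\Phi$. Lemma~\ref{Le3} then turns this into $\langle t\rangle^{1/3+5\kappa_0}\Phi(t,x)$.

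\textbf{Slip in Case $|\alpha|=1$.} As written, this step breaks \eqref{q8}. The bound $|\partial_sF_*|\lesssim M_T\langle s\rangle^{1/3+2\kappa_0}\Phi$ gives $\sigma=-\frac56+\kappa_0$, for which $\sigma-\kappa_0+1=\frac16>0$. That term would then only yield $\langle t\rangle^{1/6+\kappa_0}\Phi(t,x)$, contradicting your claim that every exponent satisfies $\sigma-\kappa_0+1<0$. The correct amplitude bound is
$|\partial_sF_*|\le|\nabla E^\ast|+s|\nabla\partial_sE^\ast|\lesssim M_T(\langle s\rangle^{-2/3+2\kappa_0}+\langle s\rangle^{-1/6})\Phi\lesssim M_T\langle s\rangle^{-1/6}\Phi$,
which gives $\sigma=-\frac43-\kappa_0$. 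This is exactly the correction you make yourself in Case $|\alpha|=2$. Separately, $\sigma=-1+5\kappa_0$ has $\sigma-\kappa_0+1=4\kappa_0>0$, so that term produces $\langle t\rangle^{5\kappa_0}$ rather than $\langle t\rangle^{\kappa_0}$; this is still within the target.

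\textbf{Smaller differences from the paper.}
\begin{enumerate}
\item For \eqref{q8} the paper uses the plain estimate \eqref{q21}. There the $\nabla_xF_*$ term with weight $\langle s\rangle^{-2/3+3\kappa_0}$ gives $\mathbf S_{-1+5\kappa_0}$, hence exactly $\langle t\rangle^{5\kappa_0}$. So neither the improved estimate nor the $\langle t-s\rangle^{-1}$ analysis is needed for \eqref{q8}.
\item \eqref{q21-improved} is only derived for $t\geq1$, so bounded times must go through \eqref{q21}, as the paper does for $t\le2$.
\item For the $\langle s\rangle^{-1/2+\kappa_0}\langle t-s\rangle^{-1}$ term in \eqref{q11}, the paper splits at $s=t-t^{1/4}$. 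It absorbs the early part into $\mathbf S_{-2/3+5\kappa_0}$ and uses \eqref{sec5eq23} near $s=t$. Your split at $s=t/2$ works equally well. For $s\geq t/2$, the Case 2 computation in the proof of Lemma~\ref{Le3} bounds the $v$-integral by $\Phi(t,x)$, leaving $\langle t\rangle^{-1/2+\kappa_0}\log(2+t)\Phi(t,x)$.
\end{enumerate}
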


\begin{proof} We first apply \eqref{q21} to
	$F=D_jE^{osc}$. Since $D_j$ contains a factor of $s$, the
	boundary term at $s=0$ vanishes. Using Lemma~\ref{Le3} and the bounds \eqref{Ebounds1}, we obtain
		\begin{align*}
		\left|
		\mathcal N(D_jE^{osc},\partial_v^\beta\mu,v^{\otimes k})(t,x)
		\right|
		&\lesssim
		M_T^2
		\int_0^t\int_{\mathbb R^2}
		\langle s,x-(t-s)v\rangle^{-2-\kappa_0}
		\langle s\rangle^{-1+5\kappa_0}
		\frac{dv\,ds}{\langle v\rangle^{2+2\kappa_0}}
		\notag\\
		&\lesssim
		M_T^2
		\langle t,x\rangle^{-2-\kappa_0}
		\langle t\rangle^{5\kappa_0}.
	\end{align*}

It remains to prove \eqref{q11}. For $\Box\in\{\cos,\sin\}$, we decompose
\[
	D_jD_{j'}E^{osc}(s,x)=
	\cos(s)F^{\cos}(s,x)+\sin(s)F^{\sin}(s,x),\qquad 	F^\Box(s,x):=s^2\partial_{x_jx_{j'}}E^\Box(s,x).
\]
It follows from \eqref{Ebounds1} and \eqref{aweights} that
\begin{equation}\label{sec5eq21}
\begin{split}
	&|F^\Box(s,x)|\lesssim M_T\langle s\rangle^{\frac23+2\kappa_0}\Phi(s,x),\\
	&|\partial_sF^\Box(s,x)|+|\nabla_xF^\Box(s,x)|\lesssim M_T\langle s\rangle^{\frac16}\Phi(s,x),\\
	&\mathfrak a_0(s,v)\langle s\rangle^{-1}\langle v\rangle^{-2}\lesssim\langle s\rangle^{-\frac76}\langle v\rangle^{-2-2\kappa_0}.
\end{split}
\end{equation}
The bounds \eqref{q11} follow from \eqref{q21},  \eqref{sec5eq21}, and Lemma \ref{Le3} if $t\in[0,2]$. On the other hand, if $t\geq 2$ then we use \eqref{q21-improved}, \eqref{sec5eq21}, and the observation $F^\Box(0,x)=0$ to obtain
\begin{align*}
&\left|
\mathcal N(D_jD_{j'}E^{osc},
\partial_v^\beta\mu,v^{\otimes k})(t,x)
\right|\\
&\qquad\lesssim
M_T^2\int_0^t\int_{\mathbb R^2}
\Phi(s,x-(t-s)v)
\left(\langle s\rangle^{-\frac23+5\kappa_0}+
\langle s\rangle^{-\frac12+\kappa_0}
\langle t-s\rangle^{-1}\right)
\frac{dv\,ds}{\langle v\rangle^{2+2\kappa_0}}\\
&\qquad+
M_T^2\int_{t-\frac12}^t\int_{\mathbb R^2}
\Phi(s,x-(t-s)v)\langle s\rangle^{-\frac12+3\kappa_0}
\frac{dv\,ds}{\langle v\rangle^{2+2\kappa_0}}.
\end{align*}
Notice that if $0\leq s\leq t-t^{\frac14}$ then $\langle s\rangle^{-\frac12+\kappa_0}\langle t-s\rangle^{-1}\lesssim\langle s\rangle^{-\frac23+5\kappa_0}$. In view of Lemma~\ref{Le3} with
$\sigma=-2/3+5\kappa_0$, for \eqref{q11} it remains to prove that if $t\geq 2$ then
\begin{equation}\label{sec5eq22}
\begin{split}
\int_{t-t^{1/4}}^t\int_{\mathbb R^2}&\Phi(s,x-(t-s)v)\langle s\rangle^{-\frac12+\kappa_0}
\langle t-s\rangle^{-1}\frac{dv\,ds}{\langle v\rangle^{2+2\kappa_0}}\\
&+\int_{t-1/2}^t\int_{\mathbb R^2}\Phi(s,x-(t-s)v)\langle s\rangle^{-\frac12+3\kappa_0}
\frac{dv\,ds}{\langle v\rangle^{2+2\kappa_0}}\lesssim \langle t\rangle^{\frac13+5\kappa_0}\Phi(t,x).
\end{split}
\end{equation}

To prove \eqref{sec5eq22} we notice that if $t\geq 2$ and $0\leq r\leq t^{1/4}$ then
\begin{equation}\label{sec5eq23}
	\int_{\mathbb R^2}
	\Phi(t,x-rv)\frac{dv}{\langle v\rangle^{2+2\kappa_0}}
	\lesssim
	\langle r\rangle^{\kappa_0}\Phi(t,x).
\end{equation}
Indeed, if $|x|\leq\langle t\rangle$ then this follows from the time component
of $\Phi(t,x-rv)$. If $|x|\geq\langle t\rangle$, then
$r\lesssim t\lesssim|x|$ and the bounds \eqref{sec5eq23} follow from Lemma~\ref{Le1}.
For $t\geq2$ and $s\in[t-t^{1/4},t]$, we have
$\langle s\rangle\approx\langle t\rangle$, hence $\Phi(s,z)\lesssim\Phi(t,z)$
for all $z\in\mathbb R^2$.
The bounds \eqref{sec5eq22} then follow from \eqref{sec5eq23} and the change of variables $r=t-s$. This completes the proof of the bounds \eqref{q11}.
\end{proof}

\subsection{The first-order bound for $\mathcal N$}\label{firstorderbound}

Throughout this subsection, we assume that $k\in\{0,1,2\}$ and $k-|\beta|\leq1$. We first
prove the desired first-order bound and then record several estimates that
will be used in the proof of the second-order bound.

\subsubsection{Proof of \eqref{q44}} Assume that $t\geq1$. We make the change of variables
\[
v=\frac{x-w}{t},\qquad dv=t^{-2}\,dw,
\]
in the definition of $\mathcal N$ to obtain
\begin{equation}\label{z51}
\begin{split}
&\mathcal N(F,\omega,b)(t,x)
=\frac1{t^2}\int_0^t\!\!\int_{\mathbb R^2}
b\Big(s,\frac{x-w}{t}\Big)
\Big[F\left(s,w+\frac{s}{t}(x-w)\right)
\omega\Big(\frac{x-w}{t}\Big)\\
&\qquad-F\Big(s,w+\frac{s}{t}(x-w)
+Y_{s,t}\Big(w,\frac{x-w}{t}\Big)\Big)\omega\Big(\frac{x-w}{t}
+W_{s,t}\Big(w,\frac{x-w}{t}\Big)\Big)\Big]\,dwds.
\end{split}
\end{equation}
Differentiating with respect to $x_j$ gives
\begin{equation}\label{z60}
\partial_{x_j}\mathcal N(F,\omega,b)=t^{-1}\mathcal N(D_jF,\omega,b)
+t^{-1}\mathcal N(F,\omega,\partial_{v_j}b)
+t^{-1}\mathcal N(F,\partial_{v_j}\omega,b)+t^{-1}\mathcal N_j^{r,1}(F,\omega,b),
\end{equation}
where $D_jF(s)=s\partial_{x_j}F(s)$ as before, and
\begin{align}
\mathcal N_j^{r,1}(F,\omega,b)(t,x)
&:=-\int_0^t\!\!\int_{\mathbb R^2}b(s,v)
\Big[
\partial_{v_j}Y_{s,t}(x-tv,v)\cdot
\nabla_xF(s,X_{s,t}(x,v))\omega(V_{s,t}(x,v))
\notag\\
&+F(s,X_{s,t}(x,v))
\nabla_v\omega(V_{s,t}(x,v))\cdot
\partial_{v_j}W_{s,t}(x-tv,v)
\Big]\,dvds.
\label{z61}
\end{align}

By \eqref{xf'}, \eqref{z36}, and the decay of $\mu$, we have
\begin{align}
\left|\mathcal N_j^{r,1}
(F,\partial_v^\beta\mu,v^{\otimes k})(t,x)\right|&\lesssim
M_T\int_0^t\!\!\int_{\mathbb R^2}
|\nabla_xF(s,x-(t-s)v)|
\langle s\rangle^{-\frac16-\kappa_0}
\frac{dv\,ds}{\langle v\rangle^{2+2\kappa_0}}
\notag\\
&+M_T\int_0^t\!\!\int_{\mathbb R^2}
|F(s,x-(t-s)v)|
\langle s\rangle^{-\frac76-\kappa_0}
\frac{dv\,ds}{\langle v\rangle^3}.
\label{x104}
\end{align}

We apply this estimate first to the two components of the field. From the
bounds for $E^{reg}$ and Lemma~\ref{Le3},
\begin{align}
\left|\mathcal N_j^{r,1}
(E^{reg},\partial_v^\beta\mu,v^{\otimes k})(t,x)\right|
&\lesssim M_T^2\mathbf S_{-\frac43-\kappa_0}(t,x)\lesssim
M_T^2\langle t\rangle^{\kappa_0}\Phi(t,x).
\label{q22}
\end{align}
For the oscillatory component, we separate the linear and nonlinear
contributions in the term containing $E^{osc}$ itself. Using
\eqref{a23}, \eqref{Ebounds1}, and Lemma~\ref{Le3}, we obtain
\begin{align}
\left|\mathcal N_j^{r,1}
(E^{osc},\partial_v^\beta\mu,v^{\otimes k})(t,x)\right|
&\lesssim M_T^2\big(
\mathbf S_{-\frac56+\kappa_0}(t,x)
+\mathbf T_{-\frac76}(t,x)
\big)\lesssim M_T^2\langle t\rangle^{\frac16}\langle t,x\rangle^{-2}.
\label{q19}
\end{align}

We now apply \eqref{z60} with $F=E^{reg}+E^{osc}$,
$\omega=\partial_v^\beta\mu$, and $b=v^{\otimes k}$. The term in which
$D_j$ falls on the field is controlled by \eqref{q5} and \eqref{q8}, the
term containing a $v$-derivative on $\omega$ or $b$ is controlled by
\eqref{q14}, and the remainder is controlled by \eqref{q22} and
\eqref{q19}. Thus
\begin{align}
\left|\nabla_x\mathcal N
(E,\partial_v^\beta\mu,v^{\otimes k})(t,x)\right|
&\lesssim
M_T^2\langle t\rangle^{-\frac56}\langle t,x\rangle^{-2}.
\label{q15}
\end{align}
which proves the desired bounds \eqref{q44} for $(t,x)\in\mathcal D_T$.

\subsubsection{Auxiliary first-order estimates}
We prove now some estimates needed in the second-order argument.
Differentiating the original definition \eqref{defN-new} of $\mathcal N$, and then using
\eqref{x103}, \eqref{n14}, and \eqref{z36}, gives
\begin{align}
&\left|\nabla_x\mathcal N
(F,\partial_v^\beta\mu,v^{\otimes k})(t,x)\right|\lesssim
M_T\int_0^t\!\!\int_{\mathbb R^2}
|\nabla_xF(s,x-(t-s)v)|
\langle s\rangle^{-\frac76-\kappa_0}
\frac{dv\,ds}{\langle v\rangle^{2+2\kappa_0}}
\notag\\
&\qquad+
M_T\int_0^t\!\!\int_{\mathbb R^2}
|F(s,x-(t-s)v)|
\langle s\rangle^{-\frac{13}{6}-\kappa_0}
\frac{dv\,ds}{\langle v\rangle^3}.
\label{q12}
\end{align}
Here we used the elementary bounds
\begin{align*}
\langle s,|v|^{\frac32}\rangle^{-\frac23+50\kappa_0}
\langle s\rangle^{-\frac23}
+\langle s,|v|^{\frac32}\rangle^{-\frac23}
\langle s\rangle^{-\frac23+\kappa_0}
&\lesssim\langle s\rangle^{-\frac76-\kappa_0}
\langle v\rangle^{-2\kappa_0}.
\end{align*}

For $F=D_jD_{j'}E^{osc}$, \eqref{Ebounds1} gives
\begin{align*}
|\nabla_xD_jD_{j'}E^{osc}(s,z)|
&\lesssim M_T\langle s\rangle^{\frac16}\Phi(s,z),\\
|D_jD_{j'}E^{osc}(s,z)|
&\lesssim M_T\langle s\rangle^{\frac23+2\kappa_0}\Phi(s,z).
\end{align*}
Thus \eqref{q12} and Lemma~\ref{Le3} imply
\begin{align}
\left|\nabla_x\mathcal N
(D_jD_{j'}E^{osc},\partial_v^\beta\mu,v^{\otimes k})(t,x)\right|
&\lesssim M_T^2\mathbf S_{-1-\kappa_0}(t,x)\lesssim
M_T^2\langle t\rangle^{\kappa_0}\Phi(t,x).
\label{q13}
\end{align}

We also apply \eqref{x104} to the first derivatives of
the field. The bounds for $E^{reg}$ give
\begin{align}
\left|\mathcal N_j^{r,1}
(D_{j'}E^{reg},\partial_v^\beta\mu,v^{\otimes k})(t,x)\right|
&\lesssim M_T^2\mathbf S_{-1-\kappa_0}(t,x)\lesssim
M_T^2\langle t\rangle^{\kappa_0}\Phi(t,x),
\label{q25}
\end{align}
whereas \eqref{Ebounds1} gives
\begin{align}
\left|\mathcal N_j^{r,1}
(D_{j'}E^{osc},\partial_v^\beta\mu,v^{\otimes k})(t,x)\right|
&\lesssim M_T^2\mathbf S_{-\frac12+\kappa_0}(t,x)\lesssim
M_T^2\langle t\rangle^{\frac12+\kappa_0}\Phi(t,x).
\label{q26}
\end{align}

We differentiate now \eqref{z61} once in $x$. The leading terms are bounded by
\[
|\nabla_x\nabla_vY_{s,t}|\,|\nabla_xF|,\qquad |\nabla_vW_{s,t}|\,|\nabla_xF|,\qquad
|\nabla_vY_{s,t}|\,|\nabla_x^2F|,\qquad |\nabla_x\nabla_vW_{s,t}|\,|F|.
\]
All other terms contain a product of two derivatives of the characteristic corrections and are smaller. Therefore,
using \eqref{n14'}--\eqref{n15'}, and
the comparison estimate \eqref{z36}, we obtain
\begin{equation}\label{q16}
\begin{split}
\left|\nabla_x\mathcal N_j^{r,1}
(F,\partial_v^\beta\mu,v^{\otimes k})(t,x)\right|&\lesssim
M_T\int_0^t\!\!\int_{\mathbb R^2}
|\nabla_x^2F(s,x-(t-s)v)|\langle s\rangle^{-\frac16-\kappa_0}
\frac{dv\,ds}{\langle v\rangle^{2+2\kappa_0}}\\
&+M_T\int_0^t\!\!\int_{\mathbb R^2}
|\nabla_xF(s,x-(t-s)v)|\langle s\rangle^{-\frac56+2\kappa_0}
\frac{dv\,ds}{\langle v\rangle^{2+2\kappa_0}}\\
&+M_T\int_0^t\!\!\int_{\mathbb R^2}|F(s,x-(t-s)v)|
\langle s\rangle^{-\frac{11}{6}+2\kappa_0}
\frac{dv\,ds}{\langle v\rangle^3}.
\end{split}
\end{equation}
Then we split $E^{osc}=E^{osc}_{lin}+E^{osc}_{non}$ as before and use Lemma~\ref{Le3}, so
\begin{align}
\left|\nabla_x\mathcal N_j^{r,1}
(E^{osc},\partial_v^\beta\mu,v^{\otimes k})(t,x)\right|
&\lesssim M_T^2\Big(
\mathbf S_{-\frac32+4\kappa_0}(t,x)
+\mathbf T_{-\frac{11}{6}+2\kappa_0}(t,x)
\Big)
\notag\\
&\lesssim
M_T^2\log(2+t)\langle t,x\rangle^{-2}.
\label{q23}
\end{align}
Similarly, taking $F=D_{j'}E^{osc}$ in \eqref{q16} and using \eqref{Ebounds1}, we obtain
\begin{align}
\left|\nabla_x\mathcal N_j^{r,1}
(D_{j'}E^{osc},\partial_v^\beta\mu,v^{\otimes k})(t,x)\right|
&\lesssim M_T^2\mathbf S_{-1-\kappa_0}(t,x)\lesssim
M_T^2\langle t\rangle^{\kappa_0}\Phi(t,x).
\label{q24}
\end{align}

\subsubsection{Some second-order bounds} We differentiate in $x$ \eqref{z60} again to see that
\begin{equation}\label{sec5eq30}
\begin{split}
\partial_{x_j}\partial_{x_{j'}}&\mathcal N(F,\omega,b)
=t^{-2}\Big[\mathcal N(D_jD_{j'}F,\omega,b)
+\mathcal N(D_{j'}F,\omega,\partial_{v_j}b)+\mathcal N(D_{j'}F,\partial_{v_j}\omega,b)\\
&+\mathcal N_j^{r,1}(D_{j'}F,\omega,b)
+\mathcal N(D_jF,\omega,\partial_{v_{j'}}b)
+\mathcal N(F,\omega,\partial_{v_j}\partial_{v_{j'}}b)+\mathcal N(F,\partial_{v_j}\omega,\partial_{v_{j'}}b)\\
&+\mathcal N_j^{r,1}(F,\omega,\partial_{v_{j'}}b)
+\mathcal N(D_jF,\partial_{v_{j'}}\omega,b)+\mathcal N(F,\partial_{v_{j'}}\omega,\partial_{v_j}b)
+\mathcal N(F,\partial_{v_j}\partial_{v_{j'}}\omega,b)\\
&+\mathcal N_j^{r,1}(F,\partial_{v_{j'}}\omega,b)
\Big]+t^{-1}\partial_{x_j}\mathcal N_{j'}^{r,1}(F,\omega,b).
\end{split}
\end{equation}

We first consider the regular component. Using \eqref{x103}, \eqref{q5},
\eqref{q22}, and \eqref{q25} we have 
\begin{align}
&\left|
\partial_{x_j}\partial_{x_{j'}}
\mathcal N(E^{reg},\partial_v^\beta\mu,v^{\otimes k})
-t^{-1}\partial_{x_j}
\mathcal N_{j'}^{r,1}(E^{reg},\partial_v^\beta\mu,v^{\otimes k})
\right|\lesssim
M_T^2\langle t\rangle^{-2+\kappa_0}\Phi(t,x).
\label{q38}
\end{align}

For the oscillatory component, define
\begin{align*}
\mathbf J_{jj'}(\partial_v^\beta\mu,v^{\otimes k})
&:=\mathcal N(D_jD_{j'}E^{osc},\partial_v^\beta\mu,v^{\otimes k})
+\mathcal N_j^{r,1}(D_{j'}E^{osc},\partial_v^\beta\mu,v^{\otimes k})
\notag\\
&+\mathcal N_j^{r,1}(E^{osc},\partial_{v_{j'}}\partial_v^\beta\mu,v^{\otimes k})
+\mathcal N_j^{r,1}(E^{osc},\partial_v^\beta\mu,\partial_{v_{j'}}v^{\otimes k}).
\end{align*}
The remaining terms are controlled by \eqref{q8} and by the argument used
to prove \eqref{q14}, applied separately to $E^{osc}$ and to the
differentiated symbols. Hence
\begin{equation}\label{q39}
\begin{split}
\left|\partial_{x_j}\partial_{x_{j'}}
\mathcal N(E^{osc},\partial_v^\beta\mu,v^{\otimes k})-t^{-1}\partial_{x_j}
\mathcal N_{j'}^{r,1}(E^{osc},\partial_v^\beta\mu,v^{\otimes k})-t^{-2}\mathbf J_{jj'}(\partial_v^\beta\mu,v^{\otimes k})\right|\\
\lesssim M_T^2\langle t\rangle^{-2+5\kappa_0}\langle t,x\rangle^{-2}.
\end{split}
\end{equation}
Moreover, \eqref{q11}, \eqref{q26}, and \eqref{q19} imply
\begin{align}
\big|\mathbf J_{jj'}(\partial_v^\beta\mu,v^{\otimes k})(t,x)\big|
&\lesssim M_T^2\langle t\rangle^{\frac12+\kappa_0}\langle t,x\rangle^{-2},
\label{x1}
\end{align}
while \eqref{q13}, \eqref{q24}, and \eqref{q23} give
\begin{align}
\big|\nabla_x\mathbf J_{jj'}(\partial_v^\beta\mu,v^{\otimes k})(t,x)\big|&\lesssim M_T^2\langle t\rangle^{\kappa_0}\langle t,x\rangle^{-2}.
\label{x2}
\end{align}
In particular, using \eqref{q38}, \eqref{q39}, and \eqref{x1}, we have
\begin{align}
&\left|\partial_{x_j}\partial_{x_{j'}}
\mathcal N(E,\partial_v^\beta\mu,v^{\otimes k})
-t^{-1}\partial_{x_j}
\mathcal N_{j'}^{r,1}(E,\partial_v^\beta\mu,v^{\otimes k})\right|\lesssim
M_T^2\langle t\rangle^{-\frac32+\kappa_0}\langle t,x\rangle^{-2}.
\label{q39.5}
\end{align}
Therefore, to prove the bounds \eqref{q45} it remains to control the term $\partial_{x_j}
\mathcal N_{j'}^{r,1}(E,\partial_v^\beta\mu,v^{\otimes k})$. This is treated in the next subsection.

We can also interpolate between the bounds \eqref{x1} and \eqref{x2} to show that for $\sigma\in[1/2,1)$,
\begin{align}
\big||\nabla|^\sigma\mathbf J_{jj'}(\partial_v^\beta\mu,v^{\otimes k})(t,x)\big|
&\lesssim_\sigma M_T^2\langle t\rangle^{\frac{1-\sigma}{2}+\kappa_0}
\langle t,x\rangle^{-2}.
\label{x3}
\end{align}
Indeed, we use the integral representation of the operator $|\nabla|^\sigma$,
\begin{equation}\label{fracnabla}
|\nabla|^\sigma f(x)=c_\sigma 
\int_{\mathbb R^2}\frac{f(x)-f(x-h)}{|h|^{2+\sigma}}\,dh,\qquad 0<\sigma<1.
\end{equation}
Let $Q(t,x):=\mathbf J_{jj'}(\partial_v^\beta\mu,v^{\otimes k})(t,x)$ and set
$L=\langle t\rangle^{1/2}/4$. The bounds \eqref{x2} and the comparability
$\langle t,x+\lambda h\rangle\approx\langle t,x\rangle$ for $|h|\leq L$ and $\lambda\in[0,1]$, show that
\begin{align*}
\int_{|h|\leq L}\frac{|Q(t,x)-Q(t,x+h)|}{|h|^{2+\sigma}}\,dh
&\lesssim_\sigma M_T^2\langle t\rangle^{\kappa_0}\langle t,x\rangle^{-2}L^{1-\sigma}\lesssim_\sigma
M_T^2\langle t\rangle^{\frac{1-\sigma}{2}+\kappa_0}\langle t,x\rangle^{-2}.
\end{align*}
For the complementary region, we use \eqref{x1}, so
\begin{equation*}
\begin{split}
\int_{|h|\geq L}&\frac{|Q(t,x)-Q(t,x+h)|}{|h|^{2+\sigma}}\,dh\lesssim M_T^2\langle t\rangle^{\frac12+\kappa_0}\int_{|h|\geq L}
\frac{\langle t,x+h\rangle^{-2}+\langle t,x\rangle^{-2}}{|h|^{2+\sigma}}\,dh\\
&\lesssim_\sigma L^{-\sigma}M_T^2\langle t\rangle^{\frac12+\kappa_0}\langle t,x\rangle^{-2}
+M_T^2\langle t\rangle^{\frac12+\kappa_0}\log(2+\langle t,x\rangle)\langle t,x\rangle^{-2-\sigma}.
\end{split}
\end{equation*}
Since
$\log(2+\langle t,x\rangle)\langle t,x\rangle^{-\sigma}
\lesssim_\sigma\langle t\rangle^{-\sigma/2}$,
the desired bounds \eqref{x3} follow from the integral representation \eqref{fracnabla}.
	
\subsection{The second-order bound for $\mathcal N$}\label{secondorderbound}

In this subsection we complete the proofs of the bounds \eqref{q45}--\eqref{q46}, and the
interior part of \eqref{q47}. In view of the estimates in subsection
\ref{subsection5.3}, we may assume throughout that $(t,x)\in\mathcal D_T$. We will also assume in this subsection that $\omega=\partial_v^\beta\mu$, $b(s,v)=v^{\otimes k}$, $k\in\{0,1,2\}$, $k-|\beta|\leq1$. 

We make the change of variables $v=(x-w)/t$ to rewrite \eqref{z61} in the form
\begin{align*}
&\mathcal N_j^{r,1}(F,\omega,b)(t,x)
=\frac{-1}{t^2}\int_0^t\!\!\int_{\mathbb R^2}b\Big(s,\frac{x-w}{t}\Big)
\Big[\partial_{v_j}Y_{s,t}\Big(w,\frac{x-w}{t}\Big)\omega\Big(\frac{x-w}{t}+W_{s,t}\Big(w,\frac{x-w}{t}\Big)\Big)\\
&\times\nabla_xF\Big(s,w+\frac{s}{t}(x-w)+Y_{s,t}\Big(w,\frac{x-w}{t}\Big)\Big)+\partial_{v_j}W_{s,t}\Big(w,\frac{x-w}{t}\Big)
\notag\\
&\times \nabla\omega\Big(\frac{x-w}{t}+W_{s,t}\Big(w,\frac{x-w}{t}\Big)\Big)\cdot F\Big(s,w+\frac{s}{t}(x-w)+Y_{s,t}\Big(w,\frac{x-w}{t}\Big)\Big)
\Big]\,dwds.
\end{align*}
Taking another $x$ derivative gives
\begin{equation}\label{n25}
\begin{split}
\partial_{x_i}\mathcal N_j^{r,1}(F,\omega,b)
&=t^{-1}\big[
\mathcal N_j^{r,1}(D_iF,\omega,b)
+\mathcal N_j^{r,1}(F,\partial_{v_i}\omega,b)
+\mathcal N_j^{r,1}(F,\omega,\partial_{v_i} b)\big]\\
&+t^{-1}\mathcal N_{ji}^{r,2}(F,\omega,b),
\end{split}
\end{equation}
where $D_iF(s)=s\partial_{x_i}F(s)$ and
\[
\mathcal N_{ji}^{r,2}(F,\omega,b)=Z_1(F)+Z_2(F)+Z_3(F)+Z_4(F)+Z_5(F),
\]
\begin{align*}
Z_1(F)&:=-\int_0^t\!\!\int_{\R^2}b(s,v)\,\partial_{v_j}\partial_{v_i}Y_{s,t}(x-tv,v)\nabla F(s,X_{s,t}(x,v))\,\omega(V_{s,t}(x,v))\,dv\,ds,\\
Z_2(F)&:=-\int_0^t\!\!\int_{\R^2} b(s,v)\,F(s,X_{s,t}(x,v))\nabla\omega(V_{s,t}(x,v))\cdot
\partial_{v_j}\partial_{v_i}W_{s,t}(x-tv,v)\,dv\,ds,\\
Z_3(F)&:=-\int_0^t\!\!\int_{\R^2} b(s,v)\,F(s,X_{s,t}(x,v))
\nabla^2\omega(V_{s,t}(x,v))\big[\partial_{v_i}W_{s,t}\otimes\partial_{v_j}W_{s,t}\big](x-tv,v)\,dv\,ds,\\
Z_4(F)&:=-\int_0^t\!\!\int_{\R^2} b(s,v)\,\nabla^2F(s,X_{s,t}(x,v))\big[\partial_{v_i}Y_{s,t}\otimes\partial_{v_j}Y_{s,t}\big](x-tv,v)
\omega(V_{s,t}(x,v))\,dv\,ds,\\
Z_5(F)&:=-\sum_{\ell=1}^2\int_0^t\!\!\int_{\R^2}b(s,v)\big(\nabla_v\omega_\ell(V_{s,t}(x,v))
\otimes\nabla_xF_\ell(s,X_{s,t}(x,v))\big)\\
&\qquad\qquad\qquad :\big[
\partial_{v_i}W_{s,t}\otimes\partial_{v_j}Y_{s,t}
+\partial_{v_j}W_{s,t}\otimes\partial_{v_i}Y_{s,t}
\big](x-tv,v)\,dv\,ds.
\end{align*}

It follows from \eqref{n25}, \eqref{q22}, \eqref{q25}, \eqref{q19}, and
\eqref{q26} that
\begin{align}
\left|\partial_{x_i}\mathcal N_j^{r,1}(E^{reg},\partial_v^\beta\mu,v^{\otimes k})-t^{-1}\mathcal N_{ji}^{r,2}(E^{reg},\partial_v^\beta\mu,v^{\otimes k})\right|
&\lesssim M_T^2\langle t\rangle^{-1+\kappa_0}\Phi(t,x),
\label{q33}\\
\left|\partial_{x_i}\mathcal N_j^{r,1}(E^{osc},\partial_v^\beta\mu,v^{\otimes k})
-t^{-1}\mathcal N_{ji}^{r,2}(E^{osc},\partial_v^\beta\mu,v^{\otimes k})\right|
&\lesssim M_T^2\langle t\rangle^{-1/2}\langle t,x\rangle^{-2}.
\label{q34}
\end{align}
Moreover, by differentiating \eqref{n25} and using \eqref{q23}--\eqref{q24}, we have
\begin{align}
\left|\nabla_x\left(
\partial_{x_i}\mathcal N_j^{r,1}(E^{osc},\partial_v^\beta\mu,v^{\otimes k})
-t^{-1}\mathcal N_{ji}^{r,2}(E^{osc},\partial_v^\beta\mu,v^{\otimes k})\right)\right|
&\lesssim M_T^2\langle t\rangle^{-1+\kappa_0}
\langle t,x\rangle^{-2}.
\label{q35}
\end{align}
The interpolation argument used in the proof of \eqref{x3} then gives,
for any $\sigma\in[1/2,1)$,
\begin{equation}\label{q34-35}
\begin{split}
&\left||\nabla_x|^\sigma\left(
\partial_{x_i}\mathcal N_j^{r,1}(E^{osc},\partial_v^\beta\mu,v^{\otimes k})-t^{-1}\mathcal N_{ji}^{r,2}(E^{osc},\partial_v^\beta\mu,v^{\otimes k})
\right)\right|\\
&\hspace{27mm}\lesssim_\sigma
M_T^2\langle t\rangle^{-(1+\sigma)/2}\langle t\rangle^{2\kappa_0}\langle t,x\rangle^{-2}.
\end{split}
\end{equation}

\subsubsection{Estimates on $Z_j(E)$, $j\in\{2,\ldots,5\}$} The bounds \eqref{xf'} and \eqref{n17b}, the comparison estimate \eqref{z36}, and the field estimates give
\begin{align*}
\sum_{*=reg,osc}\sum_{m=4}^5|Z_m(E^*)|
&\lesssim M_T^2\!\!\int_0^t\int_{\R^2}
\langle s\rangle^{-5/3}\Phi(s,x-(t-s)v)
\frac{dv\,ds}{\langle v\rangle^{11/5}},\\
\sum_{*=reg,osc}|Z_3(E^*)|
&\lesssim M_T^2\int_0^t\!\!\int_{\R^2}
\langle s\rangle^{-2}\langle s,x-(t-s)v\rangle^{-2}
\frac{dv\,ds}{\langle v\rangle^3},\\
|Z_2(E^{reg})|
&\lesssim M_T^2\langle t\rangle^{1/6-\kappa_0}
\int_0^t\!\!\int_{\R^2}
\langle s\rangle^{-3/2}\Phi(s,x-(t-s)v)
\frac{dv\,ds}{\langle v\rangle^3}.
\end{align*}
Consequently, Lemma \ref{Le3} implies that for $(t,x)\in\mathcal{D}_T$
\begin{align}
\sum_{*=reg,osc}\sum_{m=3}^5|Z_m(E^*)|+|Z_2(E^{reg})|\lesssim M_T^2\langle t\rangle^{1/6}\Phi(t,x).
\label{q31}
\end{align}

We prove now a similar estimate on $|Z_2(E^{osc})|$, 
\begin{equation}\label{sec5eq40}
|Z_2(E^{osc})|\lesssim M_T^2\langle t\rangle^{1/6}\Phi(t,x).
\end{equation}
For this we first record the elementary bound
\begin{equation}\label{second-order-convolution}
\int_{\mathbb R^2}\langle x-tv\rangle^{-2}\langle v\rangle^{-2}
\Big(\int_0^t\langle\tau\rangle^a
\Phi(\tau,x-(t-\tau)v)\,d\tau\Big)dv\lesssim\langle t\rangle^{\kappa_0}\Phi(t,x),
\end{equation}
for $a\leq 1/2$.
\begingroup
It suffices to take $a=1/2$ and set
\[
A(v):=\int_0^t\langle\tau\rangle^{1/2}\Phi(\tau,x-(t-\tau)v)\,d\tau.
\]
Since $\langle x-tv\rangle\leq\sqrt2\langle v\rangle
\langle\tau,x-(t-\tau)v\rangle$, we have
\[
\langle x-tv\rangle^{-2}\Phi(\tau,x-(t-\tau)v)
\lesssim\langle v\rangle^{\kappa_0}\langle x-tv\rangle^{-2-\kappa_0}
\langle\tau,x-(t-\tau)v\rangle^{-2}.
\]
Thus Lemma~\ref{z216} with $(a_1,a_2)=(2,1/2)$ and \eqref{z201} give
\begin{align*}
\int_{\mathbb R^2}\langle x-tv\rangle^{-2}A(v)\frac{dv}{\langle v\rangle^2}
&\lesssim\int_{\mathbb R^2}\langle x-tv\rangle^{-2-\kappa_0}
\frac{dv}{\langle v\rangle^{3-\kappa_0}}\\
&\lesssim\langle t\rangle^{\kappa_0}\Phi(t,x),
\end{align*}
since $3-\kappa_0\geq2+\kappa_0$.
\endgroup

To prove \eqref{sec5eq40} we integrate by parts in time, using the decomposition \eqref{de1}. The upper
endpoint vanishes because $\nabla_v^2W_{t,t}=0$. At the lower endpoint we use
\eqref{x203}. The contribution of
the $\tau=t$ endpoint in \eqref{x203} is bounded directly by Lemma \ref{Le1};
the remaining terms are bounded by
\begin{align*}
M_T^2\langle t\rangle^{1/6-\kappa_0}
\int_{\R^2}\langle x-tv\rangle^{-3}\frac{dv}{\langle v\rangle^3}
&+M_T^2\langle t\rangle^{1/6-\kappa_0}
\int_{\R^2}\langle x-tv\rangle^{-2}\langle v\rangle^{-2}A(v)\,dv.
\end{align*}
By \eqref{second-order-convolution} and Lemma \ref{Le1}, the boundary term is
bounded by $M_T^2\langle t\rangle^{1/6}\Phi(t,x)$, as desired.

When the time derivative falls on the remaining factors, the resulting terms
contain 
\[
\partial_sE^*\cdot \nabla_v^2W_{s,t},\quad(v+\partial_sY_{s,t})\cdot\nabla E^*\cdot \nabla_v^2W_{s,t},\quad E^*\partial_sW_{s,t}\cdot \nabla_v^2W_{s,t},\quad E^*\partial_s\nabla_v^2W_{s,t},
\]
for $*\in\{\cos,\sin\}$. Using \eqref{n13'}, \eqref{n17b},
\eqref{x200}, \eqref{x202}, and \eqref{Ebounds1}, all these contributions are
bounded by
\[
M_T^2\langle t\rangle^{1/6-\kappa_0}
\int_0^t\!\!\int_{\R^2}
\langle s\rangle^{-1-2\kappa_0}
\Phi(s,x-(t-s)v)\frac{dv\,ds}{\langle v\rangle^{2+2\kappa_0}}.
\]
The bounds \eqref{sec5eq40} follow from Lemma \ref{Le3}. Using also \eqref{q31} we obtain
\begin{align}\label{q36}
\left|\mathcal N_{ji}^{r,2}(E^*,\omega,b)-Z_1(E^*)\right|
\lesssim M_T^2\langle t\rangle^{1/6}\Phi(t,x),
\end{align}
for $*\in\{reg,osc\}$ and $(t,x)\in\mathcal D_T$.

\subsubsection{The nonlinear term $Z_1(E)$} With $O^{i,j}_{s,t}$ defined as in \eqref{sec4eq50}, set
\begin{align*}
\mathcal J_{ji}(F,\omega,b)(t,x)
:=-\int_0^t\!\!\int_{\R^2} b(s,v)\,
O_{s,t}^{i,j}(x-tv,v)\cdot\nabla F(s,x-(t-s)v)\,\omega(v)\,dv\,ds.
\end{align*}
We show first that if $(t,x)\in\mathcal{D}_T$ then
\begin{align}\label{n19}
\left|Z_1(E^*)-\mathcal J_{ji}(E^*,\omega,b)\right|
\lesssim M_T^2\langle t\rangle^{1/6}\Phi(t,x),
\qquad *\in\{reg,osc\}.
\end{align}

Indeed, by the mean value theorem, \(\eqref{n13'}\), \(\eqref{n17}\), the field bounds, and Lemma \(\ref{Le3}\), the errors produced by replacing  $X_{s,t}(x,v)$ and $V_{s,t}(x,v)$ by $x-(t-s)v$ and $v$ contain an additional factor $Y_{s,t}$ or $W_{s,t}$ and
are estimated directly. It remains to replace
$\partial_{v_j}\partial_{v_i}Y_{s,t}$ by $O_{s,t}^{i,j}$.

For the regular component, \eqref{q27}--\eqref{q28} and Lemma \ref{Le3} give
\begin{align*}
\left|Z_1(E^{reg})-\mathcal J_{ji}(E^{reg},\omega,b)\right|
&\lesssim M_T^2\langle t\rangle^{1/6-\kappa_0}\!\!\int_0^t\int_{\R^2}
\langle s\rangle^{-7/6}\Phi(s,x-(t-s)v)
\frac{dv\,ds}{\langle v\rangle^3}\\
&\lesssim M_T^2\langle t\rangle^{1/6}\Phi(t,x).
\end{align*}
For the oscillatory component, we split according to
$\chi(|v|/\langle s\rangle)$. On the support of $1-\chi$,
\eqref{q27} and Lemma \ref{Le3} give
\begin{align*}
M_T^2\langle t\rangle^{1/6-\kappa_0}
\int_0^t\!\!\int_{|v|\geq\langle s\rangle/2}
\langle s\rangle^{-2/3+2\kappa_0}
\Phi(s,x-(t-s)v)\frac{dv\,ds}{\langle v\rangle^3}
\lesssim M_T^2\langle t\rangle^{1/6}\Phi(t,x).
\end{align*}
Therefore, for \eqref{n19} it remains to prove that if $(t,x)\in\mathcal{D}_T$ then
\begin{equation}\label{n19.2}
\begin{split}
\Big|\int_0^t\!\!\int_{\R^2} b(s,v)&\chi(|v|/\langle s\rangle)\,[O_{s,t}^{i,j}(x-tv,v)-\partial_{v_i}\partial_{v_j}Y_{s,t}(x-tv,v)]\\
&\times\nabla E^{osc}(s,x-(t-s)v)\,\omega(v)\,dvds\Big|\lesssim M_T^2\langle t\rangle^{1/6}\Phi(t,x).
\end{split}
\end{equation}

We use \eqref{de1} and integrate by parts in time.
The upper endpoint vanishes. Moreover, the lower endpoint and the bulk terms are bounded by
\begin{align*}
&M_T^2\langle t\rangle^{1/6-\kappa_0}
\!\!\int_{|v|\leq4}\langle x-tv\rangle^{-2-\kappa_0}\,dv\\
&+M_T^2\langle t\rangle^{1/6-\kappa_0}
\int_0^t\!\!\int_{\R^2}\langle s\rangle^{-7/6}\Phi(s,x-(t-s)v)
\frac{dv\,ds}{\langle v\rangle^{2+2\kappa_0}}\lesssim M_T^2\langle t\rangle^{1/6}\Phi(t,x),
\end{align*}
using Lemmas \ref{Le1} and \ref{Le3}. This completes the proofs of \eqref{n19.2} and \eqref{n19}.

We show now that if $(t,x)\in\mathcal{D}_T$ then
\begin{align}\label{n20}
|\mathcal J_{ji}(E^{reg},\omega,b)(t,x)|\lesssim M_T^2\langle t\rangle^{1/6}\Phi(t,x).
\end{align}
On the low-velocity region we use \eqref{q62a}; on the high-velocity region we use
\[
\nabla_xE^{reg}(s,x-(t-s)v)=-(t-s)^{-1}\nabla_v[E^{reg}(s,x-(t-s)v)]
\]
and integrate by parts in $v$. By \eqref{q29}--\eqref{q30},
\begin{align*}
|\mathcal J_{ji}(E^{reg},\omega,b)(t,x)|
&\lesssim M_T^2\langle t\rangle^{1/6-\kappa_0}
\int_0^t\!\!\int_{|v|\leq2\langle s\rangle}
\langle s\rangle^{-7/6}\Phi(s,x-(t-s)v)\frac{dv\,ds}{\langle v\rangle^2}\\
&+M_T^2\langle t\rangle^{1/6-\kappa_0}
\int_0^t\!\!\int_{|v|\geq\langle s\rangle/2}
\langle s\rangle^{-1/2}\Phi(s,x-(t-s)v)\frac{dv\,ds}{\langle v\rangle^3}\\
&\lesssim M_T^2\langle t\rangle^{1/6-\kappa_0}
\int_0^t\!\!\int_{\R^2}
\langle s\rangle^{-13/12}\Phi(s,x-(t-s)v)
\frac{dv\,ds}{\langle v\rangle^{25/12}}.
\end{align*}
The bounds \eqref{n20} then follow from Lemma \ref{Le3}.

Combining \eqref{q33}, \eqref{q36}, \eqref{n19}, and \eqref{n20}, we obtain
\begin{align}\label{q37}
\left|\partial_{x_i}\mathcal N_j^{r,1}(E^{reg},\omega,b)(t,x)\right|
\lesssim M_T^2\langle t\rangle^{-5/6}\Phi(t,x).
\end{align}
Together with \eqref{q38}, this yields
\begin{align}\label{q38a}
\left|\nabla_x^2\mathcal N(E^{reg},\omega,b)(t,x)\right|
\lesssim M_T^2\langle t\rangle^{-11/6}\Phi(t,x).
\end{align}

For the oscillatory field, \eqref{q62a} and \eqref{Ebounds1} imply
\begin{align*}
&|\mathcal J_{ji}(E^{osc},\omega,b)(t,x)|\lesssim M_T^2\langle t\rangle^{2/3+\kappa_0}
\int_0^t\!\!\int_{\R^2}
\langle t^{1/2+2\kappa_0},v\rangle^{-1}
\langle s\rangle^{-2/3+2\kappa_0}\Phi(s,x-(t-s)v)
\frac{dv\,ds}{\langle v\rangle^2}.
\end{align*}
Since $\langle t^{1/2+2\kappa_0},v\rangle^{-1}
\lesssim\langle v\rangle^{-\kappa_0}
\langle t\rangle^{-(1/2+2\kappa_0)(1-\kappa_0)}$, 
Lemma \ref{Le3} gives
\begin{align}\label{n21}
|\mathcal J_{ji}(E^{osc},\omega,b)(t,x)|
\lesssim M_T^2\langle t\rangle^{1/2+4\kappa_0}\Phi(t,x).
\end{align}
Therefore \eqref{q34}, \eqref{q36}, and \eqref{n19} imply
\begin{align}\label{q36a}
\left|\partial_{x_i}\mathcal N_j^{r,1}(E^{osc},\omega,b)(t,x)\right|
\lesssim M_T^2\langle t\rangle^{-1/2+4\kappa_0}
\langle t,x\rangle^{-2}.
\end{align}
Using \eqref{q39} and \eqref{x1}, we conclude that
\begin{align}\label{q36b}
\left|\nabla_x^2\mathcal N(E^{osc},\omega,b)(t,x)\right|
\lesssim M_T^2\langle t\rangle^{-3/2+4\kappa_0}
\langle t,x\rangle^{-2}.
\end{align}
In particular, the bounds \eqref{q45} for $(t,x)\in\mathcal D_T$ follow from \eqref{q36b} and \eqref{q38a}.

\subsubsection{The fractional gain and completion of the proof}

The bounds \eqref{q34}, \eqref{x1}, \eqref{n21}, and \eqref{sec5eq55}
hold for every $t\geq1$ and $x\in\mathbb R^2$: their proofs use only
global field, characteristic, and ray estimates.
In particular, \eqref{n21} may be used at $x-h$ outside $\mathcal D_T$ below.

We now prove the additional regularity needed for \eqref{q46} and \eqref{q47}. For $h\in\mathbb R^2$, let
\[
\delta_hu(x):=u(x)-u(x-h).
\]
We claim that for $(t,x)\in\mathcal{D}_T$
\begin{align}\label{q17}
\sup_{0<|h|\leq\langle t,x\rangle/2}
\frac{|\delta_h\mathcal J_{ji}(E^{osc},\omega,b)(t,x)|
}{|h|^{5/6}\min\{|h|,1\}^{1/6}}
\lesssim M_T^2\langle t\rangle^{1/6}\Phi(t,x).
\end{align}

The field estimates and \eqref{a23} give
\begin{equation}\label{sec5eq55}
\begin{split}
|\delta_h\nabla_xE^{osc}(s,z)|
&\lesssim M_T
\min\{1,|h|\langle s\rangle^{-2/3}\}
\langle s\rangle^{-2/3+2\kappa_0}
\big(\Phi(s,z)+\Phi(s,z-h)\big),\\
|\delta_hE^{osc}(s,z)|
&\lesssim M_T|h|^{\kappa_0}\langle s\rangle^{50\kappa_0}
\min\{1,|h|\langle s\rangle^{-2/3}\}^{1-\kappa_0}
\big(\Phi(s,z)+\Phi(s,z-h)\big).
\end{split}
\end{equation}
Indeed, the first bound follows by taking the minimum of the pointwise
bound for $\nabla_xE^{osc}$ and the mean-value bound involving
$|h|\nabla_x^2E^{osc}$. The second bound follows in the same way, by analyzing separately the cases $|h|\gtrsim \langle s,z\rangle$ and $|h|\ll\langle s,z\rangle$, and using the decomposition $E^{osc}=E^{osc}_{lin}+E^{osc}_{non}$, \eqref{a23}, and \eqref{bo1} in the second case.

Similarly, using \eqref{q62a}--\eqref{q62d}, we obtain
\begin{equation}\label{sec5eq56}
\begin{split}
|\delta_hO_{s,t}^{i,j}(x-tv,v)|&\lesssim
M_T\langle t\rangle^{\frac16-\kappa_0}
\min\{1,|h|\langle v\rangle^{-1}\},\\
|\delta_h\partial_sO_{s,t}^{i,j}(x-tv,v)|
&\lesssim M_T\langle s\rangle^{-\frac13+\kappa_0}
\langle v\rangle^{-1}
\min\{1,|h|\langle s\rangle^{-\frac12}\}.
\end{split}
\end{equation}

Notice that 
\begin{equation}\label{sec5eq57}
\begin{split}
&\delta_h\mathcal J_{ji}(E^{osc},\omega,b)(t,x)\\
&=-\int_0^t\!\!\int_{\R^2}
\delta_hO_{s,t}^{i,j}(x-tv,v)\cdot \nabla_x E^{osc}(s,x-(t-s)v)
b(s,v)\omega(v)\,dv\,ds\\
&-\int_0^t\!\!\int_{\R^2}O_{s,t}^{i,j}(x-h-tv,v)\cdot 
\nabla_x\delta_hE^{osc}(s,x-(t-s)v) b(s,v)\omega(v)\,dv\,ds.
\end{split}
\end{equation}
We split these two integrals according to $\chi(|v|/\langle s\rangle)$, so we denote the low-velocity and
high-velocity parts of the first integral by $Z^\ast_1$ and $Z^\ast_2$, 
and the low-velocity and high-velocity parts of the second integral by $Z^\ast_3$ and $Z^\ast_4$. We integrate by parts in $v$ in $Z_4^\ast$, thus
\begin{align*}
Z_4^\ast&:=-\int_0^t\!\!\int_{\R^2}
\nabla_v\Big[
b(s,v)\omega(v)(1-\chi)(|v|/\langle s\rangle)\frac{O_{s,t}^{i,j}(x-h-tv,v)}{t-s}\Big]
\delta_hE^{osc}(s,x-(t-s)v)\,dv\,ds.
\end{align*}

We can prove now \eqref{q17}. The bounds \eqref{sec5eq55}--\eqref{sec5eq56}, \eqref{q62a}, and Lemma \ref{Le3} show that
\begin{align*}
|Z_2^\ast|
&\lesssim M_T^2\langle t\rangle^{1/6}\Phi(t,x)
|h|^{1/3+5\kappa_0}\min\{|h|,1\}^{2/3-5\kappa_0},\\
|Z_3^\ast|
&\lesssim M_T^2\langle t\rangle^{1/6}
\big(\Phi(t,x)+\Phi(t,x-h)\big)
|h|^{2/3}\min\{|h|,1\}^{1/3}.
\end{align*}
Then we use the formula above, the bounds \eqref{sec5eq55}, \eqref{q29}--\eqref{q30}, and Lemma \ref{Le3} to estimate
\begin{align*}
|Z_4^\ast|&\lesssim M_T^2\langle t\rangle^{1/6-\kappa_0}
|h|^{1/10}\min\{|h|,1\}^{9/10}\\
&\qquad\times\int_0^t\!\!\int
\langle s\rangle^{-16/15+53\kappa_0}
\sum_{\ell=0}^1\Phi(s,x-\ell h-(t-s)v)
\frac{dv\,ds}{\langle v\rangle^{2+2\kappa_0}}\\
&\lesssim M_T^2\langle t\rangle^{1/6}
\big(\Phi(t,x)+\Phi(t,x-h)\big)
|h|^{1/10}\min\{|h|,1\}^{9/10}.
\end{align*}
 
Finally, to estimate $Z_1^\ast$ we use \eqref{de1} and integrate by parts in time. The upper
endpoint vanishes since $O_{t,t}^{i,j}=0$. Using \eqref{sec5eq56} and Lemmas \ref{Le1} and \ref{Le3}, the lower endpoint and all bulk
terms are bounded by
\begin{align*}
|Z_1^\ast|
&\lesssim M_T^2\langle t\rangle^{1/6-\kappa_0}
|h|^{5/6}\min\{|h|,1\}^{1/6}\\
&\qquad\times\Big[
\int_{\R^2}\langle x-tv\rangle^{-2-\kappa_0}\frac{dv}{\langle v\rangle^{5/2}}
+\int_0^t\!\!\int_{\R^2}\langle s\rangle^{-1-\kappa_0}
\Phi(s,x-(t-s)v)\frac{dv\,ds}{\langle v\rangle^{2+2\kappa_0}}\Big]\\
&\lesssim M_T^2\langle t\rangle^{1/6}\Phi(t,x)
|h|^{5/6}\min\{|h|,1\}^{1/6}.
\end{align*}
If $|h|\leq\langle t,x\rangle/2$, then
$\Phi(t,x-h)\lesssim\Phi(t,x)$. The bounds \eqref{q17} follows from the estimates on $Z_j^\ast$, $j\in\{1,2,3,4\}$, above.

To summarize, using \eqref{q36}, \eqref{n19}, \eqref{n20}, \eqref{n21}, and \eqref{q17}, we have
\begin{align}
|\mathcal N_{ji}^{r,2}(E^{reg},\omega,b)|
+|\mathcal N_{ji}^{r,2}(E^{osc},\omega,b)
-\mathcal J_{ji}(E^{osc},\omega,b)|
&\lesssim M_T^2\langle t\rangle^{1/6}\Phi(t,x),
\label{q40}\\
|\mathcal J_{ji}(E^{osc},\omega,b)(t,x)|
&\lesssim M_T^2\langle t\rangle^{1/2+4\kappa_0}\Phi(t,x),
\label{q41}\\
\sup_{0<|h|\leq\langle t,x\rangle/2}
\frac{|\delta_h\mathcal J_{ji}(E^{osc},\omega,b)(t,x)|}
{|h|^{5/6}\min\{|h|,1\}^{1/6}}
&\lesssim M_T^2\langle t\rangle^{1/6}\Phi(t,x).
\label{q42}
\end{align}

Let $\sigma=5/6+\epsilon_1$, where $\epsilon_1\in[\kappa_0,1/6-\kappa_0]$. Using the integral 
representation \eqref{fracnabla} of $|\nabla|^\sigma$, split the difference integral at
$|h|=\langle t,x\rangle/2$. The local part is bounded by \eqref{q42}, while
the complementary part is bounded by \eqref{q41}. Since
$\int_{\mathbb R^2}\Phi(t,y)\,dy\lesssim\langle t\rangle^{-\kappa_0}$, we obtain
\begin{equation}\label{q42.5}
\begin{split}
&\left||\nabla|^\sigma\mathcal J_{ji}(E^{osc},\omega,b)(t,x)\right|
\lesssim M_T^2\langle t\rangle^{1/6}\Phi(t,x)+M_T^2\langle t\rangle^{1/2+4\kappa_0}\Phi(t,x)\langle t,x\rangle^{-\sigma}\\
&\qquad+M_T^2\langle t\rangle^{1/2+4\kappa_0}\langle t,x\rangle^{-2-\sigma}\int_{\mathbb R^2}\Phi(t,y)\,dy\lesssim M_T^2\langle t\rangle^{1/6}\Phi(t,x).
\end{split}
\end{equation}

To conclude, set componentwise
\begin{align*}
Q_{ij}:=\partial_{x_i}\mathcal N_j^{r,1}(E^{osc},\omega,b)
-t^{-1}\mathcal N_{ji}^{r,2}(E^{osc},\omega,b),\\
\mathbb S_{ij}:=t^{-1}Q_{ij}
+t^{-2}\mathbf J_{ij}(\omega,b)
+t^{-2}\mathcal J_{ji}(E^{osc},\omega,b).
\end{align*}
The bounds \eqref{q42.5}, 
\eqref{q34-35}, \eqref{x3}, \eqref{q39}, \eqref{q40}, and \eqref{q38a} show that
\begin{equation}\label{second-order-structure}
\left|\nabla_x^2\mathcal N(E,\omega,b)-\mathbb S\right|
+\left||\nabla|^\sigma\mathbb S\right|
\lesssim M_T^2\langle t\rangle^{-11/6}\Phi(t,x),
\end{equation}
for $\sigma\in[5/6+\kappa_0,1-\kappa_0]$. Indeed, the only additional terms to check are
\begin{align*}
t^{-1}\bigl|\,|\nabla|^\sigma Q\,\bigr|
&\lesssim M_T^2\langle t\rangle^{-(3+\sigma)/2}
\langle t\rangle^{2\kappa_0}
\langle t,x\rangle^{-2},\\
t^{-2}\bigl|\,|\nabla|^\sigma\mathbf J\,\bigr|
&\lesssim M_T^2
\langle t\rangle^{-(3+\sigma)/2+\kappa_0}
\langle t,x\rangle^{-2}.
\end{align*}
and both are bounded by the right-hand side of
\eqref{second-order-structure} in $\mathcal D_T$.

For $t\geq1$, choose $\psi(x)=\psi_0(x/\langle t\rangle^{10})$, with
$\psi_0\in C_c^\infty(\mathbb R^2)$ equal to $1$ for $|x|\leq1/2$
and $0$ for $|x|\geq1$.
For $\mathcal R_k$, sum the tensors above as in \eqref{sec5eq6}, retaining
the notation $\mathbb S$. By \eqref{q34}, \eqref{x1}, and \eqref{n21},
\[
|\mathbb S(t,x)|\lesssim M_T^2\langle t\rangle^{-3/2+4\kappa_0}\langle t,x\rangle^{-2},
\qquad x\in\mathbb R^2.
\]
Using \eqref{fracnabla}, the support of $\psi$, and
$|\psi(x)-\psi(x-h)|\lesssim\min\{1,|h|\langle t\rangle^{-10}\}$,
splitting at $|h|=\langle t,x\rangle/2$ gives, for $\sigma\in[5/6+\kappa_0,1-\kappa_0]$,
\begin{align*}
|[|\nabla|^\sigma,\psi]\mathbb S(t,x)|
&\lesssim M_T^2\langle t\rangle^{-3/2+5\kappa_0}
\begin{cases}
\langle t\rangle^{-\sigma}\langle t,x\rangle^{-2},&|x|\leq2\langle t\rangle^{10},\\
|x|^{-2-\sigma},&|x|>2\langle t\rangle^{10}
\end{cases}\\
&\lesssim M_T^2\langle t\rangle^{-11/6}\Phi(t,x).
\end{align*}
Here the logarithmic spatial integrals are absorbed by $\langle t\rangle^{\kappa_0}$;
the last comparison uses $\sigma\geq5/6$ and $\kappa_0=10^{-4}$.
Together with \eqref{second-order-structure}
on $|x|\leq\langle t\rangle^{10}$, this controls
$|\nabla|^\sigma(\psi\mathbb S)
=\psi|\nabla|^\sigma\mathbb S+[|\nabla|^\sigma,\psi]\mathbb S$ globally.
Also, \eqref{z34b} gives, on $|x|\geq\langle t\rangle^{10}/2$,
\[
|\nabla^2\mathcal R_k(t,x)|
\lesssim M_T^2\langle t\rangle^{-13/4+10\kappa_0}\Phi(t,x)
\lesssim M_T^2\langle t\rangle^{-11/6}\Phi(t,x).
\]
Thus, with
\[
\mathbb H:=\psi(\nabla^2\mathcal R_k-\mathbb S)
+(1-\psi)\nabla^2\mathcal R_k,
\]
we have the global decomposition and bounds
\[
\nabla^2\mathcal R_k=\mathbb H+\psi\mathbb S,
\qquad |\mathbb H|+\big||\nabla|^\sigma(\psi\mathbb S)\big|
\lesssim M_T^2\langle t\rangle^{-11/6}\Phi(t,x).
\]
For \eqref{q46}, fix $\sigma_0\in[5/6+\kappa_0,1-2\kappa_0]$ and write,
with the componentwise tensor contractions understood,
\begin{align*}
\nabla^3(1-\Delta)^{-1}\mathcal R_k=\nabla(1-\Delta)^{-1}\mathbb H+\nabla(1-\Delta)^{-1}|\nabla|^{-\sigma_0}
\big[|\nabla|^{\sigma_0}(\psi\mathbb S)\big].
\end{align*}
Both kernels are integrable and preserve $\Phi$; the second decays
like $|x|^{-3+\sigma_0}$ at infinity, with $3-\sigma_0>2+\kappa_0$.
This proves \eqref{q46} for all $x$.
Finally, taking $\sigma=5/6+\epsilon_1$ as in \eqref{q47},
two integrations by parts and the self-adjointness of $|\nabla|^\sigma$ give
\begin{align*}
\int\nabla^3\varphi\,\mathcal R_k\,dx
&=\int\nabla\varphi\,\mathbb H\,dx
+\int |\nabla|^{-\sigma}\nabla\varphi\,
|\nabla|^\sigma(\psi\mathbb S)\,dx.
\end{align*}
The global bounds above imply \eqref{q47}; the identities are justified
first for smooth compactly supported test functions and then by approximation.
This completes the proof of Proposition~\ref{maines}.

\section{Proof of Proposition \ref{propoglobal3d}}
In this section we combine the estimates for the initial-data components
$\mathcal I_k$ and the nonlinear components $\mathcal R_k$ and complete the
proof of Proposition \ref{propoglobal3d}.
\subsection{Consequences of the moment estimates}

We first record the estimates for the full moment fields
$\mathbf R_k=\mathcal I_k+\mathcal R_k$ that will be used below.

\begin{lemma}\label{lem:full-moment-estimates}
For $k\in\{0,1,2\}$, the estimates \eqref{q43}--\eqref{q47} remain valid
with $\mathcal R_k$ replaced by $\mathbf R_k$ and $M_T^2$ replaced by
$[f_0]+M_T^2$.

Assume in addition that $m_0:\mathbb R^2\to\mathbb C$ is supported in
$\{|\xi|\leq2\}$ and satisfies
\begin{equation}\label{sec5eq2}
|\nabla_\xi^j m_0(\xi)|\lesssim |\xi|^{-j},
\qquad 0\leq j\leq10.
\end{equation}
Then
\begin{align}
|\nabla m_0(\nabla)\mathbf R_k(t,x)|
&\lesssim ([f_0]+M_T^2)
\langle t\rangle^{-\frac56+21\kappa_0}\Phi(t,x),
\label{q44b}\\
|\nabla^2m_0(\nabla)\mathbf R_k(t,x)|
&\lesssim ([f_0]+M_T^2)
\langle t\rangle^{-\frac32+31\kappa_0}\Phi(t,x),
\label{q45b}\\
\big||\nabla|^{\frac{17}{6}+\frac1{100}}
m_0(\nabla)\mathbf R_k(t,x)\big|
&\lesssim ([f_0]+M_T^2)
\langle t\rangle^{-\frac{11}{6}}\Phi(t,x).
\label{q46b}
\end{align}
\end{lemma}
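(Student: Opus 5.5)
The first assertion is additive in $k$ and in the splitting $\mathbf R_k=\mathcal I_k+\mathcal R_k$. Since Proposition~\ref{maines} already gives \eqref{q43}--\eqref{q47} for $\mathcal R_k$, I will only check the same bounds for $\mathcal I_k$, with constant $[f_0]$, using Lemma~\ref{estI_k}.

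\textbf{The bounds for $\mathcal I_k$.} Writing $\langle t,x\rangle^{-2-19\kappa_0}\le\langle t\rangle^{-18\kappa_0}\Phi(t,x)$, \eqref{Z16-new} gives:
\begin{itemize}
\item for $n=0$, the bound $[f_0]\langle t\rangle^{\kappa_0}\Phi$;
\item for $n=1$, the bound $[f_0]\langle t\rangle^{-1+\kappa_0}\Phi$;
\item for $n=2$, the bound $[f_0]\langle t\rangle^{-\frac{11}{6}}\Phi$.
\end{itemize}
These are stronger than \eqref{q43}--\eqref{q45}. For \eqref{q46}, I write $\nabla^3(1-\Delta)^{-1}\mathcal I_k=\nabla(1-\Delta)^{-1}\nabla^2\mathcal I_k$. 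The kernel of $\nabla(1-\Delta)^{-1}$ is integrable and exponentially decaying, so convolution with it preserves the weight $\Phi(t,\cdot)$ up to constants, and the $n=2$ bound gives $[f_0]\langle t\rangle^{-11/6}\Phi$. For \eqref{q47}, two integrations by parts give $\int\nabla^3\varphi\,\mathcal I_k=\int\nabla\varphi\,\nabla^2\mathcal I_k$. This is bounded by $[f_0]\langle t\rangle^{-11/6}\int|\nabla\varphi|\Phi$.

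\textbf{The multiplier $m_0$: dyadic splitting.} I decompose $m_0=\sum_{j\le1}m_0\,\phi(2^{-j}\xi)$ with a Littlewood--Paley partition. By \eqref{sec5eq2}, each piece $P_j$ is a symbol adapted to $|\xi|\approx2^j$, so its kernel satisfies $|K_j(x)|\lesssim2^{2j}\langle2^jx\rangle^{-N}$. I then use the following elementary fact. If $g$ is bounded by $A\Phi(t,\cdot)$ and $2^{-j}\lesssim\langle t\rangle$, then $|K_j*g|\lesssim A\Phi$. If instead $2^{-j}\gg\langle t\rangle$, then
\[
|K_j*g|\lesssim A\bigl(\Phi+2^{2j}\langle 2^jx\rangle^{-N}\langle t\rangle^{-\kappa_0}\bigr),
\]
using $\int\Phi(t,y)\,dy\lesssim\langle t\rangle^{-\kappa_0}$.

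\textbf{Proof of \eqref{q44b} and \eqref{q45b}.}
\begin{itemize}
\item For \eqref{q44b}, the pieces with $\langle t\rangle^{-1}\lesssim2^j\le2$ are applied to $\nabla\mathbf R_k$. There are $O(\log(2+t))$ of them, so they give $\langle t\rangle^{-5/6+20\kappa_0}\log(2+t)\,\Phi\lesssim\langle t\rangle^{-5/6+21\kappa_0}\Phi$.
\item The pieces with $2^j\ll\langle t\rangle^{-1}$ are written as $(\xi P_j)$ applied to $\mathbf R_k$. The extra factor $2^j$ together with $\langle t\rangle^{20\kappa_0}\Phi$ sums to $\langle t\rangle^{-1+20\kappa_0}\Phi$, provided the tail term is controlled.
\item The tail term $2^{3j}\langle2^jx\rangle^{-N}\langle t\rangle^{-\kappa_0+20\kappa_0}$ is checked to be $\lesssim\langle t\rangle^{-1}\Phi$ by comparing the cases $|x|\le2^{-j}$ and $|x|>2^{-j}$, using $2^j\le\langle t\rangle^{-1}$.
\item \eqref{q45b} is identical, with $\nabla^2$ and the exponent $-\frac32+30\kappa_0$.
\end{itemize}

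\textbf{Proof of \eqref{q46b}: main obstacle.} The main obstacle is \eqref{q46b}, because \eqref{q45} only yields $\langle t\rangle^{-3/2}$. Here I will use the pairing estimate \eqref{q47} instead of pointwise bounds.
\begin{itemize}
\item I write $|\nabla|^{\frac{17}6+\frac1{100}}m_0(\nabla)\mathbf R_k(t,x)=\int\nabla^3\varphi_x\,\mathbf R_k\,dy$. Here $\varphi_x(y)=K(x-y)$, and $K$ is the kernel of $|\nabla|^{-\frac16+\frac1{100}}m_0(\nabla)$ combined with Riesz-type degree-zero factors; these are absorbed into $m_0$ since they satisfy \eqref{sec5eq2}.
\item The test function $\varphi_x$ lies in $W^{4,\infty}\cap W^{4,1}$, after a standard approximation.
\item Apply \eqref{q47} with $\epsilon_1=2\kappa_0$. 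The function $\nabla\varphi_x$ has symbol of degree $\frac56+\frac1{100}$, and $\nabla|\nabla|^{-\frac56-\epsilon_1}\varphi_x$ has symbol of degree $\delta:=\frac1{100}-2\kappa_0>0$, both supported in $|\xi|\le2$.
\item Both kernels are therefore bounded by $\langle x\rangle^{-2-\delta}$ (integrable, by the dyadic sum above).
\item Since $\delta>\kappa_0$, the integral $\int\langle x-y\rangle^{-2-\delta}\Phi(t,y)\,dy$ is $\lesssim\Phi(t,x)$. This gives $([f_0]+M_T^2)\langle t\rangle^{-11/6}\Phi(t,x)$.
\end{itemize}

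The delicate points are these kernel bounds near $\xi=0$ under only \eqref{sec5eq2}, and the justification of the pairing with non-compactly-supported $\varphi_x$.
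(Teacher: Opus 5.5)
Your proposal is correct and essentially follows the paper's proof, and your Littlewood--Paley split at $2^j\approx\langle t\rangle^{-1}$ for \eqref{q44b}--\eqref{q45b} is just the frequency-side version of the paper's physical-space split of the kernel of $m_0(\nabla)$ at $|z|\approx\langle t\rangle$: a logarithm from the intermediate scales, and one derivative moved onto the kernel at the largest scales. Likewise, \eqref{q46b} comes, as in the paper, from the pairing estimate \eqref{q47} applied to test functions whose two associated kernels decay like $\langle z\rangle^{-2-\sigma}$ and $\langle z\rangle^{-2-(\sigma-\sigma_0)}$ with $\sigma-\sigma_0>\kappa_0$; your choice $\epsilon_1=2\kappa_0$ instead of the paper's $1/200$ is harmless, and the only slip is cosmetic, namely that your low-frequency tail term carries $\langle t\rangle^{19\kappa_0}$ and is therefore bounded by $\langle t\rangle^{-1+20\kappa_0}\Phi$ rather than $\langle t\rangle^{-1}\Phi$, which still suffices.
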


\begin{proof}
The bounds \eqref{q43}--\eqref{q45} for $\mathbf R_k$ follow immediately
from Proposition \ref{maines} and Lemma \ref{estI_k}. For the two remaining
bounds, notice that \eqref{Z16-new} gives
\[
|\nabla_x^2\mathcal I_k(t,x)|
\lesssim [f_0]\langle t\rangle^{-\frac{11}{6}}\Phi(t,x).
\]
The kernel of $\nabla(1-\Delta)^{-1}$ is integrable and preserves the
weight $\Phi$. Therefore
\[
|\nabla^3(1-\Delta)^{-1}\mathcal I_k(t,x)|
\lesssim [f_0]\langle t\rangle^{-\frac{11}{6}}\Phi(t,x).
\]
Moreover, integrating by parts twice gives
\[
\left|\int_{\mathbb R^2}\nabla^3\varphi(x)\mathcal I_k(t,x)\,dx\right|
\lesssim [f_0]\langle t\rangle^{-\frac{11}{6}}
\int_{\mathbb R^2}|\nabla\varphi(x)|\Phi(t,x)\,dx.
\]
This proves the full-moment versions of \eqref{q46} and \eqref{q47}.

We next prove \eqref{q44b}--\eqref{q45b}. Let $K_0$ be the kernel of
$m_0(\nabla)$. The symbol bounds \eqref{sec5eq2} imply
\[
|\nabla^jK_0(z)|\lesssim\langle z\rangle^{-2-j},
\qquad 0\leq j\leq8.
\]
To estimate $K_0*\nabla^j\mathbf R_k$, $j=1,2$, we split the kernel at
the scale $\langle t\rangle$. On $|z|\lesssim\langle t\rangle$ we retain
all derivatives on $\mathbf R_k$ and use
\[
\int_{|z|\lesssim\langle t\rangle}|K_0(z)|\,dz
\lesssim\log(2+t).
\]
On the complementary region we move one derivative from $\mathbf R_k$
to the cutoff kernel. The resulting kernel is bounded by
$\langle z\rangle^{-3}$ and has $L^1$ norm
$O(\langle t\rangle^{-1})$. The near and far contributions are therefore
both bounded as claimed. 

It remains to prove \eqref{q46b}. Set $\sigma=5/6+1/100$, $\sigma_0=5/6+1/200$. For fixed $x$, apply the full-moment version of \eqref{q47}, with exponent
$\sigma_0$, to regularized kernels $\varphi_x^{abc}$ such that
\[
\sum_{a,b,c\in\{1,2\}}\partial_{y_a}\partial_{y_b}\partial_{y_c}\varphi^{abc}_x(y)
=\big[|\nabla|^{2+\sigma}m_0(\nabla)\big](x-y).
\]
The two kernels on the right-hand side of \eqref{q47} satisfy
\begin{align*}
|\nabla\varphi^{abc}_x(y)|
&\lesssim\langle x-y\rangle^{-2-\sigma},\qquad \big|\nabla|\nabla|^{-\sigma_0}\varphi^{abc}_x(y)\big|\lesssim\langle x-y\rangle^{-2-(\sigma-\sigma_0)}.
\end{align*}
Since $\sigma-\sigma_0=1/200>\kappa_0$, both kernels preserve the weight
$\Phi$. Thus \eqref{q47} gives \eqref{q46b}. One may first regularize the
symbols at $\xi=0$ and truncate the kernels in physical space; the stated
bound then follows by passing to the limit.
\end{proof}

\subsection{The regular component}

We now estimate the two terms in \eqref{de1.1}. By Lemma \ref{keres}, the
full-moment version of \eqref{q46} for zero and one spatial derivatives,
and the full-moment version of \eqref{q47} for two spatial derivatives,
we have, for $n\in\{0,1,2\}$,
\begin{align*}
&\left|\nabla_x^n\int_0^t\cos(t-s)(1-\chi)(|\nabla|)
\nabla\Delta^{-1}e^{-(t-s)|\nabla|}
\big(|\nabla|\mathbf R_0-\operatorname{div}\mathbf R_1\big)(s)(x)\,ds\right|\\
&\qquad\lesssim([f_0]+M_T^2)
\int_0^t\!\!\int_{\mathbb R^2}
\frac{\langle s\rangle^{-\frac{11}{6}}\Phi(s,y)}
{(t-s+|x-y|)^2\langle t-s,x-y\rangle^6}\,dy\,ds.
\end{align*}
Here the use of \eqref{q47} when $n=2$ transfers two derivatives to the
moment field without creating a nonintegrable kernel.

On the other hand, \eqref{bo1}, \eqref{a23}, and the definition of the
bootstrap norms imply
\[
|E(s,y)|^2
\lesssim M_T^2\langle s\rangle^{100\kappa_0}
\langle s,y\rangle^{-4}.
\]
Consequently,
\begin{align*}
&\left|\nabla_x^n\int_0^t\sin(t-s)\chi(|\nabla|)e^{-(t-s)|\nabla|}
\nabla\Delta^{-1}
\left(\operatorname{div}^2(E\otimes E)-\frac12\Delta|E|^2\right)(s)(x)\,ds\right|\\
&\qquad\lesssim M_T^2
\int_0^t\!\!\int_{\mathbb R^2}
\frac{\langle s\rangle^{100\kappa_0}\langle s,y\rangle^{-4}}
{\langle t-s,x-y\rangle^{3+n}}\,dy\,ds.
\end{align*}
Standard dyadic decompositions in $(s,y)$ give
\begin{align*}
\int_0^t\!\!\int_{\mathbb R^2}
\frac{\langle s\rangle^{-\frac{11}{6}}\Phi(s,y)}
{(t-s+|x-y|)^2\langle t-s,x-y\rangle^6}\,dy\,ds
&\lesssim\langle t\rangle^{-\frac{11}{6}}\Phi(t,x),\\
\int_0^t\!\!\int_{\mathbb R^2}
\frac{\langle s\rangle^{100\kappa_0}\langle s,y\rangle^{-4}}
{\langle t-s,x-y\rangle^{3+n}}\,dy\,ds
&\lesssim\langle t\rangle^{-\frac12-\frac{2n}{3}}\Phi(t,x).
\end{align*}
It follows that
\begin{equation}\label{section6-regular-bound}
\|E^{reg}\|_{1,T}\lesssim [f_0]+M_T^2.
\end{equation}

\subsection{The nonlinear oscillatory components}

Let $\Box\in\{\cos,\sin\}$. Since the trigonometric factors in
\eqref{z40.4} are bounded, Lemma \ref{keres} and the full-moment versions
of \eqref{q43}--\eqref{q45} give
\begin{align*}
|E^\Box_{non}(t,x)|
&\lesssim([f_0]+M_T^2)
\int_0^t\!\!\int_{\mathbb R^2}
\frac{\langle s\rangle^{20\kappa_0}\Phi(s,y)}
{\langle t-s,x-y\rangle^3}\,dy\,ds,\\
|\nabla_xE^\Box_{non}(t,x)|
&\lesssim([f_0]+M_T^2)
\int_0^t\!\!\int_{\mathbb R^2}
\frac{\langle s\rangle^{-\frac56+20\kappa_0}\Phi(s,y)}
{\langle t-s,x-y\rangle^3}\,dy\,ds.
\end{align*}
The kernel $\langle t-s,x-y\rangle^{-3}$ is borderline integrable in
time-space and produces at most a factor $\log(2+t)$. Therefore the
preceding estimates imply
\begin{align}
|E^\Box_{non}(t,x)|
&\lesssim([f_0]+M_T^2)
\langle t\rangle^{50\kappa_0}\Phi(t,x),
\label{section6-osc-zero}\\
|\nabla_xE^\Box_{non}(t,x)|
&\lesssim([f_0]+M_T^2)
\langle t\rangle^{-\frac23+2\kappa_0}\Phi(t,x).
\label{section6-osc-one}
\end{align}

For the second spatial derivative, we split the time integral into
$[0,t/2]$ and $[t/2,t]$. On the first interval, we place the two
additional derivatives on the kernel and use the full-moment version
of \eqref{q43}. On the second interval, we transfer these derivatives
to the moment fields and use the full-moment version of \eqref{q45}.
Thus, by Lemma \ref{keres},
\begin{align*}
|\nabla_x^2E^\Box_{non}(t,x)|
&\lesssim ([f_0]+M_T^2)
\int_0^t\!\!\int_{\mathbb R^2}
\Bigg[
\frac{\mathbf{1}_{s\leq t/2}\langle s\rangle^{20\kappa_0}}
{\langle t-s,x-y\rangle^5}+
\frac{\mathbf{1}_{s>t/2}
\langle s\rangle^{-\frac32+30\kappa_0}}
{\langle t-s,x-y\rangle^3}
\Bigg]\Phi(s,y)\,dy\,ds\\
&=:([f_0]+M_T^2)
\bigl(I_{\mathrm{early}}(t,x)+I_{\mathrm{late}}(t,x)\bigr).
\end{align*}
We first estimate $I_{\mathrm{early}}$. Set
$R:=\langle t,x\rangle$. For $0\leq s\leq t/2$, splitting the
spatial integral into $|y|\geq R/4$ and $|y|<R/4$ gives
\begin{equation*}
\int_{\mathbb R^2}
\frac{\Phi(s,y)}{\langle t-s,x-y\rangle^5}\,dy
\lesssim
\langle t\rangle^{-3}\Phi(t,x)
+R^{-5}\langle s\rangle^{-\kappa_0}.
\end{equation*}
Indeed, on the first region,
$\Phi(s,y)\lesssim R^{-2-\kappa_0}=\Phi(t,x)$, and
the $L^1$ norm of the kernel is bounded by
$C\langle t-s\rangle^{-3}\lesssim\langle t\rangle^{-3}$.
On the second region, $\langle t-s,x-y\rangle\gtrsim R$,
while
$\int_{\mathbb R^2}\Phi(s,y)\,dy
\lesssim\langle s\rangle^{-\kappa_0}$.
Consequently,
\begin{align*}
I_{\mathrm{early}}(t,x)
&\lesssim
\langle t\rangle^{-3}\Phi(t,x)
\int_0^{t/2}\langle s\rangle^{20\kappa_0}\,ds
+R^{-5}\int_0^{t/2}\langle s\rangle^{19\kappa_0}\,ds\\
&\lesssim
\langle t\rangle^{-2+20\kappa_0}\Phi(t,x),
\end{align*}
where the last inequality follows from $R\geq\langle t\rangle$.
For $t/2\leq s\leq t$, we have
$\langle s\rangle\approx\langle t\rangle$ and
\begin{equation*}
\int_{\mathbb R^2}
\frac{\Phi(s,y)}{\langle t-s,x-y\rangle^3}\,dy
\lesssim
\langle t-s\rangle^{-1}\Phi(t,x).
\end{equation*}
To see this, if $|x|\leq2\langle t\rangle$, use
$\Phi(s,y)\lesssim\langle t\rangle^{-2-\kappa_0}
\lesssim\Phi(t,x)$ and the $L^1$ norm of the kernel.
If $|x|>2\langle t\rangle$, split into
$|y|\geq|x|/2$ and $|y|<|x|/2$.
The first region is estimated in the same way. The second contributes
at most
\begin{equation*}
|x|^{-3}\langle t\rangle^{-\kappa_0}
\lesssim
\langle t-s\rangle^{-1}|x|^{-2-\kappa_0}
\lesssim
\langle t-s\rangle^{-1}\Phi(t,x),
\end{equation*}
since $\langle t-s\rangle\leq\langle t\rangle$ and
$\kappa_0<1$.
It follows that
\begin{align*}
I_{\mathrm{late}}(t,x)
&\lesssim
\langle t\rangle^{-\frac32+30\kappa_0}\Phi(t,x)
\int_{t/2}^t\langle t-s\rangle^{-1}\,ds\lesssim
\langle t\rangle^{-\frac32+30\kappa_0}
\log(2+t)\Phi(t,x).
\end{align*}
Combining the two estimates, we obtain
\begin{align}
|\nabla_x^2E^\Box_{non}(t,x)|
&\lesssim ([f_0]+M_T^2)
\left(
\langle t\rangle^{-2+20\kappa_0}
+\langle t\rangle^{-\frac32+30\kappa_0}\log(2+t)
\right)\Phi(t,x)\nonumber\\
&\lesssim ([f_0]+M_T^2)
\langle t\rangle^{-\frac43+2\kappa_0}\Phi(t,x).
\label{section6-osc-two}
\end{align}
Here we used $\kappa_0=10^{-4}$, so that $18\kappa_0<2/3$ and
$28\kappa_0<1/6$; the latter inequality allows us to absorb the
logarithmic factor.

We next estimate the third spatial derivative. It is enough to consider
$t\geq2$, since the bound for bounded times follows directly from the
preceding kernel estimates. We split $[0,t]=[0,t/2]\cup[t/2,t]$. On the
first interval we use the full-moment version of \eqref{q43} and place all
four derivatives on the low-frequency kernel. Thus
\begin{align*}
&\int_0^{t/2}\!\!\int_{\mathbb R^2}
\frac{([f_0]+M_T^2)\langle s\rangle^{20\kappa_0}\Phi(s,y)}
{\langle t-s,x-y\rangle^6}\,dy\,ds\lesssim([f_0]+M_T^2)
\langle t\rangle^{-3+20\kappa_0}\Phi(t,x).
\end{align*}
On $[t/2,t]$ we use \eqref{q46b}. After extracting
$|\nabla|^{17/6+1/100}$ from the moment field, the remaining kernel has
order $7/6-1/100$ and satisfies
\[
|K(t-s,x-y)|
\lesssim\langle t-s,x-y\rangle^{-\frac{19}{6}+\frac1{100}}.
\]
Since $19/6-1/100>3$ and $s\geq t/2$, convolution with this kernel gives
\begin{align*}
&\int_{t/2}^t\!\!\int_{\mathbb R^2}
\frac{([f_0]+M_T^2)\langle s\rangle^{-\frac{11}{6}}\Phi(s,y)}
{\langle t-s,x-y\rangle^{\frac{19}{6}-\frac1{100}}}\,dy\,ds\lesssim([f_0]+M_T^2)
\langle t\rangle^{-\frac{11}{6}}\Phi(t,x).
\end{align*}
We conclude that
\begin{equation}\label{section6-osc-three}
|\nabla_x^3E^\Box_{non}(t,x)|
\lesssim([f_0]+M_T^2)
\langle t\rangle^{-\frac{11}{6}}\Phi(t,x).
\end{equation}

It remains to estimate time derivatives. For this paragraph, set
\[
\mathcal P
:=|\nabla|^2\mathbf R_0-2|\nabla|\operatorname{div}\mathbf R_1
+\operatorname{div}^2\mathbf R_2.
\]
Differentiating \eqref{z40.4}, for $n\in\{0,1,2\}$, gives the bound
\begin{align*}
|\partial_t\nabla_x^nE^\Box_{non}(t,x)|
&\lesssim
\left|\nabla_x^n\chi(|\nabla|)\nabla\Delta^{-1}
\mathcal P(t,x)\right|\\
&\quad+\int_0^t\left|\nabla_x^n|\nabla|\chi(|\nabla|)
e^{-(t-s)|\nabla|}\nabla\Delta^{-1}\mathcal P(s)(x)\right|\,ds.
\end{align*}
The endpoint term is bounded by \eqref{q44b}, \eqref{q45b}, and
\eqref{q46b}, for $n=0,1,2$, respectively. In the last case the remaining
multiplier has order $1/6-1/100$ and therefore has an integrable kernel
that preserves $\Phi$. Hence the endpoint contribution is bounded by $([f_0]+M_T^2)\langle t\rangle^{-\frac12-\frac{2n}{3}}\Phi(t,x)$. 
The integral term is estimated by the same arguments used for one, two,
and three spatial derivatives above; the harmless order-zero angular
multipliers do not affect the kernel bounds. Using
\eqref{section6-osc-one}--\eqref{section6-osc-three}, we obtain
\begin{equation}\label{section6-time-bound}
|\partial_t\nabla_x^nE^\Box_{non}(t,x)|
\lesssim([f_0]+M_T^2)
\langle t\rangle^{-\frac12-\frac{2n}{3}}\Phi(t,x),
\qquad n\in\{0,1,2\}.
\end{equation}

Combining \eqref{section6-osc-zero}--\eqref{section6-time-bound} gives
\begin{equation}\label{section6-osc-bound}
\|(E^{\cos}_{non},E^{\sin}_{non})\|_{2,T}
\lesssim [f_0]+M_T^2.
\end{equation}
Proposition \ref{propoglobal3d} follows using also \eqref{section6-regular-bound}.

	\section{Proof of Proposition \ref{Localwell}} 

	In this section we assume that $T$ is fixed and $\delta$ is sufficiently small with respect to $T$. In both cases $T=0$ and $T>0$, we have
	\begin{align}\label{a35}
		\sum_{n=0,1,2}|\nabla_x^{n} E(t,x)|\lesssim_T\epsilon \langle x\rangle^{-2}
	\end{align}
	for any $t\in [0,T]$ and $x\in\R^2$. For $T>0$, this follows from
	Lemma~\ref{eslinear} and \eqref{bo1'}; for $T=0$, it follows from
	$E(0)=E_{lin}^{\cos}(0)$, Lemma~\ref{eslinear}, and $[f_0]\leq\epsilon$.
	Here the notation $X\lesssim_T Y$ means in this section that the inequality holds with implied constants that depend only on powers of $\langle T\rangle$. 
	
	We divide the proof into several steps.
	\medskip

	\textbf{Step 1.} We construct the solution by Picard iteration. For this we define the Banach spaces $G^{k,\delta}$, $k\in\{0,1,2\}$, of vector-fields $(h_1,h_2,h_3): [T,T+\delta]\times\R^2\to(\R^2)^3$ by the norms
	\begin{equation}\label{sec7eq1}
		\|(h_1,h_2,h_3)\|_{G^{k,\delta}}:= \sum_{n\leq k}\sup_{t\in[T,T+\delta],\,x\in \mathbb{R}^2}\langle x\rangle^{2+2\kappa_0}|\nabla_x^n(h_1,h_2,h_3,\nabla_{t,x}h_2,\nabla_{t,x}h_3)(t,x)|.
	\end{equation}
	For a triple $H=(H_1,H_2,H_3)\in G^{k,\delta}$, set $\mathcal U(H):=
	(H_1,H_2,H_3,\nabla_{t,x}H_2,\nabla_{t,x}H_3)$
	and define the terminal-variation seminorm
	\begin{equation}\label{sec7eq2}
	[H]_{k,\delta;T}:=\sum_{n\leq k}
	\sup_{\substack{t\in[T,T+\delta],\,x\in\mathbb R^2}}
	\langle x\rangle^{2+2\kappa_0}
	\left|\nabla_x^n\mathcal U(H)(t,x)-
	\nabla_x^n\mathcal U(H)(T,x)\right|.
	\end{equation}
For $T=0$ we construct the solution on $[0,\delta]$; for
	$T>0$ we extend the given solution from $[0,T]$ to $[0,T+\delta]$.
	In both cases the unknown components on $[T,T+\delta]$ are constructed by
	a Picard iteration.
	
	Precisely, for any suitable $\tilde E=(\tilde{E}^{reg},\tilde{E}^{\cos}_{non},\tilde{E}^{\sin}_{non})$ defined on $[T,T+\delta]$, we define the vector-field $E^\ast$ on $[0,T+\delta]$ by
	\begin{equation}\label{sec7eq3}
	\begin{split}
	&E^\ast(t):=\widetilde{E}^{reg}(t)+\cos(t)\bigl(\widetilde{E}^{\cos}_{non}+E_{lin}^{\cos}\bigr)(t)
		+\sin(t)\bigl(\widetilde{E}^{\sin}_{non}+E_{lin}^{\sin}\bigr)(t),
		\qquad t\in (T,T+\delta],\\
	&E^\ast(t):=E(t),\qquad t\in [0,T].
	\end{split}
	\end{equation}
		Let $(X_{s,t}^\ast,V_{s,t}^\ast,Y_{s,t}^\ast,W_{s,t}^\ast)$, for $0\le s\le t\le T+\delta$, denote the characteristics defined in \eqref{Lan1} and \eqref{Lan6} associated with the field $E^\ast$. By construction,
	\begin{equation}\label{sec7eq5}
		(X_{s,t}^\ast,V_{s,t}^\ast,Y_{s,t}^\ast,W_{s,t}^\ast)=(X_{s,t},V_{s,t},Y_{s,t},W_{s,t}),
		\qquad 0\le s\le t\le T.
	\end{equation}
	By \eqref{a31} and \eqref{a32}, for $0\le s\le T\le t\le T+\delta$, we have
	\begin{equation}\label{a31'}
	\begin{split}
		Y_{s,t}^\ast(x,v)
		&=Y_{T,t}^\ast(x,v)-(T-s)W_{T,t}^\ast(x,v)\\
		&+Y_{s,T}(x+Y_{T,t}^\ast(x,v)-TW_{T,t}^\ast(x,v),\,v+W_{T,t}^\ast(x,v)),\\
		W_{s,t}^\ast(x,v)&=W_{T,t}^\ast(x,v)+W_{s,T}(x+Y_{T,t}^\ast(x,v)-TW_{T,t}^\ast(x,v),\,v+W_{T,t}^\ast(x,v)).
		\end{split}
	\end{equation}

	As in \eqref{z1.5}--\eqref{z1.6}, for $k\in\{0,1,2\}$ we define
	\begin{equation}\label{sec7eq7}
		\mathbf{R}_k(E^\ast)(t,x):=\mathcal{I}_k(E^\ast)(t,x)+\mathcal{R}_k(E^\ast)(t,x)
	\end{equation}
	on $[0,T+\delta]\times\mathbb{R}^2$. Then we define (compare with \eqref{de1.1}--\eqref{z40.4})
	\begin{equation}\label{sec7eq8}
	\begin{split}
		\mathcal{B}^{reg}(\tilde E)&:=
		\int_0^t \cos(t-s)(1-\chi)(|\nabla|)\nabla\Delta^{-1}e^{-(t-s)|\nabla|}
		\bigl(|\nabla|\mathbf{R}_0(E^\ast)-\operatorname{div}\mathbf{R}_1(E^\ast)\bigr)(s)\,ds\\
		&-\int_0^t \sin(t-s)\chi(|\nabla|)e^{-(t-s)|\nabla|}\nabla\Delta^{-1}
		\Bigl(\operatorname{div}^2(E^\ast\otimes E^\ast)-\frac12\Delta |E^\ast|^2\Bigr)(s)\,ds\\
		&=:	\mathcal{B}_1^{reg}(\tilde E)+	\mathcal{B}^{reg}_2(\tilde E),
		\end{split}
	\end{equation}
	and
	\begin{equation}\label{sec7eq9}
		\begin{split}
			\mathcal{B}^{\cos}(\tilde E)&:=-\int_{0}^{t}\sin(s)\chi(|\nabla|)e^{-(t-s)|\nabla|} \nabla\Delta^{-1}\\
			&\qquad\qquad\bigl(|\nabla|^2\mathbf{R}_0( E^\ast)
			-2|\nabla|\operatorname{div}\mathbf{R}_1( E^\ast)
			+\operatorname{div}^2\mathbf{R}_2( E^\ast)\bigr)(s)\,ds,\\
			\mathcal{B}^{\sin}(\tilde E)&:=\int_{0}^{t}\cos(s)\chi(|\nabla|)e^{-(t-s)|\nabla|} \nabla\Delta^{-1}\\
			&\qquad\qquad\bigl(|\nabla|^2\mathbf{R}_0( E^\ast)
			-2|\nabla|\operatorname{div}\mathbf{R}_1( E^\ast)
			+\operatorname{div}^2\mathbf{R}_2( E^\ast)\bigr)(s)\,ds.
		\end{split}
	\end{equation}
	Accordingly, we define $\mathcal{B}(\tilde E):=\bigl(\mathcal{B}^{reg}(\tilde E),\mathcal{B}^{\cos}(\tilde E),\mathcal{B}^{\sin}(\tilde E)\bigr)$. Every classical solution of \eqref{eq2} satisfies the following fixed-point equation via \eqref{de1}; Step~3 proves the converse.
	\begin{align}\label{a14}
		\mathcal{B}(\tilde E)=\tilde E\qquad\text{ on }\qquad [T,T+\delta]\times\mathbb{R}^2.
	\end{align}

	\textbf{Step 2.} We construct now the solution $\widetilde{E}$ to the fixed-point problem \eqref{a14}. We initialize the iteration by setting
	\begin{align*}
		\tilde E^{(0)}=0,\qquad\text{i.e.}\qquad
		\tilde{E}^{reg,(0)}=\tilde{E}^{\cos,(0)}_{non}=\tilde{E}^{\sin,(0)}_{non}=0.
	\end{align*}
	The field $E^{\ast,(0)}$ associated with \(\widetilde E^{(0)}=0\) may be discontinuous at \(T\), but it is used only as an auxiliary input to \(\mathcal B\). Since $\tilde E^{(0)}=0$ refers only to the three unknown nonlinear
	components, we have
	\begin{equation*}
		E^{\ast,(0)}(t)=\cos(t)E_{lin}^{\cos}(t)+\sin(t)E_{lin}^{\sin}(t),
		\qquad T<t\le T+\delta.
	\end{equation*}
	
	We state first the main a priori estimate for the iteration map $\mathcal{B}$.
	
	\begin{lemma}\label{keyLe} Assume $\delta\leq \delta(T)$ is sufficiently small. Then, if $\tilde E^1, \tilde{E}^2\in G^{2,\delta}$ satisfy
		\begin{equation}\label{a27}
			\|\tilde E^1\|_{G^{2,\delta}}+\|\tilde E^2\|_{G^{2,\delta}}\le \epsilon\delta^{-1/1000},
		\end{equation}
		then
		\begin{align}\label{a26}
		&\|\mathcal{B}(\tilde E^1)\|_{G^{2,\delta}}+\|\mathcal{B}(\tilde E^2)\|_{G^{2,\delta}}\lesssim_T\epsilon,\\&	
		\label{a24'}[\mathcal{B}(\tilde E^1)]_{2,\delta;T}+[\mathcal{B}(\tilde E^2)]_{2,\delta;T}\lesssim  \epsilon\delta^{\kappa_0},\\&
			\label{a24}\|\mathcal{B}(\tilde E^1)-	\mathcal{B}(\tilde E^2)\|_{G^{1,\delta}}\lesssim \delta^{\frac{1}{20}}\|\tilde E^1-\tilde E^2\|_{G^{1,\delta}}.
		\end{align}
	\end{lemma}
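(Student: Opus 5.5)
The plan is to treat $\mathcal B$ on the short interval $[T,T+\delta]$ as a perturbation of the already-known history on $[0,T]$. The argument rests on three facts.

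First, all kernel, characteristic and moment bounds on $[0,T+\delta]$ hold with $T$-dependent constants. This only requires a crude bound $|\nabla_x^n E^\ast|\lesssim_T \epsilon\delta^{-1/1000}\langle x\rangle^{-2}$ on the new interval. That bound follows from \eqref{a27}, \eqref{a35} and Lemma~\ref{eslinear}.

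Second, the new interval contributes only to integrals $\int_T^t\cdots ds$ of length at most $\delta$, which produces a gain $\delta\cdot\delta^{-1/1000}\ll 1$.

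Third, the terminal variation at $T$ is controlled by the time derivatives of the integrands.

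\textbf{Step 1: characteristics.} For $E^\ast$ built from $\tilde E^\ell$, I would first bound the characteristics. The cocycle identity \eqref{a31'} reduces $Y^\ast_{s,t}$ and $W^\ast_{s,t}$, for $s\le T\le t$, to the old $Y_{s,T}$ and $W_{s,T}$ evaluated at points shifted by $(Y^\ast_{T,t},W^\ast_{T,t})$. The latter are $O_T(\epsilon\delta^{1-1/1000})$ in $C^2_{x,v}$, by Gronwall applied to \eqref{definitionYW} and its $x,v$ derivatives on $[T,t]$. Hence the estimates \eqref{n13}--\eqref{n17b} continue to hold with $\lesssim_T$ constants. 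Lemma~\ref{lem:Psi} and Lemma~\ref{lem:change-var} then apply as stated.

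\textbf{Step 2: the bound \eqref{a26}.} I would feed Step 1 into the crude pointwise estimates of Lemma~\ref{Lem1} and Lemma~\ref{estI_k}. This gives $\sum_{m\le 3}|\nabla^m\mathbf R_k(E^\ast)|\lesssim_T(\epsilon+\epsilon^2\delta^{-1/500})\langle x\rangle^{-11/4}$ together with the quadratic bound for $E^\ast\otimes E^\ast$. The third derivative is obtained by one more differentiation of the characteristic equations, with $f_0\in C^3$ supplying it for $\mathcal I_k$. Convolving with the kernels of Lemma~\ref{keres} over $s\in[0,t]$ yields $\langle x\rangle^{-2-2\kappa_0}$ decay for $\mathcal B$, its $\nabla_x^{\le 2}$ derivatives, and $\nabla_{t,x}$ of the oscillatory components, since $11/4>2+2\kappa_0$. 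This gives \eqref{a26}.

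\textbf{Step 3: the terminal variation \eqref{a24'}.} Write $\mathcal B(t)-\mathcal B(T)$ as the integral $\int_T^t\partial_\tau\mathcal B\,d\tau$. The integrand $\partial_\tau\mathcal B$ consists of an endpoint term (the integrand at $s=\tau$) plus an integral with one extra kernel derivative. Both are bounded by Step 2, with at most $\delta^{-1/500}$ losses. Integrating over an interval of length $\delta$ gives $\epsilon\delta^{1-1/500}\le\epsilon\delta^{\kappa_0}$.

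For the components $\nabla_{t,x}\mathcal B^{\cos},\nabla_{t,x}\mathcal B^{\sin}$, the $\partial_t$ part is the endpoint term $\chi\nabla\Delta^{-1}\mathcal P(t)$ plus an integral. The variation of the endpoint term requires Hölder-in-time continuity of $\mathbf R_k(E^\ast)$ at $t=T$. I would obtain this from \eqref{z5}--\eqref{z5'}, which express $\partial_t\mathbf R_k$ through $\nabla_x\mathbf R_{k+1}$ and quadratic field terms. For $k=2$, I would use the $F$-equation directly, which gives an $\epsilon\delta^{1-1/500}$ bound.

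\textbf{Step 4: the contraction \eqref{a24}.} This is the main obstacle. The difference $E^{\ast,1}-E^{\ast,2}$ is supported in $(T,T+\delta]$. The difficulty is that $\mathbf R_k(E^{\ast,1})-\mathbf R_k(E^{\ast,2})$ involves differences of characteristics at all earlier times $s\le T$, because the flow depends on the terminal data through $(Y^\ast_{T,t},W^\ast_{T,t})$.

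By Gronwall on $[T,t]$, this difference is $\lesssim_T\delta^{2}\|\tilde E^1-\tilde E^2\|_{G^{0,\delta}}$ for $Y$ and $\lesssim\delta$ times the same norm for $W$. Its $x$-derivative is bounded by $\|\cdot\|_{G^{1,\delta}}$. Substituting into \eqref{a31'} and using the mean value theorem with the old $C^2$ bounds on $Y_{s,T}$ and $W_{s,T}$ propagates these differences to all $s\le T$. The cost is one extra derivative, which is why the contraction holds in $G^{1}$ rather than $G^{2}$.

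The field terms $E^{\ast,\ell}(s,X^\ast)$ with $s>T$ differ by $\|\tilde E^1-\tilde E^2\|$ on a set of $s$-measure $\delta$. Collecting everything gives $\delta^{1-1/500}\lesssim\delta^{1/20}$ times the $G^{1,\delta}$ norm of the difference, after the kernel convolutions of Step 2.

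The delicate points are:
1. tracking the $\langle x\rangle^{-2-2\kappa_0}$ weight through the velocity integrals, which Lemma~\ref{Le1} handles;
2. ensuring that the $\nabla_{t,x}$ components of $\mathcal B^{\cos},\mathcal B^{\sin}$ only require $\nabla_x$ of the moment differences, which the $G^{1}$ norm controls.
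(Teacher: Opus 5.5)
Your overall architecture (cocycle identity for the flow, moment bounds, kernel convolutions, smallness from the short interval) is the paper's. Two steps, however, do not work as proposed.

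\textbf{The terminal variation \eqref{a24'}.} Writing $\mathcal B(t)-\mathcal B(T)=\int_T^t\partial_\tau\mathcal B\,d\tau$ costs a derivative you do not have.
\begin{itemize}
\item For $\nabla_x^2\mathcal B^{reg}_1$, the endpoint term of $\partial_\tau\nabla_x^2\mathcal B^{reg}_1$ is the zero-order operator $(1-\chi)(|\nabla|)\nabla\Delta^{-1}|\nabla|$ (a truncated Riesz transform) applied to $\nabla_x^2\mathbf R_k$. This is not bounded on weighted $L^\infty$.
\item After the two spatial derivatives are placed on $\mathbf R_k$, the integral term has kernel $K^{1-\chi}_{1,1}(t-s)$, whose $L^1$ norm is $\approx(t-s)^{-1}$ and hence not integrable at $s=t$.
\end{itemize}
Both problems would disappear with a bound on $\nabla_x^3\mathbf R_k$, but the one you assert in Step 2 is not available. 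One more differentiation of the characteristics (also inside $f_0(X_{0,t},V_{0,t})$) produces $\nabla_x^3E^\ast$. Neither $G^{2,\delta}$ nor the bootstrap norm on $[0,T]$ controls $\nabla_x^3E^{reg}$. The paper never differentiates $\mathcal B^{reg}$ in time. It splits the $s$-integral at $(T-\delta)_+$ and uses the kernel bounds \eqref{s7c12}, which give $\epsilon\delta\log(2/\delta)$.

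The endpoint term $b_\Box(t)\mathcal S^j(t)$ of $\partial_t\mathcal B^\Box$ also fails as proposed: a Lipschitz-in-time bound for $\mathbf R_2$ is not available in the weighted space. Note first that \eqref{z5'} does not hold for an iterate; the valid identity is \eqref{sec7eq15}. The $F$-equation gives $\partial_t\mathbf R_2=-\operatorname{div}\int v^{\otimes3}F\,dv+\dots$. With only the weight in \eqref{a16'}, the third moment is controlled by $\int|v|^3\langle x-tv,v\rangle^{-3-20\kappa_0}\langle v\rangle^{-3-\kappa_0}\,dv\approx\langle x\rangle^{-1-21\kappa_0}$, coming from the region $v\approx x/t$. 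This is far from $\langle x\rangle^{-2-2\kappa_0}$.

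The paper proves only H\"older continuity \eqref{s7c37}, with rate $\delta^{19\kappa_0}$:
\begin{itemize}
\item it applies the mean value theorem only for $|v|\le\delta^{-1}$ and interpolates $\min\{\delta\langle v\rangle,1\}$ against the available $\langle v\rangle^{-2-19\kappa_0}$ decay;
\item for $\mathcal R_k$ it splits at $|v|=\delta^{-1/2}$ and uses \eqref{s7c8}.
\end{itemize}
This is why \eqref{a24'} carries only $\delta^{\kappa_0}$, not $\delta^{1-1/500}$.

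\textbf{Spatial weights on $[T,T+\delta]$.} Sup-norm Gronwall bounds for $(Y^\ast_{T,t},W^\ast_{T,t})$ and for $Y^1-Y^2$, $W^1-W^2$ lose the spatial weight. In $\mathcal R_2$ the velocity factor is only $\langle v\rangle^{-2}$, so $\int\langle v\rangle^{-2}\langle x-(t-s)v\rangle^{-2-\kappa_0}\,dv\approx\langle x\rangle^{-2}$. Lemma~\ref{Le1} needs $\langle v\rangle^{-2-\kappa_0}$ and does not apply. You therefore get moment differences of size $\delta\|\tilde E^1-\tilde E^2\|_{G^{1,\delta}}\langle x\rangle^{-2}$ rather than the required $\langle x\rangle^{-2-2\kappa_0}$.

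The missing ingredient is that these deviations and differences are bounded by the ray integral $\mathfrak I_\delta(x,v)=\int_0^\delta\langle x-\sigma v\rangle^{-2}d\sigma$, as in \eqref{s7c16}, \eqref{s7c18}, \eqref{s7c21}. This is combined with the product estimate of Lemma~\ref{intLe}: $\int\mathfrak I_t\mathfrak I_\delta\langle v\rangle^{-2}dv\lesssim\delta^{1/8}\langle t\rangle^3\langle x\rangle^{-11/4}$.

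The same mechanism is needed for \eqref{a26}. Your Step 2 output $C_T(\epsilon+\epsilon^2\delta^{-1/500})$ is not $\lesssim_T\epsilon$ as $\delta\to0$ with $\epsilon$ fixed, and it does not even return the iterates to the ball \eqref{a27}. The paper splits $\mathcal R_k=\mathcal R_k^<+\mathcal R_k^>$ so that every $\delta^{-1/1000}$ loss comes paired with a factor $\mathfrak I_\delta$. Lemma~\ref{intLe} then converts that loss into a positive power of $\delta$.
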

	
	We will prove this lemma in subsection \ref{subsection7.1} below. Assuming this lemma, we construct a unique solution to \eqref{a14} by Picard iteration. With $\widetilde{E}^{(0)}=0$ as before, we define the sequence $\{\tilde E^{(n)}\}_{n\ge0}$ by
	\begin{equation}\label{sec7eq10}
		\tilde E^{(n+1)}= \mathcal{B}(\tilde E^{(n)}), \qquad n\ge 0.
	\end{equation}
	
	By \eqref{a26} the vector-fields $\tilde{E}^{(n)}$ are well defined in the ball of radius $\approx_T\epsilon$ in $G^{2,\delta}$, for all $n\geq 0$. Using the bounds \eqref{a26} and \eqref{a24}, 
	\begin{equation}\label{picard-uniform-two}
	\sup_{n\geq0}\|\tilde E^{(n)}\|_{G^{2,\delta}}\lesssim_T \epsilon,\qquad \|\tilde E^{(n+1)}-\tilde E^{(n)}\|_{G^{1,\delta}}\lesssim_T \epsilon \delta^{\frac{n}{40}}\text{ for all }n\geq 0.
	\end{equation}
	\begingroup
	Therefore \(\widetilde E^{(n)}\) converges in \(G^{1,\delta}\) to a limit \(\widetilde E\).
	Passing to the limit in the characteristic formulas and weak-* in the highest derivatives gives
	\endgroup
	\begin{equation}\label{limit-order-two}
	 \mathcal{B}(\tilde{E})=\tilde{E},\qquad \|\tilde E\|_{G^{2,\delta}}\lesssim_T\epsilon,\qquad [\tilde E]_{2,\delta;T}
	\lesssim\epsilon\delta^{\kappa_0},\qquad\|\tilde E-\mathcal{B}(\tilde E^{(0)})\|_{G^{1,\delta}}\lesssim \epsilon\delta^{\frac1{50}}.
	\end{equation}
	\begingroup
	Here the highest derivatives are initially weak derivatives, with essential suprema;
	their continuity is established after \eqref{sec7eq13} below.
	The formula \eqref{sec7eq8} and kernel bounds \eqref{s7c12}, with the
	time-splitting argument of Step~5 in the proof of Lemma~\ref{keyLe}
	applied at any terminal time, give
	$\widetilde E^{reg}\in C([T,T+\delta];C_x^1)$.
	\endgroup

	Let $\widetilde E=(\widetilde E^{reg},\widetilde E^{\cos}_{non},\widetilde E^{\sin}_{non})$
	be the fixed point constructed above, define the field $E^\ast$ on $[0,T+\delta]$ as in
	\eqref{sec7eq3}, let $(X^\ast_{s,t},V^\ast_{s,t})$ denote its characteristic flow, and set
	\begin{equation}\label{sec7eq11}
		f(t,x,v):=
		f_0\bigl(X^\ast_{0,t}(x,v),V^\ast_{0,t}(x,v)\bigr)
		-\int_0^t E^\ast\bigl(s,X^\ast_{s,t}(x,v)\bigr)\cdot
		\nabla_v\mu\bigl(V^\ast_{s,t}(x,v)\bigr)\,ds .
	\end{equation}
	Differentiating along the flow shows that $f$ is the unique classical solution of the
	linear transport problem
	\begin{equation}\label{sec7eq12}
		\partial_tf+v\cdot\nabla_xf+E^\ast\cdot\nabla_vf=-E^\ast\cdot\nabla_v\mu,
		\qquad f(0)=f_0 ,
	\end{equation}
	and, by \eqref{sec7eq5} and the uniqueness for \eqref{sec7eq12}, $f$ coincides on $[0,T]$
	with the given solution. 
	\medskip
	
	{\bf{Step 3.}} It remains to prove that $E^\ast$ is the field generated by $f$,
	i.e.
	\begin{equation}\label{sec7eq13}
		E^\ast(t)=\nabla\Delta^{-1}\rho(t),\qquad
		\rho(t,x):=\int_{\R^2}f(t,x,v)\,dv,
		\qquad t\in[0,T+\delta].
	\end{equation}
	This is the only point at which the specific algebraic form of the map $\mathcal B$ is used,
	and it requires an argument because the moment identity \eqref{z5'} that enters the construction of
	$\mathcal B$ is available only for genuine solutions of \eqref{eq2}, while at this stage
	$E^\ast$ is merely a fixed point.

	Notice that every term in \eqref{de1.1} and \eqref{sec7eq8}--\eqref{sec7eq9} carries a factor
	$\nabla\Delta^{-1}$, and $E$ is a gradient field on $[0,T]$, so $E^\ast$ is a gradient
	field on $[0,T+\delta]$ and we may write
	\begin{equation}\label{sec7eq14}
		E^\ast(t)=\nabla\Delta^{-1}\gamma (t),
		\qquad
		\gamma:=\operatorname{div}E^\ast .
	\end{equation}
	
	The identity \eqref{sec7eq13} is thus equivalent to $\rho=\gamma$. As in \eqref{z1.5}--\eqref{z1.6}, we define
		\begin{equation}\label{sec7eq14.5}
		\begin{split}
		&\mathbf R_k(t,x):=\mathbf R_k(E^\ast)(t,x)=\int_{\R^2}v^{\otimes k}F(t,x,v)\,dv,\\
		&F(t,x,v):=f(t,x,v)+\int_0^tE^\ast(s,x-(t-s)v)\cdot\nabla_v\mu(v)\,ds,
		\end{split}
		\end{equation}
		for $k\in\{0,1,2\}$. Applying $\partial_t+v\cdot\nabla_x$ to the definition of $F$ and using \eqref{sec7eq12},
		\[
			\partial_tF+v\cdot\nabla_xF=-E^\ast(t,x)\cdot\nabla_vf(t,x,v).
		\]
\begingroup
The field bounds for $E^\ast$ allow the ray-integration argument
of Section~2, giving $|F|\lesssim_T\epsilon\langle v\rangle^{-5}$;
also, \eqref{sec7eq11} gives
$|f|+|\nabla_v f|\lesssim_T\epsilon\langle v\rangle^{-4}$.
Thus the second velocity moment of $F$ is finite, $\rho$ is bounded,
and the velocity integrations by parts below are justified.
\endgroup
		Integrating in $v$ and using
		$\int_{\R^2}\nabla_vf\,dv=0$ and $\int_{\R^2}v^j\partial_{v_i}f\,dv=-\delta_{ij}\rho$,
		we obtain
		\begin{equation}\label{sec7eq15}
			\partial_t\mathbf R_0=-\operatorname{div}\mathbf R_1,
			\qquad
			\partial_t\mathbf R_1=-\operatorname{div}\mathbf R_2+\rho\,E^\ast .
		\end{equation}
		Both identities hold for an arbitrary field $E^\ast$. This should be compared with
		\eqref{z5}--\eqref{z5'}, where the quadratic expression
		$\operatorname{div}(E\otimes E)-\frac12\nabla|E|^2$ appears in place of $\rho E^\ast$, and
		the two agree only when $E$ is the field generated by $\rho$.

     		We integrate now \eqref{sec7eq14.5} in $v$ to see that
		\[
			\rho(t,x)=\mathbf R_0(t,x)-\int_0^t\!\!\int_{\R^2}
			E^\ast(s,x-(t-s)v)\cdot\nabla_v\mu(v)\,dv\,ds .
		\]
		We take the Fourier transform in $x$ and use
		$\int_{\R^2}e^{-ir v\cdot\xi}\nabla_v\mu(v)\,dv=ir\xi\,\widehat\mu(r\xi)$ together with
		$\xi\cdot\widehat{E^\ast}(s,\xi)=-i\,\widehat{\gamma}(s,\xi)$, which follows from
		\eqref{sec7eq14}. Therefore
		\begin{equation}\label{sec7eq16}
			\widehat\rho(t,\xi)
			+\int_0^t(t-s)\,\widehat\mu\bigl((t-s)\xi\bigr)\,
			\widehat{\gamma}(s,\xi)\,ds
			=\widehat{\mathbf R_0}(t,\xi).
		\end{equation}
		This is the Volterra equation \eqref{Volterra}, except that the memory term carries
		$\gamma$ rather than $\rho$.

		Let $\kappa$ denote the solution of the Volterra equation with the same source, i.e.
		\begin{equation}\label{sec7eq17}
		\begin{split}
			&\widehat{\kappa}(t,\xi)+\int_0^t(t-s)\widehat\mu\bigl((t-s)\xi\bigr)\widehat{\kappa}(s,\xi)\,ds
			=\widehat{\mathbf R_0}(t,\xi),\\
			&\text{or equivalently}\qquad
			\kappa(t)=\mathbf R_0(t)-\int_0^t\sin(s)e^{-s|\nabla|}\mathbf R_0(t-s)\,ds,
		\end{split}
		\end{equation}
		the equivalence being the resolvent computation \eqref{z40}. The passage from \eqref{z40}
		to \eqref{z40.2} consists of two integrations by parts which use only the two identities
		$\partial_s\mathbf R_0=-\operatorname{div}\mathbf R_1$ and
		$\partial_s\mathbf R_1=-\operatorname{div}\mathbf R_2+\mathcal Q$, and which place
		$-\int_0^t\sin(t-s)\chi(|\nabla|)e^{-(t-s)|\nabla|}\operatorname{div}\mathcal Q(s)\,ds$ in the last line
		of \eqref{z40.2}. Denoting by $\mathcal L(t)$ the sum of the first four terms of the
		right-hand side of \eqref{z40.2}, which involve only $\mathbf R_0,\mathbf R_1,\mathbf R_2$
		and the data, we conclude from \eqref{sec7eq15} that
		\begin{equation}\label{sec7eq18}
			\kappa(t)=\mathcal L(t)
			-\int_0^t\sin(t-s)\chi(|\nabla|)e^{-(t-s)|\nabla|}\operatorname{div}\bigl(\rho E^\ast\bigr)(s)\,ds .
		\end{equation}
		
		On the other hand, applying $\operatorname{div}$ to \eqref{sec7eq3} and using the fixed-point
		relation $\mathcal B(\widetilde E)=\widetilde E$ together with
		\eqref{sec7eq8}--\eqref{sec7eq9}, we obtain the same expression with
		$\operatorname{div}(E^\ast\otimes E^\ast)-\frac12\nabla|E^\ast|^2$ in place of
		$\rho E^\ast$. Since $E^\ast$ is a gradient field,
		\[
			\operatorname{div}(E^\ast\otimes E^\ast)-\tfrac12\nabla|E^\ast|^2
			=(\operatorname{div}E^\ast)E^\ast=\gamma\,E^\ast ,
		\]
		and therefore
		\begin{equation}\label{sec7eq19}
			\gamma(t)=\mathcal L(t)
			-\int_0^t\sin(t-s)\chi(|\nabla|)e^{-(t-s)|\nabla|}\operatorname{div}\bigl(\gamma E^\ast\bigr)(s)\,ds .
		\end{equation}

		Finally we set $g:=\rho-\gamma$ and
		\begin{equation}\label{sec7eq20}
			\mathcal G(t):=\int_0^t\sin(t-s)\chi(|\nabla|)e^{-(t-s)|\nabla|}
			\operatorname{div}\bigl(gE^\ast\bigr)(s)\,ds .
		\end{equation}
		Subtracting \eqref{sec7eq18} from \eqref{sec7eq19} gives
		$\gamma-\kappa=\mathcal G$. Subtracting \eqref{sec7eq17} from \eqref{sec7eq16} gives
		\[
			\widehat\rho-\widehat{\kappa}
			=-\int_0^t(t-s)\widehat\mu\bigl((t-s)\xi\bigr)
			\bigl(\widehat{\gamma}-\widehat{\kappa}\bigr)(s,\xi)\,ds
			=-\int_0^t(t-s)\widehat\mu\bigl((t-s)\xi\bigr)\widehat{\mathcal G}(s,\xi)\,ds .
		\]
		Since $g=(\rho-\kappa)-(\gamma-\kappa)$, we conclude that
		\begin{equation}\label{sec7eq21}
			g(t)=-\mathcal G(t)-\int_0^t(t-s)\,\mathcal M_{t-s}\ast\mathcal G(s)\,ds,
			\qquad
			\mathcal M_r(z):=r^{-2}\mu(z/r),
		\end{equation}
		where we used that $\widehat\mu(r\xi)$ is the Fourier multiplier of convolution with
		$\mathcal M_r$. The symbol $i\xi\chi(|\xi|)$ is smooth and compactly supported, so $\chi(|\nabla|)\nabla$ has an
		integrable convolution kernel. Therefore, by \eqref{sec7eq20},
		\[
			\|\mathcal G(t)\|_{L^\infty_x}
			\lesssim\int_0^t\|g(s)\|_{L^\infty_x}\|E^\ast(s)\|_{L^\infty_x}\,ds .
		\]
		Using now \eqref{sec7eq21} and $\|\mathcal M_r\|_{L^1}=1$,
		\[
			\|g(t)\|_{L^\infty_x}
			\lesssim_T \sup_{s\in[0,t]}\|\mathcal G(s)\|_{L^\infty_x}.
		\]
		We combine the last two inequalities and use
		$\sup_{[0,T+\delta]}\|E^\ast(s)\|_{L^\infty_x}\lesssim_T\epsilon$. Since $g(t)=0$ for $t\in [0,T]$, it follows using Gronwall's inequality that $g$ vanishes on the entire interval $[0,T+\delta]$. Thus $\rho(t)=\gamma(t)$ for all $t\in[0,T+\delta]$, and the desired identity \eqref{sec7eq13} follows.
	\begingroup
	The regularity argument at the start of Section~\ref{section4} now gives
	$E^\ast\in C_tC^2_{\mathrm{loc}}$ and a $C^2$ phase flow.
	The continuous moment fields and the low-frequency formulas \eqref{sec7eq9}
	also give continuity of the required oscillatory derivatives;
	thus $\widetilde E\in G^{2,\delta}$ with classical derivatives.
	\endgroup
	\medskip

	\textbf{Step 4.} We prove now that our solution is unique in the set $\mathfrak S_{T,\delta}$ of classical solutions $(f,E)$ of \eqref{eq2} on
	$[0,T+\delta]$, with $f\in C([0,T+\delta];C^1_{x,v})$, which agree on $[0,T]$ with the given
	solution and for which the triple $\widetilde E:=\bigl(E^{reg},E^{\cos}_{non},E^{\sin}_{non}\bigr)\big|_{[T,T+\delta]}$
	defined by \eqref{de1.1} and \eqref{z40.4}, belongs to $G^{2,\delta}$ and satisfies $\|\widetilde E\|_{G^{2,\delta}}\leq C_T\epsilon$, where $C_T$ is the implicit constant in \eqref{a26}.

	If $(f,E)\in\mathfrak S_{T,\delta}$ then, by the derivation of
	\eqref{z40.2}--\eqref{z40.4} in Section~2, which is valid for any solution of \eqref{eq2},
	the field $\widetilde{E}$ satisfies $\mathcal B(\widetilde E)=\widetilde E$. By \eqref{a24}
	the map $\mathcal B$ has at most one fixed point in the ball of radius $C_T\epsilon$ of
	$G^{2,\delta}$. Hence $\widetilde E$,
	and therefore $E$ by \eqref{sec7eq3}, is uniquely determined; the characteristic flow of $E$
	is then uniquely determined, and so is $f$ by \eqref{t1}.
	The uniqueness asserted in Proposition~\ref{Localwell} among classical solutions
	with finite $A_{T+\delta}$ follows from the weighted energy argument in
	Step~1 of the proof of Theorem~\ref{mainthm}, since these bounds imply
	the weighted field condition used there.
	
	Finally, we make the constant in \eqref{bo2} explicit.
	For $T>0$, set $\beta:=23/6+\kappa_0$. The time exponents in
	\eqref{cbc2} have absolute value at most $11/6$, and
	$\langle t,x\rangle^{2+\kappa_0}
	\leq\langle t\rangle^{2+\kappa_0}
	\langle x\rangle^{2+2\kappa_0}.$
	For a universally small upper bound on $\delta$, each bootstrap
	weight at $t\in[T,T+\delta]$ is at most twice its value at $T$.
	Decomposing each field derivative into its value at $T$ and its
	increment, and using \eqref{limit-order-two}, therefore gives
	\[
	A_{T+\delta}\leq2A_T+
	C_{\mathrm{loc}}\langle T\rangle^\beta
	\epsilon\delta^{\kappa_0},
	\]
	with $C_{\mathrm{loc}}$ independent of $T$, $\epsilon$, and the data.
	Choose $\delta$ so that
	$C_{\mathrm{loc}}\langle T\rangle^\beta\delta^{\kappa_0}\leq1$.
	Since $A_T\leq\epsilon$, this yields $A_{T+\delta}\leq3\epsilon$.
	For $T=0$, the mapping bound in \eqref{limit-order-two} and the
	definition \eqref{cbc2}, with $\delta\leq1$, give
	$A_\delta\leq C_{\mathrm{init}}\epsilon$ for a fixed constant
	$C_{\mathrm{init}}$. Thus \eqref{bo2} holds with
	$C_1:=\max\{3,C_{\mathrm{init}}\}$.

	The other smallness conditions on $\delta$ in the local construction
	involve positive powers of $\delta$ and constants bounded by powers
	of $\langle T\rangle$. Hence, taking fixed $N>0$ sufficiently large
	and $c>0$ sufficiently small, with $N\kappa_0\geq\beta$, we may choose $
	\delta(T)=c\langle T\rangle^{-N}.$
	In particular, $\inf_{0\leq T\leq R}\delta(T)
	\geq c\langle R\rangle^{-N}>0$ for every finite $R$.
	This completes the proof of the proposition.

	\subsection{Proof of Lemma \ref{keyLe}}\label{subsection7.1} Recall our standing assumption that $T$ is fixed and $\delta$ is sufficiently small relative to $T$. Using the definitions, to prove \eqref{a26} we must show that
	\begin{equation}\label{s7B4}
		\sum_{n\leq2}|\nabla_x^n\mathcal B^{reg}(\tilde E^j)|
		+\sum_{n\leq3}|\nabla_x^n\mathcal B^{\Box}(\tilde E^j)|
		+\sum_{n\leq2}|\nabla_x^n\partial_t\mathcal B^{\Box}(\tilde E^j)|
		\lesssim_T\epsilon\langle x\rangle^{-2-2\kappa_0},
	\end{equation}
	for $j\in\{1,2\}$ and $\Box\in \{\cos,\sin\}$, while for \eqref{a24} we must show that
	\begin{equation}\label{s7B5}
		\sum_{n\leq1}|\nabla_x^n\mathrm d\mathcal B^{reg}|
		+\sum_{n\leq2}\sum_{\Box}|\nabla_x^n\mathrm d\mathcal B^{\Box}|
		+\sum_{n\leq1}\sum_{\Box}|\nabla_x^n\partial_t\mathrm d\mathcal B^{\Box}|
		\lesssim\delta^{\frac1{20}}\|\tilde{E}^1-\tilde{E}^2\|_{G^{1,\delta}}\,\langle x\rangle^{-2-2\kappa_0},
	\end{equation}
	where $\mathrm d\mathcal B^{reg}:=\mathcal B^{reg}(\tilde E^1)-\mathcal B^{reg}(\tilde E^2)$, $\mathrm d\mathcal B^{\Box}:=\mathcal B^{\Box}(\tilde E^1)-\mathcal B^\Box(\tilde E^2)$. We will prove these bounds in Steps 3 and 4 below, while the remaining bounds \eqref{a24'} are proved in Step 5 below.
	
Throughout the proof of the lemma we let $E^j$ denote the field
	\eqref{sec7eq3} associated with $\tilde E^j$, write $(X^j_{s,t},V^j_{s,t})$ for its
	characteristic flow, and let $\mathcal I_k(E^j)$, $\mathcal R_k(E^j)$, $\mathbf R_k(E^j)$ denote the
	moment quantities \eqref{sec7eq7}. As in \eqref{newDefY}, set
	\begin{equation}\label{s7c2}
	\begin{split}
		&\widetilde Y^j_{s,t}(x,v):=Y^j_{s,t}(x-tv,v)=X^j_{s,t}(x,v)-x+(t-s)v,\\
		&\widetilde W^j_{s,t}(x,v):=W^j_{s,t}(x-tv,v)=V^j_{s,t}(x,v)-v.
	\end{split}
	\end{equation}
	By \eqref{Yinteq},
	\begin{equation}\label{s7c3}
		\widetilde Y^j_{s,t}(x,v)=\int_s^t(r-s)E^j\bigl(r,X^j_{r,t}(x,v)\bigr)\,dr,
		\quad
		\widetilde W^j_{s,t}(x,v)=-\int_s^tE^j\bigl(r,X^j_{r,t}(x,v)\bigr)\,dr .
	\end{equation}
	Finally, for $t\in[0,T+\delta]$ and $v\neq 0$ we let
	\begin{equation}\label{s7c4}
		\mathfrak I_t(x,v):=\int_0^t\langle x-\sigma v\rangle^{-2}\,d\sigma
		\lesssim\min\Bigl\{t,\ |v|^{-1}\langle D_v(x)\rangle^{-\frac12}\Bigr\},
		\qquad
		D_v(x):=|x|^2-\frac{(x\cdot v)^2}{|v|^2},
	\end{equation}
	see \eqref{a40.1}. We prove first two elementary lemmas. 
	
	\begin{lemma}\label{convLe}
		Let $b\in(2,3]$ and $A\geq0$.

		(i) If $K\in L^1(\R^2)$ and $|g(y)|\leq A\langle y\rangle^{-b}$ for all $y$, then
		\begin{equation*}
			|(K\ast g)(x)|\lesssim A\,\Lambda_b(K)\,\langle x\rangle^{-b},
			\qquad
			\Lambda_b(K):=\|K\|_{L^1(\R^2)}+\sup_{R\geq1}R^{b}\sup_{|z|\geq R}|K(z)| .
		\end{equation*}

		(ii) If $K:[0,t]\times\R^2\to\C$ and $|g(s,y)|\leq A\langle y\rangle^{-b}$ for all
		$(s,y)\in[0,t]\times\R^2$, then
		\begin{equation*}
			\Bigl|\int_0^t \int_{\R^2} K(t-s,x-y) g(s,y)\,dyds\Bigr|
			\lesssim A\,\Lambda_b(K)\,\langle x\rangle^{-b},
		\end{equation*}
		where now
		\[
			\Lambda_b(K):=\int_0^t\|K(s,\cdot)\|_{L^1(\R^2)}\,ds
			+\sup_{R\geq1}R^{b}\int_0^t\sup_{|z|\geq R}|K(s,z)|\,ds.
		\]
	\end{lemma}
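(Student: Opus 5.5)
Part (ii) follows from part (i) applied for each fixed $s$, followed by integration in $s$. So the plan is to prove (i) by a near/far splitting of the convolution integral around the scale $\langle x\rangle$, and then deduce (ii).

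For (i) I assume $A=1$. If $|x|\le 2$, the bound $|K\ast g(x)|\le \|K\|_{L^1}\|g\|_{L^\infty}\le \Lambda_b(K)$ suffices, since $\langle x\rangle\approx1$. If $|x|\ge2$, I set $R:=|x|/2\ge1$ and write
\[
(K\ast g)(x)=\int_{|x-y|<R}K(x-y)g(y)\,dy+\int_{|x-y|\ge R}K(x-y)g(y)\,dy .
\]
In the first region, $|y|\ge |x|-R=|x|/2$, so $|g(y)|\lesssim\langle x\rangle^{-b}$. Hence this part is bounded by $\|K\|_{L^1}\langle x\rangle^{-b}$. In the second region, $|K(x-y)|\le \sup_{|z|\ge R}|K(z)|\le R^{-b}\Lambda_b(K)$. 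Since $b>2$, we have $\int_{\mathbb R^2}\langle y\rangle^{-b}\,dy\lesssim_b 1$. This part is therefore bounded by $R^{-b}\Lambda_b(K)\lesssim\langle x\rangle^{-b}\Lambda_b(K)$. Combining the two regions gives (i), with implicit constant depending only on $b\in(2,3]$, and this dependence is uniform on that range.

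For (ii), I fix $s$ and apply the same splitting to $K(t-s,\cdot)\ast g(s,\cdot)$, keeping track of the quantities separately instead of packaging them into a single $\Lambda_b$. The near part contributes at most $\|K(t-s,\cdot)\|_{L^1}\langle x\rangle^{-b}$. The far part contributes at most $\sup_{|z|\ge R}|K(t-s,z)|\,\int\langle y\rangle^{-b}\,dy$. For $|x|\le2$ I use only the $L^1$ bound. Integrating in $s\in[0,t]$ and changing variables $s\mapsto t-s$, the near parts sum to $\int_0^t\|K(s,\cdot)\|_{L^1}\,ds\cdot\langle x\rangle^{-b}$. The far parts sum to $R^{-b}\cdot R^{b}\int_0^t\sup_{|z|\ge R}|K(s,z)|\,ds\lesssim \langle x\rangle^{-b}\Lambda_b(K)$. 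The point to be careful about is exactly this: the supremum over $R$ in $\Lambda_b$ sits outside the time integral. That is why the splitting radius $R=|x|/2$ must be chosen independently of $s$, and it is.

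I expect no real obstacle here. The only points needing care are the restriction $b>2$, which gives integrability of the weight in $\mathbb R^2$, and the one just mentioned, that the fixed choice $R=|x|/2$ is compatible with the order of the supremum and the time integral in the definition of $\Lambda_b$ for (ii). The upper bound $b\le 3$ plays no role in the argument; it only keeps the constants uniform.
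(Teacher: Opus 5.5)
Your proof is correct and is essentially the paper's argument: a near/far split at the fixed scale $|x|/2$. The paper splits according to whether $|y|\geq|x|/2$ and proves (ii) directly, whereas you split according to whether $|x-y|<|x|/2$; in both versions the $s$-independent radius $R=|x|/2$ is what lets the supremum over $R$ sit outside the time integral in (ii).

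One side remark should be dropped. The constant is not uniform on $(2,3]$, because $\int_{\R^2}\langle y\rangle^{-b}\,dy=2\pi/(b-2)$ blows up as $b\downarrow2$, and this loss is genuine rather than an artifact of the proof. The implicit constant therefore depends on $b$, which is harmless since the lemma is only applied at fixed exponents such as $b=3$ and $b=2+2\kappa_0$.
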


	\begin{proof} We prove (ii); the proof of (i) is identical. For $|x|\leq2$ the bound follows from
	the first term of $\Lambda_b(K)$, so we may assume $|x|\geq2$. On the set $\{|y|\geq|x|/2\}$ we
	have $\langle y\rangle^{-b}\lesssim\langle x\rangle^{-b}$, and the corresponding contribution is
	$\lesssim A\langle x\rangle^{-b}\int_0^t\|K(s,\cdot)\|_{L^1}ds$. On the set $\{|y|<|x|/2\}$ we have
	$|x-y|\geq|x|/2\geq1$, hence $|K(t-s,x-y)|\leq\sup_{|z|\geq|x|/2}|K(t-s,z)|$. Since
	$\int_{\R^2}\langle y\rangle^{-b}\,dy\lesssim1$, the corresponding contribution is
	$\lesssim A|x|^{-b}\sup_{R\geq1}R^b\int_0^t\sup_{|z|\geq R}|K(s,z)|\,ds$.
	\end{proof}

	The second lemma is a refinement of \eqref{a40} which produces a power of $\eta$ when one of the two
	time integrations is restricted to a short interval.

	\begin{lemma}\label{intLe} For $0<\eta\leq t$, $\Lambda\geq1$ and $x\in\R^2$ we have
		\begin{align}
			&\int_{\R^2}\mathfrak I_t(x,v)\,\mathfrak I_{\eta}(x,v)\,\frac{dv}{\langle v\rangle^2}
			\lesssim\eta^{\frac18}\langle t\rangle^{3}\langle x\rangle^{-\frac{11}{4}},
			\label{s7c7}\\
			&\int_{\{|v|\geq\Lambda\}}\mathfrak I_t(x,v)^2\,\frac{dv}{\langle v\rangle^2}
			\lesssim\Lambda^{-\frac18}\langle t\rangle^{4}\langle x\rangle^{-\frac{21}{8}} .
			\label{s7c8}
		\end{align}
	\end{lemma}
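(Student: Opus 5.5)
The proof will repeat the velocity splitting from the proof of \eqref{a40} in Lemma~\ref{Lem1}, interpolating the two bounds in \eqref{s7c4} more carefully. Fix $x$; we may take $|x|\geq 2$, since for bounded $x$ both estimates follow from $\mathfrak I_t\lesssim t$, $\mathfrak I_\eta\lesssim\eta$ and the integrability bound $\mathfrak I_\eta\lesssim\eta^{1/8}|v|^{-7/8}$ given below. As in \eqref{a40}, we split velocity space into the region $|x|\geq 2(1+t)|v|$ and its complement.

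On the region $|x|\geq 2(1+t)|v|$ we have $|x-\sigma v|\gtrsim|x|$ for all $\sigma\le t$. Hence $\mathfrak I_t\lesssim t\langle x\rangle^{-2}$ and $\mathfrak I_\eta\lesssim\eta\langle x\rangle^{-2}$. Integrating against $\langle v\rangle^{-2}$ over $|v|\lesssim|x|$ gives at most $t\eta\langle x\rangle^{-4}\log(2+|x|)$. Since $\eta\le t$, this is bounded by $\eta^{1/8}\langle t\rangle^{3}\langle x\rangle^{-11/4}$. For \eqref{s7c8} the same region gives $t^2\langle x\rangle^{-4}\log(2+|x|)\,\mathbf 1_{\Lambda\lesssim|x|}$. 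We trade the restriction $|v|\geq\Lambda$ for the factor $\Lambda^{-1/8}\lesssim |x|^{-1/8}$, which is acceptable.

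On the complement $|v|\geq|x|/(2(1+t))$ we interpolate each factor using \eqref{s7c4}. We use $\mathfrak I_\eta\lesssim\eta^{1/8}|v|^{-7/8}\langle D_v(x)\rangle^{-7/16}$ and $\mathfrak I_t\lesssim t^{1/8}|v|^{-7/8}\langle D_v(x)\rangle^{-7/16}$. Their product is bounded by $\eta^{1/8}t^{1/8}|v|^{-7/4}\langle D_v(x)\rangle^{-3/4}$, after discarding the excess angular decay. We then pass to polar coordinates $v=r\theta$ and apply \eqref{a40.2}. This reduces the estimate to $\eta^{1/8}t^{1/8}\langle x\rangle^{-1}\int_{r\gtrsim|x|/\langle t\rangle} r^{-3/4}\langle r\rangle^{-2}\,dr$. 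Evaluating this integral exactly as in the proof of \eqref{a40} gives $\lesssim \eta^{1/8}\langle t\rangle^{3}\langle x\rangle^{-11/4}$. Here the generous power $\langle t\rangle^3$ absorbs the factor $t^{1/8}$ and the lower limit $|x|/\langle t\rangle$.

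For \eqref{s7c8}, the complement region is intersected with $|v|\ge\Lambda$. We use the same interpolation with both factors equal to $\mathfrak I_t$, namely $\mathfrak I_t^2\lesssim t^{1/4}|v|^{-7/4}\langle D_v\rangle^{-3/4}$. The radial integral becomes $\int_{r\geq\max(\Lambda,|x|/\langle t\rangle)}r^{-3/4}\langle r\rangle^{-2}\,dr\lesssim \max(\Lambda,|x|/\langle t\rangle)^{-7/4}$. We split this as $\Lambda^{-1/8}(|x|/\langle t\rangle)^{-13/8}$. Together with the angular factor $\langle x\rangle^{-1}$, this yields $\Lambda^{-1/8}\langle t\rangle^{2}\langle x\rangle^{-21/8}$, well within the claim.

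The main obstacle is the bookkeeping near $|v|\lesssim1$ on the complement region, where $r^{-3/4}$ must remain integrable. Since $r^{-3/4}$ is integrable at $0$, this is harmless, and the interpolation exponents are chosen precisely for that reason. I would first check that the angular estimate \eqref{a40.2} is used with exponent exactly $3/4$, downgrading $\langle D\rangle^{-7/8}$ to $\langle D\rangle^{-3/4}$, so that the argument is a literal variant of \eqref{a40}.
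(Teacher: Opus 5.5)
Your proposal is correct and is essentially the paper's proof. It uses the same velocity split into $|x|\gtrsim\langle t\rangle|v|$ and its complement, the same $1/8$-interpolation of the two bounds in \eqref{s7c4}, and the same polar-coordinate reduction via \eqref{a40.2}. For \eqref{s7c8}, the paper extracts $\Lambda^{-1/8}$ at the outset through $\langle v\rangle^{-2}\leq\Lambda^{-1/8}\langle v\rangle^{-15/8}$ on $\{|v|\geq\Lambda\}$, which is equivalent to your bookkeeping.

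One small slip: on the inner region, $\Lambda\lesssim|x|$ gives $1\lesssim\Lambda^{-1/8}|x|^{1/8}$, not $\Lambda^{-1/8}\lesssim|x|^{-1/8}$. The resulting loss $|x|^{1/8}$ is then absorbed by the spare decay in $\langle x\rangle^{-4}\log(2+|x|)$.
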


	\begin{proof} We argue as in the proof of \eqref{a40}. If $|x|\geq2\langle t\rangle|v|$ then
	$|x-\sigma v|\gtrsim|x|$ for $\sigma\in[0,t]$, so $\mathfrak I_t\lesssim t\langle x\rangle^{-2}$ and
	$\mathfrak I_{\eta}\lesssim\eta\langle x\rangle^{-2}$; since
	$\int_{\{|v|\leq|x|/(2\langle t\rangle)\}}\langle v\rangle^{-2}dv\lesssim\log(2+|x|)$, the
	contribution of this region to \eqref{s7c7} is $\lesssim t\eta\langle x\rangle^{-4}\log(2+|x|)$,
	which is bounded by the right-hand side of \eqref{s7c7}. If $|x|\leq2\langle t\rangle|v|$, we
	interpolate the two bounds in \eqref{s7c4} to get
	\[
		\mathfrak I_t(x,v)\mathfrak I_{\eta}(x,v)
		\lesssim t^{\frac18}\eta^{\frac18}|v|^{-\frac74}\langle D_v(x)\rangle^{-\frac78}
		\lesssim t^{\frac18}\eta^{\frac18}|v|^{-\frac74}\langle D_v(x)\rangle^{-\frac34}.
	\]
	Then we conclude, using polar coordinates $v=r\theta$ and \eqref{a40.2}, that this
	contribution is
	\[
		\lesssim t^{\frac18}\eta^{\frac18}\langle x\rangle^{-1}
		\int_{r\gtrsim|x|/\langle t\rangle}r^{-\frac34}\langle r\rangle^{-2}\,dr
		\lesssim\eta^{\frac18}\langle t\rangle^{3}\langle x\rangle^{-\frac{11}{4}} .
	\]
	The bound \eqref{s7c8} follows in the same way, by replacing $\langle v\rangle^{-2}$ by
	$\Lambda^{-\frac{1}{8}}\langle v\rangle^{-\frac{15}{8}}$ on $\{|v|\geq\Lambda\}$.
	\end{proof}

	For integers $j_1,j_2\geq0$ and $\varsigma\in\{\chi, 1-\chi\}$ let
	$K^{\varsigma}_{j_1,j_2}(t,\cdot)$ denote the convolution kernels of the operators
	$\partial_t^{j_1}\nabla_x^{j_2}m_{-1}(\nabla)\varsigma(|\nabla|)e^{-t|\nabla|}$, where $m_{-1}$ is a homogeneous symbol of degree $-1$. Lemma \ref{keres} gives
	\begin{equation}\label{s7c10}
		\bigl|K^{\chi}_{j_1,j_2}(t,z)\bigr|\lesssim\langle t,z\rangle^{-1-j_1-j_2},
		\qquad
		\bigl|K^{1-\chi}_{j_1,j_2}(t,z)\bigr|\lesssim(t+|z|)^{-1-j_1-j_2}\langle t,z\rangle^{-7}.
	\end{equation}
	Therefore, in the notation of Lemma \ref{convLe} with $t\leq T+\delta$ and $b\in(2,3]$,
	\begin{equation}\label{s7c11}
		\Lambda_b\bigl(K^{\chi}_{j_1,j_2}\bigr)\lesssim\langle T\rangle\quad\text{if }j_1+j_2\geq2,
		\qquad
		\Lambda_b\bigl(K^{1-\chi}_{j_1,j_2}\bigr)\lesssim1\quad\text{if }j_1+j_2\leq1,
	\end{equation}
	and $\Lambda_b\bigl(K^{\chi}_{j_1,j_2}(s,\cdot)\bigr)\lesssim1$ uniformly in $s\geq0$ if
	$j_1+j_2\geq2$. We shall also use the bounds
	\begin{equation}\label{s7c12}
	\begin{split}
		&\Lambda_b\bigl(K^{\chi}_{j_1,j_2}(s)\mathbf 1_{s\leq\eta}\bigr)\lesssim\eta\quad\text{ if }j_1+j_2\geq 2,\\
		&\Lambda_b\bigl(K^{1-\chi}_{0,1}(s)\mathbf 1_{s\leq\eta}\bigr)\lesssim\eta\log(2/\eta),\qquad
		\Lambda_b\bigl(K^{1-\chi}_{1,1}(s)\mathbf 1_{s\geq\eta}\bigr)\lesssim\log(2/\eta),
		\end{split}
	\end{equation}
	valid for $\eta\in(0,1]$. These bounds follow from \eqref{s7c10} by direct computation.
	\medskip
	
	\textbf{Step 1.} Since $E^j=E$ on $[0,T]$, the bounds \eqref{a35} give
	\begin{equation}\label{s7c13}
		\sum_{n\leq2}|\nabla_x^nE^j(t,x)|\lesssim_T\epsilon\,\langle x\rangle^{-2},
		\qquad t\in[0,T],\,j\in\{1,2\}.
	\end{equation}
	For $t\in(T,T+\delta]$ the definition \eqref{sec7eq3}, the hypothesis \eqref{a27}, and Lemma
	\ref{eslinear} give
	\begin{equation}\label{s7c14}
	\begin{split}
		&\sum_{n\leq2}|\nabla_x^nE^j(t,x)|\lesssim \epsilon\delta^{-1/1000}\langle x\rangle^{-2},\\
		&\sum_{n\leq1}\bigl|\nabla_x^n(E^1-E^2)(t,x)\bigr|\lesssim \|\tilde{E}^1-\tilde{E}^2\|_{G^{1,\delta}}\,\langle x\rangle^{-2-2\kappa_0},
		\end{split}
	\end{equation}
	while $E^1-E^2$ vanishes identically on $[0,T]$. 
	
	We prove first bounds on the characteristics. Precisely, we claim that if $0\leq s\leq t\leq T+\delta$, $j\in\{1,2\}$, and $n\leq2$, then
	\begin{align}
		&\bigl|\nabla_x^n\widetilde Y^j_{s,t}(x,v)\bigr|+\bigl|\nabla_x^n\widetilde W^j_{s,t}(x,v)\bigr|
		\lesssim_T\epsilon\,\mathfrak I_t(x,v)+\epsilon\delta^{-1/1000}\mathfrak I_\delta(x,v),
		\label{s7c16}\\
		&\bigl|\widetilde Y^j_{s,t}(x,v)\bigr|+\bigl|\widetilde W^j_{s,t}(x,v)\bigr|\lesssim\epsilon,
		\label{s7c17}
	\end{align}
	the implicit constant in \eqref{s7c17} being absolute. Moreover, if $T\leq s\leq t\leq T+\delta$ then
	\begin{equation}\label{s7c18}
		\bigl|\nabla_x^n\widetilde Y^j_{s,t}(x,v)\bigr|+\bigl|\nabla_x^n\widetilde W^j_{s,t}(x,v)\bigr|
		\lesssim_T\epsilon\delta^{-1/1000}\mathfrak I_\delta(x,v),\qquad n\leq2 .
	\end{equation}
	In particular
	\begin{equation*}
		\bigl\langle X^j_{s,t}(x,v)\bigr\rangle\approx\langle x-(t-s)v\rangle,
		\qquad
		\bigl\langle V^j_{s,t}(x,v)\bigr\rangle\approx\langle v\rangle,
		\qquad 0\leq s\leq t\leq T+\delta .
	\end{equation*}
	Finally, we have the following bounds for the differences: for $0\leq s\leq t\leq T+\delta$ and $n\leq1$,
	\begin{equation}\label{s7c21}
		\bigl|\nabla_x^n\bigl(\widetilde Y^1_{s,t}-\widetilde Y^2_{s,t}\bigr)\bigr|
		+\bigl|\nabla_x^n\bigl(\widetilde W^1_{s,t}-\widetilde W^2_{s,t}\bigr)\bigr|
		\lesssim_T\|\tilde{E}^1-\tilde{E}^2\|_{G^{1,\delta}}\mathfrak I_\delta(x,v) .
	\end{equation}

	To prove these bounds assume first $s\leq t\in [T, T+\delta]$. Notice that if 
	$|\widetilde Y^j_{r,t}(x,v)|\leq 1$ for $r\in[s,t]$ we have
	$\langle X^j_{r,t}\rangle\approx\langle x-(t-r)v\rangle$, so that \eqref{s7c3} and \eqref{s7c14} give
	\[
		\bigl|\widetilde Y^j_{s,t}(x,v)\bigr|+\bigl|\widetilde W^j_{s,t}(x,v)\bigr|
		\lesssim\epsilon\delta^{\frac{-1}{1000}}\int_s^t\langle x-(t-r)v\rangle^{-2}dr\lesssim\epsilon\delta^{\frac{-1}{1000}}
		\mathfrak I_\delta(x,v)\lesssim \epsilon\delta^{\frac12}.
	\]
	A continuity bootstrap argument in $s,t$ therefore gives \eqref{s7c18} for
	$n=0$. Differentiating \eqref{s7c3} in $x$ at most twice, the terms containing
	$\nabla_x^n\widetilde Y^j_{r,t}$ carry a factor of
	$\int_s^t|\nabla_xE^j(r,X^j_{r,t})|\,dr\lesssim\epsilon\delta^{1/2}$, so can be absorbed; this gives
	\eqref{s7c18} for $n\leq2$. 
	
	The bounds \eqref{s7c21} follow in the same way when $s\leq t\in [T, T+\delta]$. We start from
	\[
		\bigl|E^1(r,X^1_{r,t})-E^2(r,X^2_{r,t})\bigr|
		\lesssim \|\tilde{E}^1-\tilde{E}^2\|_{G^{1,\delta}}\langle x-(t-r)v\rangle^{-2}
		+\epsilon\delta^{\frac{-1}{1000}}\langle x-(t-r)v\rangle^{-2}\bigl|\widetilde Y^1_{r,t}-\widetilde Y^2_{r,t}\bigr|,
	\]
	which follows from \eqref{s7c14} and \eqref{s7c18}. Then we use the formulas \eqref{s7c3} and notice that we can absorb  the last term.

	Assume now $0\leq s\leq T\leq t\leq T+\delta$ and set
	$(y^j,w^j):=\bigl(X^j_{T,t}(x,v),V^j_{T,t}(x,v)\bigr)$. Since $X^j_{s,t}=X_{s,T}(y^j,w^j)$ and
	$V^j_{s,t}=V_{s,T}(y^j,w^j)$, the formulas \eqref{a31}--\eqref{a32} show that
	\begin{equation}\label{s7c22}
	\begin{split}
		&\widetilde Y^j_{s,t}(x,v)=\widetilde Y^j_{T,t}(x,v)-(T-s)\,\widetilde W^j_{T,t}(x,v)
		+\widetilde Y_{s,T}(y^j,w^j),\\
		&\widetilde W^j_{s,t}(x,v)=\widetilde W^j_{T,t}(x,v)+\widetilde W_{s,T}(y^j,w^j),
	\end{split}
	\end{equation}
	which is equivalent to \eqref{a31'}. The first two terms in each line are
	$\lesssim_T\epsilon\delta^{-1/1000}\mathfrak I_\delta(x,v)\lesssim\epsilon\delta^{1/2}$, due to \eqref{s7c18}. For the last terms, we first compare the rays. By \eqref{s7c18},
for $0\leq\sigma\leq T$,
\[
|y^j-\sigma w^j-(x-(t-T+\sigma)v)|\mathrel{\lesssim_T}\epsilon\delta^{1/2}.
\]
Consequently, $\mathfrak I_T(y^j,w^j)\lesssim\mathfrak I_t(x,v)$. 

For the solution on $[0,T]$, it follows from \eqref{sec5eq4} and
\eqref{n13}--\eqref{n17b} that for any $y,w\in\R^2$
\begin{align*}
\sum_{n\leq2}
|\nabla_y^n(\widetilde Y_{s,T},\widetilde W_{s,T})(y,w)|
&\lesssim_T\epsilon\,\mathfrak I_T(y,w),\\
\sum_{a+b\leq2}
|\nabla_y^a\nabla_w^b
(\widetilde Y_{s,T},\widetilde W_{s,T})(y,w)|
&\lesssim_T\epsilon.
\end{align*}

By \eqref{s7c18}, $|\nabla_xy^j|\lesssim1$ and 
$|\nabla_xw^j|+|\nabla_x^2y^j|+|\nabla_x^2w^j|\lesssim_T\epsilon\delta^{-1/1000}\mathfrak I_\delta(x,v)$.
When differentiating the old-flow compositions at most twice, the terms
containing only first derivatives of $y^j$ are controlled by the first
bound above. Every other term contains one of these small derivatives
and is controlled by the second bound. Thus
\[
\sum_{n\leq2}
\left|\nabla_x^n
\big[(\widetilde Y_{s,T},\widetilde W_{s,T})(y^j,w^j)\big]\right|
\lesssim_T
\epsilon\mathfrak I_t(x,v)
+\epsilon^2\delta^{-1/1000}\mathfrak I_\delta(x,v).
\]
Combining this with \eqref{s7c22} proves \eqref{s7c16}.
The absolute bound \eqref{s7c17} follows instead from \eqref{n13}
and the bounds on the explicit short-interval terms.

Finally, to prove the bounds \eqref{s7c21} when $0\leq s\leq T\leq t\leq T+\delta$, notice that the short-interval version of \eqref{s7c21} gives
\[
\sum_{n\leq1}
|\nabla_x^n(y^1-y^2,w^1-w^2)|
\lesssim_T
\|\tilde E^1-\tilde E^2\|_{G^{1,\delta}}\mathfrak I_\delta(x,v).
\]
The mean value theorem and the mixed-derivative bounds above therefore
control the difference of the old-flow compositions, together with
its first $x$-derivatives, by the same right-hand side multiplied by
$\epsilon$. Subtracting the two identities \eqref{s7c22} proves
\eqref{s7c21}. When $t\leq T$, the two flows coincide, and the remaining
bounds follow directly from \eqref{sec5eq4} and \eqref{n13}.
\medskip

\textbf{Step 2.} We prove that, for $k\in\{0,1,2\}$,
\begin{equation}\label{s7c23}
\sum_{n\leq2}|\nabla_x^n\mathbf R_k(E^j)(t,x)|
\lesssim_T\epsilon\langle x\rangle^{-2-2\kappa_0}.
\end{equation}
For $t\leq T$ this follows from \eqref{Z16-new} and \eqref{z34b}.
Assume therefore $t\in[T,T+\delta]$.

We estimate first the initial-data component. Set
\begin{equation}\label{s7c24}
g(y,w):=f_0(X_{0,T}(y,w),V_{0,T}(y,w)).
\end{equation}
The flow property gives
\[
\mathcal I_k(E^j)(t,x)=\int_{\R^2}v^{\otimes k}
g(X^j_{T,t}(x,v),V^j_{T,t}(x,v))\,dv.
\]
By \eqref{a16'} and \eqref{n13}--\eqref{n17b},
\begin{equation}\label{s7c25}
\sum_{n\leq2}|\nabla_{y,w}^ng(y,w)|
\lesssim_T[f_0]\langle y-Tw,w\rangle^{-3-20\kappa_0}
\langle w\rangle^{-3-\kappa_0}.
\end{equation}
Moreover, \eqref{s7c18} gives
\[
|X^j_{T,t}(x,v)-TV^j_{T,t}(x,v)-(x-tv)|
+|V^j_{T,t}(x,v)-v|\lesssim\epsilon,
\]
and $|\nabla_x^n(X^j_{T,t},V^j_{T,t})(x,v)|\lesssim1$
for $n=1,2$. Consequently,
\begin{equation}\label{s7c26}
\sum_{n\leq2}|\nabla_x^n\mathcal I_k(E^j)(t,x)|\lesssim_T[f_0]\int_{\R^2}
\langle v\rangle^{k-3-\kappa_0}
\langle x-tv,v\rangle^{-3-20\kappa_0}\,dv\lesssim_T\epsilon\langle x\rangle^{-2-2\kappa_0}.
\end{equation}
Indeed, since $k\leq2$, the integrand is bounded by
$C_T\langle x\rangle^{-2-2\kappa_0}
\langle v\rangle^{-2-19\kappa_0}$.

For the nonlinear component we follow the proof of
Lemma~\ref{Lem1}. We first justify the velocity change of variables.
By the hypotheses of Proposition~\ref{Localwell}, \eqref{a23}, and \eqref{s7c14},
\[
\int_0^{T+\delta}
\big(\|E^j(r)\|_{L^\infty}
+r\|\nabla_xE^j(r)\|_{L^\infty}\big)\,dr
\lesssim\epsilon.
\]
The integral equations \eqref{s7c3} and their first $v$-derivatives
therefore give
\[
\frac{|\widetilde Y^j_{s,t}(x,v)|
+|\nabla_v\widetilde Y^j_{s,t}(x,v)|}{t-s}
\lesssim\epsilon.
\]
Thus the construction in Lemma~\ref{lem:Psi} applies to $E^j$;
denote the resulting map by $\Psi^j_{s,t}$.

Using \eqref{s7c13}, \eqref{s7c14}, and \eqref{s7c17}
in the same integral equations gives the more precise bounds
\[
\frac{|\widetilde Y^j_{s,t}(x,v)|
+|\nabla_v\widetilde Y^j_{s,t}(x,v)|}{t-s}
+|\widetilde W^j_{s,t}(x,v)|
\lesssim_T\epsilon\big(
\mathbf1_{s<T}\mathfrak I_t(x,v)
+\delta^{-1/1000}\mathfrak I_\delta(x,v)\big).
\]
Since $|\Psi^j_{s,t}(x,v)-v|\lesssim\epsilon$, replacing $v$
by $\Psi^j_{s,t}(x,v)$ preserves these ray integrals up to
a constant depending polynomially on $\langle T\rangle$.
Let $\Sigma^j_{s,t}$ denote the multiplier \eqref{sec5eq7}
associated with $E^j$, $b=v^{\otimes k}$, and
$\omega=\nabla\mu$, componentwise.
The formula and \eqref{sigma-basic} imply
\begin{equation}\label{sec7-Sigma}
|\Sigma^j_{s,t}(x,v)|
\lesssim_T\epsilon\langle v\rangle^{-2}
\big(\mathbf1_{s<T}\mathfrak I_t(x,v)
+\delta^{-1/1000}\mathfrak I_\delta(x,v)\big).
\end{equation}

We use superscripts $<$ and $>$ to indicate restrictions
of the time integration to $[0,T]$ and $[T,t]$, respectively.
After at most two $x$-derivatives, the principal terms in
$\mathcal R_k(E^j)$ are
$\mathcal R_k(\nabla_x^nE^j,E^j)$, estimated using
\eqref{q1} and \eqref{sec7-Sigma}.
Every remaining term contains a factor
$\nabla_x^a\widetilde Y^j_{s,t}$ or
$\nabla_x^a\widetilde W^j_{s,t}$, $1\leq a\leq2$,
and involves at most two derivatives of $E^j$.
Thus 
\begin{equation}\label{sec7-R-pieces}
\begin{split}
\sum_{n\leq2}|\nabla_x^n\mathcal R_k^<(E^j)(t,x)|
&\lesssim_T\epsilon^2\int_{\R^2}\mathfrak I_t(x,v)
\big(\mathfrak I_t(x,v)+\delta^{-1/1000}\mathfrak I_\delta(x,v)\big)
\frac{dv}{\langle v\rangle^2}\lesssim_T\epsilon^2\langle x\rangle^{-11/4},\\
\sum_{n\leq2}|\nabla_x^n\mathcal R_k^>(E^j)(t,x)|
&\lesssim_T\epsilon^2\delta^{-2/1000}
\int_{\R^2}\mathfrak I_\delta(x,v)^2
\frac{dv}{\langle v\rangle^2}\lesssim\epsilon^2\delta^{1/5}\langle x\rangle^{-11/4},
\end{split}
\end{equation}
using \eqref{s7c13}--\eqref{s7c18}, \eqref{a40}, and \eqref{s7c7}. Together with \eqref{s7c26}, these bounds prove \eqref{s7c23}.

For later use, write
\[
\mathcal C^j_{k,s,t}(x,v):=v^{\otimes k}
\big[E^j(s,x-(t-s)v)\cdot\nabla\mu(v)
-E^j(s,X^j_{s,t}(x,v))\cdot\nabla\mu(V^j_{s,t}(x,v))\big].
\]
The mean value theorem and \eqref{s7c17} give
\begin{equation}\label{s7c27}
\begin{split}
|\mathcal C^j_{k,s,t}(x,v)|
&\lesssim\langle v\rangle^{k-4}
\langle x-(t-s)v\rangle^{-2}
\|\langle\cdot\rangle^2(E^j,\nabla_xE^j)(s)\|_{L^\infty}\\
&\qquad\times
\big(|\widetilde Y^j_{s,t}(x,v)|
+|\widetilde W^j_{s,t}(x,v)|\big).
\end{split}
\end{equation}
\medskip

\textbf{Step 3.} We prove \eqref{a26}.
For $\nabla_x^n\mathcal B_1^{reg}$, $n\leq2$, put the
$n$ spatial derivatives on the moment fields, leaving kernels
of the form $K^{1-\chi}_{0,1}$.
Lemma~\ref{convLe}, \eqref{s7c11}, and \eqref{s7c23} give
\begin{equation}\label{s7c29}
\sum_{n\leq2}|\nabla_x^n\mathcal B_1^{reg}(\tilde E^j)(t,x)|
\lesssim_T\epsilon\langle x\rangle^{-2-2\kappa_0}.
\end{equation}

For $\mathcal B_2^{reg}$, put all spatial derivatives on
the low-frequency kernels, which have at least two derivatives.
The quadratic source is bounded by
$C_T\epsilon^2\langle x\rangle^{-4}$ on $[0,T]$ and by
$C\epsilon^2\delta^{-2/1000}\langle x\rangle^{-4}$ on $[T,t]$.
Applying \eqref{s7c11} and \eqref{s7c12} with $b=3$ yields
\begin{equation}\label{s7c30}
\sum_{n\leq2}|\nabla_x^n\mathcal B_2^{reg}(\tilde E^j)(t,x)|
\lesssim_T\epsilon^2
\big(1+\delta^{1-2/1000}\big)\langle x\rangle^{-3}
\lesssim_T\epsilon^2\langle x\rangle^{-3}.
\end{equation}

For the oscillatory components, define
\[
\mathcal S^j(t):=\chi(|\nabla|)\nabla\Delta^{-1}
\big(|\nabla|^2\mathbf R_0(E^j)
-2|\nabla|\operatorname{div}\mathbf R_1(E^j)
+\operatorname{div}^2\mathbf R_2(E^j)\big)(t).
\]
With $b_{\cos}(s)=-\sin s$ and $b_{\sin}(s)=\cos s$,
\eqref{sec7eq9} becomes
\begin{equation}\label{s7c31}
\begin{split}
\mathcal B^\Box(\tilde E^j)(t)
&=\int_0^t b_\Box(s)e^{-(t-s)|\nabla|}\mathcal S^j(s)\,ds,\\
\partial_t\mathcal B^\Box(\tilde E^j)(t)
&=b_\Box(t)\mathcal S^j(t)
-|\nabla|\mathcal B^\Box(\tilde E^j)(t).
\end{split}
\end{equation}
All these operators have low-frequency kernels with at least
two derivatives in the notation of \eqref{s7c10}.
Putting spatial derivatives on the kernels and using
\eqref{s7c11} and \eqref{s7c23}, we obtain
\begin{equation}\label{s7c32}
\begin{split}
\sum_{n\leq3}&\big(
|\nabla_x^n\mathcal S^j(t,x)|
+|\nabla_x^n\mathcal B^\Box(\tilde E^j)(t,x)|\big)\\
&+\sum_{n\leq3,\,a\in\{0,1\}}
|\nabla_x^n|\nabla|^a
\partial_t\mathcal B^\Box(\tilde E^j)(t,x)|
\lesssim_T\epsilon\langle x\rangle^{-2-2\kappa_0}.
\end{split}
\end{equation}
The additional bounds with $a=1$ will be used in Step 5.
Combining \eqref{s7c29}, \eqref{s7c30}, and \eqref{s7c32}
proves \eqref{s7B4}, hence \eqref{a26}.
\medskip

\textbf{Step 4.} We prove \eqref{a24}. Set
\[
\mathcal D:=\|\tilde E^1-\tilde E^2\|_{G^{1,\delta}},
\qquad
\mathrm d\mathbf R_k:=\mathbf R_k(E^1)-\mathbf R_k(E^2).
\]
The differences vanish for $t\leq T$, so assume
$t\in[T,T+\delta]$.
The mean value theorem, \eqref{s7c25}, \eqref{s7c21},
and $\mathfrak I_\delta\leq\delta$ give
\[
\sum_{n\leq1}
|\nabla_x^n(\mathcal I_k(E^1)-\mathcal I_k(E^2))(t,x)|
\lesssim_T\epsilon\mathcal D\delta
\langle x\rangle^{-2-2\kappa_0}.
\]

For the nonlinear component on $[0,T]$, the free-transport
terms cancel. Applying the mean value theorem to the
characteristic terms, and using \eqref{s7c13} and
\eqref{s7c21}, gives
\[
\begin{split}
\sum_{n\leq1}
|\nabla_x^n(\mathcal R_k^<(E^1)-\mathcal R_k^<(E^2))(t,x)|&\lesssim_T\epsilon\mathcal D
\int_{\R^2}\mathfrak I_t(x,v)\mathfrak I_\delta(x,v)
\frac{dv}{\langle v\rangle^2}\\
&\lesssim_T\epsilon\mathcal D\delta^{1/8}
\langle x\rangle^{-11/4}.
\end{split}
\]

On $[T,t]$, use the decomposition
\[
\mathcal R_k^>(E^1)-\mathcal R_k^>(E^2)
=\mathcal R_k^>(E^1-E^2,E^1)
+\mathcal R_k^>(E^2,E^1)-\mathcal R_k^>(E^2,E^2).
\]
For the first term, argue as in Step 2, using
\eqref{q1} and \eqref{sec7-Sigma} for the principal terms
after differentiation. This requires one derivative
of $E^1-E^2$. In the last two terms the free-transport contributions cancel;
the mean value theorem uses at most two derivatives of $E^2$
and one derivative of the differences of the characteristics.
Thus \eqref{s7c14}, \eqref{s7c18}--\eqref{s7c21} imply
\[
\begin{split}
\sum_{n\leq1}
|\nabla_x^n(\mathcal R_k^>(E^1)-\mathcal R_k^>(E^2))(t,x)|&\lesssim_T\epsilon\mathcal D\delta^{-1/1000}
\int_{\R^2}\mathfrak I_\delta(x,v)^2
\frac{dv}{\langle v\rangle^2}\\
&\lesssim_T\epsilon\mathcal D\delta^{1/4-1/1000}
\langle x\rangle^{-11/4}.
\end{split}
\]
Consequently,
\begin{equation}\label{s7c33}
\sum_{n\leq1}|\nabla_x^n\mathrm d\mathbf R_k(t,x)|
\lesssim_T\mathcal D\delta^{1/8}
\langle x\rangle^{-2-2\kappa_0}.
\end{equation}

We now repeat the kernel estimates of Step 3.
For $\mathcal B_1^{reg}$ put at most one derivative on
$\mathrm d\mathbf R_k$; for the oscillatory components
put all derivatives on the low-frequency kernels.
The difference of the quadratic sources vanishes on $[0,T]$
and is bounded on $[T,t]$ by
$C\epsilon\delta^{-1/1000}\mathcal D\langle x\rangle^{-4}$.
Its contribution is therefore bounded by
$C_T\epsilon\mathcal D\delta^{1-1/1000}
\langle x\rangle^{-3}$.
Together with \eqref{s7c31} and \eqref{s7c33}, this gives
\[
\|\mathcal B(\tilde E^1)-\mathcal B(\tilde E^2)\|_{G^{1,\delta}}
\lesssim_T\mathcal D\delta^{1/8}
\lesssim\mathcal D\delta^{1/20},
\]
which proves \eqref{a24}.
\medskip

\textbf{Step 5.} We prove \eqref{a24'}.
Fix $j\in\{1,2\}$ and $t\in[T,T+\delta]$.
By \eqref{s7c32},
\begin{equation}\label{s7c35}
\sum_{\substack{n\leq3\\a=0,1}}
\left|\nabla_x^n|\nabla|^a
\big[\mathcal B^\Box(\tilde E^j)(t)
-\mathcal B^\Box(\tilde E^j)(T)\big](x)\right|
\lesssim_T\epsilon\delta\langle x\rangle^{-2-2\kappa_0}.
\end{equation}

For $\mathcal B_1^{reg}$, split the time integrations at
$(T-\delta)_+$.
The contributions adjacent to $T$ are bounded by
$C_T\epsilon\delta\log(2/\delta)
\langle x\rangle^{-2-2\kappa_0}$, using \eqref{s7c12}.
On the remaining interval, differentiate the kernel and
the cosine factor with respect to the terminal time.
The bounds for $K^{1-\chi}_{1,1}$ in \eqref{s7c12}
give the same estimate.

For $\mathcal B_2^{reg}$, differentiating the low-frequency
kernel and the sine factor on $[0,T]$ gives a contribution
$C_T\epsilon^2\delta\langle x\rangle^{-3}$.
On $[T,t]$, use $|\sin(t-s)|\leq t-s$ and
\eqref{s7c14}; this contribution is bounded by
$C_T\epsilon^2\delta^{2-2/1000}\langle x\rangle^{-3}$.
It follows that
\begin{equation}\label{s7c36}
\begin{split}
&\sum_{n\leq2}
\left|\nabla_x^n
\big[\mathcal B^{reg}(\tilde E^j)(t)
-\mathcal B^{reg}(\tilde E^j)(T)\big](x)\right|\\
&\qquad\lesssim_T
\epsilon\delta\log(2/\delta)\langle x\rangle^{-2-2\kappa_0}
\lesssim_T\epsilon\delta^{1/2}
\langle x\rangle^{-2-2\kappa_0}.
\end{split}
\end{equation}

It remains to estimate the time derivatives of the oscillatory
components. In view of \eqref{s7c31} and \eqref{s7c35},
it suffices to prove
\begin{equation}\label{s7c37}
|\mathbf R_k(E^j)(t,x)-\mathbf R_k(E)(T,x)|
\lesssim_T\epsilon\delta^{19\kappa_0}
\langle x\rangle^{-2-2\kappa_0},
\qquad k=0,1,2.
\end{equation}

For the initial-data component, use \eqref{s7c24} to write
\[
\mathcal I_k(E^j)(t,x)-\mathcal I_k(E)(T,x)
=\int_{\R^2}v^{\otimes k}
\big[g(X^j_{T,t}(x,v),V^j_{T,t}(x,v))-g(x,v)\big]\,dv.
\]
If $|v|\leq\delta^{-1}$, the segment joining the two arguments
of $g$ preserves the weight in \eqref{s7c25}, up to a
constant depending on $T$. Its length is bounded by
$C_T(\delta\langle v\rangle+\epsilon\delta^{1/2})$,
by \eqref{s7c18}.
For $|v|>\delta^{-1}$, estimate the two terms separately.
The calculation in \eqref{s7c26} therefore gives
\[
\begin{split}
|\mathcal I_k(E^j)(t,x)-\mathcal I_k(E)(T,x)|
&\lesssim_T\epsilon\langle x\rangle^{-2-2\kappa_0}
\int_{\R^2}
\frac{\min\{\delta\langle v\rangle,1\}
+\epsilon\delta^{1/2}}
{\langle v\rangle^{2+19\kappa_0}}\,dv\\
&\lesssim_T\epsilon\delta^{19\kappa_0}
\langle x\rangle^{-2-2\kappa_0}.
\end{split}
\]

For the nonlinear component,
\[
\begin{split}
\mathcal R_k(E^j)(t,x)-\mathcal R_k(E)(T,x)
&=\mathcal R_k^>(E^j)(t,x)+\int_0^T\int_{\R^2}
\big[\mathcal C^j_{k,s,t}(x,v)
-\mathcal C^j_{k,s,T}(x,v)\big]\,dv\,ds.
\end{split}
\]
The first term is bounded by \eqref{sec7-R-pieces}.
Split the remaining velocity integral at $|v|=\delta^{-1/2}$.
On the large-velocity region, \eqref{s7c27},
\eqref{s7c16}, and \eqref{s7c8}, together with
Cauchy--Schwarz for the mixed ray integrals, give
\[
\begin{split}
&\left|\int_0^T\int_{|v|\geq\delta^{-1/2}}
\big[\mathcal C^j_{k,s,t}(x,v)
-\mathcal C^j_{k,s,T}(x,v)\big]\,dv\,ds\right|\\
&\qquad\lesssim_T
\epsilon^2\delta^{1/16-1/1000}\langle x\rangle^{-21/8}
\lesssim_T\epsilon^2\delta^{1/17}\langle x\rangle^{-21/8}.
\end{split}
\]

Assume now $|v|\leq\delta^{-1/2}$.
The composition formula \eqref{s7c22}, the short-interval
bounds \eqref{s7c18}, and the old-flow derivative bounds
used in Step 1 imply
\[
\begin{split}
&|\widetilde Y^j_{s,t}(x,v)-\widetilde Y_{s,T}(x,v)|
+|\widetilde W^j_{s,t}(x,v)-\widetilde W_{s,T}(x,v)|\lesssim_T\epsilon
\big(\delta^{-1/1000}\mathfrak I_\delta(x,v)
+\delta^{1/2}\mathfrak I_t(x,v)\big).
\end{split}
\]
Indeed, the change in the terminal position is bounded by
$\delta|v|+|\widetilde Y^j_{T,t}(x,v)|$, and the change in
the terminal velocity by $|\widetilde W^j_{T,t}(x,v)|$.
Apply the weighted spatial derivative bounds to the former
and the bounded velocity derivatives to the latter.

Since $|(t-T)v|\leq\delta^{1/2}$, the free rays at times
$t$ and $T$ have comparable weights. Applying the mean
value theorem twice to the definition of $\mathcal C^j$,
and using \eqref{s7c13}, \eqref{s7c16}, and the last display,
we obtain
\[
\begin{split}
|\mathcal C^j_{k,s,t}(x,v)-\mathcal C^j_{k,s,T}(x,v)|
&\lesssim_T\epsilon^2
\langle v\rangle^{-2}\langle x-(T-s)v\rangle^{-2}\\
&\quad\times
\big(\delta^{-1/1000}\mathfrak I_\delta(x,v)
+\delta^{1/2}\mathfrak I_t(x,v)\big).
\end{split}
\]
Integration and \eqref{a40} and \eqref{s7c7} yield
\[
\begin{split}
&\left|\int_0^T\int_{|v|\leq\delta^{-1/2}}
\big[\mathcal C^j_{k,s,t}(x,v)
-\mathcal C^j_{k,s,T}(x,v)\big]\,dv\,ds\right|\\
&\qquad\lesssim_T\epsilon^2
\big(\delta^{1/8-1/1000}+\delta^{1/2}\big)
\langle x\rangle^{-11/4}
\lesssim_T\epsilon^2\delta^{1/9}\langle x\rangle^{-11/4}.
\end{split}
\]
Together with the initial-data bounds and \eqref{sec7-R-pieces},
this proves \eqref{s7c37}.

Finally, all derivatives of the low-frequency operator
defining $\mathcal S^j$ have kernels satisfying
Lemma~\ref{convLe} with $b=2+2\kappa_0$.
Thus \eqref{s7c37} gives
\[
\sum_{n\leq2}
|\nabla_x^n[\mathcal S^j(t)-\mathcal S^j(T)](x)|
\lesssim_T\epsilon\delta^{19\kappa_0}
\langle x\rangle^{-2-2\kappa_0}.
\]
Using \eqref{s7c31}, \eqref{s7c32}, and \eqref{s7c35},
we conclude that
\[
\sum_{n\leq2}
\left|\nabla_x^n
\big[\partial_t\mathcal B^\Box(\tilde E^j)(t)
-\partial_t\mathcal B^\Box(\tilde E^j)(T)\big](x)\right|
\lesssim_T\epsilon\delta^{19\kappa_0}
\langle x\rangle^{-2-2\kappa_0}.
\]
Combining this with \eqref{s7c35} and \eqref{s7c36} proves
\[
[\mathcal B(\tilde E^1)]_{2,\delta;T}
+[\mathcal B(\tilde E^2)]_{2,\delta;T}
\lesssim\epsilon\delta^{\kappa_0},
\]
which is \eqref{a24'} and completes the proof of
Lemma~\ref{keyLe}.

\section{Proof of Theorem \ref{mainthm}}

Let $C\geq1$ be the constant in Proposition~\ref{propoglobal3d}
and let $C_1\geq1$ be the constant in \eqref{bo2}.
Fix the bootstrap threshold $\epsilon_0$ sufficiently small for
Propositions~\ref{propoglobal3d} and \ref{Localwell}, with
$(1+2C)\epsilon_0\leq1$. Choose the data threshold in
Theorem~\ref{mainthm} so that
$0<\bar\epsilon\leq
\frac{\epsilon_0}{2(C_1+1)(2C+1)}.$
\medskip

\textbf{Step 1: uniqueness.}
Fix $R>0$ and let $(f^j,E^j)$, $j=1,2$, be two classical solutions
on $[0,R]$ with the same initial data and satisfying the weighted
field condition in the theorem. Then
$E^j\in L^\infty([0,R];L_x^2\cap W_x^{1,\infty})$.
Their characteristic flows have bounded velocity shifts and first
derivatives on $[0,R]$. Using \eqref{a16'}, \eqref{t1}, volume
preservation, and
$|\nabla_v\mu|+|\nabla_v^2\mu|\lesssim\langle v\rangle^{-4}$, we obtain
\[
\sup_{0\leq t\leq R}
\|\langle v\rangle^{5/2}f^j(t)\|_{L^2_{x,v}}<\infty,
\qquad
|\nabla_v(\mu+f^j)(t,x,v)|\leq C_R\langle v\rangle^{-4}.
\]
For the first bound, use $E^j\in L^\infty_tL_x^2$ and
$\langle v\rangle^{5/2}\nabla_v\mu\in L_v^2$ in \eqref{t1};
the second follows by differentiating that formula. Set $h=f^1-f^2$, $e=E^1-E^2$, and
$Q(t):=\|\langle v\rangle^{5/2}h(t)\|_{L^2_{x,v}}^2$.
Subtracting the equations gives
\[
\partial_t h+v\cdot\nabla_xh+E^1\cdot\nabla_vh
=-e\cdot\nabla_v(\mu+f^2).
\]
The difference of the continuity equations and the Poisson relation give
\[
\partial_te=-\nabla_x\Delta_x^{-1}\operatorname{div}_xJ,
\qquad
J(t,x):=\int_{\mathbb R^2}v\,h(t,x,v)\,dv.
\]
By Cauchy--Schwarz,
\[
\|J(t)\|_{L_x^2}^2
\leq\left(\int_{\mathbb R^2}|v|^2\langle v\rangle^{-5}\,dv\right)Q(t)
\lesssim Q(t).
\]
Multiplying the transport equation by $\langle v\rangle^5h$ and
integrating, justified by spatial and velocity cutoffs, gives
$Q'(t)\leq C_R(Q(t)+\|e(t)\|_{L_x^2}^2)$.
Here we use $|\nabla_v\langle v\rangle^5|\lesssim\langle v\rangle^5$
and $\int_{\mathbb R^2}\langle v\rangle^{-3}\,dv<\infty$.
Since $\nabla_x\Delta_x^{-1}\operatorname{div}_x$ is bounded on $L_x^2$,
the field identity yields
\[
\frac{d}{dt}\bigl(Q(t)+\|e(t)\|_{L_x^2}^2\bigr)
\leq C_R\bigl(Q(t)+\|e(t)\|_{L_x^2}^2\bigr).
\]
The initial difference is zero, so Gronwall's inequality gives
$h=e=0$ on $[0,R]$. Since $R$ is arbitrary, uniqueness follows.
\medskip

\textbf{Step 2: global existence.}
Set $\varepsilon_{\mathrm{in}}:=[f_0]\leq\bar\epsilon$. If $\varepsilon_{\mathrm{in}}=0$, the zero solution has all
the asserted properties and is unique by Step 1; hence assume $\varepsilon_{\mathrm{in}}>0$.
With $M_T$ defined in \eqref{bo1}, let
\[
T^*:=\sup\big\{T>0:\text{\eqref{eq2} admits a unique classical
solution on $[0,T]$ with }M_T\leq\epsilon_0\big\}.
\]
Proposition~\ref{Localwell}, applied with $T=0$ and $\epsilon=\varepsilon_{\mathrm{in}}$,
gives an initial solution with
$M_{\delta(0)}\leq(C_1+1)\varepsilon_{\mathrm{in}}\leq\epsilon_0/2$; thus $T^*>0$.
For every $T<T^*$, Proposition~\ref{propoglobal3d} gives
\[
A_T\leq2C\varepsilon_{\mathrm{in}},
\qquad
M_T=A_T+\varepsilon_{\mathrm{in}}\leq(2C+1)\varepsilon_{\mathrm{in}}\leq\epsilon_0/2,
\]
where $A_T$ is defined in \eqref{local-AT}.

Suppose $T^*<\infty$. The proof of Proposition~\ref{Localwell}
gives a common continuation time $\delta_*>0$ for $T\in[0,T^*]$.
For $T<T^*$ sufficiently close to $T^*$, apply that proposition
with $\epsilon=(2C+1)\varepsilon_{\mathrm{in}}\leq\epsilon_0$. The extension satisfies
\[
\begin{aligned}
M_{T+\delta_*}
=A_{T+\delta_*}+\varepsilon_{\mathrm{in}}\leq C_1(2C+1)\varepsilon_{\mathrm{in}}+\varepsilon_{\mathrm{in}}\leq(C_1+1)(2C+1)\bar\epsilon
\leq\epsilon_0/2.
\end{aligned}
\]
Taking $T+\delta_*>T^*$ contradicts the definition of $T^*$.
Thus $T^*=\infty$, and the preceding energy
argument gives uniqueness in the stated class.
The bounds $A_T\leq2C[f_0]$, valid for every $T$, together
with Lemma~\ref{eslinear}, give all the electric-field
estimates in the theorem. In particular, the constructed solution satisfies
the weighted field condition in its statement.

On each finite interval $[0,R]$, the local construction gives $E$ and
$\nabla_xE$ continuous in $(t,x)$. The characteristic flow and its first
$(x,v)$-derivatives therefore depend continuously on time, uniformly on
compact phase-space sets. Formula~\eqref{t1} and its first $(x,v)$-derivatives
then give $f\in C([0,R];C^1_{x,v})$. Moreover, \eqref{eq2} yields
\[
\partial_tf=-v\cdot\nabla_xf-E\cdot\nabla_vf-E\cdot\nabla_v\mu
\in C([0,R];C^0_{x,v}).
\]
Thus $f\in C^1([0,R];C^0_{x,v})$ in the topology specified before the
statement of the theorem. Since $R$ is arbitrary, this proves the asserted
time regularity.
\medskip

\textbf{Step 3: convergence of the characteristics.}
By \eqref{n14} and \eqref{xf},
\[
\sup_{t\geq s}
\big(\operatorname{Lip}_{x,v}Y_{s,t}
+\langle s\rangle\operatorname{Lip}_{x,v}W_{s,t}\big)
\lesssim[f_0].
\]
Apply the mean value theorem to the composition formulas
\eqref{a31}--\eqref{a32}. Using also \eqref{n13}, we obtain,
for $0\leq s\leq t'\leq t$,
\[
\begin{split}
&|Y_{s,t}(x,v)-Y_{s,t'}(x,v)|
+\langle s\rangle|W_{s,t}(x,v)-W_{s,t'}(x,v)|\\
&\qquad\lesssim
|Y_{t',t}(x,v)|+\langle t'\rangle|W_{t',t}(x,v)|
\lesssim[f_0]\mathfrak a_0(t',v).
\end{split}
\]
Since $\mathfrak a_0(t',v)\lesssim
\langle t'\rangle^{-1/2-\kappa_0}$, the limits
\[
(Y_{s,\infty},W_{s,\infty})(x,v)
:=\lim_{t\to\infty}(Y_{s,t},W_{s,t})(x,v)
\]
exist uniformly in $(x,v)$, and
\begin{equation}\label{a51}
\begin{split}
&|Y_{s,t}(x,v)-Y_{s,\infty}(x,v)|
+\langle s\rangle|W_{s,t}(x,v)-W_{s,\infty}(x,v)|\lesssim[f_0]\mathfrak a_0(t,v)
\lesssim[f_0]\langle t\rangle^{-1/2-\kappa_0}.
\end{split}
\end{equation}
The limits inherit the Lipschitz bounds above and the
pointwise bounds \eqref{n13}.

The convergence is uniform when $s$ ranges over a bounded
interval. Passing to the limit in the characteristic
equations in integral form gives
\[
\begin{split}
\partial_sY_{s,\infty}(x,v)&=W_{s,\infty}(x,v),\\
\partial_sW_{s,\infty}(x,v)
&=E(s,x+sv+Y_{s,\infty}(x,v)).
\end{split}
\]
Thus these functions are continuously differentiable
in $s$, and $W_{s,\infty}(x,v)\to0$ as $s\to\infty$.
\medskip

\textbf{Step 4: scattering.}
Conservation of $f+\mu$ along characteristics gives
\[
\begin{split}
f(t,z+tv,v)
={}&f_0(z+Y_{0,t}(z,v),v+W_{0,t}(z,v))+\mu(v+W_{0,t}(z,v))-\mu(v).
\end{split}
\]
We therefore define
\[
\begin{split}
f_\infty(z,v)
:={}&f_0(z+Y_{0,\infty}(z,v),v+W_{0,\infty}(z,v))+\mu(v+W_{0,\infty}(z,v))-\mu(v).
\end{split}
\]
By Step 3 and \eqref{a16'}, $f_\infty\in C^{0,1}_{z,v}$.

Taking $s=0$ in \eqref{a51} and applying the mean value theorem
directly to the formulas for $f(t,z+tv,v)$ and $f_\infty(z,v)$ gives
\[
\begin{split}
\langle v\rangle^4|f(t,z+tv,v)-f_\infty(z,v)|
&\lesssim\bigl([f_0]^2\langle v\rangle^{-2-21\kappa_0}
+[f_0]\bigr)\langle t\rangle^{-\frac12-\kappa_0}\lesssim[f_0]\langle t\rangle^{-\frac12-\kappa_0},
\end{split}
\]
where we used $[f_0]\leq1$. This proves the simpler uniform
scattering estimate stated after \eqref{po}.

Fix $(t,x,v)$ and set $z:=x-tv$.
The corrections at $s=0$ are $O([f_0])$, so their shifts
preserve the weights in \eqref{a16'}.
Using the mean value theorem, \eqref{a51}, and
$|\nabla\mu(v)|\lesssim\langle v\rangle^{-4}$, we obtain
\begin{equation}\label{sec8-profile-comparison}
\begin{split}
\langle v\rangle^3|f(t,x,v)-f_\infty(z,v)|
&\lesssim[f_0]^2\langle z,v\rangle^{-3-20\kappa_0}
\langle t\rangle^{-1/2-\kappa_0}\\
&+\langle v\rangle^{-1}
|W_{0,t}(z,v)-W_{0,\infty}(z,v)|.
\end{split}
\end{equation}

Define $Z_s(z,v):=z+sv+Y_{s,\infty}(z,v)$.  The equations for $W_{s,t}$ and $W_{s,\infty}$ imply
\[
\begin{split}
W_{0,t}(z,v)-W_{0,\infty}(z,v)
={}&-W_{t,\infty}(z,v)+\int_0^t\big[
E(s,Z_s(z,v))
-E(s,z+sv+Y_{s,t}(z,v))\big]\,ds.
\end{split}
\]
The two trajectories differ by
$O([f_0]\langle t\rangle^{-1/2-\kappa_0})$,
by \eqref{a51}, and their spatial weights are comparable.
The field estimates therefore give
\[
\begin{split}
|E(s,Z_s(z,v))
-E(s,z+sv+Y_{s,t}(z,v))|\lesssim[f_0]^2
\langle t\rangle^{-1/2-\kappa_0}
\langle s\rangle^{-2/3+\kappa_0}
\langle s,z+sv\rangle^{-2}.
\end{split}
\]
Consequently,
\begin{equation}\label{sec8-velocity-comparison}
\begin{split}
\langle v\rangle^{-1}
|W_{0,t}(z,v)-W_{0,\infty}(z,v)|&\lesssim
\langle v\rangle^{-1}|W_{t,\infty}(z,v)|\\
&+[f_0]^2\langle t\rangle^{-1/2-\kappa_0}
\int_0^t\langle s\rangle^{-2/3+\kappa_0}
\langle s,z+sv\rangle^{-2}\,ds.
\end{split}
\end{equation}

It remains to estimate $W_{t,\infty}$.
Step 3 and the field bounds give
\[
W_{t,\infty}(z,v)
=-\int_t^\infty E(s,Z_s(z,v))\,ds,
\qquad
\partial_sZ_s(z,v)=v+W_{s,\infty}(z,v).
\]
In particular,
$|\partial_sZ_s(z,v)|\lesssim\langle v\rangle$ and
$\langle s,Z_s(z,v)\rangle\approx\langle s,z+sv\rangle$.
Decompose $E$ as in \eqref{de1} and integrate by parts
in the factors $\cos s$ and $\sin s$. This gives
\[
\begin{split}
|W_{t,\infty}(z,v)|
&\lesssim
\sum_{\ast\in\{\cos,\sin\}}
|E^\ast(t,Z_t(z,v))|
+\int_t^\infty|E^{reg}(s,Z_s(z,v))|\,ds\\
&\quad+
\sum_{\ast\in\{\cos,\sin\}}\int_t^\infty
\big(
|(\partial_sE^\ast)(s,Z_s(z,v))|
+\langle v\rangle|\nabla_xE^\ast(s,Z_s(z,v))|
\big)\,ds.
\end{split}
\]
The boundary terms at infinity vanish by \eqref{Ebounds1}.
Using \eqref{cbc2}, \eqref{Ebounds1}, and
$Z_t(z,v)=x+Y_{t,\infty}(z,v)$, we conclude that
\begin{equation}\label{sec8-velocity-tail}
\begin{split}
\langle v\rangle^{-1}|W_{t,\infty}(z,v)|
&\lesssim[f_0]\langle t\rangle^{50\kappa_0}
\langle t,x\rangle^{-2}+[f_0]\int_t^\infty
\langle s\rangle^{-1/2}
\langle s,z+sv\rangle^{-2}\,ds.
\end{split}
\end{equation}

Combining \eqref{sec8-profile-comparison},
\eqref{sec8-velocity-comparison}, and
\eqref{sec8-velocity-tail}, and using
$[f_0]\leq1$ and
$\langle z,v\rangle^{-3-20\kappa_0}
\leq\langle z,v\rangle^{-2-19\kappa_0}$,
proves \eqref{po}. This completes the proof of
Theorem~\ref{mainthm}.

\end{document}